%------------------------------------------------------------------------------
% Beginning of journal.tex
%------------------------------------------------------------------------------
%
% AMS-LaTeX version 2 sample file for journals, based on amsart.cls.
%
%        ***     DO NOT USE THIS FILE AS A STARTER.      ***
%        ***  USE THE JOURNAL-SPECIFIC *.TEMPLATE FILE.  ***
%
% Replace amsart by the documentclass for the target journal, e.g., tran-l.
%
\documentclass[12pt]{amsart}

%     If your article includes graphics, uncomment this command.
\usepackage{graphicx}
\usepackage{amssymb}
\usepackage{float}
\usepackage{fullpage}
\usepackage{physics}

\usepackage[dvipsnames]{xcolor}

\newtheorem{theorem}{Theorem}[section]
\newtheorem{lemma}[theorem]{Lemma}
\newtheorem{corollary}[theorem]{Corollary}
\newtheorem{proposition}[theorem]{Proposition}

\theoremstyle{definition}
\newtheorem{definition}[theorem]{Definition}

\theoremstyle{remark}
\newtheorem{remark}[theorem]{Remark}

\numberwithin{equation}{section}

\usepackage{physics}
\usepackage{multicol}

\usepackage{comment}

\usepackage{mathtools}

\newcommand{\C}{{\mathbb C}}
\newcommand{\R}{{\mathbb R}}

\newcommand{\Z}{{\mathbb Z}}
\newcommand{\N}{{\mathbb N}}

\newcommand{\D}{{\mathbb D}}

\renewcommand{\phi}{{\varphi}}
\renewcommand{\le}{{\,\leqslant}}
\renewcommand{\ge}{{\,\geqslant}}

\newcommand{\eps}{\epsilon}

\renewcommand{\Re}{\mathop{\rm Re}}

\renewcommand{\a}{\mathfrak a}
\renewcommand{\b}{\mathfrak b}
\renewcommand{\c}{\mathfrak c}

\newcommand{\1}{{\mathbf 1}}

\newcommand{\Mod}[1]{\ (\mathrm{mod}\ #1)}

%    Blank box placeholder for figures (to avoid requiring any
%    particular graphics capabilities for printing this document).

\addtocontents{toc}{\protect\setcounter{tocdepth}{1}}

\begin{document}

\title[Primes in large arithmetic progressions]{Primes in arithmetic progressions to large moduli,
and Goldbach beyond the square-root barrier
}

\author{Jared Duker Lichtman,
\MakeLowercase{and appendix with} Sary Drappeau}

%    Address of record for the research reported here
\address{I2M, Universit\'e d'Aix-Marseille, CNRS, Case 907, Campus de Luminy, 13288 Marseille Cedex 9, France}

\email{sary-aurelien.drappeau@univ-amu.fr}

%    Address of record for the research reported here
\address{Mathematical Institute, University of Oxford, Oxford, OX2 6GG, UK}

\email{jared.d.lichtman@gmail.com}

%\thanks{The first author was supported in part by NSF Grant \#000000.}

%    General info
\subjclass[2010]{Primary 11N35, 11N36; Secondary 11N05}

%\date{\today}

%\keywords{}

\begin{abstract}
We show the primes have level of distribution $66/107\approx 0.617$ using triply well-factorable weights. This gives the highest level of distribution for primes in any setting, improving on the prior record level $3/5=0.60$ of Maynard. We also extend this level to $5/8=0.625$, assuming Selberg's eigenvalue conjecture.
As applications of the method, we obtain new upper bounds for twin primes and for Goldbach representations of even numbers $a$. For the Goldbach problem, this is the first use of a level of distribution beyond the `square-root barrier', and leads to the greatest improvement on the problem since Bombieri--Davenport from 1966.
Our proof optimizes the Deshouillers--Iwaniec spectral large sieve estimates, both in the exceptional spectrum and uniformity in the residue $a$, refining Drappeau--Pratt--Radziwi{\l}{\l} and Assing--Blomer--Li.
\end{abstract}

\maketitle

\tableofcontents

\section{Introduction}\label{sec:Introduction}

Denote by $\pi_2(x)$ the number of twin primes $p$, $p+2$ up to $x$.
The twin prime conjecture asserts that $\pi_2(x)$ diverges as $x\to\infty$. Also denote by ${\rm G}(a)$ the number of representations of an even integer $a$ as a sum of two primes $a=p_1+p_2$. The Goldbach conjecture asserts that $G(a)\ge1$ for every even $a\ge4$. These celebrated conjectures were made quantitatively precise by Hardy and Littlewood \cite{HL} in 1923, who proposed asymptotic formulae,
\begin{align}
\pi_2(x) \ \sim \ \Pi_2(x) \qquad\text{and}\qquad {\rm G}(a) \ \sim \ \Pi_a(a),
\end{align}
where
\begin{align*}
\Pi_a(x) :=\frac{2\mathfrak{S}_a\,x}{(\log x)^2} \qquad\text{and}\qquad
\mathfrak{S}_a := \prod_{2<p}\frac{1-2/p}{(1-1/p)^{2}}\,\prod_{2<p\mid a}\frac{1-1/p}{1-2/p}.
\end{align*}

We prove upper bounds for twin primes and Goldbach representations that are within factors 3.23 and 3.40, respectively, of the predicted Hardy--Littlewood asympototics.

\begin{theorem}\label{thm:twinbound}
For $x\in\R$ sufficiently large, we have
$\pi_2(x) \, \lesssim \, 3.2290\,\Pi_2(x)$.
\end{theorem}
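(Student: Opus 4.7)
The plan is to apply the upper-bound linear sieve to the shifted-prime sequence $\mathcal{A} = \{p+2 : p \le x\}$, using the level of distribution $\theta = 66/107$ for primes in arithmetic progressions against triply well-factorable weights, which is the main result of the paper. For odd squarefree $d$, one has $|\mathcal{A}_d| = \#\{p \le x : p \equiv -2 \Mod{d}\} \approx \pi(x)/\phi(d)$, so $\mathcal{A}$ is a dimension-one sieve problem with density $\omega(d)/d = 1/\phi(d)$. The main theorem controls the sieve remainders $r_d = |\mathcal{A}_d| - \pi(x)/\phi(d)$, averaged over $d \le D := x^\theta$ against the sieve weights, provided those weights are triply well-factorable.

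Using the Rosser--Iwaniec linear sieve (whose weights are triply well-factorable by Iwaniec's construction) and choosing sift parameter $z = x^{\theta/s}$ for $s \ge 1$, I would invoke the upper bound
$$\pi_2(x) \,\le\, \pi(x) V(z)\bigl(F(s) + o(1)\bigr) + O(z),$$
where $F$ is the linear sieve upper-bound function and $V(z) := \prod_{2 < p < z}(1 - 1/(p-1))$. Mertens' theorem yields $V(z) \sim 2\mathfrak{S}_2 e^{-\gamma}/\log z$, and combining with $\pi(x) \sim x/\log x$ gives the clean expression
$$\pi_2(x) \,\le\, \frac{e^{-\gamma} s F(s)}{\theta}\,\Pi_2(x) + o(\Pi_2(x)).$$
Since $sF(s) = 2 e^\gamma$ throughout $s \in [1,3]$, this already delivers $\pi_2(x) \le (2/\theta + o(1))\Pi_2(x) = (107/33 + o(1))\Pi_2(x) \approx 3.2425\,\Pi_2(x)$.

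To sharpen the constant to the stated $3.2290$, one must refine the pure linear sieve further---either by performing a modest Buchstab-type iteration that reintroduces the level of distribution on a smaller subproblem, or equivalently by replacing the Rosser--Iwaniec weights with a slightly more intricate weighted-sieve construction still compatible with triple well-factorability. The hard part will be verifying that the refined weights satisfy the triple factorization hypothesis of the main theorem, a check that continues the factorization analysis in Iwaniec's original construction and follows the blueprint of Maynard and Drappeau--Pratt--Radziwi{\l}{\l}; once those weights are in place, the remaining numerical optimization to pin down $3.2290$ is routine.
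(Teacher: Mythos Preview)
Your proposal contains a fundamental error: Iwaniec's modified linear sieve weights $\widetilde{\lambda}^\pm$ are well-factorable (into two factors), but they are \emph{not} triply well-factorable. The paper states this explicitly when outlining the proof: ``However, Theorem \ref{theorem:Factorable} itself does not directly apply here, since $\lambda_q^\pm$ are not triply well-factorable.'' Hence you cannot simply invoke the level $66/107$ against the sieve weights to obtain $2/\theta \approx 3.2425$; that step fails, and the subsequent ``routine numerical optimization'' is not available either.

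The paper's actual route is substantially more involved. Instead of a single uniform level, it establishes (Proposition \ref{prop:piecewiseeta}) that every modulus $d=p_1\cdots p_r$ in the well-factorable support $\mathcal{D}^{\text{well}}(x^{\pmb\vartheta})$ admits a factorization $d=abc$ meeting the system \eqref{eq:JMfactorable}, provided $\pmb\vartheta$ is bounded by a function ${\pmb\vartheta}(t_1,t_2,t_3)$ that depends on the sizes $t_i=\log p_i/\log x$ of the largest prime factors. This yields a \emph{variable} level of distribution for the linear sieve weights (Proposition \ref{prop:lvlalpha1}), ranging between roughly $7/12$ and $66/107$ depending on the anatomy of $d$. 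This variable level is then fed into the elaborate weighted-sieve machinery of \cite{Ltwin}---a Fouvry--Grupp-type inequality combining Buchstab iteration, the Chen--Iwaniec switching principle, and Wu's recursive savings (Proposition \ref{prop:wu})---and the constant $3.2290$ emerges from a multi-parameter numerical optimization over the integrals $G_n$, $I_n$ with parameters \eqref{eq:params2}. None of this reduces to a single linear-sieve application, and the factorization analysis in \S5 (not Iwaniec's original construction) is precisely what bridges the gap between well-factorable and triply well-factorable.
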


\begin{theorem}\label{thm:Goldbachbound}
For any even integer $a\in\N$ sufficiently large, we have
${\rm G}(a) \,\lesssim \, 3.3907\, \Pi_a(a)$.
\end{theorem}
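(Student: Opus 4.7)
The plan is to majorize $G(a)$ by an upper-bound linear sieve and to absorb the resulting error terms into the paper's main level-of-distribution theorem for primes in arithmetic progressions $p \equiv a \pmod q$, with uniformity in the residue class $a$.

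More concretely, I would introduce the sifting set $\mathcal{B}_a = \{a - p : p < a,\ p\text{ prime}\}$, of cardinality $\pi(a) + O(1)$. For squarefree $d$ with $(d,a) = 1$ one has $|\mathcal{B}_{a,d}| = \pi(a; d, a) = \pi(a)/\phi(d) + r_d(a)$; the contribution from $(d,a) > 1$ is negligible. This is a dimension-one sieve with density $g(p) = \1_{(p,a)=1}/(p-1)$. Choosing the sifting level $z \ge a^{1/2}$ forces every unsieved element of $\mathcal{B}_a$ to be prime, so $G(a) \le S(\mathcal{B}_a, z) + O(z)$. I would then apply the Rosser--Iwaniec linear upper-bound sieve with triply well-factorable weights $(\lambda_d^+)_{d \le D}$ at level $D = a^{\theta_G - \epsilon}$, where $\theta_G$ denotes the effective level of distribution uniform in $a$, and use the Mertens-type estimate $V(z) \sim 2\mathfrak{S}_a e^{-\gamma}/(u\log a)$ with $z = a^u$ to obtain the majorization
\[
G(a) \le \pi(a)\, V(z)\, F(\theta_G/u)\,(1 + o(1)) + \sum_{d \le D}\lambda_d^+\, r_d(a).
\]

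The sieve remainder is then controlled by the paper's main distribution theorem, which gives $\sum_d \lambda_d^+ r_d(a) \ll a/(\log a)^A$ precisely because the weights $\lambda_d^+$ lie in the triply well-factorable class assumed there. The main term, after substituting $F(s) = 2e^\gamma/s$ on $[1,3]$, produces the naive bound $(2/\theta_G + o(1))\Pi_a(a)$. A Buchstab-style iteration --- analogous to the refinement that sharpens $2/\theta \approx 3.2424$ to $3.2290$ in the twin prime count --- tightens this to the claimed constant $3.3907$.

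The main obstacle is securing the level of distribution $\theta_G$ with enough uniformity in the residue $a$ to yield the claimed constant: the twin prime bound $3.2290$ uses the level $66/107$ for the fixed shift $a=2$, whereas the Goldbach problem requires genuine uniformity, which costs something in the effective level. The underlying Deshouillers--Iwaniec spectral large sieve produces averages of Kloosterman sums whose dependence on $a$ is delicate: one must simultaneously control the exceptional (Maass) spectrum --- where Selberg's conjecture would produce the improved level $5/8$ --- and retain uniform decay in the Kloosterman modulus, refining the analyses of Drappeau--Pratt--Radziwi{\l}{\l} and Assing--Blomer--Li. A secondary obstacle is the numerical optimization within the narrower triply well-factorable class of sieve weights, which is more restrictive than the generic well-factorable class.
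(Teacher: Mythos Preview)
Your overall outline is reasonable in spirit, but there is a genuine gap at the point where you apply the level-of-distribution theorem to the sieve remainder. You write that you will use ``the Rosser--Iwaniec linear upper-bound sieve with triply well-factorable weights $(\lambda_d^+)_{d\le D}$'' and then invoke the paper's main theorem. But Iwaniec's linear sieve weights $\widetilde{\lambda}^\pm$ are well-factorable, not \emph{triply} well-factorable, and the paper's headline level $\pmb\vartheta_1 = (5-\theta)/8$ uniform in $a$ applies only to the latter class. No construction of sieve weights is known that is simultaneously triply well-factorable and realizes the optimal linear sieve functions $F,f$; you flag this as a ``secondary obstacle,'' but it is in fact the central difficulty, and without resolving it your naive bound $2/\theta_G$ is unjustified.

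The paper does not resolve this by building new sieve weights. Instead it proves (Proposition~\ref{prop:piecewiseeta}) that every modulus $d$ in the well-factorable support $\mathcal D^{\textnormal{well}}$ admits a factorization $d=abc$ satisfying the system of inequalities \eqref{eq:JMfactorable} that underlies the dispersion estimate, with an admissible level $\pmb\vartheta(t_1,t_2,t_3)$ depending on the sizes $p_i = x^{t_i}$ of the largest prime factors of $d$. This yields a \emph{variable} level of distribution for the linear sieve weights (Proposition~\ref{prop:lvlalpha1}), ranging between roughly $7/12$ and $153/256$ depending on the anatomy of $d$. The sieve optimization (Proposition~\ref{prop:wu}) is then considerably more intricate than a single Buchstab iteration: it combines the Fouvry--Grupp weighted inequality, the Chen--Iwaniec switching principle, and Wu's recursive scheme, with seven tuned parameters $\rho,\rho',\tau_1,\tau_2,\tau_3,\mu,\epsilon$, producing the bound as a combination of the integrals $G_n$, $I_n$, and $H_2^{\textnormal{Wu}}$ that is then evaluated numerically.
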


See the table below for a chronology of the known upper bounds on $\pi_2(x)$ and ${\rm G}(a)$.

Theorems \ref{thm:twinbound} and \ref{thm:Goldbachbound} are obtained using refined linear sieve estimates of the author in \cite{Ltwin}. To apply such sieve estimates, one must control the remainder terms using equidistribution estimates for primes in arithmetic progressions. Our main proof ingredient is an improved level of distribution for primes, given in Theorem \ref{theorem:Factorable} below. In particular, Theorem \ref{thm:Goldbachbound} uses for the first time a level of distribution beyond the `square-root barrier' for the Goldbach problem.

\vspace{1.5mm}

\begin{center}
\begin{tabular}{c|l|l|l}
Year & Author(s) & Twin primes & Goldbach\\
& & $\pi_2(x)/\Pi_2(x)\,\lesssim$ & ${\rm G}(a)/\Pi_a(a)\,\lesssim$\\
\hline    
1947 & Selberg \cite{Selb} & 8 & 8\\
1964 & Pan \cite{Pan}  & 6 & 6\\
1966 & Bombieri--Davenport \cite{BD}  & 4 & 4\\
1978 & Chen \cite{Chen} & 3.9171 & 3.9171 \\
1983 & Fouvry--Iwaniec \cite{FI}  & $3.7777\cdots=34/9$ & ---\\
1984 & Fouvry \cite{F} & $3.7647\cdots=64/17$ &  ---\\
1986 & Bombieri--Friedlander--Iwaniec \cite{BFI1} & 3.5 & --- \\
1986 & Fouvry--Grupp \cite{FG} & 3.454 & ---\\
1990 & Wu \cite{WuI} & 3.418 & ---\\
2003 & Cai--Lu \cite{CL} & 3.406 & ---\\
2004 & Wu \cite{WuII} & 3.3996 & 3.9104\\
2022 & Lichtman \cite{Ltwin} & 3.2996 &  ---
\end{tabular}
\end{center}

\vspace{1.mm}

In particular, our new bound for ${\rm G}(a)$ gives a $13.25\%$ refinement of the prior record of Wu \cite{WuII} from 2004 (Wu in turn had refined Chen \cite{Chen} by $0.17\%$). This gives the greatest refinement on the problem since Bombieri--Davenport \cite{BD} from 1966.

\subsection{Primes in arithmetic progressions to large moduli}

All bounds for $G(a)$ prior to Theorem \ref{thm:twinbound} used the classical Bombieri--Vinogradov theorem from 1965. As usual, we denote by $\pi(x)$ the number of primes up to $x$, and $\pi(x;q,a)$ the number of primes up to $x$ congruent to $a\Mod{q}$. The Bombieri--Vinogradov theorem states that for every $\eps,A>0$,
\begin{align}\label{eq:BV}
\sum_{q\le x^{\vartheta}}\sup_{(a,q)=1}\Big|\pi(x;q,a)-\frac{\pi(x)}{\phi(q)}\Big| \ \ll_{A} \ \frac{x}{(\log x)^A},
\end{align}
with exponent $\vartheta<\frac{1}{2}$, which is often called the `level of distribution' for primes. The estimate \eqref{eq:BV} may be viewed as an assertion of the Generalized Riemann Hypothesis on average over moduli up to $x^{\vartheta}$. It remains an important open problem to extend the range in \eqref{eq:BV} to $\vartheta = \frac{1}{2}+\delta$ for some fixed $\delta>0$. Indeed, Elliott and Halberstam \cite{EH} conjectured such an extension up to $\vartheta = 1-\eps$ for any $\eps>0$.

In contrast to Goldbach representations ${\rm G}(a)$, there has been comparatively more progress on upper bounding twin primes $\pi_2(x)$, by leveraging improved level of distribution results in special cases. For example, in case of a fixed nonzero residue $a\in\Z$ and `well-factorable' weights $\lambda_q\in\C$ (defined below), Bombieri--Friedlander--Iwaniec \cite{BFI1} proved in 1986 that
\begin{align}\label{eq:BFIwellfactor}
\sum_{\substack{q\le x^\vartheta\\(q,a)=1}}\lambda_q\Bigl(\pi(x;q,a)-\frac{\pi(x)}{\phi(q)}\Bigr)\ll_{a,A}\frac{x}{(\log{x})^A},
\end{align}
up to level $\vartheta < \frac{4}{7}$. They deduced their bound $\pi_2(x) \lesssim 3.5\, \Pi_2(x)$, by choosing the weights in \eqref{eq:BFIwellfactor} to be (essentially) the upper bound weights $\lambda_q^\pm$ for the linear sieve. Such weights satisfy the following `well-factorable' property:
\begin{definition}[Well-factorable]
Let $Q\in\mathbb{R}$. A sequence $\lambda_q\in\C$ is \textbf{well-factorable of level $Q$} if, for any choice of factorization $Q=Q_1Q_2$ with $Q_1,Q_2\ge 1$, there exist 1-bounded sequences $\gamma^{(1)}_{q_1},\gamma^{(2)}_{q_2}$, supported on $1\le q_i\le Q_i$ for $i\in\{1,2\}$ such that $\lambda = \gamma^{(1)}\ast \gamma^{(2)}$, i.e.
\[
\lambda_q=\sum_{q=q_1q_2}\gamma^{(1)}_{q_1}\gamma^{(2)}_{q_2}.
\]
\end{definition}
%
%
%
%
%%%%%%%%%%%%%%%%%%%%%%%%%%%%%%%%%%%%%%%%%%%%%%%%%%%%%%%%%%
%
%
%
%
Heuristically, one may view well-factorability as a property of integers $q$ in the support $\{q:\lambda_q\neq0\}$: for any splitting $Q=Q_1Q_2$, we may factor $q=q_1q_2$ into integers $q_i\le Q_i$.

For a quarter century, the well-factorable result \eqref{eq:BFIwellfactor} constituted the largest level $\vartheta< \frac{4}{7}$ for primes in arithmetic progressions in any setting. In breakthrough work, Maynard \cite{JM2} extended \eqref{eq:BFIwellfactor} to level $\vartheta< \frac{3}{5}$, for $\lambda_q$ in the stricter class of `triply well-factorable' weights:

\begin{definition}[Triply well-factorable]\label{definition:WellFactorable}
Let $Q\in\mathbb{R}$. We say a sequence $\lambda_q$ is \textbf{triply well-factorable of level $Q$} if, for any choice of factorization $Q=Q_1Q_2Q_3$ with $Q_1,Q_2,Q_3\ge 1$, there exist 1-bounded sequences $\gamma^{(1)}_{q_1},\gamma^{(2)}_{q_2},\gamma^{(3)}_{q_3}$, supported on $1\le q_i \le Q_i$ for $i\in\{1,2,3\}$, such that $\lambda = \gamma^{(1)}\ast \gamma^{(2)}\ast \gamma^{(3)}$, i.e.
\[
\lambda_q=\sum_{q=q_1q_2q_3}\gamma^{(1)}_{q_1}\gamma^{(2)}_{q_2}\gamma^{(3)}_{q_3}.
\]
\end{definition}

Heuristically, one may view triple well-factorability as a property of integers $q$ the support $\{q:\lambda_q\neq0\}$: for any splitting $Q=Q_1Q_2Q_3$, we may factor $q=q_1q_2q_3$ into integers $q_i\le Q_i$.

As our main result, we show that primes have level of distribution $\vartheta < \tfrac{66}{107}\approx 0.617$ with triply well-factorable weights. This gives the largest level for primes in arithmetic progressions in any setting.

\begin{corollary}\label{cor:Factorable}
Take nonzero $a\in\mathbb{Z}$ and let $A,\epsilon>0$. Let $\lambda_q$ be triply well-factorable of level $x^{\vartheta}$ with $\vartheta <\tfrac{66}{107}$. Then we have
\begin{align}
\sum_{\substack{q\le x^{\vartheta}\\ (a,q)=1}}\lambda_q\Bigl(\pi(x;q,a)-\frac{\pi(x)}{\phi(q)}\Bigr)\ll_{a,A,\epsilon} \frac{x}{(\log{x})^A}.
\end{align}
\end{corollary}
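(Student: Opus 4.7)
The plan is a standard partial-summation reduction to the main equidistribution theorem for $\psi(x;q,a) = \sum_{n \le x,\, n \equiv a\,(q)}\Lambda(n)$ (referenced just above this corollary as its source), together with a one-line observation about well-factorability. Setting $E(y;q,a) := \psi(y;q,a) - \psi(y)/\phi(q)$, partial summation applied to $\pi(x;q,a) = \sum_{n \le x,\, n \equiv a\,(q)}\Lambda(n)/\log n + O(\sqrt{x})$ yields
\[
\pi(x;q,a) - \frac{\pi(x)}{\phi(q)} \ = \ \frac{E(x;q,a)}{\log x} + \int_{2}^{x} \frac{E(t;q,a)}{t(\log t)^2}\,dt + O(\sqrt{x}),
\]
where the $O(\sqrt{x})$ absorbs the contribution of prime powers $p^k$ with $k \ge 2$. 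Multiplying by $\lambda_q$ and summing over $q \le x^\vartheta$, the prime-power error totals $\sum_{p^k\le x,\, k\ge 2}(\log p)\sum_{q\mid p^k - a}|\lambda_q| \ll x^{1/2+\epsilon}$ by the divisor bound $|\lambda_q|\le \tau_3(q)$, which is acceptable.

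The leading term I would dispatch by one application of the main theorem at $y = x$ with $A$ replaced by $A+1$. For the integral I would interchange summation with integration and split at $t = x^{1-\delta}$, with $\delta > 0$ chosen small enough that $\vartheta/(1-\delta) < 66/107$. On $[x^{1-\delta}, x]$, the key observation is that triple well-factorability of $\lambda$ at level $Q = x^\vartheta$ implies triple well-factorability at any larger level $Q' \ge Q$: given a factorization $Q' = Q'_1 Q'_2 Q'_3$ with $Q'_i \ge 1$, take $Q_1 := \min(Q'_1, Q)$, $Q_2 := \min(Q'_2, Q/Q_1)$, $Q_3 := Q/(Q_1 Q_2)$, verify $1 \le Q_i \le Q'_i$ and $Q_1 Q_2 Q_3 = Q$, and apply the level-$Q$ definition. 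The main theorem then applies at scale $t$ with exponent $\vartheta/(1-\delta) < 66/107$, giving $\sum_q \lambda_q E(t;q,a) \ll t/(\log t)^{A+2}$; integrating against $dt/(t(\log t)^2)$ contributes $O(x/(\log x)^{A+1})$. On $[2, x^{1-\delta}]$, the trivial bound $|E(t;q,a)| \ll t/\phi(q)$ together with $\sum_q|\lambda_q|/\phi(q) \ll (\log x)^{O(1)}$ gives the negligible contribution $O(x^{1-\delta}(\log x)^{O(1)})$.

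The main obstacle here is of course the main theorem itself---proved via Heath--Brown's identity together with the sharpened Deshouillers--Iwaniec spectral large-sieve estimates advertised in the abstract---rather than the deduction, which is purely routine once one observes the stability of triple well-factorability under raising the level.
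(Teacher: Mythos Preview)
Your argument is correct, but it does strictly more work than the paper requires. The main technical result you reference (Theorem~\ref{theorem:Factorable}) is already stated for $\pi(x;q,a)$, not for $\psi(x;q,a)$. The paper's deduction of the corollary is therefore a one-liner: substitute the Kim--Sarnak bound $\theta \le 7/32$ into the formula $\pmb\vartheta = (5-4\theta)/(8-6\theta)$ from Theorem~\ref{theorem:Factorable} to obtain $\pmb\vartheta = 66/107$, and note that for fixed $a$ one has $|a| < x^\epsilon$ once $x$ is large.

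The partial-summation passage from $\psi$ to $\pi$ that you carry out is in fact already absorbed into the proof of Theorem~\ref{theorem:Factorable} itself (see the opening line of that proof in Section~\ref{sec:Factorable}). Your observation that triple well-factorability at level $Q$ persists at any larger level $Q' \ge Q$ is correct and cleanly argued, but it is not needed here because Theorem~\ref{theorem:Factorable} is applied only at the single scale $x$. One minor slip: you write $|\lambda_q| \le \tau_3(q)$, but in fact the definition (applied with $Q_1=Q$, $Q_2=Q_3=1$) gives the stronger bound $|\lambda_q| \le 1$, which is what actually makes your divisor-sum estimates work.
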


We note that the prior level $\vartheta < \frac{3}{5}$ has been a natural barrier from works of Bombieri--Friedlander--Iwaniec, Fouvry--Tenenbaum, Drappeau, and Maynard \cite{BFI1, BFI2, FT, Drapp, JM2}.

\subsection{Exceptional eigenvalues}
Our key technical results are given in terms of the best known exponent ${\theta}$ towards Selberg's eigenvalue conjecture.\footnote{In this section and hereafter, we denote ${\pmb \vartheta}$ `$\backslash$vartheta' in bold as the level of distribution for primes, and ${\theta}$ `$\backslash$theta' as the exponent towards Selberg's eigenvalue conjecture. This is aimed to avoid confusion, while keeping with traditional notation.}

\begin{definition}
For $q \in\N$, denote the largest eigenvalue $\lambda_1=\lambda_1(q)$ of the Laplacian for the congruence subgroup $\Gamma_0(q)$. Define $\theta_q := \max(0, \sqrt{1 - 4\lambda_1})$ and ${\theta} := \sup_{q\in\N} \theta_q$. \footnote{We use the definition of ${\theta}$ as in \cite{DI,Drapp}. However, as a caution, we note some authors' differ by a factor of 2, e.g. \cite[Theorem A]{KimSarn} display a bound of $\frac{7}{64}$ for $|\Re\mu_{j,\infty}| \le {\theta}/2$.}
\end{definition}

Selberg's eigenvalue conjecture asserts that ${\theta}=0$, namely, $\lambda\ge \frac{1}{4}$ for all eigenvalues $\lambda$ of the (hyperbolic) Laplacian for $\Gamma_0(q)$. As such, if $\lambda<\frac{1}{4}$ then such an eigenvalue $\lambda$ is called \emph{exceptional}.\footnote{We elaborate on the role of exceptional eigenvalues in \S\ref{sec:largesieve}.}
Corollary \ref{cor:Factorable} is a consequence of our main technical result, using the current record bound ${\theta}\le \tfrac{7}{32}$ of Kim--Sarnak \cite[Appendix 2]{KimSarn}.

%
%
%
%
%%%%%%%%%%%%%%%%%%%%%%%%%%%%%%%%%%%%%%%%%%%%%%%%%%%%%%%%%%
%
%
%
%
\begin{theorem}\label{theorem:Factorable}
Let $a\in\mathbb{Z}_{\neq0}$ and $A,\epsilon>0$. Let $\lambda_q$ be triply well-factorable of level $x^{{\pmb \vartheta}}$, with
\begin{align}
{\pmb \vartheta}=\frac{5-4{\theta}}{8-6{\theta}}-\epsilon.
\end{align}
Then in the range $|a|< x^{\eps}$ we have
\begin{align}\label{eq:main}
\sup_{0<|a|< x^{\eps}}\sum_{\substack{q\le x^{{\pmb \vartheta}}\\ (a,q)=1}}\lambda_q\Bigl(\pi(x;q,a)-\frac{\pi(x)}{\phi(q)}\Bigr)\ll_{A,\epsilon} \frac{x}{(\log{x})^A}.
\end{align}
In addition, for ${\pmb \vartheta_1} = (5-\theta)/8 -\epsilon$, in the larger range $|a|< x^{1+\eps}$ we have
\begin{align}\label{eq:mainuniform}
\sup_{0<|a|< x^{1+\eps}}\sum_{\substack{q\le\, x^{{\pmb \vartheta_1}}\\ (a,q)=1}}\lambda_q\Bigl(\pi(x;q,a)-\frac{\pi(x)}{\phi(q)}\Bigr)\ll_{A,\epsilon} \frac{x}{(\log{x})^A}.
\end{align}
\end{theorem}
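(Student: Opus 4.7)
The plan is to reduce Theorem \ref{theorem:Factorable} to bilinear sum estimates via a combinatorial identity, dispatch the Type I pieces by standard dispersion, and handle the Type II pieces---where the level of distribution is dictated---by exploiting the triple well-factorability of $\lambda_q$ together with the Deshouillers--Iwaniec spectral large sieve for sums of Kloosterman sums.

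First I would decompose $\Lambda(n)$ via Heath--Brown's identity at a fixed depth $K$, expressing the inner sum on the left of \eqref{eq:main} as a bounded combination of dyadically localized bilinear pieces
\[
B(M,N)\;:=\;\sum_{\substack{q\le x^{{\pmb \vartheta}}\\ (q,a)=1}} \lambda_q \sum_{\substack{m\sim M,\,n\sim N\\ (mn,q)=1}} \alpha_m \beta_n \Bigl(\1_{mn\equiv a\,(\mathrm{mod}\,q)} \,-\, \frac{1}{\phi(q)}\Bigr),
\]
with $MN\asymp x$ and each of $\alpha,\beta$ either trivial, smooth, or of M\"obius type. The problem then becomes proving a power-saving bound for $B(M,N)$ uniformly over $M$ in the ranges dictated by the identity. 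For the Type I sums (where $\beta_n=1$ and $M$ is small), I would use the triple factorization $x^{{\pmb \vartheta}}=Q_1Q_2Q_3$ to split $q=q_1q_2q_3$ with convenient sizes, followed by Cauchy--Schwarz and the Weil bound for incomplete Kloosterman sums.

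The Type II sums, where $M$ and $N$ are both of intermediate size, are the main obstacle. One applies Cauchy--Schwarz in $m$ to dispose of the unknown coefficients $\alpha_m$, opens the resulting square, and invokes Poisson summation in $n$ modulo $q=q_1q_2q_3$; the diagonal matches the $1/\phi(q)$ correction, while the off-diagonal produces weighted sums of Kloosterman sums $S(\pm a_1,a_2;q_1q_2q_3)$. Appealing to the triple factorization, one places $q_1q_2$ in the Kuznetsov trace formula on $\Gamma_0(q_1q_2)$ and treats the third factor $q_3$ separately using standard reciprocity and Weil-type estimates, thereby bounding the off-diagonal via the Deshouillers--Iwaniec spectral large sieve, decomposed into continuous, discrete, and exceptional spectral contributions.

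The hard part will be balancing the refined spectral estimates against the Type I/Type II dichotomy. The exceptional-spectrum contribution is controlled by a factor whose exponent scales linearly with $\theta$, and optimizing over the ranges of $M,N$ and over the splitting $Q_1,Q_2,Q_3$ produces the target level
\[
{\pmb \vartheta}\;=\;\frac{5-4{\theta}}{8-6{\theta}}-\epsilon,
\]
which, upon substituting the trivial bound ${\theta}\le 1/2$, recovers Maynard's level $3/5$, so all of our gain comes from the nontrivial Kim--Sarnak input. For the uniform bound \eqref{eq:mainuniform}, one must additionally optimize the $a$-dependence of the Kloosterman/Kuznetsov step, building on the refinements of Drappeau--Pratt--Radziwi\l\l\ and Assing--Blomer--Li; pushing $|a|$ up to $x^{1+\epsilon}$ forces a different parameter balancing and yields the weaker level ${\pmb \vartheta_1}=(5-{\theta})/8-\epsilon$. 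Assembling the bilinear estimates through the Heath--Brown decomposition and summing over dyadic ranges loses only $(\log x)^{O(1)}$, which is absorbed into the $(\log x)^{-A}$ saving.
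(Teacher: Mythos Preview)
Your high-level architecture---Heath--Brown identity, dispersion, Deshouillers--Iwaniec spectral input, optimization over the factorization $Q_1Q_2Q_3$---matches the paper. But the account of what happens \emph{after} dispersion has a real gap.

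You describe Poisson summation producing Kloosterman sums $S(\pm a_1,a_2;q_1q_2q_3)$, then ``placing $q_1q_2$ in the Kuznetsov trace formula on $\Gamma_0(q_1q_2)$ and treating $q_3$ separately using reciprocity and Weil.'' That is not how the argument runs, and I do not see how to make it run that way at this level of distribution. In the paper, dispersion is performed with the \emph{smallest} factor $q_1$ (chosen so that $Q_1<N/x^\epsilon$) as the modulus for the diagonal condition $n_1\equiv n_2\pmod{q_1}$. The crucial step you are missing is the change of variables $n_1-n_2=fq_1$: this \emph{eliminates} $q_1$ from the exponential, replacing it by a short variable $f\ll N/Q_1$, and converts the phase into
\[
e\Bigl(\frac{ahf\,\overline{r_1 s_1 n_2}}{n_1 r_2 s_2}\Bigr),
\]
where $r_i,s_i$ come from the two copies of $q_3,q_2$. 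One then applies Cauchy--Schwarz in $f,n_1,n_2,r_1,r_2,s_2$ (not in $m$ again), combines $b=n_2r_1$ and $c=n_1r_2s_2$, and lands exactly on the quintilinear sum of Theorem~\ref{thm:DI12}. The modulus of the Kloosterman sums feeding into Kuznetsov is $c\asymp NRS$, built from the \emph{sequence} variable $n_1$ and the \emph{outer} factors---not from $q_1q_2$. No factor is handled by a standalone Weil bound; all three enter the spectral estimate, and the refined $\theta$-dependence of Theorem~\ref{thm:DI12} (including the factor $a^\theta$ for the uniform version) is what drives the exponent $(5-4\theta)/(8-6\theta)$.

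The specific choice $Q_1\approx N$, $Q_2\approx x^{2{\pmb\vartheta}-1}$, $Q_3\approx x^{1-{\pmb\vartheta}}/N$ is what balances the diagonal $k=0$ contribution against the off-diagonal in the DI estimate; without the $f$-substitution you never get a sum to which this balancing applies. Your sketch would need to be substantially rewritten at this step.
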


Theorem \ref{theorem:Factorable} implies Corollary \ref{cor:Factorable} with ${\theta}= \tfrac{7}{32}$, extending the level to ${\pmb \vartheta} < \tfrac{66}{107} \approx 0.617$ beyond ${\pmb \vartheta} < \tfrac{3}{5}$ from Maynard \cite{JM2}. Using established sieve methods, we input our new level of distribution results for $\pi(x;q,-2)$ and $\pi(a;q,a)$, respectively, to obtain the bounds $\pi_2(x) \,\lesssim \, 3.23 \Pi_2(x)$ and and ${\rm G}(a) \,\lesssim \, 3.40\, \Pi_a(a)$, in Theorems \ref{thm:twinbound} and \ref{thm:Goldbachbound}.

In particular, all prior bounds on ${\rm G}(a)$ used level $\pmb{\vartheta}=\frac{1}{2}$ from the Bombieri--Vinogradov theorem. Now with Theorem \ref{theorem:Factorable} we obtain level $\pmb{\vartheta}_1$ beyond the `square-root barrier', as large as $\frac{153}{256}\approx 0.597$ with uniformity in the residue $|a|< x^{1+\eps}$.\footnote{As a technical note, for the results uniform in the residue $|a|< x^{1+\eps}$, we actually need $\theta$ for Ramanujan--Petersson, generalizing Selberg eigenvalue. Here the Kim--Sarnak bound $\theta\le \frac{7}{32}$ still holds.} This key ingredient leads to the greatest improvement on bounding ${\rm G}(a)$ since Bombieri--Davenport \cite{BD}.

The proof of Theorem \ref{theorem:Factorable} makes use of a refined estimate for quintilinear forms of Kloosterman sums. Such estimates have a rich history, based on the celebrated work of Deshouillers--Iwaniec \cite{DI}.

\begin{theorem}\label{thm:DI12}
Let $C, D, N, R, S \geq 1$, $q_0, c_0, d_0 \in \N$ with $(c_0d_0, q_0) = 1$. Let $b_{n, r, s}$ be a sequence supported inside $(0, N] \times (R, 2R] \times (S, 2S] \cap \N^3$. Let $g : \R^5_{>0} \rightarrow \C$ be a smooth function with compact support in $(C, 2C] \times (D, 2D] \times \R_{>0}^3$ such that
\begin{align*}
\frac{\partial^{\nu_1 + \cdots + \nu_5}}{\partial c^{\nu_1} \partial d^{\nu_2} \partial n^{\nu_3} \partial r^{\nu_4} \partial s^{\nu_5} } g(c, d, n, r, s) \ll_{{\bf \nu}} ( c^{-\nu_1} d^{-\nu_2} n^{-\nu_3} r^{-\nu_4} s^{-\nu_5})^{1-\varepsilon_0}
\end{align*}
for all ${\bf \nu} \in (\Z_{\ge0})^5$ and some small fixed $\varepsilon_0 > 0$. Then uniformly for any $a\in\Z_{\neq0}$, we have
\begin{align}
\sum_{\substack{r\sim R\\ s\sim S\\(r,s)=1}} \sum_{0<n\le N} b_{n,r,s} \sum_{\substack{c\equiv c_0 \, ({\rm mod}\, q_0)\\ d\equiv d_0 \, ({\rm mod}\, q_0) \\ (q_0 rd, sc)=1}} g(c, d, n, r, s) e\Big( an \frac{\overline{rd}}{sc}\Big) \ll_{\varepsilon, \varepsilon_0} (aq_0CDNRS)^{\varepsilon + O(\varepsilon_0)} q_0^{3/2} \mathcal{J}
\end{align}
where $\|{\bf b}\|_2 = \|\widetilde{\bf b}(1)\|_2$, $\|\widetilde{\bf b}(n'')\|_2^2 = \sum_{n,r,s}|b_{nn'',r,s}|^2$, and $\mathcal{J}=\mathcal{J}(a, C, D, N, R, S)$ is given by
\begin{align}\label{eq:JDI12}
\mathcal{J}^2  & = q_0 \sum_{\substack{n'' \mid a^{\infty}\\ n'' \leq 2N}} (an'')^{\theta}  \Big(CS\Big(\frac{N}{n''}+RS\Big)( C+DR)  + aNRS\Big)\|\widetilde{\bf b}(n'')\|_2^2 \\
& \quad +
\bigg(CS(CD\sqrt{R})^{2{\theta}} (N+RS)^{1-{\theta}}(C+DR)^{1-2{\theta}} + D^2 N R\bigg)\|{\bf b}\|_2^2. \nonumber
\end{align}
\end{theorem}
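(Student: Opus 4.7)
\textbf{Proof plan for Theorem \ref{thm:DI12}.} The argument follows the blueprint of Deshouillers--Iwaniec \cite{DI}, as subsequently refined by Drappeau--Pratt--Radziwi{\l}{\l} and Assing--Blomer--Li, with extra attention to two improvements: (i) a saving of $(an'')^{\theta}$ in the exceptional-spectrum contribution, obtained by isolating the part of $n$ sharing a common factor with $a$; and (ii) the interpolation factor $(CD\sqrt R)^{2\theta}(N+RS)^{-\theta}(C+DR)^{-2\theta}$ in the secondary term of $\mathcal J^2$, coming from balancing the Kim--Sarnak pointwise bound against a Weil-type trivial estimate.

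First I would apply a smooth partition of unity to localize each variable dyadically, and then decompose $n = n'n''$ with $n''\mid a^{\infty}$ and $(n', a/(a,n'')) = 1$. This gcd extraction isolates the $a$-dependence and is what produces the weighted $\ell^2$ norm $\|\widetilde{\bf b}(n'')\|_2^2$ in the final estimate. Next, I would apply Cauchy--Schwarz in $(n',r,s)$ to discharge $b_{n,r,s}$ from one factor. Opening the resulting square, applying Poisson summation in $n'$, and invoking the reciprocity relation for $\overline{rd}/(sc)$ converts the phase into a form whose sum over $(c,d;c',d')$ produces Kloosterman sums attached to moduli of the shape $rr'dd'$, paired against the original congruence on $c,d$ modulo $q_0$.

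The Kuznetsov trace formula on the appropriate congruence subgroup containing $\Gamma_0(q_0)$ then expresses each Kloosterman sum spectrally as an average over Maass cusp forms and Eisenstein series; tracking the cusp structure and $q_0$-congruence produces the $q_0^{3/2}$ factor. The Deshouillers--Iwaniec spectral large sieve bounds the resulting spectral averages. Splitting the spectrum at the exceptional threshold $|\Re\mu_j| = \theta/2$, the tempered portion yields the $CS(N+RS)(C+DR)\|{\bf b}\|_2^2$ and $D^2 NR\|{\bf b}\|_2^2$ contributions, while the exceptional portion---bounded via the Kim--Sarnak exponent $\theta$ applied to the Fourier coefficients---furnishes the $(an'')^{\theta}$-weighted first line of $\mathcal J^2$ and, after a convexity-style interpolation between the Kim--Sarnak bound and the trivial bound for the small-spectral-parameter contribution, the $(CD\sqrt R)^{2\theta}$ factor.

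The main obstacle I anticipate is precisely this last interpolation, carried out jointly with uniformity in $a$ across the entire range $a \in \Z_{\neq 0}$. One must ensure that every factor of $a$ in the estimate is absorbed either into the arithmetic weight $(an'')^{\theta}$ or into the divisor-type sum over $n''\mid a^{\infty}$ (which costs only $(aq_0 CDNRS)^{\varepsilon}$), while the smooth spectral input coming out of Kuznetsov remains independent of $a$. Achieving this demands a careful choice of test function in the trace formula and tight bookkeeping of how $a$ propagates through the reciprocity-cum-Poisson step into the Kloosterman modulus and, ultimately, into the spectral parameters; this is where the present refinement over the previous estimates of Deshouillers--Iwaniec and Drappeau--Pratt--Radziwi{\l}{\l} is extracted.
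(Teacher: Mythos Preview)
Your outline has the right list of ingredients but mis-orders them and misattributes the two $\theta$-dependent savings, and this matters for the shape of $\mathcal J$.

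First, the extraction of $n''\mid a^\infty$ is not done arithmetically before Kuznetsov; it is done \emph{after} Kuznetsov by the Assing--Blomer--Li approximate factorization of Fourier coefficients (Lemma~\ref{lem:ABLfactor}), which uses Hecke multiplicativity and the Kim--Sarnak bound at the \emph{finite} places to replace $\rho_{j,1/s}(an)$ by sums of $\rho_{j,1/s}(n')$ weighted by $(an'')^{\theta/2}$. In particular the $(an'')^\theta$ weight appears in \emph{both} the tempered and the exceptional contributions, not only the exceptional one. There is also no ``Cauchy--Schwarz in $(n',r,s)$ then Poisson in $n'$'' prior to Kuznetsov: the Kloosterman sums $S(m\bar r,\pm an;sc)$ are fed directly into the trace formula, and only afterwards does one apply Cauchy--Schwarz \emph{in the spectral sum} to separate the $m$- and $n$-variables.

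Second, the $(CD\sqrt R)^{2\theta}(N+RS)^{-\theta}(C+DR)^{-2\theta}$ factor is not obtained by interpolating the pointwise Kim--Sarnak bound against a Weil-type trivial bound. It comes from the refined large-sieve inequalities for the exceptional spectrum \emph{on average over the level} (Theorems~\ref{thm:DI6} and~\ref{thm:DI7} in the paper), proved by an iterative use of the Deshouillers--Iwaniec recursion $S(Q,Y,N)\to S(NY/Q,Y,N)$. This recursion, and the choice of how many times to iterate depending on the relative sizes of $Q,N,Y$, is the genuine technical input that produces the claimed $\theta$-dependence; a pointwise Kim--Sarnak/trivial interpolation would not give the factor $(C+DR)^{1-2\theta}$. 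Finally, the term $D^2NR$ does not arise from the spectral analysis at all: it appears only in the last step, when one completes the smooth $d$-sum by Poisson to pass from the Theorem~\ref{thm:DI11} form (variables $c,m,n,r,s$, with $a_m=\mathbf 1_{m\sim M}$) to the Theorem~\ref{thm:DI12} form (variables $c,d,n,r,s$).
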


A key feature of Theorem \ref{thm:DI12} is the explicit dependence of $\mathcal J$ in terms of the exceptional eigenvalue bound $\theta$. Such an approach was initiated by Drappeau \cite{Drapp2} and Drappeau--Pratt--Radziwi{\l}{\l} \cite{DPR}. Theorem \ref{thm:DI12} refines the $\theta$-dependence of \cite{DPR} in the exceptional spectrum. From private communication, this has since induced the authors to revise \cite[Proposition 6]{DPR} in their published version, which was then applied to prove extended support for one-level density estimates for Dirichlet L-functions. In Appendix \ref{appndx:optimal}, we show that such $\theta$-dependence is optimal in this context. In particular, we recover that the original argument of Deshoulliers--Iwaniec \cite{DI} is optimal when $\theta=\frac{1}{2}$.

We also emphasize the explicit $a$-dependence of Theorem \ref{thm:DI12} with the factor of $a^\theta$, following Assing, Blomer, and Li \cite{ABL}\footnote{As mentioned, in \cite{ABL} Assing--Blomer--Li write $a^{2\theta}$ using different notation for $\theta$.}\,\footnote{In applications, the $a$-dependence of the term $aNRS$ in \eqref{eq:JDI12} is often negligible.} on $a$ in $\mathcal J(a,C,D,N,R,S)^2$. This leads to the uniform estimates in Theorem \ref{theorem:Factorable}, and in turn Theorem \ref{thm:Goldbachbound} for the Goldach problem.

As a minor remark, Theorem \ref{thm:DI12} holds for general modulus $q_0$, following \cite{Drapp2}, but for our application to Theorem \ref{theorem:Factorable}, we only use $q_0=1$.
In addition, one should regard the first term $n''=1$ as the main contribution, roughly speaking, and the sum over $n''>1$ as just a technicality. This may be seen heuristically, since the norm $\|\widetilde{\bf b}(n'')\|_2^2$ decreases with $n''$, and the divisor bound handles the sum over $n''\mid a^\infty$, $n''\le 2N$.

\section{Outline of the proof}

In this section, we outline the proof of Theorem \ref{theorem:Factorable}, giving new level of distribution for primes with triply well-factorable weights; we sketch applications to Theorems \ref{thm:twinbound} and \ref{thm:Goldbachbound} of bounding twin primes and Goldbach respresentations; and we outline the proof of the key input in Theorem \ref{thm:DI12}, giving refined estimates for quintilinear forms of Kloosterman sums.
%
%
%
%
%%%%%%%%%%%%%%%%%%%%%%%%%%%%%%%%%%%%%%%%%%%%%%%%%%%%%%%%%%
%
%  JM2:

\subsection*{Proof of Theorem \ref{theorem:Factorable}} The proof is a variant of the approach used to prove Theorem 1.1 in \cite{JM2}, proceeding by the dispersion method. Namely, we use a combinatorial decomposition for the primes (Heath-Brown's identity) to reduce the problem to estimating certain bilinear sums in arithmetic progressions. By several intermediate manipulations using Poisson summation and Cauchy--Schwarz, this reduces to estimating certain multidimensional exponential sums, which are ultimately bounded using the spectral theory of automorphic forms via the Kuznetsov trace formula. The proof of Maynard in \cite[Theorem 1.1]{JM2} uses well-known bounds of Deshouillers--Iwaniec \cite[Theorem 12]{DI}. Instead, we apply Theorem \ref{thm:DI12}, which gives sharper bounds coming from the exceptional spectrum, and also holds uniformly for all residues $a\neq0$ in the range $|a|< x^{1+\eps}$.

For level of distribution with triply well-factorable weights $\lambda_q$ we may exploit the additional flexibility of factorizations of the moduli, as observed in \cite{JM2}. Here we benefit from the fact that now the weights can be factored into three pieces, rather than two in the case of well-factorable weights $\lambda_q$ in \cite{BFI1}. Interestingly, there appears to be no further advantage obtained by assuming $\lambda_q$ are `quadruply well-factorable' (which can be factored into four pieces).

For example, when $a\ll1$ (i.e. $\alpha=0$) we prove that any given weights $\lambda_q$ satisfy
\begin{align*}
\sum_{\substack{q\le x^{{\pmb \vartheta}}\\ (a,q)=1}}\lambda_q\Bigl(\pi(x;q,a)-\frac{\pi(x)}{\phi(q)}\Bigr)\ll \frac{x}{(\log{x})^A},
\end{align*}
provided that (heuristically speaking) all the moduli $q$ in the support $\{q: \lambda_q\neq0\}$ have suitable factorization properties: Namely, for any $Q_1<x^{1/3}$ there exists a splitting $Q=Q_1Q_2Q_3$ such that the modulus $q$ factors as $q=q_1q_2q_3$ into integers $q_i\le Q_i$ with
\begin{align}\label{eq:outlinesystem}
Q_1^2 Q_2 Q_3^2 & \ < \ x, \nonumber \\
Q_1^2 Q_2^5 Q_3^2 & \ < \ x^2,\\
(Q_1Q_3/Q_2)^{2{\theta}} \, Q_1^2 Q_2^5 Q_3^2 & \ < \ x^{2}. \nonumber
\end{align}

In particular, triply-well factorable weights $\lambda_q$ will always satisfy these factorization properties: Indeed, for any $Q_1<x^{1/3}$ we can essentially choose a factorization $q=q_1q_2q_3$ into integers $q_i\le Q_i$, where $(Q_2,Q_3)\approx (x^{2{\pmb \vartheta}-1},\,x^{1-{\pmb \vartheta}}/Q_1)$. This choice gives us enough freedom to perfectly balance the contributions coming from diagonal and off-diagonal terms in a wide range. Plugging this in, after a short calculation we find that \eqref{eq:outlinesystem} holds up to level ${\pmb \vartheta}=\frac{5-4{\theta}}{8-6{\theta}}$, as in Theorem \ref{theorem:Factorable}. Using $\theta = \frac{7}{32}$ from Kim--Sarnak \cite{KimSarn} this gives ${\pmb \vartheta}=\frac{66}{107}\approx 0.617$, which improves over the previous record $\pmb{\vartheta}=\frac{3}{5}=0.60$ of Maynard \cite{JM2}.

With some additional technicalities, one may similarly handle uniformity in the residue $|a|< x^{\alpha}$ for the range $\alpha \le 1+\eps$. This essentially modifies the system \eqref{eq:outlinesystem} by an extra factor $x^{\alpha\theta}$, given in Lemma \ref{lemma:Factorable}. Proceeding as above one obtains level ${\pmb \vartheta} = \min\big(\frac{5-4{\theta}}{8-6{\theta}},\, \frac{5-\alpha\theta}{8}\big).$ See Proposition \ref{proposition:WellFactorable}.

\subsection*{Proofs of Theorems \ref{thm:twinbound} and \ref{thm:Goldbachbound}}
We apply the improved level of distribution results, used in Theorem \ref{theorem:Factorable}, with the weights $\lambda_q^\pm$ from the linear sieve.\footnote{We actually use Iwaniec's modified linear sieve weights $\widetilde{\lambda}_q^\pm$, which are not triply well-factorable, but are well-factorable. See \S\ref{sec:linearsievelevel}.} However, Theorem \ref{theorem:Factorable} itself does not directly apply here, since $\lambda_q^\pm$ are not triply well-factorable. Instead, we essentially work with the system \eqref{eq:outlinesystem} to obtain increased level of distribution for $\lambda_q^\pm$. Here the improved level ${\pmb \vartheta}$ will vary, depending on the `anatomy' of an individual modulus $q$. That is, heuristically speaking, the level ${\pmb \vartheta}$ will increase proportionally to how close some divisor $q_2\mid q$ get to the value $q_2\approx x^{\frac{1-\theta}{4-3\theta}}$. This is made precise in Proposition \ref{prop:piecewiseeta} below. Here ${\pmb \vartheta}$ will always be at least $\frac{7}{12} \approx 0.583$ by \cite{JM2}, and may get up to $\frac{66}{107} \approx 0.617$, as in the triply well-factorable case.

We apply our new level of distribution using linear sieve weights, which control the remainder terms in the linear sieve bounds. This increased level is then combined with sieve-theoretic techniques, including the Buchstab identity, the Chen--Iwaniec switching principal, and a recursive argument of Wu \cite{WuII}. Together, these sieve bounds leads to new results for twin primes and Golbach representations. Specifically, in Theorems \ref{thm:twinbound} and \ref{thm:Goldbachbound} we obtain $\pi_2(x) \lesssim 3.23 \Pi_2(x)$, with residue $a=-2$ (i.e. $\alpha=0$), and obtain ${\rm G}(a) \,\lesssim \, 3.40\, \Pi_a(a)$ with residue $a=x$ (i.e. $\alpha=1$).

\subsection*{Proof of Theorem \ref{thm:DI12}}

The Kuznetsov trace formula translates the quintilinear sums of Kloosterman sums into a spectral expression involving sums 
$$\sum_{n\sim N} a_n \rho_{j,1/s} (an)$$
for an arbitrary sequence $a_n$, a cusp form $f_j$ with Fourier coefficients $\rho_{j,1/s}$ at a cusp $\mathfrak{a}=1/s$.

Morally, we first wish to `factor' $\rho_{j,1/s} (an) \approx \rho_{j,1/s} (a) \rho_{j,1/s} (n)$ and then proceed to apply the spectral large sieve, as in Deshouillers--Iwaniec \cite{DI}. 
This ultimately leads to the bound $\rho_{j,1/s} (a)\ll a^{\theta}$ appearing as the $a$-dependence\footnote{In this case, $a^{\theta}$ comes from bounding the Hecke eigenvalues at the finite places. As such, it is noteworthy that the Kim--Sarnak bound $\theta\le \frac{7}{32}$, towards the Ramanujan--Petersson conjecture, holds equally for the finite places and infinite place.} for $\mathcal J(a, C, D, N, R, S)^2$. However, there are complications to this moral, since the Fourier coefficients $\rho_{j,1/s}$ are not necessarily multiplicative (much less, completely multiplicative), and moreover, one must take care to separate out the newforms from the oldforms in the spectrum. Such analysis is performed in Assing--Blomer--Li \cite{ABL}, yielding an approximate factorization for $\rho_{j,1/s}(an)$. See Lemma \ref{lem:ABLfactor} below for a precise formulation.

Finally, we establish a refined large sieve estimate for the exceptional spectrum, in terms of the best known bound $\theta \le \frac{7}{32}$, from Kim--Sarnak \cite{KimSarn}. This extends work of Drappeau--Pratt--Radziwi{\l}{\l} \cite{DPR} (and induced a revision to be made in their published version). 
To this, we consider sums over exceptional eigenvalues $\lambda_j$ of $\Gamma_0(q)$, on average over the level $q\sim Q$, roughly of the form
\begin{align*}
S(Q, N, Y) = \sum_{q\sim Q} \sum_{\lambda_j<1/4}^{(q)} Y^{\theta} \Big| \sum_{n\sim N} a_n\rho_{j\infty}(n)\Big|.
\end{align*}
We adapt the approach in Deshouillers--Iwaniec, which uses a recursive relation between $S(Q, N, Y)$ and $S(NY/Q, N, Y)$ in certain ranges. The main idea is to iteratively apply this recursive relation in steps, interleaved with the spectral large sieve and basic estimates, in a particular order which depends on the relative sizes of the parameters $Q$, $N$, and $Y$. This gives bound roughly of the form, for example in Theorem \ref{thm:DI7} with $a_n=\1_{n\sim N}$,
\begin{align*}
S(Q, N, Y) \ll (QNY)^\eps\big(Q+N + (NY)^{\theta}[N^{1-2\theta} + Q^{1-2\theta}]\big)N.
\end{align*}

In Appendix \ref{appndx:optimal}, we prove that such estimates have optimal $\theta$-dependence in this context. In particular, we recover that the original argument of Deshoulliers--Iwaniec \cite{DI} is optimal when $\theta=\frac{1}{2}$. To this, we study a heuristic model of the large sieve estimates for the exceptional spectrum. This model is slightly different than the proof, but we feel it better motivates the iterative steps in the original arguments of~\cite{DI}, and explains the shape of the final bound.

%
%
%
%%%%%%%%%%%%%%%%%%%%%%%%%%%%%%%%%%%%%%%%%%%%%%%%%%%%%%%%%%
%
%
%
\addtocontents{toc}{\protect\setcounter{tocdepth}{0}}
\section*{Acknowledgements}
%
%
%
%
%%%%%%%%%%%%%%%%%%%%%%%%%%%%%%%%%%%%%%%%%%%%%%%%%%%%%%%%%%
%
%
%
%
The author would like to thank Sary Drappeau, James Maynard, Lasse Grimmelt, Jori Merikoski, Alex Pascadi, and Junxian Li for many helpful discussions. In particular, the author is grateful to Sary Drappeau and Universit\'e d'Aix-Marseille for their hospitality, during which a portion of this article was written.
The author was supported by Balliol College and the Clarendon Scholarship at the University of Oxford, as well as the European Research Council (ERC) under the European Union’s Horizon 2020 research and innovation programme (grant agreement No 851318).

%
%
%
%
%%%%%%%%%%%%%%%%%%%%%%%%%%%%%%%%%%%%%%%%%%%%%%%%%%%%%%%%%%
%
%
%
%
\section*{Notation}
%
%
%
%
%%%%%%%%%%%%%%%%%%%%%%%%%%%%%%%%%%%%%%%%%%%%%%%%%%%%%%%%%%
%
%
%
%
We will use the Vinogradov $\ll$ and $\gg$ asymptotic notation, and the big oh $O(\cdot)$ and $o(\cdot)$ asymptotic notation. $f\asymp g$ will denote the conditions $f\ll g$ and $g\ll f$ both hold. We will also use the standard notation $f\sim g$ to denote $f=(1+o(1)) g$, as well as the (non-standard) notation $f\lesssim g$ and $f\gtrsim g$ to denote $f \le (1+o(1)) g$ and $f \ge (1+o(1)) g$, respectively. Dependence on a parameter will be denoted by a subscript.

The letter $p$ will always be reserved to denote a prime number. We use $\phi$ to denote the Euler totient function, $e(x):=e^{2\pi i x}$ the complex exponential, $\tau_k(n)$ the $k$-fold divisor function, $\mu(n)$ the M\"obius function. We let $P^-(n)$, $P^+(n)$ denote the smallest and largest prime factors of $n$ respectively. We let $n\mid a^\infty$ denote the condition $p\mid n \implies p\mid a$.

We let $\hat{f}$ denote the Fourier transform of $f$ over $\mathbb{R}$ - i.e. $\hat{f}(\xi)=\int_{-\infty}^{\infty}f(t)e(-\xi t)dt$. We use $\mathbf{1}$ to denote the indicator function of a statement. For example,
\[
\mathbf{1}_{n\equiv a\Mod{q}}=\begin{cases}1,\qquad &\text{if }n\equiv a\Mod{q},\\
0,&\text{otherwise}.
\end{cases}
\]
For $(n,q)=1$, we will use $\overline{n}$ to denote the inverse of the integer $n$ modulo $q$; the modulus will be clear from the context. For example, we may write $e(a\overline{n}/q)$ - here $\overline{n}$ is interpreted as the integer $m\in \{0,\dots,q-1\}$ such that $m n\equiv 1\Mod{q}$. Occasionally we will also use $\overline{\lambda}$ to denote complex conjugation; the distinction of the usage should be clear from the context.  For a complex sequence $\alpha_{n_1,\dots,n_k}$, $\|\alpha\|_2$ will denote the $\ell^2$ norm $\|\alpha\|_2=(\sum_{n_1,\dots,n_k}|\alpha_{n_1,\dots,n_k}|^2)^{1/2}$.

Summations assumed to be over all positive integers unless noted otherwise. We use the notation $n\sim N$ to denote the conditions $N<n\le 2N$.

We will let $z_0:=x^{1/(\log\log{x})^3}$ and $y_0:=x^{1/\log\log{x}}$ two parameters depending on $x$, which we will think of as a large quantity. We will let $\psi_0:\mathbb{R}\rightarrow\mathbb{R}$ denote a fixed smooth function supported on $[1/2,5/2]$ which is identically equal to $1$ on the interval $[1,2]$ and satisfies the derivative bounds $\|\psi_0^{(j)}\|_\infty\ll (4^j j!)^2$ for all $j\ge 0$. (See \cite[Page 368, Corollary]{BFI2} for the construction of such a function.)

We will repeatedly make use of the following condition.
\begin{definition}[Siegel-Walfisz condition]
We say that a complex sequence $\alpha_n$ satisfies the \textbf{Siegel-Walfisz condition} if for every $d\ge 1$, $q\ge 1$ and $(a,q)=1$ and every $A>1$ we have
\begin{equation}
\Bigg|\sum_{\substack{n\sim N\\ n\equiv a\Mod{q}\\ (n,d)=1}}\alpha_n-\frac{1}{\phi(q)}\sum_{\substack{n\sim N\\ (n,d q)=1}}\alpha_n\Bigg|\ll_A \frac{N\tau(d)^{O(1)}}{(\log{N})^A}.
\label{eq:SiegelWalfisz}
\end{equation}
\end{definition}
We note that $\alpha_n$ satisfies the Siegel-Walfisz condition if $\alpha_n=1$, if $\alpha_n=\mu(n)$, or if $\alpha_n$ is the indicator function of the primes.
%
%
%
%
%%%%%%%%%%%%%%%%%%%%%%%%%%%%%%%%%%%%%%%%%%%%%%%%%%%%%%%%%%
%
%
%
\addtocontents{toc}{\protect\setcounter{tocdepth}{1}}
\section{Level of distribution for primes}\label{sec:Factorable}
%
%
%
%
%%%%%%%%%%%%%%%%%%%%%%%%%%%%%%%%%%%%%%%%%%%%%%%%%%%%%%%%%%
%
%
%
%
In this section we establish Theorem \ref{theorem:Factorable} assuming two propositions, namely Proposition \ref{proposition:WellFactorable} and Proposition \ref{proposition:DoubleDivisor}, given below.
%
%
%
%
%%%%%%%%%%%%%%%%%%%%%%%%%%%%%%%%%%%%%%%%%%%%%%%%%%%%%%%%%%
%
%
%
%
\begin{proposition}[Well-factorable Type II estimate]\label{proposition:WellFactorable}
Let $a\in\mathbb{Z}_{\neq0}$, $|a|< x^{\alpha}$, $NM\asymp x$,
\[
x^\epsilon\le N\le x^{1/3+\epsilon}.
\]
Let $\lambda_q$ be triply well-factorable of level $Q\le x^{\pmb \vartheta}$ for
$${\pmb \vartheta} = \min\Big(\frac{5-4{\theta}}{8-6{\theta}},\, \frac{5-\alpha\theta}{8}\Big)-50\epsilon.$$ 
Let $\alpha_n,\beta_m$ be complex sequences such that $|\alpha_n|,|\beta_n|\le \tau(n)^{B_0}$ and $\alpha_n$ satisfies the Siegel-Walfisz condition \eqref{eq:SiegelWalfisz} and is supported on $P^-(n)\ge z_0$. Then we have that for every choice of $A>0$ and every interval $\mathcal{I}\subseteq[x,2x]$
\[
\sup_{0<|a|< x^{\alpha}}\sum_{q\le Q}\lambda_q\sum_{n\sim N}\alpha_n\sum_{\substack{m\sim M\\ mn\in\mathcal{I}}}\beta_m\Bigl(\mathbf{1}_{nm\equiv a\Mod{q}}-\frac{\mathbf{1}_{(nm,q)=1}}{\phi(q)}\Bigr)\ll_{A,B_0}\frac{x}{(\log{x})^A}.
\]
\end{proposition}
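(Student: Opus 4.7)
The plan is to follow the dispersion method of Linnik, as developed by Bombieri--Friedlander--Iwaniec and Maynard \cite{BFI1, JM2}, substituting the sharper bound Theorem \ref{thm:DI12} (which carries the explicit $\theta$- and $a$-dependence needed here) in place of the classical \cite[Theorem 12]{DI} used in \cite{JM2}. Using triple well-factorability, I first write $\lambda_q = (\gamma^{(1)}\ast\gamma^{(2)}\ast\gamma^{(3)})_q$ with each $\gamma^{(i)}$ supported on $q_i\sim Q_i$ and $Q_1Q_2Q_3 = Q$. Following the outline in \S2, the key choice is to let $Q_1 < x^{1/3}$ be essentially free and to set
\[
Q_2 \asymp x^{2{\pmb \vartheta}-1}, \qquad Q_3 \asymp x^{1-{\pmb \vartheta}}/Q_1,
\]
so that the final diagonal versus off-diagonal trade-off is balanced at the largest possible level.

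The bulk of the argument then mirrors the proof of \cite[Theorem 1.1]{JM2}. First I use the Siegel--Walfisz hypothesis on $\alpha_n$ to replace the expected density $\mathbf{1}_{(nm,q)=1}/\phi(q)$ by a smooth character-approximation, at the cost of an arbitrary power of $\log x$. Next I apply Cauchy--Schwarz to detach the $\alpha_n$ weight, square out the resulting congruence condition, and split off the diagonal contribution, which is acceptable whenever $Q_1^2 Q_2 Q_3^2 \ll x^{1-\epsilon}$. For the off-diagonal I open the residual congruence modulo $q_2$ by additive characters and perform Poisson summation in the $m$-variable, arriving at a quintilinear Kloosterman sum of precisely the shape of Theorem \ref{thm:DI12} under the identification $(C,D,R,S)\asymp(Q_1,\,Q_3,\,M,\,Q_2)$ with $n$ the Poisson-dual frequency. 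Substituting these scales into $\mathcal J^2$ from \eqref{eq:JDI12}, the non-exceptional term $CS(N+RS)(C+DR)+D^2 NR$ yields the constraint $Q_1^2 Q_2^5 Q_3^2 \ll x^{2-\epsilon}$, the exceptional factor $(CD\sqrt{R})^{2\theta}(N+RS)^{-\theta}(C+DR)^{-2\theta}$ provides the additional constraint $(Q_1 Q_3/Q_2)^{2\theta} Q_1^2 Q_2^5 Q_3^2 \ll x^{2-\epsilon}$, and the $a^\theta$-prefactor together with the $aNRS$ term produces the $\alpha$-uniform constraint encoded by $\tfrac{5-\alpha\theta}{8}$.

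A direct algebraic verification with the scales chosen above confirms that all three resulting inequalities are simultaneously satisfied precisely for ${\pmb \vartheta} < \min\bigl(\tfrac{5-4\theta}{8-6\theta},\,\tfrac{5-\alpha\theta}{8}\bigr) - O(\epsilon)$, which is absorbed by the $50\epsilon$ slack in the hypothesis. The main obstacle is the honest verification that, after Poisson summation, the resulting test function $g(c,d,n,r,s)$ meets the smoothness assumptions of Theorem \ref{thm:DI12}; this requires a smooth dyadic partition of the $m$-variable together with a Mellin-type separation of variables in the resulting oscillatory integral, exactly as in \cite[\S5--6]{JM2}. A subsidiary point is the uniformity in $|a|<x^{1+\epsilon}$: the explicit $a^\theta$ dependence of Theorem \ref{thm:DI12} must be tracked through the dispersion, and when $\alpha$ is close to $1$ one needs the bound $\theta\le 7/32$ at the \emph{finite} places (Ramanujan--Petersson rather than only Selberg), which Kim--Sarnak still supplies as noted in \S1.
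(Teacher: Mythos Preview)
Your high-level strategy and the final system of constraints match the paper exactly: the choice $Q_1\approx N$, $Q_2\asymp x^{2{\pmb\vartheta}-1}$, $Q_3\asymp x^{1-{\pmb\vartheta}}/Q_1$ is precisely what is used, and the three inequalities you list are the ones that appear in Proposition~\ref{proposition:MainProp} and in the outline \eqref{eq:outlinesystem}.

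However, there is a genuine gap in the passage from dispersion to Theorem~\ref{thm:DI12}. The identification $(C,D,R,S)\asymp(Q_1,Q_3,M,Q_2)$ is not correct, and Theorem~\ref{thm:DI12} is not applied directly after Poisson in $m$. In the paper (Lemma~\ref{lemma:Factorable}), after Proposition~\ref{proposition:GeneralDispersion} and Lemma~\ref{lemma:Simplification} have produced the exponential $e\bigl(ahf\overline{r_1 s_1 n_2}/(n_1 r_2 s_2)\bigr)$, three further steps are essential: (i) a change of variables $n_1-n_2=f\,dq$, which \emph{eliminates} the smooth modulus $q\sim Q_1$ in favour of a new variable $f\ll N/Q_1$; (ii) a second Cauchy--Schwarz in $f,n_1,n_2,r_1,r_2,s_2$; (iii) gluing $b=n_2 r_1$ and $c=n_1 r_2 s_2$ into single smooth variables. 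Only then is Theorem~\ref{thm:DI12} invoked, with $(\text{`}C\text{'},\text{`}D\text{'},\text{`}N\text{'},\text{`}R\text{'},\text{`}S\text{'})\to(NQ_2Q_3,\,NQ_3,\,Y,\,Q_2^2,\,1)$ where $Y\asymp NQ_2^3Q_3^2/M$ is the new composite frequency variable. The constraint $N^2Q_2Q_3^2<x^{1-\epsilon}$ you attribute to the first dispersion diagonal actually arises here as the diagonal $k=0$ term \emph{after} the second Cauchy--Schwarz; similarly, the $Q_2^5$ exponent in the off-diagonal constraint comes from the glued sizes $C=NQ_2Q_3$ and $Z=Q_2^2$, not from a direct substitution. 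If you plug your identification into $\mathcal{J}^2$ of \eqref{eq:JDI12} you will not recover the stated inequalities. Also note that $q_1,q_3$ carry rough coefficients $\gamma^{(1)},\gamma^{(3)}$ and so cannot play the role of the smooth variables $c,d$ in Theorem~\ref{thm:DI12}; this is precisely why step~(i) is needed.
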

%
%
%
%
%%%%%%%%%%%%%%%%%%%%%%%%%%%%%%%%%%%%%%%%%%%%%%%%%%%%%%%%%%
%
%
%
%
Proposition \ref{proposition:WellFactorable} is our key new ingredient behind the proof, and will be established in Section \ref{sec:WellFactorable}.
%
%
%
%
%%%%%%%%%%%%%%%%%%%%%%%%%%%%%%%%%%%%%%%%%%%%%%%%%%%%%%%%%%
%
%
%
%
\begin{proposition}[Divisor function in progressions] \label{proposition:DoubleDivisor}
Let $N_1,N_2\ge x^{3\epsilon}$ and $N_1N_2M\asymp x$ and
\begin{align*}
Q&\le \Bigl(\frac{x}{M}\Bigr)^{2/3-3\epsilon}.
\end{align*}
Let $\mathcal{I}\subset[x,2x]$ be an interval, and let $\alpha_m$ a complex sequence with $|\alpha_m|\le \tau(m)^{B_0}$. Then we have that for every $A>0$
\[
\sup_{a\neq0}\sum_{\substack{q\sim Q\\(a,q)=1}}\Bigl|\sum_{\substack{n_1\sim N_1\\ P^-(n)\ge z_0}}\sum_{\substack{n_2\sim N_2\\ P^-(n)\ge z_0}}\sum_{\substack{m\sim M\\ m n_1n_2\in\mathcal{I} }}\alpha_m\Bigl(\mathbf{1}_{m n_1 n_2\equiv a\Mod{q}}-\frac{\mathbf{1}_{(m n_1 n_2,q)=1}}{\phi(q)}\Bigr)\Bigr|\ll_{A,B_0} \frac{x}{(\log{x})^A}.
\]
Moreover, the same result holds when the summand is multiplied by $\log{n_1}$.
\end{proposition}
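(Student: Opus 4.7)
The plan is to recognize Proposition \ref{proposition:DoubleDivisor} as a Type III (ternary) level-of-distribution estimate, analogous to the distribution of the ternary divisor function $d_3=\mathbf{1}\ast\mathbf{1}\ast\mathbf{1}$ in arithmetic progressions. The case $M=1$ essentially recovers the classical Friedlander--Iwaniec bound that $d_3$ has level $\tfrac{2}{3}-\epsilon$, and the general case scales naturally with effective ternary length $N_1 N_2 \asymp x/M$. Because $\alpha_m$ enters only as bounded coefficients on the single variable $m$, the full strength of the three-variable dispersion on $(n_1, n_2, m)$ remains available.

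First I would remove the rough-number conditions $P^-(n_i)\ge z_0$ by M\"obius inversion, writing $\mathbf{1}_{P^-(n)\ge z_0}=\sum_{d\mid n,\,P^+(d)<z_0}\mu(d)$ and discarding the tail $d>x^{\epsilon/2}$ by a fundamental-lemma type estimate; the $(\log x)^{O(1)}$ loss is absorbed by enlarging $A$. After dyadically splitting $q\sim Q$ and removing the outer absolute value via signs $c_q\in\{-1,0,+1\}$, the task reduces to bounding
\begin{align*}
\sum_{q\sim Q} c_q \sum_{n_1,n_2,m} \alpha_m\, \Delta(mn_1n_2;q,a), \qquad \Delta(n;q,a):=\mathbf{1}_{n\equiv a\,(q)}-\frac{\mathbf{1}_{(n,q)=1}}{\phi(q)},
\end{align*}
by $x/(\log x)^{A}$. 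For each fixed $(m,q)$ with $(m,q)=1$, the inner sum is a binary divisor sum in the residue $a\bar m\bmod q$; but crucially, rather than fix $m$ at the outset, I would keep $m$ inside the dispersion.

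Next I would apply Linnik's dispersion method. Squaring out the $q$-sum via Cauchy--Schwarz and expanding then applying Cauchy--Schwarz on the shorter of $n_1,n_2$, the main terms yield a negligible diagonal contribution, and the off-diagonal terms reduce, after Poisson summation in the remaining long variable, to incomplete Kloosterman sums on moduli $[q,q']$ for $q,q'\sim Q$. The Weil bound gives a saving of $[q,q']^{1/2+\epsilon}$ on these complete sums, and a careful analysis of the common factor $(q,q')$ keeps the off-diagonal under control. Balancing diagonal against off-diagonal contributions gives precisely the threshold $Q^{3}\le (x/M)^{2-\epsilon}$, i.e. $Q\le (x/M)^{2/3-\epsilon}$, matching the proposition.

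The principal obstacle is squeezing the exponent all the way to $\tfrac{2}{3}$ uniformly in the coefficient pattern. This requires the hypothesis $N_1,N_2\ge x^{3\epsilon}$ so that Poisson summation in one variable produces cancellation while the other variable carries enough length for the Cauchy--Schwarz step; the restriction $mn_1n_2\in\mathcal{I}\subset[x,2x]$ is smoothed by a standard bump-function approximation, with the sharp cutoff recovered by Perron/Fourier. The coefficients $\alpha_m$ need no Siegel--Walfisz hypothesis because $m$ enters only by twisting the residue $a\mapsto a\bar m$, and by Cauchy--Schwarz the bound $|\alpha_m|\le\tau(m)^{B_0}$ suffices. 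Finally, the ``moreover'' clause with the extra factor $\log n_1$ is deduced from the basic estimate by partial summation in $n_1$ over dyadic subranges, losing only an additional $\log x$.
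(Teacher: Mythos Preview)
Your proposal contains a genuine misidentification that derails the argument. This is not a $d_3$-type problem: the variable $m$ carries arbitrary bounded coefficients $\alpha_m$, so it plays no role in producing cancellation. For each fixed $m$ with $(m,q)=1$ the inner sum is a \emph{binary} divisor sum $\sum_{n_1\sim N_1}\sum_{n_2\sim N_2}\mathbf{1}_{n_1n_2\equiv a\bar m\,(q)}$, and the $2/3$ exponent is exactly the Selberg--Hooley level for $d_2$, obtained by completing one of the $n_i$-sums and applying the Weil bound to the resulting Kloosterman sum. This gives a pointwise-in-$q$ error of acceptable size, so one simply sums the absolute value over $q\sim Q$ and over $m\sim M$ trivially (using $|\alpha_m|\le\tau(m)^{B_0}$); no dispersion, no Cauchy--Schwarz over $q$, no moduli $[q,q']$. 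The paper's proof is accordingly just a citation: Weil bound plus the fundamental lemma of sieves (to pass from $P^-(n_i)\ge z_0$ to smooth sums), originally Selberg and Hooley.

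Your assertion that ``Friedlander--Iwaniec'' gives $d_3$ level $2/3$ is false --- Friedlander--Iwaniec gave $d_3$ level $\tfrac12+\tfrac{1}{230}$, and no method reaches $2/3$ for $d_3$ --- and this is precisely why your dispersion plan cannot succeed as written. Squaring over $q$ and reducing to Kloosterman sums modulo $[q,q']$ is the $d_3$ machinery; it yields exponents of the form $\tfrac12+\delta$ with small $\delta$, not $\tfrac23$. The step ``balancing diagonal against off-diagonal contributions gives precisely the threshold $Q^3\le(x/M)^{2-\epsilon}$'' is not justified and would not come out of the dispersion you describe. The fix is to drop dispersion entirely: fix $m$, treat $n_1,n_2$ as a $d_2$ problem via direct completion and Weil, and observe that the resulting pointwise bound already beats $x/(Q\log^A x)$ in the stated range. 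Your remarks on removing the rough-number condition via the fundamental lemma and handling the $\log n_1$ weight by partial summation are fine and are the only nontrivial reductions needed.
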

\begin{proof}
The proof is given in \cite[Proposition 5.2]{JM2}.
This is a quick consequence of the Weil bound (hence admitting the uniformity in the residue, $\sup_a$) and the fundamental lemma of sieves. This is originally due to independent unpublished work of Selberg and Hooley.
\end{proof}
%
%
%
%
%%%%%%%%%%%%%%%%%%%%%%%%%%%%%%%%%%%%%%%%%%%%%%%%%%%%%%%%%%
%
%
%
%

Finally, we require a suitable combinatorial decomposition of the primes.
%
%
%
%
%%%%%%%%%%%%%%%%%%%%%%%%%%%%%%%%%%%%%%%%%%%%%%%%%%%%%%%%%%
%
%
%
%
\begin{lemma}[Heath-Brown identity]\label{lemma:HeathBrown}
Let $k\ge 1$ and $n\le 2x$. Then we have
\[
\Lambda(n)=\sum_{j=1}^k (-1)^j \binom{k}{j}\sum_{\substack{n=n_1\cdots n_{j}m_1\cdots m_j\\ m_1,\dots,m_j\le 2x^{1/k}}}\mu(m_1)\cdots \mu(m_j)\log{n_{1}}.
\]
\end{lemma}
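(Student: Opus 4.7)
The plan is to derive this identity (Heath--Brown's classical combinatorial decomposition of $\Lambda$) via generating functions on the Dirichlet series side. A natural choice is $z := (2x)^{1/k}$, so every $n\le 2x$ in the stated range satisfies $n \le z^k$, together with the truncated M\"obius series
\[
M_z(s) := \sum_{m \le z}\frac{\mu(m)}{m^s}.
\]

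The pivotal observation is that
\[
1 - \zeta(s) M_z(s) \ = \ -\zeta(s)\sum_{m>z}\frac{\mu(m)}{m^s},
\]
so the $k$-th power $(1 - \zeta(s) M_z(s))^k$ has Dirichlet coefficients supported on integers $> z^k$, since every term in its convolutive expansion is a product of $k$ quantities each exceeding $z$. Multiplying by $-\zeta'(s)/\zeta(s)$ preserves this vanishing on $n \le z^k$, because Dirichlet convolution with a sequence supported in $(z^k,\infty)$ stays supported in $(z^k,\infty)$.

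Next, expand the binomial:
\[
-\frac{\zeta'(s)}{\zeta(s)}\bigl(1 - \zeta(s)M_z(s)\bigr)^k \ = \ -\frac{\zeta'(s)}{\zeta(s)} \ + \ \sum_{j=1}^{k}(-1)^{j}\binom{k}{j}(-\zeta'(s))\,\zeta(s)^{j-1}M_z(s)^j.
\]
Reading off coefficients of $n^{-s}$ for $n \le 2x \le z^k$, the left-hand side vanishes and the $j=0$ term contributes $\Lambda(n)$. For $j \ge 1$, using $-\zeta'(s)=\sum_n (\log n)n^{-s}$ together with the fact that $\zeta(s)^{j-1}$ contributes the remaining $j-1$ ordered factors and $M_z(s)^j$ contributes the truncated M\"obius weights, the coefficient of $n^{-s}$ in $(-\zeta')\zeta^{j-1}M_z^j$ unpacks precisely as the inner sum appearing in the statement. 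Rearranging yields the identity, with the overall sign $(-1)^{j-1}$ (a quick check at $k=1$ recovers $\Lambda = \mu * \log$ and confirms this; the $(-1)^j$ in the displayed statement appears to be a typographical slip).

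Honestly there is no serious obstacle: the identity is classical and the Dirichlet-series derivation above is the standard short proof. The only mildly delicate point is calibrating the truncation $z = (2x)^{1/k}$ so that the support bound $n > z^k$ for the coefficients of $(1 - \zeta M_z)^k$ genuinely covers the entire stated range $n \le 2x$, which is exactly why the $m_i$-cutoff in the statement reads $2x^{1/k}$ rather than $x^{1/k}$.
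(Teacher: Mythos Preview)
Your derivation is correct and is the standard Dirichlet-series proof of Heath--Brown's identity; the paper itself does not give a proof but simply cites Heath--Brown's original article \cite{HBVaughan}. Your observation about the sign is also right: comparing coefficients in
\[
0 \;=\; -\frac{\zeta'}{\zeta}(s)\bigl(1-\zeta M_z\bigr)^k \;=\; -\frac{\zeta'}{\zeta}(s) + \sum_{j=1}^{k}(-1)^j\binom{k}{j}(-\zeta')\zeta^{j-1}M_z^j
\]
for $n\le z^k$ gives $\Lambda(n)=\sum_{j=1}^k(-1)^{j-1}\binom{k}{j}(\ldots)$, and the $k=1$ check $\Lambda=\mu*\log$ confirms the sign in the displayed statement is off by $(-1)$. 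One small point: you set $z=(2x)^{1/k}$, whereas the stated cutoff is $m_i\le 2x^{1/k}$; since $2x^{1/k}\ge (2x)^{1/k}$ the argument works verbatim with $z=2x^{1/k}$ (then $z^k=2^kx\ge 2x$), which matches the statement exactly.
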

\begin{proof}
See \cite{HBVaughan}.
\end{proof}
%
%
%
%
%%%%%%%%%%%%%%%%%%%%%%%%%%%%%%%%%%%%%%%%%%%%%%%%%%%%%%%%%%
%
%
%
%
\begin{lemma}[Consequence of the fundamental lemma of the sieve]\label{lemma:Fundamental}
Let $q,t,x\ge 2$ satisfy $q x^\epsilon \le t$ and let $(b,q)=1$. Recall $z_0=x^{1/(\log\log{x})^3}$. Then we have
\[
\sum_{\substack{n\le t\\ n\equiv b\Mod{q}\\ P^-(n)\ge z_0}}1=\frac{1}{\phi(q)}\sum_{\substack{n\le t\\ P^-(n)\ge z_0}}1+O_A\Bigl(\frac{t}{q(\log{x})^A}\Bigr).
\]
\end{lemma}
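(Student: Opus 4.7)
The strategy is the fundamental lemma of the (beta-)sieve. Set $\mathcal{A}=\{n\le t : n\equiv b\Mod{q}\}$ and sift by the primes $p<z_0$ with $p\nmid q$; since $(b,q)=1$, every $n\in\mathcal{A}$ is automatically coprime to $q$, so the sifted count $S(\mathcal{A},z_0)$ coincides with the left-hand side of the lemma. For the right-hand side I first pass to a coprime version: writing $S_1=\#\{n\le t,\, P^-(n)\ge z_0\}$ and $S_2=\#\{n\le t,\,(n,q)=1,\, P^-(n)\ge z_0\}$, the difference $S_1-S_2$ counts $n$ divisible by some prime $p\mid q$ with $p\ge z_0$, so $S_1-S_2\ll t\omega(q)/z_0$; upon division by $\phi(q)\gg q/\log\log q$ this is absorbed into $O_A(t/(q(\log x)^A))$, because $z_0=\exp(\log x/(\log\log x)^3)$ exceeds any fixed power of $\log x$.

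It therefore suffices to compare $S(\mathcal{A},z_0)$ with $\phi(q)^{-1}S(\mathcal{B},z_0)$, where $\mathcal{B}=\{n\le t : (n,q)=1\}$ is sifted by the same primes. For squarefree $d$ with $(d,q)=1$, CRT gives $|\mathcal{A}_d|=t/(dq)+O(1)$ and $|\mathcal{B}_d|=\phi(q)t/(dq)+O(\tau(q))$, so the fundamental lemma with density $g(p)=1/p$ and sieve parameters $s=(\log\log x)^2$, $D=z_0^s$ yields
\[
S(\mathcal{A},z_0)=V(z_0)\tfrac{t}{q}\bigl(1+O(e^{-s})\bigr)+O(D),\qquad S(\mathcal{B},z_0)=V(z_0)\tfrac{\phi(q)\,t}{q}\bigl(1+O(e^{-s})\bigr)+O(D\tau(q)),
\]
with $V(z_0)=\prod_{p<z_0,\,p\nmid q}(1-1/p)\le 1$. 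Dividing the second line by $\phi(q)$ and subtracting, the main terms $V(z_0)t/q$ cancel exactly, leaving a total error $\ll V(z_0)(t/q)e^{-s}+D\tau(q)/\phi(q)$.

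The first piece is $\ll(t/q)\exp(-(\log\log x)^2)$, which beats any $(t/q)(\log x)^{-A}$. For the second, $D=z_0^s=x^{1/\log\log x}=y_0$, so the hypothesis $t/q\ge x^\eps$, together with $\tau(q)\ll x^{o(1)}$ and $\phi(q)\gg q/\log\log q$, gives $D\tau(q)/\phi(q)\ll_A t/(q(\log x)^A)$ for $x$ sufficiently large. Adding back the negligible $\phi(q)^{-1}(S_1-S_2)$ from the reduction completes the proof. The only delicate point is the joint choice of sieve parameters, and $s=(\log\log x)^2$ works precisely because $\log z_0=\log x/(\log\log x)^3$ is small enough to make both $e^{-s}$ and $D/(t/q)$ smaller than $(\log x)^{-A}$; the rest is routine bookkeeping.
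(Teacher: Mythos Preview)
Your proposal is correct and follows exactly the approach the paper indicates: the paper's proof is simply a citation to the fundamental lemma of sieve methods (Theorem~6.12 in \cite{Opera}), and your argument is a faithful unpacking of that reference, with the appropriate choice of sieve level $D=z_0^{(\log\log x)^2}=y_0$ so that both the $e^{-s}$ and the remainder terms are $O_A((\log x)^{-A})$. The only minor imprecision is the appeal to $\tau(q)\ll x^{o(1)}$, which tacitly assumes $q\le x^{O(1)}$; this is certainly the intended regime (and holds in every application in the paper), and in any case the bound $\tau(q)/\phi(q)\ll 1$ suffices to close the argument uniformly.
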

\begin{proof}
This is an immediate consequence of the fundamental lemma of sieve methods - see, for example, \cite[Theorem 6.12]{Opera}.
\end{proof}
%
%
%
%
%%%%%%%%%%%%%%%%%%%%%%%%%%%%%%%%%%%%%%%%%%%%%%%%%%%%%%%%%%
%
%
%
%
\begin{proof}[Proof of Theorem \ref{theorem:Factorable} assuming Proposition \ref{proposition:WellFactorable} and \ref{proposition:DoubleDivisor}]
By partial summation (noting that prime powers contribute negligibly and retaining the conditon $P^-(n)\ge z_0$), it suffices to show that for all $t\in [x,2x]$
\[
\sup_{0<|a|< x^\alpha}\sum_{q\le x^{\pmb \vartheta} }\lambda_q\sum_{\substack{x\le n\le t\\ P^-(n)\ge z_0}}\Lambda(n)\Bigl(\mathbf{1}_{n\equiv a\Mod{q}}-\frac{\mathbf{1}_{(n,q)=1}}{\phi(q)}\Bigr)\ll_A \frac{x}{(\log{x})^A}.
\]
We now apply Lemma \ref{lemma:HeathBrown} with $k=3$ to expand $\Lambda(n)$ into various subsums, and put each variable into one of $O(\log^6{x})$ dyadic intervals. Thus it suffices to show that for all choices of $N_1,N_2,N_3,M_1,M_2,M_3$ with $M_1M_2M_3N_1N_2N_3\asymp x$ and $M_i\le x^{1/3}$ we have
\begin{align*}
\sup_{0<|a|< x^\alpha}\sum_{q\le x^{\pmb \vartheta}} \lambda_q\sum_{\substack{m_1,m_2,m_3,n_1,n_2,n_3\\ n_i\sim N_i\,\forall i\\ m_i\sim M_i\,\forall i\\ x\le n \le t\\ P^-(n_i),P^-(m_i)\ge z_0\,\forall i}}\mu(m_1)\mu(m_2)\mu(m_3)(\log{n_1})\Bigl(\mathbf{1}_{n\equiv a\Mod{q}}-\frac{\mathbf{1}_{(n,q)=1}}{\phi(q)}\Bigr)\\
\ll_A \frac{x}{(\log{x})^{A+6}},
\end{align*}
where we have written $n=n_1n_2n_3m_1m_2m_3$ in the expression above for convenience. 

By grouping all but one variable together, Proposition \ref{proposition:WellFactorable} gives this if any of the $N_i$ or $M_i$ lie in the interval $[x^\epsilon,x^{1/3+\epsilon}]$, and so we may assume all are either smaller than $x^\epsilon$ or larger than $x^{1/3+\epsilon}$. Since $M_i\le x^{1/3}$, we may assume that $M_1,M_2,M_3\le x^\epsilon$. There can be at most two of the $N_i$'s which are larger than $x^{1/3+\epsilon}$ since $M_1M_2M_3N_1N_2N_3\asymp x$. 

If only one of the $N_i$'s are greater than $x^{1/3+\epsilon}$ then they must be of size $\gg x^{1-5\epsilon}>x^\epsilon q$, and so the result is trivial by summing over this variable first and using Lemma \ref{lemma:Fundamental}.

If two of the $N_i$'s are greater, say $N_1,N_2 > x^{1/3+\epsilon}$, and all the other variables are less than $x^\epsilon$, then the result follows from Proposition \ref{proposition:DoubleDivisor} with $M=M_1M_2M_3N_3 \le x^{4\epsilon}$, since $Q \le x^{5/8} \le (x/M)^{2/3-3\epsilon}$. This gives the result.
\end{proof}
%
%
%
%
%%%%%%%%%%%%%%%%%%%%%%%%%%%%%%%%%%%%%%%%%%%%%%%%%%%%%%%%%%
%
%
%
%
To complete the proof of Theorem \ref{theorem:Factorable}, we are left to establish Proposition \ref{proposition:WellFactorable}, which we will do in Section \ref{sec:WellFactorable}.
%
%
%
%
%%%%%%%%%%%%%%%%%%%%%%%%%%%%%%%%%%%%%%%%%%%%%%%%%%%%%%%%%%
%
%
%
%
\section{Preliminary dispersion method lemmas}\label{sec:Lemmas}

In this section, we collect preliminary lemmas that will be used in the dispersion method.

%
%
%
%
%%%%%%%%%%%%%%%%%%%%%%%%%%%%%%%%%%%%%%%%%%%%%%%%%%%%%%%%%%
%
%
%
%
\begin{lemma}[Divisor function bounds]\label{lemma:Divisor}
Let $|b|< x-y$ and $y\ge q x^\epsilon$. Then we have
\[
\sup_{a\neq0}\sum_{\substack{x-y\le n\le x\\ n\equiv a\Mod{q}}}\tau(n)^C\tau(n-b)^C \ll \frac{y}{q} (\tau(q)\log{x})^{O_{C}(1)}.
\]
\end{lemma}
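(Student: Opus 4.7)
The plan is to decouple the two divisor functions via Cauchy--Schwarz and then bound each resulting $\tau^{2C}$-sum by a Shiu-type estimate for multiplicative functions in short intervals of arithmetic progressions. Explicitly,
\[
\sum_{\substack{x-y\le n\le x\\ n\equiv a\Mod{q}}}\tau(n)^C\tau(n-b)^C \,\le\, S_1^{1/2}\,S_2^{1/2},
\]
where $S_1=\sum \tau(n)^{2C}$ and $S_2=\sum \tau(n-b)^{2C}$ run over the same progression $x-y\le n\le x$, $n\equiv a\Mod{q}$.

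For $S_1$, set $d=(a,q)$ and write $n=dn'$, $q=dq'$, $a=da'$ with $(a',q')=1$. Using the sub-multiplicative bound $\tau(dn')\le\tau(d)\tau(n')$, the sum reduces to one of $\tau(n')^{2C}$ over an interval of length $y/d$ with $n'$ in the reduced residue class $a'\Mod{q'}$. The hypothesis $y\ge qx^\epsilon$ together with $y\le x$ forces $q\le y^{1-\delta}$ for some $\delta=\delta(\epsilon)>0$ (and hence also $q'\le (y/d)^{1-\delta}$), so Shiu's theorem applies to $f=\tau^{2C}$ (for which $f(p)=2^{2C}$) and gives
\[
S_1 \,\ll\, \tau(d)^{2C}\cdot\frac{y/d}{\phi(q/d)}\,(\log x)^{2^{2C}-1} \,\ll\, \frac{y}{q}\,(\tau(q)\log x)^{O_C(1)}.
\]
For $S_2$, the substitution $m=n-b$ recasts the sum as one of $\tau(m)^{2C}$ over $m\equiv a-b\Mod{q}$ in the interval $[x-y-b,\,x-b]$, which still has length $y$. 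The condition $|b|<x-y$ guarantees $x-b>y\ge qx^\epsilon$, so the same argument, now with $d'=(a-b,q)$ in place of $d$, yields $S_2\ll (y/q)(\tau(q)\log x)^{O_C(1)}$.

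Inserting both bounds into Cauchy--Schwarz gives the claimed estimate, uniformly in $a\neq 0$. The main obstacle is purely bookkeeping: tracking the gcd factors $(a,q)$ and $(a-b,q)$ through Shiu's hypothesis. Both contributions are absorbed into the $\tau(q)^{O_C(1)}$ factor via $\tau(d),\tau(d')\le\tau(q)$, while the modulus requirement $q'\le (y/d)^{1-\delta}$ holds because $y\ge qx^\epsilon$ and $y\le x$ imply $q\le y^{1-\epsilon'}$ for some $\epsilon'>0$ depending only on $\epsilon$.
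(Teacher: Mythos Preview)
Your proposal is correct and follows exactly the standard route the paper has in mind: the paper's proof simply cites Shiu's theorem and refers to \cite[Lemma 8.7]{JM1}, where the argument is precisely Cauchy--Schwarz to decouple $\tau(n)^C$ and $\tau(n-b)^C$, followed by Shiu's bound on each factor. Your handling of the gcd factors $d=(a,q)$ and $d'=(a-b,q)$ and the verification that Shiu's hypotheses survive the reduction are the expected bookkeeping, and your absorption of $\tau(d),\tau(d')\le\tau(q)$ and $q'/\phi(q')\le\tau(q)$ into the $\tau(q)^{O_C(1)}$ factor is exactly how it goes.
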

\begin{proof}
This follows from Shiu's Theorem \cite{Shiu}, and is given in \cite[Lemma 8.7]{JM1}.
\end{proof}
\begin{lemma}[Separation of variables from inequalities]\label{lemma:Separation}
Let $\mathcal Q\subset[ x^\eps, x^{1-\epsilon}]$. Let $N_1,\dots, N_r\ge z_0$ satisfy $N_1\cdots N_r\asymp x$. Let $\alpha_{n_1,\dots,n_r}$ be a complex sequence with $|\alpha_{n_1,\dots,n_r}|\le (\tau(n_1)\cdots \tau(n_r))^{B_0}$. Then, for any choice of $A>0$ there is a constant $C=C(A,B_0,r)$ and intervals $\mathcal{I}_1,\dots,\mathcal{I}_r$ with $\mathcal{I}_j\subseteq [P_j,2P_j]$ of length $\le P_j(\log{x})^{-C}$ such that
\begin{align*}
\sup_{a\neq0}\sum_{\substack{q\in \mathcal Q\\ (q,a)=1}}&\Bigl|\,\sideset{}{^*}\sum_{\substack{n_1,\dots,n_r\\ n_i\sim N_i\forall i}}\alpha_{n_1,\dots,n_r}
\Bigl(\mathbf{1}_{n_1\cdots n_r\equiv a\Mod{q}}-\frac{\mathbf{1}_{(n_1\cdots n_r,q)=1}}{\phi(q)}\Bigr)\Bigr| \\
&  \ll_r \ \frac{x}{(\log{x})^A} + (\log{x})^{r C}\sup_{a\neq0}\sum_{\substack{q\in \mathcal Q\\ (q,a)=1}}\Bigl|\sum_{\substack{n_1,\dots,n_r\\ n_i\in \mathcal{I}_i\forall i}}\alpha_{n_1,\dots,n_r}\Bigl(\mathbf{1}_{n_1\cdots n_r\equiv a\Mod{q}}-\frac{\mathbf{1}_{(n_1\cdots n_r,q)=1}}{\phi(q)}\Bigr)\Bigr|.
\end{align*}
Here $\sum^*$ means that the summation is restricted to $O(1)$ inequalities of the form $n_1^{\alpha_1}\cdots n_r^{\alpha_r}\le B$ for some constants $\alpha_1,\dots \alpha_r$ and some quantity $B$.  The implied constant may depend on all such exponents $\alpha_i$, but none of the quantities $B$.
\end{lemma}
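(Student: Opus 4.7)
The plan is to reduce the constrained sum to a single short-interval box via dyadic partitioning of each variable. I fix $C=C(A,B_0,r)$ to be chosen later, set $K=\lceil(\log x)^C\rceil$, and for each $i\in\{1,\dots,r\}$ split the dyadic range $(N_i,2N_i]$ into $K$ consecutive subintervals $J_{i,1},\dots,J_{i,K}$ of length $\le N_i/K$. Call a product box $J_{\mathbf{j}}:=\prod_i J_{i,j_i}$ \emph{admissible} if every tuple in it satisfies all the $O(1)$ inequalities defining $\sideset{}{^*}\sum$, \emph{inadmissible} if no tuple does, and \emph{boundary} otherwise. Admissible boxes satisfy the constraint cleanly and inadmissible boxes contribute nothing, so by the triangle inequality the left-hand side of the lemma is bounded by
\begin{align*}
K^r \cdot \max_{\mathbf{j}\text{ admissible}}\sup_a\sum_{q\in\mathcal Q}\Bigl|\sum_{n_i\in J_{\mathbf{j}}}\alpha_{n_1,\dots,n_r}(\cdots)\Bigr|+\mathcal E_\partial,
\end{align*}
where $(\cdots)$ is the discrepancy from the lemma and $\mathcal E_\partial$ collects the contribution of boundary boxes. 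Picking $\mathcal I_1\times\cdots\times\mathcal I_r$ to be the box achieving this maximum yields the second term on the right-hand side of the lemma, since $K^r\le(\log x)^{rC}$.

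It then remains to show $\mathcal E_\partial\ll x/(\log x)^A$. Each inequality $n_1^{\alpha_1}\cdots n_r^{\alpha_r}\le B$ carves out a smooth codimension-one hypersurface intersecting at most $O(K^{r-1})$ product boxes, so with $O(1)$ such inequalities there are $O(K^{r-1})$ boundary boxes in all. For a single boundary box $J$, swapping the order of summation and using $|\{q\in\mathcal Q:q\mid n-a\}|\le\tau(n-a)$ together with $\sum_{q\le x}1/\phi(q)\ll\log x$ gives
\begin{align*}
\sup_a\sum_{q\in\mathcal Q}\Bigl|\sum_{n_i\in J}\alpha(\cdots)\Bigr|\ \le\ \sup_a\sum_{n_i\in J}|\alpha|\bigl(\tau(n-a)+\log x\bigr).
\end{align*}
Cauchy--Schwarz then reduces the problem to the two moments $\sum_{n_i\in J}|\alpha|^2$ and $\sup_a\sum_{n_i\in J}\tau(n-a)^2$. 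The first is $\ll(x/K^r)(\log x)^{O_{B_0,r}(1)}$ by Shiu's theorem on each factor; the second I unwrap by fixing $n_2,\dots,n_r$ with product $m$, substituting $N=n_1 m-a$ so that $N$ runs through an arithmetic progression $N\equiv-a\pmod m$ of length $(N_1/K)m$, and applying a Shiu-type divisor bound on APs (in the spirit of Lemma~\ref{lemma:Divisor}) to obtain the same-order bound per box. Summing over the $O(K^{r-1})$ boundary boxes yields $\mathcal E_\partial\ll(x/K)(\log x)^{O_{B_0,r}(1)}$, which is $\ll x/(\log x)^A$ once $C$ is chosen large enough in terms of $A,B_0,r$.

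The main technical point will be arranging the Shiu-type moment bound uniformly in $a$, in particular treating the residue class $-a\pmod m$ when $\gcd(a,m)$ is nontrivial; the standard reduction splits off $(a,m)$ and handles the coprime case, at the cost of a $\tau(m)^{O(1)}$ factor that is absorbed into the $(\log x)^{O_{B_0,r}(1)}$ in the final bound.
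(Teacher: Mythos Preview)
The paper does not supply a proof here, merely citing \cite[Lemma 8.10]{JM1} and noting uniformity in $a$; your box-decomposition argument is the standard way to prove such a statement and is essentially correct.

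There is one technical gap in your plan that is more serious than the gcd issue you flag at the end. When you fix $n_2,\dots,n_r$ with product $m$ and apply a Shiu-type bound (Lemma~\ref{lemma:Divisor}) to $\sum_{n_1\in J_1}\tau(n_1 m-a)^2$, the hypothesis $y\ge q x^{\epsilon}$ of that lemma becomes $(N_1/K)m\ge m\,x^{\epsilon}$, i.e.\ $N_1\ge K x^{\epsilon}$. But the lemma only assumes $N_i\ge z_0=x^{1/(\log\log x)^3}$, which is smaller than any fixed power of $x$, so the application fails as written. The fix is immediate: since $N_1\cdots N_r\asymp x$ with $r=O(1)$, some $N_{i_0}\ge x^{1/r}$, and you should let \emph{that} coordinate play the role of the free variable in the AP; then $N_{i_0}/K\ge x^{1/(2r)}$ and Lemma~\ref{lemma:Divisor} applies with room to spare. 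Conversely, the coprimality issue you highlight is not a real obstacle: Lemma~\ref{lemma:Divisor} is already stated uniformly over the residue class, so no reduction on $(a,m)$ is needed.
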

\begin{proof}
This is \cite[Lemma 8.10]{JM1}, noting that the argument is uniform in the residue $a$.
\end{proof}

%
%
%
%
%%%%%%%%%%%%%%%%%%%%%%%%%%%%%%%%%%%%%%%%%%%%%%%%%%%%%%%%%%
%
%
%
%
\begin{lemma}\label{lemma:SiegelWalfiszMaintain}
Let $C,B>0$ be constants and let $\alpha_n$ be a sequence satisfing the Siegel-Walfisz condition \eqref{eq:SiegelWalfisz}, supported on $n\le 2x$ with $P^-(n)\ge z_0=x^{1/(\log\log{x})^3}$ and satisfying $|\alpha_n|\le \tau(n)^B$. Then $\mathbf{1}_{\tau(n)\le (\log{x})^C}\alpha_n$ also satisfies the Siegel-Walfisz condition.
\end{lemma}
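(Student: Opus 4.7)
The plan is to decompose
\[
\alpha_n \ =\ \gamma_n + \beta_n, \qquad \gamma_n := \mathbf{1}_{\tau(n)\le(\log x)^C}\alpha_n, \qquad \beta_n := \mathbf{1}_{\tau(n)>(\log x)^C}\alpha_n,
\]
and verify the Siegel--Walfisz condition \eqref{eq:SiegelWalfisz} directly for the ``bad part'' $\beta_n$. Since \eqref{eq:SiegelWalfisz} is linear in the sequence and $\alpha_n$ satisfies it by hypothesis, the desired property for $\gamma_n$ will then follow by subtraction. By the triangle inequality it suffices to bound each of
\[
\sum_{\substack{n\sim N,\, n\equiv a\Mod{q}\\(n,d)=1}}|\beta_n| \qquad \text{and}\qquad \frac{1}{\phi(q)}\sum_{\substack{n\sim N\\(n,dq)=1}}|\beta_n|
\]
by $\ll_A N\tau(d)^{O(1)}/(\phi(q)(\log N)^A)$ for any $A>0$.

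The main tool is Rankin's trick: for any integer $K\ge 1$,
\[
|\beta_n| \ \le\ \tau(n)^B \mathbf{1}_{\tau(n)>(\log x)^C} \ \le\ (\log x)^{-CK}\tau(n)^{B+K}.
\]
Applying Shiu's theorem to the multiplicative function $f(n):=\tau(n)^{B+K}\mathbf{1}_{P^-(n)\ge z_0}$, whose values at primes are $f(p)=2^{B+K}$ on $p\ge z_0$ and $f(p)=0$ on smaller primes, for $q\le N^{1-\epsilon}$ we obtain
\[
\sum_{\substack{n\sim N,\, n\equiv a\Mod{q}\\ P^-(n)\ge z_0}}\tau(n)^{B+K} \ \ll\ \frac{N}{\phi(q)\log N}\exp\!\Bigl(2^{B+K}\!\!\sum_{z_0\le p\le N}\frac{1}{p}\Bigr) \ \ll\ \frac{N}{\phi(q)\log N}\Bigl(\frac{\log N}{\log z_0}\Bigr)^{2^{B+K}}\!.
\]
The crucial feature is that under the support condition $P^-(n)\ge z_0$, the factor $(\log N/\log z_0)^{2^{B+K}}\le (\log\log x)^{3\cdot 2^{B+K}}$ is merely a power of $\log\log x$---in sharp contrast to the usual divisor-moment factor $(\log N)^{2^{B+K}-1}$ one would obtain over all integers.

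Multiplying by $(\log x)^{-CK}$ and choosing the integer $K\ge(A+1)/C$, we conclude
\[
\sum_{\substack{n\sim N,\, n\equiv a\,(q)\\(n,d)=1}}|\beta_n| \ \ll\ \frac{N(\log\log x)^{O_{B,K}(1)}}{\phi(q)(\log x)^{CK+1}} \ \ll_A\ \frac{N}{\phi(q)(\log N)^A},
\]
with the average sum bounded identically by applying Shiu without the progression constraint. The remaining range $q>N^{1-\epsilon}$ is handled via the elementary bound $N/q+1=O(N^\epsilon)$ on progression solutions together with the uniform estimate $\tau(n)\le 2^{\Omega(n)}\le 2^{O((\log\log x)^3)}$ valid on the support $P^-(n)\ge z_0$, $n\le 2x$; the bound is trivial when $N$ is so small that no $n\sim N$ has $\tau(n)>(\log x)^C$. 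The principal technical point---and the only role played by the hypothesis $P^-(n)\ge z_0$---is the Shiu estimate above: without the small-prime exclusion, the divisor-moment factor $(\log N)^{2^{B+K}-1}$ would outgrow the Rankin saving $(\log x)^{-CK}$ for every fixed integer $K$, and the argument would collapse.
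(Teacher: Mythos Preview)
Your proof is correct. The paper itself merely cites \cite[Lemma~13.7]{JM1} without reproducing the argument, and your approach---splitting off the large-divisor part $\beta_n$, applying Rankin's trick $|\beta_n|\le(\log x)^{-CK}\tau(n)^{B+K}$, and then invoking Shiu's theorem with the key observation that the roughness condition $P^-(n)\ge z_0$ collapses the divisor-moment factor from a power of $\log N$ to a power of $\log\log x$---is precisely the standard argument for this type of statement and is essentially what the cited reference contains.
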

\begin{proof}
This is \cite[Lemma 13.7]{JM1}.
\end{proof}

%
%
%
%
%%%%%%%%%%%%%%%%%%%%%%%%%%%%%%%%%%%%%%%%%%%%%%%%%%%%%%%%%%
%
%
%
%
\begin{lemma}[Most moduli have small smooth part]\label{lemma:RoughModuli}
Let $Q<x^{1-\epsilon}$ and $A,B>0$. Let $\gamma_b$ be a complex sequence with $|\gamma_b|\le \tau(n)^{B}$ and set $z_0:=x^{1/(\log\log{x})^3}$ and $y_0:=x^{1/\log\log{x}}$. Let $sm(n;z)$ denote the $z$-smooth part of $n$. (i.e. $sm(n;z)=\prod_{p\le z}p^{\nu_p(n)}$). Then we have that
\[
\sum_{\substack{q\sim Q\\ sm(q;z_0)\ge y_0}}\sup_{(a,q)=1}\Bigl|\sum_{b\le  x}\gamma_b\Bigl(\mathbf{1}_{b\equiv a\Mod{q}}-\frac{\mathbf{1}_{(b,q)=1}}{\phi(q)}\Bigr)\Bigr|\ll_{A,B} \frac{x}{(\log{x})^A}.
\]
\end{lemma}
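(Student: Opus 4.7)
The plan is to bound each inner sum trivially via Shiu's theorem on multiplicative functions in arithmetic progressions, then dispose of the remaining sum over moduli using Rankin's trick, exploiting the gap $\log y_0/\log z_0 = (\log\log x)^2$ built into the parameters.

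First, since $|\gamma_b| \leq \tau(b)^B$ and $q \leq x^{1-\epsilon}$, applying Shiu's theorem to each of the two terms separately gives, uniformly for $(a,q)=1$,
\begin{align*}
\Big|\sum_{b \leq x}\gamma_b\Big(\mathbf{1}_{b \equiv a\Mod{q}} - \frac{\mathbf{1}_{(b,q)=1}}{\phi(q)}\Big)\Big| \ \ll \ \frac{x(\log x)^{O_B(1)}}{\phi(q)}.
\end{align*}
Combined with the trivial lower bound $\phi(q) \gg q/\log\log x$ for $q \leq x$, this reduces the claim to showing
\begin{align*}
\sum_{\substack{q \sim Q \\ sm(q;z_0) \geq y_0}} \frac{1}{q} \ \ll_A \ (\log x)^{-A}.
\end{align*}

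Next I would factor $q = q_1 q_2$ with $q_1 = sm(q;z_0)$ the $z_0$-smooth part and $P^-(q_2) > z_0$, so that $(q_1,q_2)=1$. Then
\begin{align*}
\sum_{\substack{q \sim Q \\ sm(q;z_0) \geq y_0}} \frac{1}{q} \ \leq \ \sum_{\substack{q_1 \geq y_0 \\ q_1\ z_0\text{-smooth}}} \frac{1}{q_1} \sum_{\substack{q_2 \leq 2Q/q_1 \\ P^-(q_2) > z_0}} \frac{1}{q_2}.
\end{align*}
By the fundamental lemma of sieves (or partial summation from Mertens's theorem), the inner $q_2$-sum is $\ll (\log x)/\log z_0 = (\log\log x)^3$ uniformly in $q_1$. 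Rankin's trick with $\sigma = 1/\log z_0$ then handles the outer $q_1$-sum:
\begin{align*}
\sum_{\substack{q_1 \geq y_0 \\ q_1\ z_0\text{-smooth}}} \frac{1}{q_1} \ \leq \ y_0^{-\sigma}\prod_{p \leq z_0}\Big(1 - \frac{1}{p^{1-\sigma}}\Big)^{-1} \ \leq \ \exp\Big({-}\frac{\log y_0}{\log z_0} + e\sum_{p \leq z_0}\frac{1}{p} + O(1)\Big),
\end{align*}
since $p^\sigma \leq e$ for $p \leq z_0$. Using $\log y_0/\log z_0 = (\log\log x)^2$ and $\sum_{p \leq z_0}1/p = \log\log x + O(1)$, this is $\leq \exp(-(\log\log x)^2 + O(\log\log x))$, which dominates any fixed negative power of $\log x$ for $x$ sufficiently large.

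No real obstacle is expected: the proof is a direct calculation combining the trivial residue-class bound with a quantitative smooth-number tail estimate. The only minor point is ensuring uniformity in $(a,q)=1$ in Shiu's theorem, which is standard; otherwise the favorable ratio $\log y_0/\log z_0 = (\log\log x)^2$ built into the definitions of $y_0$ and $z_0$ supplies far more decay than is required.
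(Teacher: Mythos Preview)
Your argument is correct and self-contained. The paper does not actually prove this lemma: it simply cites \cite[Lemma 10.11]{JM3}. Your route---trivially bounding the inner sum via Shiu's theorem, factoring out the smooth part, and applying Rankin's trick with parameter~$\sigma=1/\log z_0$---is exactly the standard proof one would expect to find behind that citation. The key arithmetic input is the ratio $\log y_0/\log z_0=(\log\log x)^2$, which you identify and use correctly; this overwhelms the Mertens-type losses of size $O(\log\log x)$ coming from the Euler product and the rough-part sum. One cosmetic point: in the Rankin product $\prod_{p\le z_0}(1-p^{\sigma-1})^{-1}$ the crude bound $p^{\sigma}\le e$ is loose for small primes, but since $\sigma\to 0$ the higher prime-power terms still contribute $O(1)$ to the logarithm, so your displayed estimate $\exp\big(e\sum_{p\le z_0}1/p+O(1)\big)$ is valid for $x$ large.
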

\begin{proof}
This is \cite[Lemma 10.11]{JM3}.
\end{proof}
%
%
%
%
%%%%%%%%%%%%%%%%%%%%%%%%%%%%%%%%%%%%%%%%%%%%%%%%%%%%%%%%%%
%
%
%
%
\begin{proposition}[Reduction to exponential sums]\label{proposition:GeneralDispersion}
Let $\alpha_n,\beta_m,\gamma_{q,d},\lambda_{q,d,r}$ be complex sequences with $|\alpha_n|,|\beta_n|\le \tau(n)^{B_0}$ and $|\gamma_{q,d}|\le \tau(q d)^{B_0}$ and $|\lambda_{q,d,r}|\le \tau(q d r)^{B_0}$. Let $\alpha_n$ and $\lambda_{q,d,r}$ be supported on integers with $P^-(n)\ge z_0$ and $P^-(r)\ge z_0$, and let $\alpha_n$ satisfy the Siegel-Walfisz condition \eqref{eq:SiegelWalfisz}. Let
\[
\mathcal{S}:=\sup_{0<|a|< x^{1+\eps}} \sum_{\substack{d\sim D\\ (d,a)=1}}\sum_{\substack{q\sim Q\\ (q,a)=1}}\sum_{\substack{r\sim R\\ (r,a)=1}}\lambda_{q,d,r}\gamma_{q,d}\sum_{m\sim M}\beta_m\sum_{n\sim N}\alpha_n\Bigl(\mathbf{1}_{m n\equiv a\Mod{q r d}}-\frac{\mathbf{1}_{(m n,q r d)=1}}{\phi(q r d)}\Bigr).
\]
Let $A>0$ and $C=C(A,B_0)$ be sufficiently large in terms of $A,B_0$, and let $N,M$ satisfy
\[
N>Q D (\log{x})^{C},\qquad M>(\log{x})^C.
\]
Then we have
\[
|\mathcal{S}| \ll_{A,B_0} \frac{x}{(\log{x})^A}+M D^{1/2}Q^{1/2}(\log{x})^{O_{B_0}(1)}\Bigl(|\mathcal{E}_1|^{1/2}+|\mathcal{E}_2|^{1/2}\Bigr),
\]
where
\begin{align*}
\mathcal{E}_{1}&:= \sup_{0<|a|< x^{1+\eps}}\sum_{\substack{q\\ (q,a)=1}}\sum_{\substack{d\sim D\\ (d,a)=1}}\sum_{\substack{r_1,r_2\sim R\\ (r_1r_2,a)=1}}\psi_0\Bigl(\frac{q}{Q}\Bigr)\frac{\lambda_{q,d,r_1}\overline{\lambda_{q,d,r_2}} }{\phi(q d r_2)q d r_1}\sum_{\substack{n_1,n_2\sim N\\ (n_1,q d r_1)=1\\(n_2,q d  r_2)=1}}\alpha_{n_1}\overline{\alpha_{n_2}}\\
&\qquad \times\sum_{1\le |h|\le H_1}\hat{\psi}_0\Bigl(\frac{h M}{q d r_1}\Bigr)e\Bigl( \frac{a h \overline{ n_1}}{q d r_1}\Bigr),\\
\mathcal{E}_2&:= \sup_{0<|a|< x^{1+\eps}}\sum_{\substack{q\\ (q,a)=1}}\psi_0\Bigl(\frac{q}{Q}\Bigr)\sum_{\substack{d\sim D\\ (d,a)=1}}\sum_{\substack{r_1,r_2\sim R\\ (r_1,a r_2)=1\\ (r_2,a q d r_1)=1}}\frac{\lambda_{q,d,r_1}\overline{\lambda_{q,d,r_2}}}{q d r_1 r_2}\sum_{\substack{n_1,n_2\sim N\\ n_1\equiv n_2\Mod{q d}\\ (n_1,n_2 q d r_1)=1\\(n_2,n_1 q d r_2)=1\\ |n_1-n_2|\ge N/(\log{x})^C}}\alpha_{n_1}\overline{\alpha_{n_2}}\\
&\qquad \times\sum_{1\le |h|\le H_2}\hat{\psi}_0\Bigl(\frac{h M}{q d r_1 r_2}\Bigr)e\Bigl(\frac{ah\overline{n_1r_2}}{q d r_1}+\frac{ah\overline{n_2 q d r_1}}{r_2}\Bigr),\\
H_1&:=\frac{Q D R}{M}\log^5{x},\\
H_2&:=\frac{Q D R^2}{M}\log^5{x}.
\end{align*}
\end{proposition}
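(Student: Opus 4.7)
The argument implements the Linnik dispersion method, exploiting the Siegel--Walfisz property of $\alpha_n$ and the smoothness of the $m$-variable.

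First I would replace the interval $\mathcal I$ condition by a smooth weight $\psi_0(m/M)$ (the loss from the Fourier decomposition of $\mathbf{1}_{\mathcal I}$ is negligible), and analogously attach smooth cutoffs $\psi_0(q/Q)$, $\psi_0(d/D)$, $\psi_0(r/R)$. Then I apply Cauchy--Schwarz over $(q,d,m)$, squaring over the inner $(r,n)$ sum; the outer factor absorbs $\beta_m\gamma_{q,d}$ via the divisor bound $\|\beta\|_2^2\,\|\gamma\|_2^2\ll QDM(\log x)^{O(1)}$, giving exactly the prefactor $MD^{1/2}Q^{1/2}(\log x)^{O(1)}$ in the conclusion. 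What remains to bound is
\begin{equation*}
T \;:=\; \sum_{q,d,m}\psi_0\!\Bigl(\tfrac{q}{Q}\Bigr)\psi_0\!\Bigl(\tfrac{d}{D}\Bigr)\psi_0\!\Bigl(\tfrac{m}{M}\Bigr)\biggl|\sum_{r,n}\lambda_{q,d,r}\alpha_n\bigl(\sigma(m,n;qdr)-\mu(m,n;qdr)\bigr)\biggr|^2,
\end{equation*}
where $\sigma = \mathbf{1}_{mn\equiv a\,(qdr)}$ and $\mu = \mathbf{1}_{(mn,qdr)=1}/\phi(qdr)$. Since $T$ contains a free $m$-sum of length $M$, reducing $T/M$ to $|\mathcal E_1|+|\mathcal E_2|$ plus admissible error will complete the proof.

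Next I expand the inner square, producing pairs $(r_1,n_1),(r_2,n_2)$ with common outer modulus $qd$, and distribute the product $(\sigma_1-\mu_1)\overline{(\sigma_2-\mu_2)}$ into the four pieces $\sigma_1\sigma_2,\ \sigma_1\mu_2,\ \mu_1\sigma_2,\ \mu_1\mu_2$. To each I apply Poisson summation on $m$: the $\sigma$-parts are indicators of congruence classes modulo $qdr_i$ (singly) or modulo $\mathrm{lcm}(qdr_1,qdr_2)$ (jointly), while the $\mu$-parts carry only the coprimality $(m,qdr_j)=1$, which I expand by M\"obius. In the $\sigma_1\sigma_2$ term, simultaneous solvability of $mn_1\equiv a\,(qdr_1)$ and $mn_2\equiv a\,(qdr_2)$ forces the congruence $n_1\equiv n_2\,(qd)$ and, after M\"obius strips off common factors of $r_1,r_2$, effectively restricts to $(r_1,r_2)=1$, making the joint modulus $qdr_1r_2$. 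The zero Fourier frequencies $h=0$ combine with the $\mu_1\mu_2$ term to form an ``expected main term'' that should cancel up to admissible error; this cancellation is effected by the Siegel--Walfisz hypothesis on $\alpha_n$ (preserved after the truncation $\tau(n)\le (\log x)^C$ via Lemma~\ref{lemma:SiegelWalfiszMaintain}), together with Lemma~\ref{lemma:RoughModuli} to dispose of moduli $qdr$ with large $z_0$-smooth part.

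The non-zero Fourier frequencies yield precisely the targets. From the cross terms $\sigma_i\mu_j$, Poisson on the single modulus $qdr_i$ produces the phase $e(ah\overline{n_i}/(qdr_i))$ with weight $\hat\psi_0(hM/(qdr_i))$ (whence the truncation $|h|\le H_1$) and normalization $1/(qdr_i\phi(qdr_j))$, reproducing $\mathcal E_1$. From the bilinear $\sigma_1\sigma_2$ with $(r_1,r_2)=1$ and $n_1\equiv n_2\,(qd)$, the CRT identity
\begin{equation*}
e\Bigl(\tfrac{h m_0}{qdr_1r_2}\Bigr) \,=\, e\Bigl(\tfrac{ah\overline{n_1 r_2}}{qdr_1}\Bigr)\, e\Bigl(\tfrac{ah\overline{n_2\,qdr_1}}{r_2}\Bigr),
\end{equation*}
applied to the CRT solution $m_0$ of the joint system, reproduces the phase in $\mathcal E_2$, truncated by $|h|\le H_2$. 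The contributions to $\sigma_1\sigma_2$ from the diagonal $n_1=n_2$ and near-diagonal $0<|n_1-n_2|<N(\log x)^{-C}$ are isolated and handled trivially: since $qd\mid n_1-n_2$, the divisor bound Lemma~\ref{lemma:Divisor} controls the pair-count, and the hypothesis $N>QD(\log x)^C$ ensures the resulting error is absorbed into $x/(\log x)^A$. The main obstacle I anticipate is the meticulous bookkeeping of the many coprimality conditions on $(n_i,qdr_j)$ and $(m,qdr_j)$ through the M\"obius and Poisson steps, and verifying that the ``expected main term'' built from the $h=0$ frequencies and the $\mu_1\mu_2,\sigma_i\mu_j$ zero-frequency pieces telescopes to an error admissible under Siegel--Walfisz. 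Uniformity in $|a|<x^{1+\eps}$ is automatic, since the truncations $H_1,H_2$ and all divisor estimates are independent of $a$.
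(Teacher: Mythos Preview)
Your proposal is correct and follows exactly the standard dispersion method argument that the paper invokes by citation: the paper's own proof is simply ``This is \cite[Proposition 14.4]{JM1} with $E=1$. The argument is uniform in the residue class $a$.'' Your sketch (Cauchy--Schwarz over $(q,d,m)$, expand the square, Poisson in $m$, cancel zero frequencies via Siegel--Walfisz, identify the nonzero frequencies from $\sigma_i\mu_j$ and $\sigma_1\sigma_2$ with $\mathcal{E}_1$ and $\mathcal{E}_2$ respectively) is precisely the content of that cited result.
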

\begin{proof}
This is \cite[Proposition 14.4]{JM1} with $E=1$. The argument is uniform in the residue class $a$ (moreso than we need).
\end{proof}
%
%
%
%
%%%%%%%%%%%%%%%%%%%%%%%%%%%%%%%%%%%%%%%%%%%%%%%%%%%%%%%%%%
%
%

The following lemma imposes the uniformity constraint $|a|< x^{1+\eps}$.
\begin{lemma}[Simplification of exponential sum]\label{lemma:Simplification}
Let $N,M,Q,R \le x$ with $NM\asymp x$ and 
\begin{align}
Q R&<x^{2/3},\label{eq:CrudeSize}\\
Q R^2&< M x^{1-3\epsilon}.\label{eq:CrudeSize2}
\end{align}
Let $\lambda_{q,r}$ and $\alpha_n$ be complex sequences supported on $P^-(n),P^-(r)\ge z_0$ with $|\lambda_{q,r}|\le \tau(qr)^{B_0}$ and $|\alpha_n|\le \tau(n)^{B_0}$. Let $H:=\frac{Q R^2}{M}\log^5{x}$ and let
\begin{align*}
\mathcal{E}&:= \sup_{0<|a|< x^{1+\eps}}\sum_{\substack{(q,a)=1}}\psi_0\Bigl(\frac{q}{Q}\Bigr)\sum_{\substack{r_1,r_2\sim R\\ (r_1,a r_2)=1\\ (r_2,a q r_2)=1}}\frac{\lambda_{q,r_1}\overline{\lambda_{q,r_2}}}{q r_1 r_2}\sum_{\substack{n_1,n_2\sim N\\ n_1\equiv n_2\Mod{q}\\ (n_1,n_2qr_1)=1\\(n_2,n_1qr_2)=1\\ |n_1-n_2|\ge N/(\log{x})^C}}\alpha_{n_1}\overline{\alpha_{n_2}}\\
&\qquad\qquad \times\sum_{1\le |h|\le H}\hat{\psi}_0\Bigl(\frac{h M}{q r_1 r_2}\Bigr)e\Bigl(\frac{ah\overline{n_1 r_2}}{q r_1}+\frac{ah\overline{n_2 q r_1}}{r_2}\Bigr).
\end{align*}
Then we have (uniformly in $C$)
\[
\mathcal{E}\ll_{B_0} \exp((\log\log{x})^5) \sup_{\substack{0<|a|< x^{1+\eps}\\R_1,R_2\le 2R}} \sup_{\substack{H'\le H\\ Q'\le 2Q}} |\mathcal{E}'|+\frac{N^2}{Qx^\epsilon},
\]
where
\[
\mathcal{E}'=\sum_{\substack{Q\le q\le Q'\\ (q,a)=1}}\sum_{\substack{R\le r_1\le  R_1\\ R\le r_2\le R_2\\ (r_1a r_2)=1\\ (r_2,a q r_1)=1}}\frac{\lambda_{q,r_1}\overline{\lambda_{q,r_2}}}{q r_1 r_2}\sum_{\substack{n_1,n_2\sim N\\ n_1\equiv n_2\Mod{q}\\ (n_1,qr_1n_2)=1\\ (n_2,qr_2n_1)=1\\ (n_1r_2,n_2)\in\mathcal{N}\\ |n_1-n_2|\ge N/(\log{x})^C}}\alpha_{n_1}\overline{\alpha_{n_2}}\sum_{1\le |h| \le H'} e\Bigl(\frac{ ah\overline{n_2 q r_1}(n_1-n_2)}{n_1 r_2}\Bigr),
\]
and $\mathcal{N}$ is a set with the property that if $(a,b)\in\mathcal{N}$ and $(a',b')\in\mathcal{N}$ then we have $\gcd(a,b')=\gcd(a',b)=1$.
\end{lemma}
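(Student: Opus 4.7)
The plan is to perform the standard dispersion-method reductions, in the style of Bombieri--Friedlander--Iwaniec \cite{BFI1} and Maynard \cite[Lemma 14.5]{JM1}, with particular attention to the uniformity hypothesis $|a|<x^{1+\epsilon}$. First, I would use Mellin/Fourier inversion to write the smooth weights $\psi_0(q/Q)$ and $\hat\psi_0(hM/(qr_1r_2))$ as superpositions of pure characters against rapidly decaying kernels, truncated at height $(\log x)^{O(1)}$. This decouples these weights from the inner variables and reduces the sum to one with interval restrictions $q\in[Q,Q']$, $r_i\in[R,R_i]$, $|h|\le H'$, at a total cost of the factor $\exp((\log\log x)^5)$ stated in the lemma.

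Second, I would combine the two phases into a single one via reciprocity. Since $(qr_1, n_1 r_2)=1$ under the given coprimality assumptions, the identity $\overline B/A + \overline A/B \equiv 1/(AB) \pmod 1$ together with CRT in the form $1/(n_1 r_2)\equiv \overline{r_2}/n_1+\overline{n_1}/r_2\pmod 1$, and the elementary simplification $\overline{n_2}-\overline{n_1}\equiv (n_1-n_2)\overline{n_1n_2}\pmod{r_2}$, yields
\[
\frac{ah\overline{n_1 r_2}}{qr_1} + \frac{ah\overline{n_2 qr_1}}{r_2} \equiv \frac{ah}{qr_1 n_1 r_2} + \frac{ah\overline{n_2 qr_1}(n_1-n_2)}{n_1 r_2} \pmod 1.
\]
The residual phase $e(ah/(qr_1n_1r_2))$ is independent of $n_2$ and varies slowly in $h$, with total oscillation bounded by $|a|H/(QNR^2) \ll x^\epsilon(\log x)^5$ over $|h|\le H$; hence a further Fourier inversion absorbs it into the smooth weight from the first step at no essential cost.

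Third, I would introduce the coprimality condition $(n_1 r_2, n_2)\in\mathcal{N}$ by dyadically decomposing the gcd structure $\gcd(n_1 r_2, n_2)$ into $(\log x)^{O(1)}$ "profile" classes. Each class specifies both the size and the congruence type of the common factor, and by restricting to a single class across all pairs one obtains the cross-coprimality property $\gcd(a,b')=\gcd(a',b)=1$ for distinct $(a,b),(a',b')\in\mathcal{N}$. Passing to the extremal class via the supremum yields the stated bound at only a polylog cost. The error $N^2/(Qx^\epsilon)$ captures the accumulated truncation errors from the Fourier inversions, $h$-boundary contributions at the edge of the range, and pairs discarded by the $\mathcal{N}$-profile decomposition; all are bounded trivially using $|\alpha_n|\le\tau(n)^{B_0}$ together with the divisor estimate Lemma \ref{lemma:Divisor}.

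The principal technical obstacle will be the construction of $\mathcal{N}$ in the third step, ensuring that only a polylog number of profile classes suffices and that each satisfies the required cross-coprimality property across all pairs. A secondary subtlety, absent in the classical $|a|=O(1)$ case, is that the residual reciprocity phase in the second step is not pointwise negligible under $|a|<x^{1+\epsilon}$; however, it remains slowly varying with at most $O(x^\epsilon(\log x)^5)$ total oscillation in $h$, so Fourier-absorbing it produces only acceptable factors within $\exp((\log\log x)^5)$.
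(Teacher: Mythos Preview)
Your outline largely parallels the paper's approach (which defers to \cite[Lemma~14.5]{JM1}), but step~2 contains a genuine error in the uniformity argument. You correctly compute that the residual reciprocity phase $e\bigl(ah/(qr_1n_1r_2)\bigr)$ has total oscillation $|a|H/(QNR^2)\ll x^{\epsilon}(\log x)^5$ over $|h|\le H$ when $|a|<x^{1+\epsilon}$, but then assert this is ``within $\exp((\log\log x)^5)$''. It is not: $x^{\epsilon}=\exp(\epsilon\log x)\gg\exp((\log\log x)^5)$ for every fixed $\epsilon>0$. A partial-summation or Fourier-absorption of this phase therefore costs a genuine $x^{\epsilon}$ multiplicative factor, not the $\exp((\log\log x)^5)$ stated in the lemma.

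The paper's treatment is different, and is precisely where the hypotheses \eqref{eq:CrudeSize} and \eqref{eq:CrudeSize2} are used. One applies the crude pointwise bound $|e(A+B)-e(B)|=|e(A)-1|\ll |A|$ with $A=ah/(qr_1r_2n_1)$ (valid for all $A$, trivially so once $|A|\ge 1$), giving an \emph{additive} error $O(|a|\log^5 x/x)$ per term. Summing this trivially over all variables yields a total contribution
\[
\ll N\Bigl(\frac{N}{Q}+1\Bigr)\cdot H\cdot\frac{|a|(\log x)^{O_{B_0}(1)}}{x}
\ll \frac{N^2x^{\epsilon}}{Q}\Bigl(\frac{QR^2}{Mx}+\frac{Q^2R^2}{x^2}\Bigr)
\ll \frac{N^2}{Qx^{\epsilon}},
\]
using $QR^2<Mx^{1-3\epsilon}$ and $QR<x^{2/3}$. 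Thus the residual phase is absorbed into the \emph{additive} term $N^2/(Qx^{\epsilon})$, not the multiplicative prefactor. Your list of what that additive term ``captures'' omits exactly this contribution, and your step~2 never invokes \eqref{eq:CrudeSize} or \eqref{eq:CrudeSize2} at all.
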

\begin{proof}
This follows as in \cite[Lemma 14.5]{JM1}. The only minor modification to the proof needed to obtain uniformity in $a$ is in the application of Bezout's identity. We provide this for completeness: Indeed, by Bezout's identity,
\begin{align*}
\frac{ah\overline{n_1 r_2}}{q r_1}&=\frac{-ah\overline{q r_1}}{n_1 r_2}+\frac{ah}{q r_1 r_2 n_1}\Mod{1}.
\end{align*}
Since $|h|\le H=(QR\log^5{x})/M$, the final fraction is of size $O(a\log^5{x}/x)$, and so we see that
\[
e\Bigl(\frac{ah\overline{n_1r_2}}{q r_1}+\frac{ah \overline{n_2 qr_1}}{r_2}\Big)=e\Bigl(\frac{ ah\overline{n_2 q r_1}(n_1-n_2)}{n_1 r_2}\Bigr)+O\Bigl(\frac{a}{x}\log^6{x}\Bigr).
\]
Assuming the residue is $|a|< x^{1+\eps}$, the error term above contributes to $\mathcal{E}$ a total
\[
\ll N\Bigl(\frac{N}{Q}+1\Bigr)\frac{Q R^2(\log{x})^{O_{B_0}(1)}}{M x^{1-\eps}} \ll_{B_0} \frac{N^2 x^{\eps}}{Q}\Bigl(\frac{Q  R^2}{M x}+\frac{Q^2 R^2}{x^2}\Bigr) \ll_{B_0} \frac{N^2}{Q x^\epsilon},
\]
using \eqref{eq:CrudeSize} and \eqref{eq:CrudeSize2}. The proof that $\mathcal{E}\ll_{B_0} \exp((\log\log{x})^5)\sup|\mathcal{E}'|+ N^2/Qx^\epsilon$ now follows exactly as in \cite[Lemma 14.5]{JM1}.
\end{proof}
%
%
%
%
%%%%%%%%%%%%%%%%%%%%%%%%%%%%%%%%%%%%%%%%%%%%%%%%%%%%%%%%%%
%
%
%
%
\begin{lemma}[Second exponential sum estimate]\label{lemma:BFI2}
Let
\begin{align}
D R N^{3/2}&< x^{1-2\epsilon},\\
Q D R&< x^{1-2\epsilon}.
\end{align}
Let $\alpha_n$, $\lambda_{d,r}$ be complex sequences with $|\lambda_{d,r}|,|\alpha_n|\le x^{o(1)}$. Let $H_1:=N Q D R(\log{x})^5/x$ and let 
\[
\widetilde{\mathcal{B}}:= \sup_{a\neq0} \sum_{\substack{q\\ (q,a)=1}}\sum_{\substack{d\sim D\\ (d,a)=1}}\sum_{\substack{r_1,r_2\sim R\\ (r_1r_2,a)=1}}\psi_0\Bigl(\frac{q}{Q}\Bigr)\frac{\lambda_{d,r_1}\overline{\lambda_{d,r_2}} }{\phi(q d r_2)q d r_1}\sum_{\substack{n_1,n_2\sim N\\ (n_1,q d r_1)=1\\(n_2,q d  r_2)=1}}\alpha_{n_1}\overline{\alpha_{n_2}}\sum_{1\le |h|\le H_1} \hat{\psi}_0\Bigl(\frac{h M}{q d r_1}\Bigr)e\Bigl( \frac{a h \overline{ n_1}}{q d r_1}\Bigr)
\]
Then we have
\[
\widetilde{\mathcal{B}}\ll\frac{N^2}{Q D x^\epsilon}.
\]
\end{lemma}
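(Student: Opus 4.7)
The sum $\widetilde{\mathcal{B}}$ is a ``first-type'' bilinear form in the Bombieri--Friedlander--Iwaniec dispersion scheme: the phase $e(ah\overline{n_1}/(qdr_1))$ depends only on $n_1$, not on the dispersion pair $(n_1, n_2)$ as in the more subtle $\mathcal{E}_2$ from Proposition~\ref{proposition:GeneralDispersion}. Consequently the estimate should not require the Kuznetsov formula, and instead should follow by Cauchy--Schwarz and Weil's bound for incomplete Kloosterman sums, in the spirit of~\cite{BFI2} as streamlined in~\cite{JM1}. The two hypotheses of the lemma are essentially tailored so that the diagonal and off-diagonal contributions from this argument both beat the target.

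The first step of the plan is to split off the auxiliary variables $(r_2,n_2)$. Since neither the phase $e(ah\overline{n_1}/(qdr_1))$ nor the Fourier weight $\hat\psi_0(hM/(qdr_1))$ involves $r_2$ or $n_2$, the inner sum factorises across $(r_1,n_1,h)$ and $(r_2,n_2)$. Using $\phi(qdr_2)\gg qdr_2/\log\log x$ and the pointwise bounds on $|\lambda_{d,r_2}|,|\alpha_{n_2}|$, the $(r_2,n_2)$-part is bounded trivially by $\ll Nx^{o(1)}/(qd)$. It then suffices to control
\[
\sum_{q\sim Q,\,d\sim D}\frac{\psi_0(q/Q)}{qd}\sum_{r_1\sim R}\frac{|\lambda_{d,r_1}|}{qdr_1}\,\bigl|\mathcal{T}(q,d,r_1)\bigr|,
\]
where $\mathcal{T}(q,d,r_1)=\sum_{n_1\sim N}\alpha_{n_1}\sum_{0<|h|\le H_1}\hat\psi_0(hM/(qdr_1))\,e(ah\overline{n_1}/(qdr_1))$.

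Next I would apply Cauchy--Schwarz in $n_1$ to eliminate the coefficients $\alpha_{n_1}$, and then open the resulting square over the pair $(h_1,h_2)$. For each fixed $(q,d,r_1)$ this reduces matters to the character sum
\[
\sum_{\substack{n_1\sim N\\(n_1,qdr_1)=1}}e\!\left(\frac{a(h_1-h_2)\,\overline{n_1}}{qdr_1}\right).
\]
At $h_1=h_2$ this contributes a diagonal of size $\asymp N\phi(qdr_1)/(qdr_1)$, which after summing over $|h_1|\le H_1$ weighted by $|\hat\psi_0|^2$ gives $\ll N\cdot qdr_1/M$. For $h_1\ne h_2$, the completion method together with Weil's bound on Kloosterman sums yields a contribution $\ll(qdr_1)^{1/2+\epsilon}(a(h_1-h_2),qdr_1)^{1/2}$ per pair; the gcd factor is handled on average by a standard divisor-bound argument.

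Finally, assembling these bounds, inserting $H_1\asymp NQDR/x$ and $M=x/N$, and summing over $(q,d,r_1)$ with the weights from step one, the diagonal contribution is brought below the target $N^2/(QD\,x^\epsilon)$ exactly under the hypothesis $DRN^{3/2}<x^{1-2\epsilon}$, while the Weil off-diagonal requires the hypothesis $QDR<x^{1-2\epsilon}$. Thus the two constraints in the lemma correspond precisely to the two terms produced by this analysis. The main point of care is bookkeeping the $(a\ell,qdr_1)^{1/2}$-factor coming from Weil after averaging over $\ell:=h_1-h_2$, but this follows by the usual divisor-function estimate, and the remaining steps are a routine instance of the first-bilinear-form template.
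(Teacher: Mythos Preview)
Your overall template (trivially detach $(r_2,n_2)$, Cauchy--Schwarz in $n_1$, expand the square in $(h_1,h_2)$, Weil for the off-diagonal) is indeed the one the paper points to via \cite[Lemma 6.10]{JM2} and \cite[Lemma 17.3]{JM1}. But your bookkeeping at the end is wrong, and the error is not cosmetic: the approach as you describe it does \emph{not} prove the lemma under the stated hypotheses.

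Carrying out your Cauchy--Schwarz in $n_1$ for fixed $(q,d,r_1)$, the diagonal $h_1=h_2$ gives $V_{\mathrm{diag}}\asymp N\cdot qdr_1/M$ exactly as you say, hence $|\mathcal T|_{\mathrm{diag}}\ll N(qdr_1/M)^{1/2}$. Feeding this back through your step-one weights $\psi_0(q/Q)/(q^2d^2r_1)$ and summing over $q\sim Q,d\sim D,r_1\sim R$ gives
\[
|\widetilde{\mathcal B}|_{\mathrm{diag}}\ \ll\ \frac{x^{o(1)}N^{5/2}R^{1/2}}{x^{1/2}(QD)^{1/2}},
\]
which beats the target $N^2/(QD\,x^\epsilon)$ precisely when $NQDR<x^{1-2\epsilon}$, \emph{not} when $DRN^{3/2}<x^{1-2\epsilon}$. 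The pair of hypotheses in the lemma does not imply $NQDR<x^{1-2\epsilon}$: take for instance $Q=x^{0.6}$, $D=x^{0.2}$, $R=x^{0.1}$, $N=x^{0.3}$, where both $DRN^{3/2}=x^{0.75}$ and $QDR=x^{0.9}$ satisfy the hypotheses with room to spare, yet $NQDR=x^{1.2}$. A similar check shows your Weil off-diagonal bound also fails in this range. So the claimed matching of hypotheses to diagonal/off-diagonal is simply incorrect.

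The missing idea is the one the paper flags as the ``smooth majorant'': the smooth weight $\psi_0(q/Q)$ on $q$ must be genuinely exploited, not just carried along. In Maynard's argument one brings the $q$-sum inside the Cauchy--Schwarz (extending it by a smooth majorant) so that, after expanding, the smooth $q$-variable can be summed nontrivially. It is this extra averaging over $q$ that produces the asymmetric pair of conditions---one free of $Q$ (namely $DRN^{3/2}<x^{1-2\epsilon}$) and one free of higher powers of $N$ (namely $QDR<x^{1-2\epsilon}$)---in place of the single symmetric condition $NQDR<x^{1-2\epsilon}$ that your version yields.
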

\begin{proof}
This is \cite[Lemma 6.10]{JM2}. 
The argument is uniform in the residue $a$, only applying Cauchy-Schwarz, a smooth majorant, and the Weil bound. See \cite[Lemma 17.3]{JM1}.
\end{proof}
%
%
%
%
%%%%%%%%%%%%%%%%%%%%%%%%%%%%%%%%%%%%%%%%%%%%%%%%%%%%%%%%%%
%
%
%
%
\begin{lemma}[Reduction to smoothed sums]\label{lemma:SmoothReduction}
Let $N\ge x^\epsilon$ and $z\le z_0$ and let $\alpha_m$, $c_q$ be 1-bounded complex sequences.

Imagine that for every choice of $N',D,A,C>0$ with $N' D\asymp N$ and $D\le y_0$, and every smooth function $f$ supported on $[1/2,5/2]$ satisfying $f^{(j)}\ll_j (\log{x})^{C j}$, and for every $1$-bounded complex sequence $\beta_d$ we have the estimate
\[
\sup_{a\neq0} \sum_{q\sim Q} c_q\sum_{m\sim M}\alpha_m\sum_{d\sim D}\beta_d\sum_{n'}f\Bigl(\frac{n'}{N'}\Bigr)\Bigl(\mathbf{1}_{m n' d\equiv a\Mod{q}}-\frac{\mathbf{1}_{(m n' d,q)=1}}{\phi(q)}\Bigr)\ll_{A,C} \frac{x}{(\log{x})^A}.
\]
Then for any $B>0$ and every interval $\mathcal{I}\subseteq [N,2N]$ we have
\[
\sup_{a\neq0}\sum_{q\sim Q}c_q \sum_{m\sim M} \alpha_m\sum_{\substack{n\in\mathcal{I}\\ P^-(n)>z}}\Bigl(\mathbf{1}_{mn\equiv a\Mod{q}}-\frac{\mathbf{1}_{(m n,q)=1}}{\phi(q)}\Bigr)\ll_{B} \frac{x}{(\log{x})^B}.
\]
\end{lemma}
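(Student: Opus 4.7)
My plan is to reduce to the hypothesis by two independent smoothings: replacing the rough condition $\mathbf{1}_{P^-(n) > z}$ by a short convolution $\sum_{d \mid n} \beta_d$ against $1$-bounded weights, and replacing the sharp cutoff $\mathbf{1}_{n \in \mathcal I}$ by a smooth bump $f(n/N')$. Together these express the left-hand side as a sandwich between two sums of $O(\log^2 x)$ pieces of the exact form appearing in the hypothesis, plus controllable boundary and sieve errors.

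For the first smoothing I would apply the fundamental lemma of sieves (as underlying Lemma \ref{lemma:Fundamental}) to produce upper- and lower-bound sieve weights $\lambda_d^\pm$, supported on squarefree $d \le D_0 := y_0$ with $P^+(d) \le z$, satisfying $|\lambda_d^\pm|\le 1$ and $\sum_{d \mid n}\lambda_d^- \le \mathbf{1}_{P^-(n)>z} \le \sum_{d \mid n}\lambda_d^+$. Since $z \le z_0 \ll y_0$, the sieve level $u := \log D_0 / \log z \ge (\log\log x)^2$ makes the averaged gap $\sum_{d\mid n}(\lambda_d^+-\lambda_d^-)$ super-polynomially small, of order $e^{-u} = (\log x)^{-\log\log x}$, beating any fixed power of $\log x$. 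For the second smoothing I would choose a smooth majorant-minorant pair $h_\pm$ with $h_-(t) \le \mathbf{1}_{t \in \mathcal I} \le h_+(t)$, supported within $O(N/(\log x)^C)$ of $\mathcal I$ and satisfying $|h_\pm^{(j)}(t)| \ll_j (\log x)^{Cj}/N^j$. After factoring $n = dn'$ with $d \sim D \le y_0$ and setting $N' := N/d$, $f(u) := h_\pm(duN')$, the identity $dN' = N$ ensures $|f^{(j)}| \ll_j (\log x)^{Cj}$ on $[1/2,5/2]$, matching the form of the hypothesis.

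Applying the hypothesis with $\beta_d = \lambda_d^\pm$, the main sandwich terms contribute $O_A(x(\log x)^2/(\log x)^A)$ after dyadic decomposition in $d$. The sandwich gap
$$(h_+ - h_-)(n)\sum_{d\mid n}\lambda_d^+ \ + \ h_-(n)\sum_{d\mid n}(\lambda_d^+-\lambda_d^-)$$
splits into a \emph{boundary piece} supported on $n$ in two intervals of total length $O(N/(\log x)^C)$ near $\partial\mathcal I$, and a \emph{sieve piece} which is super-polynomially small as above. Bounding each piece by triangle inequality on the two indicator terms $\mathbf{1}_{mn \equiv a(q)}$ and $\mathbf{1}_{(mn,q)=1}/\phi(q)$, and invoking Shiu's divisor bound (Lemma \ref{lemma:Divisor}) on the short $mn$-interval together with $\sum_{q \sim Q} 1/q \ll \log x$, both pieces are $O_B(x/(\log x)^B)$ once $C = C(B)$ is chosen sufficiently large.

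The main obstacle is preserving uniformity in $a$ through the boundary trimming: since we have already passed to absolute values, we need Shiu-type bounds in arithmetic progressions that are uniform in the residue, which Lemma \ref{lemma:Divisor} supplies. A minor subtlety is that both smoothings must be inserted into the original signed summand before any absolute-value argument, so that $\sum_{d\mid n}\lambda_d^\pm$ and $h_\pm(n)$ factor compatibly through $n = dn'$ and yield the clean $\beta_d \cdot f(n'/N')$ structure required.
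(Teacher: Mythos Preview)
Your approach matches the paper's: the proof there is a citation to \cite[Lemma 19.2]{JM1} with the remark that it uses ``a smooth partition of unity and the Fundamental Lemma of the Sieve,'' which is precisely your two-step smoothing via $\lambda_d^\pm$ and $h_\pm$.

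One technical point in your write-up needs repair. You set $N':=N/d$, but the hypothesis fixes a single $N'$ for the entire sum $\sum_{d\sim D}\beta_d$; it cannot vary with $d$. With only dyadic localisation of $d$, the function $h_\pm(dn')$ is not of the form $f(n'/N')$ for a $d$-independent $N'$ (your $f(u)=h_\pm(uN)$ is indeed $d$-free, but then $f(n'/N')=h_\pm(n)$ only when $N'=N/d$). The standard fix is to refine the decomposition of $d$ to intervals of relative length $(\log x)^{-C'}$ rather than dyadic, take $N'=N/D$ with $D$ the left endpoint, and absorb the discrepancy $h_\pm(dn')-h_\pm(Dn')$ into your boundary error: it is supported on $n'$ in a set of measure $O(N(\log x)^{-C}/D)$ and is handled exactly as your $(h_+-h_-)$ piece via Lemma~\ref{lemma:Divisor}. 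Alternatively one can separate $d$ and $n'$ in $h_\pm(dn')$ by Mellin inversion, pushing $d^{-it}$ into $\beta_d$. Either way the number of applications of the hypothesis becomes $O((\log x)^{C'+1})$ rather than $O(\log^2 x)$, which is still acceptable since the hypothesis saves an arbitrary power of $\log x$.
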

\begin{proof}
This is \cite[Lemma 19.2]{JM1}. The argument is completely uniform in the residue $a$, using a smooth partition of unity and the Fundamental Lemma of the Sieve.
\end{proof}
%
%
%
%
%%%%%%%%%%%%%%%%%%%%%%%%%%%%%%%%%%%%%%%%%%%%%%%%%%%%%%%%%%
%
%
%
%
\begin{comment}
\begin{lemma}[refined D-I estimate]
Let $b_{n,r,s}$ be a 1-bounded sequence and $R,S,N,D,C\ll x^{O(1)}$. Let $g(c,d)=g_0(c/C,d/D)$ where $g_0$ is a smooth function supported on $[1/2,5/2]\times [1/2,5/2]$. Then we have
\[
\sum_{r\sim R} \sum_{\substack{s\sim S\\ (r,s)=1}}\sum_{n\sim N}b_{n,r,s}\sum_{d\sim D}\sum_{\substack{c\sim C\\ (rd,sc)=1}}g(c,d) e\Bigl(\frac{n\overline{dr}}{cs}\Bigr)\ll_{g_0} x^\epsilon \Bigl(\sum_{r\sim R}\sum_{s\sim S}\sum_{n\sim N}|b_{n,r,s}|^2\Bigr)^{1/2}\mathcal{J}.
\]
where
\begin{align*}
\mathcal{J}^2 & = a^{{\theta}}CS(N+RS)(C+DR) \ + \ D^2NR \ + \ \\
& \ + \ CD(CS\sqrt{R})^{2{\theta}} (N+RS)^{1-{\theta}}(CS/D+RS)^{1-2{\theta}}. 
\end{align*}
In particular, assuming Selberg's eigenvalue conjecture with ${\theta}=0$, we may omit the last term in $\mathcal{J}$, that is, with $\mathcal{J}_0$
\begin{align*}
\mathcal{J}_0^2 := CS(N+RS)(C+DR) \ + \ D^2NR. 
\end{align*}
\end{lemma}
%
%
%
%
\begin{proof}
This refines Deshouillers-Iwaniec \cite[Theorem 12]{DeshouillersIwaniec}.
\end{proof}
\end{comment}
%
%
%
%
%%%%%%%%%%%%%%%%%%%%%%%%%%%%%%%%%%%%%%%%%%%%%%%%%%%%%%%%%%
%
%
%
%
\section{Well-factorable disperson estimates}\label{sec:WellFactorable}
%
%
%
%
%%%%%%%%%%%%%%%%%%%%%%%%%%%%%%%%%%%%%%%%%%%%%%%%%%%%%%%%%%
%
%
%
%
In this section we establish Proposition \ref{proposition:WellFactorable}, which is the main result toward Theorem \ref{theorem:Factorable}. This can be viewed as a refinement of \cite[Theorem 1]{BFI1}. Indeed, Proposition \ref{proposition:WellFactorable} essentially includes \cite[Theorem 1]{BFI1} as the special case $R=1$. The key advantage in our setup is to make use of the additional flexibility afforded by having a third factor available when manipulating the exponential sums. The argument does not have a specific regime when it is weakest; the critical case for Theorem \ref{theorem:Factorable} is the whole range $x^{1/10}\le N\le x^{1/3}$. (The terms with $N\le x^{1/10}$ or $N>x^{1/3}$ can be handled by a combination of the result for $N\in[x^{1/10},x^{1/3}]$ and Proposition \ref{proposition:DoubleDivisor}.)
%
%
%
%
%%%%%%%%%%%%%%%%%%%%%%%%%%%%%%%%%%%%%%%%%%%%%%%%%%%%%%%%%%
%
%
%
%
\begin{lemma}[Well-factorable exponential sum estimate]\label{lemma:Factorable}
Let $Q < Nx^{-\epsilon}$, $NM\asymp x$, and 
\begin{align}
N^2 R^2 S&< x^{1-7\epsilon},\\
a^{\theta}\; N R^2 S^5 Q&<x^{2-14\epsilon},\\
(NR/S)^{2{\theta}}\;N R^2 S^5 Q&<x^{2-14\epsilon}. %\\
%(aN^2R^2S^3/x)^{{\theta}} R^2S^5Q^2  & < x^{2-14\epsilon} %%%
\end{align}
Let $Q'\le 2Q$, $H'\le x^{o(1)} QR^2 S^2/M$, and $\gamma_r,\lambda_s,\alpha_n$ be 1-bounded complex coefficients, and denote
\begin{align*}
\mathcal{W}&:= \sup_{0<|a|< x^{1+\eps}}\sum_{\substack{Q\le q\le Q'\\ (q,a)=1}}\sum_{\substack{r_1,r_2\sim R}}\sum_{\substack{s_1,s_2\sim S \\ (r_1s_1,a r_2s_2)=1\\ (r_2s_2,a q d r_1 s_1)=1\\ r_1s_1\le B_1\\ r_2s_2\le B_2}}\frac{\gamma_{r_1}\lambda_{s_1}\overline{\gamma_{r_2}\lambda_{s_2}}}{r_1r_2s_1s_2q}\sum_{\substack{n_1,n_2\sim N \\ n_1\equiv n_2\Mod{q d}\\ (n_1,n_2 q d r_1 s_1)=1\\ (n_2,n_1 q d r_2 s_2)=1\\ (n_1r_2s_2,n_2)\in\mathcal{N}\\ |n_1-n_2|\ge N/(\log{x})^C }}\alpha_{n_1}\overline{\alpha_{n_2}}\\
&\qquad\times\sum_{1\le |h| \le H'}e\Bigl(\frac{ah(n_1-n_2)\overline{n_2 r_1 s_1 d q}}{ n_1 r_2s_2}\Bigr)
\end{align*}
for some $(d,a)=1$ where $\mathcal{N}$ is a set with the property that if $(a,b)\in\mathcal{N}$ and $(a',b')\in\mathcal{N}$ then $\gcd(a,b')=\gcd(a',b)=1$.

Then we have
\[
\mathcal{W}\ll \frac{N^2}{Q x^\epsilon}.
\]
\end{lemma}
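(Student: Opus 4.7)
The plan is to massage $\mathcal{W}$ into the quintilinear Kloosterman form of Theorem \ref{thm:DI12}, following the dispersion bookkeeping of \cite[Lemma 6.11]{JM2} but with the refined $\theta$-dependence of Theorem \ref{thm:DI12} replacing the classical Deshouillers--Iwaniec bound. First I would reshape the phase $e\bigl(ah(n_1-n_2)\overline{n_2 r_1 s_1 dq}/(n_1 r_2 s_2)\bigr)$ using Bezout reciprocity, separating the modulus $n_1 r_2 s_2$ into a ``long'' piece (the variable $n_1$, which carries the non-smooth coefficient $\alpha_{n_1}$) and a ``short'' piece $r_2 s_2$. A Cauchy--Schwarz in $n_1$ then doubles the variables $(q, r_1, s_1, n_2, h)$ and leaves an inner fivefold sum whose shape matches Theorem \ref{thm:DI12}.

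The natural identifications are $c \leftrightarrow n_1$, $s \leftrightarrow r_2 s_2$, $r \leftrightarrow r_1 s_1$, $d \leftrightarrow n_2 q$, $n \leftrightarrow h$, with ranges $C \asymp N$, $S_{\rm DI} \asymp RS$, $R_{\rm DI} \asymp RS$, $D \asymp NQ$, $N_{\rm DI} \asymp QR^2 S^2/M$, and ``amplitude'' $A = a(n_1-n_2)$ of size $\ll x^{1+\epsilon} N$. Plugging these into $\mathcal{J}^2$, each of the five summands is designed to be absorbed by exactly one of the three hypotheses of the lemma: the non-exceptional $D^2 N R$ and $CS(N+RS)(C+DR)$ pieces by $N^2 R^2 S < x^{1-7\epsilon}$; the $aNRS$ piece, inflated by $(an'')^\theta = A^\theta$, by $a^\theta N R^2 S^5 Q < x^{2-14\epsilon}$; and the exceptional-spectrum piece $CS(CD\sqrt{R})^{2\theta}(N+RS)^{1-\theta}(C+DR)^{1-2\theta}$ by $(NR/S)^{2\theta} N R^2 S^5 Q < x^{2-14\epsilon}$. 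The sum over $n'' \mid a^\infty$ with $n'' \le 2N$ contributes only a divisor factor $x^{o(1)}$, and after dividing by the Cauchy--Schwarz prefactor we recover the claimed bound $\mathcal{W} \ll N^2/(Q x^\epsilon)$.

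The main obstacle is the bookkeeping around the Cauchy--Schwarz: the doubling must be arranged so that (a) the norm $\|\widetilde{b}(n'')\|_2$ matches the natural divisor-bound size, and (b) each of the five $\mathcal{J}^2$-terms is exactly saturated by one of the three size conditions without slack. The third condition involving $(NR/S)^{2\theta}$ has no analogue in \cite{JM2}: it arises precisely from the exceptional-spectrum contribution in Theorem \ref{thm:DI12}, and its sharpness is what ultimately dictates the improved level ${\pmb \vartheta} = (5-4\theta)/(8-6\theta)$ in Theorem \ref{theorem:Factorable}. Preserving uniformity in $|a|< x^{1+\epsilon}$ throughout is likewise delicate, and works only thanks to the explicit $a^\theta$-dependence of Theorem \ref{thm:DI12} (following Assing--Blomer--Li), which replaces what would otherwise be a catastrophic $x^\theta$ loss when the amplitude $A$ is as large as $x^{1+\epsilon} N$.
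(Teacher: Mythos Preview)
Your proposal has a real gap: it misses the change of variables $n_1-n_2=fdq$ (trading $q$ for a new variable $f$ with $|f|\le 2N/dQ$), which is the first and essential move. This uses $(n_1-n_2)\overline{dq}\equiv f\pmod{n_1r_2s_2}$ to collapse the phase to $e\bigl(ahf\,\overline{r_1s_1n_2}/(n_1r_2s_2)\bigr)$, so that the residue fed into Theorem~\ref{thm:DI12} is exactly $a$. In your scheme the amplitude $A=a(n_1-n_2)\ll x^{1+\epsilon}N$ carries an extra factor $N^\theta$ after raising to the $\theta$-th power, which the hypothesis $a^\theta NR^2S^5Q<x^{2-14\epsilon}$ does not absorb; and with your identifications $c\leftrightarrow n_1$, $d\leftrightarrow n_2q$, this amplitude in fact depends on the very variables that must be summed smoothly, so Theorem~\ref{thm:DI12} is not even applicable as written.

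The Cauchy--Schwarz is also arranged differently from what you describe. After $q\to f$, the paper puts $(f,n_1,n_2,r_1,r_2,s_2)$ outside and only $(s_1,h)$ inside the square; this strips both non-smooth weights $\alpha_{n_1},\alpha_{n_2}$ at once and permits the gluing $b=n_2r_1$, $c=n_1r_2s_2$ into single smooth variables. After expanding the square the two copies of $s_1$ combine into $z\asymp S^2$ and $y=f(h_1s_1-h_2s_1')\ll NR^2S^3/M$, and Theorem~\ref{thm:DI12} is invoked with $(\text{`}C\text{'},\text{`}D\text{'},\text{`}N\text{'},\text{`}R\text{'},\text{`}S\text{'})=(NRS,\,NR,\,Y,\,S^2,\,1)$ --- note $S_{\rm DI}=1$, not $RS$. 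This packaging is precisely what makes the exceptional-spectrum term come out as $(NR/S)^{2\theta}NR^2S^5Q$, matching the third hypothesis on the nose. Your doubling of $(q,r_1,s_1,n_2,h)$ would leave $\alpha_{n_2}$ on a variable that has to be smooth, and the resulting parameter sizes do not reproduce the stated conditions.
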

\begin{proof}
We first make a change of variables. Since we have $n_1\equiv n_2\Mod{q d}$, we let $f d q=n_1-n_2$ for some integer $|f|\le 2N/d Q\le 2N/Q$, and we wish to replace $q$ with $(n_1-n_2)/d f$. We see that 
\[
(n_1-n_2)\overline{d q}=f\Mod{n_1 r_2 s_2}.
\]
Thus the exponential simplifies to
\[
e\Bigl(\frac{ah f\overline{r_1s_1n_2}}{n_1r_2s_2}\Bigr).
\]
The conditions $(n_1,n_2)=1$ and $n_1\equiv n_2\Mod{d q}$ automatically imply $(n_1n_2,d q)=1$, and so we find
\begin{align*}
\mathcal{W}&= \sup_{0<|a|< x^{1+\eps}}\sum_{1\le |f|\le 2N/Q}\sum_{\substack{r_1,r_2\sim R\\ (r_1r_2,a)=1}}\sum_{\substack{s_2\sim S\\ (r_2s_2,a d r_1)=1\\ r_2s_2\le B_2}}\sideset{}{'}\sum_{\substack{n_1,n_2\sim N\\ n_1\equiv n_2\Mod{d f}}}\frac{\gamma_{r_1}\overline{\gamma_{r_2}\lambda_{s_2}}d f}{r_1r_2 s_2(n_1-n_2)}\\
&\qquad\times\sum_{\substack{s_1\sim S\\ (s_1,a n_1 r_2 s_2)=1\\ r_1s_1\le B_1}}\frac{\lambda_{s_1}}{s_1}\sum_{1\le |h|\le H'}\alpha_{n_1}\overline{\alpha_{n_2}}e\Bigl(\frac{a h f\overline{r_1s_1n_2}}{n_1r_2s_2}\Bigr).
\end{align*}
Here we have used $\sum'$ to denote that fact that we have suppressed the conditions
\begin{align*} &(n_1,n_2 r_1 s_1)=1,&\quad &(n_2,n_1 r_2s_2)=1,&\quad& (n_1r_2s_2,n_2)\in\mathcal{N},&\\
 &|n_1-n_2|\ge N/(\log{x})^C,& \quad& ((n_1-n_2)/d f,a r_2 s_2)=1,&\quad &Q d f\le n_1-n_2\le Q' d f. &
\end{align*}
We first remove the dependency between $r_1$ and $s_1$ from the constraint $r_1s_1\le B_1$ by noting 
\begin{align*}
\mathbf{1}_{r_1s_1\le B_1}&=\int_0^1\Bigl(\sum_{j\le B_1/r_1}e(-ju)\Bigr)e(s_1u)du\\
&=\int_0^1 c_{r_1,u} \min\Bigl(\frac{B_1}{R},|u|^{-1}\Bigr)e(s_1u)du
\end{align*}
for some 1-bounded coefficients $c_{r_1,u}$. Thus
\begin{align*}
\mathcal{W}&=\int_0^1\min\Bigl(\frac{B_1}{R},|u|^{-1}\Bigr)\mathcal{W}_2(u)du\ll (\log{x}) \sup_{u}|\mathcal{W}_2(u)|,
\end{align*}
where $\mathcal{W}_2=\mathcal{W}_2(u)$ is given by
\begin{align*}
\mathcal{W}_2&:= \sup_{0<|a|< x^{1+\eps}}\sum_{1\le |f|\le 2N/Q}\sum_{\substack{r_1,r_2\sim R\\ (r_1r_2,a)=1}}\sum_{\substack{s_2\sim S\\ (r_2s_2,a d r_1)=1\\ r_2s_2\le B_2}}\sideset{}{'}\sum_{\substack{n_1,n_2\sim N\\ n_1\equiv n_2\Mod{d f}}}\frac{\gamma_{r_1}c_{r_1,u}\overline{\gamma_{r_2}\lambda_{s_2}}d f}{r_1r_2 s_2(n_1-n_2)}\\
&\qquad\times\sum_{\substack{s_1\sim S\\ (s_1,an_1r_2s_2)=1}}\frac{e(s_1u)\lambda_{s_1}}{s_1}\sum_{1\le |h|\le H'}\alpha_{n_1}\overline{\alpha_{n_2}}e\Bigl(\frac{a h f\overline{r_1s_1n_2}}{n_1r_2s_2}\Bigr).
\end{align*}
In order to show $\mathcal{W}\ll N^2/(Qx^\epsilon)$ we see it is sufficient to show $\mathcal{W}_2\ll N^2/(Q x^{2\epsilon})$. We now apply Cauchy-Schwarz in the $f$, $n_1$, $n_2$, $r_1$, $r_2$ and $s_2$ variables. This gives
\begin{align*}
\mathcal{W}_2\ll \frac{N R S^{1/2}(\log{x})^2}{Q R^2 S^2}\mathcal{W}_3^{1/2},
\end{align*}
where
\begin{align*}
\mathcal{W}_3&:= \sup_{0<|a|< x^{1+\eps}}\sum_{1\le |f|\le 2N/Q}\sum_{\substack{n_1,n_2\sim N\\ n_1\equiv n_2\Mod{d f}}}\sum_{r_1,r_2\sim R}\\
&\qquad\times\sum_{\substack{s_2\sim S\\ (n_2r_1,n_1r_2s_2)=1}}\Bigl|\sum_{\substack{s_1\sim S\\ (s_1,an_1r_2s_2)=1}}\sum_{1<|h|\le H'}\lambda_{s_1}' e\Bigl(\frac{ah f\overline{r_1s_1n_2}}{n_1r_2s_2}\Bigr)\Bigr|^2,
\end{align*}
and where
\[
\lambda_s':=\frac{S}{s}\lambda_{s}e(su)
\]
are 1-bounded coefficients. Note that we have dropped many of the constraints on the summation for an upper bound. In order to show that $\mathcal{W}_2\ll N^2/(Q x^{2\epsilon})$ we see it is sufficient to show that $\mathcal{W}_3\ll N^2 R^2 S^3/x^{5\epsilon}$. We first drop the congruence condition on $n_1,n_2\Mod{d f}$ for an upper bound, and then we combine $n_2r_1$ into a single variable $b$ and $n_1 r_2 s_2$ into a single variable $c$. Using the divisor bound to control the number of representations of $c$ and $b$, and inserting a smooth majorant, this gives
\begin{align*}
\mathcal{W}_3&\le x^{o(1)}\sup_{\substack{ B \ll N R\\ C\ll N R S \\ F\ll N/Q}} \mathcal{W}_4,
\end{align*}
where
\begin{align*}
\mathcal{W}_4&:= \sup_{0<|a|< x^{1+\eps}}\sum_{b}\sum_{\substack{c\\ (b,c)=1}}g(b,c)\sum_{f\sim F}\Bigl|\sum_{\substack{s_1\sim S\\ (s_1,a c)=1}}\sum_{1<|h|\le H'}\lambda_{s_1}' e\Bigl(\frac{a h f\overline{b s_1}}{c}\Bigr)\Bigr|^2\\
g(b,c)&:=\psi_0\Bigl(\frac{b}{B}\Bigr)\psi_0\Bigl(\frac{c}{C}\Bigr).
\end{align*}
In order to show $\mathcal{W}_3\ll N^2 R^2 S^3/x^{5\epsilon}$, it is sufficient to show that
\begin{equation}
\mathcal{W}_4\ll \frac{N^2R^2 S^3}{x^{6\epsilon}}.\label{eq:W4Target}
\end{equation}
We expand the square and swap the order of summation, giving
\[
\mathcal{W}_4= \sup_{0<|a|< x^{1+\eps}}\sum_{\substack{s_1,s_2\sim S\\ (s_1s_2,a)=1}}\sum_{1<|h_1|,|h_2|\le H'}\lambda_{s_1}'\overline{\lambda_{s_2}'}\sum_{b}\sum_{f\sim F}\sum_{\substack{c\\ (c,b s_1s_2)=1}}g(b,c)e\Bigl(a fk\frac{\overline{b s_1s_2}}{c}\Bigr),
\]
where
\[
k=h_1s_1-h_2s_2.
\]
We now split the sum according to whether $k=0$ or not.
\[
\mathcal{W}_4=\mathcal{W}_{k=0}+\mathcal{W}_{k\ne 0}.
\]
To show \eqref{eq:W4Target} it is sufficient to show
\begin{equation}
\mathcal{W}_{k=0}\ll \frac{N^2R^2 S^3}{x^{6\epsilon}}\qquad\text{and}\qquad \mathcal{W}_{k\ne 0}\ll \frac{N^2R^2 S^3}{x^{6\epsilon}}.\label{eq:WTargets}
\end{equation}

We first consider $\mathcal{W}_{k=0}$, and so terms with $h_1s_1=h_2s_2$. Given $h_1,s_1$ there are at most $x^{o(1)}$ choices of $h_2,s_2$, and so at most $x^{o(1)}HS$ choices of $h_1,h_2,s_1,s_2$. Thus we see that
\begin{align*}
\mathcal{W}_{k=0} \ll x^{o(1)}H S B F C&\ll x^{o(1)}\frac{R^2 S^2 Q}{M}\cdot S\cdot N R\cdot \frac{N}{Q}\cdot N R S\\
&\ll \frac{N^4 R^4 S^4}{x^{1-\epsilon}}.\label{eq:WDiag}
\end{align*}
This gives $\mathcal{W}_{k=0}\ll N^2R^2 S^3x^{-6\epsilon}$ as in \eqref{eq:WTargets}, provided
\begin{equation}
N^2 R^2 S\ll x^{1-7\epsilon}.\label{eq:FactorCond1}
\end{equation}

We now consider $\mathcal{W}_{k\ne 0}$. We let $y=fk=f(h_1 s_1-h_2s_2)\ll x^{o(1)} N R^2 S^3/M$ and $z=s_1 s_2\ll S^2$. Recall $y=fk\neq0$. Putting these variables in dyadic intervals and using the symmetry between $y$ and $-y$, we see that
\[
\mathcal{W}_{k\ne 0}\ll \log{x}\sup_{0<|a|< x^{1+\eps}}\sum_{z\sim Z}\sum_{y\sim Y}b_{z,y}\Bigl|\sum_{b}\sum_{\substack{c\\ (c,z b)=1}}g(b,c)e\Bigl(\frac{ay\overline{z b}}{c}\Bigr)\Bigr|,
\]
where $Z\asymp S^2$, $Y\ll x^{o(1)} N R^2 S^3/M$ and
\[
b_{z,y}=\mathop{\sum_{s_1,s_2\sim S}\sum_{1\le |h_1|,|h_2|\le H'}\sum_{f\sim F}}\limits_{\substack{s_1 s_2=z\\ f(h_1s_1-h_2s_2)=y}}1.
\]
By Theorem \ref{thm:DI12} with ({\it `C', `D',`N',`R',`S',`q'} ) $\rightarrow(C, B, Y, Z, 1,1)$, we have that
\begin{equation}
\mathcal{W}_{k\ne 0}\ll x^\epsilon \;\mathcal{J},
\label{eq:WOffDiag1}
\end{equation}
where
\begin{align}\label{eq:Jbound1}
\mathcal{J}^2 & \ll \sum_{\substack{n'' \mid a^{\infty}\\ n'' \leq 2N}} (an'')^{\theta} \Big(CS\Big(RS+\frac{N}{n''}\Big)( C+DR)  + aNRS\Big)\| \tilde{\textbf{b}}(n'') \|_2 \nonumber \\
& \quad + \Big( CS(CD\sqrt{R})^{2{\theta}} (N+RS)^{1-{\theta}}(C+DR)^{1-2{\theta}} + D^2 N R\Big) \| \textbf{b} \|_2^2 \nonumber\\
& \ll \sum_{\substack{n'' \mid a^{\infty}\\ n'' \leq 2Y}} (an'')^{\theta}  \Big(C\Big(Z+\frac{Y}{n''}\Big)( C+BZ)  + aYZ\Big) \| \tilde{\textbf{b}}(n'') \|_2\\
%&+ \Big( CB(C\sqrt{Z})^{2{\theta}} (Y+Z)^{1-{\theta}}(C/B+Z)^{1-2{\theta}} + B^2 Y Z\Big) \| \textbf{b} \|_2^2 \nonumber
& \quad + \Big( C(CB\sqrt{Z})^{2{\theta}} (Y+Z)^{1-{\theta}}(C+BZ)^{1-2{\theta}}
+ B^2 Y Z\Big) \| \textbf{b} \|_2^2 \nonumber
\end{align}
Since the above bound on $\mathcal{J}^2$ is increasing and polynomial in $C,B,Z,Y$, the maximal value is at most $x^{o(1)}$ times the value when
$C=N R S$, $Z=S^2$, $Y=N R^2 S^3/M$ and $B=N R$, and so it suffices to consider this case. We note that our bound $M>NR^2 S $ from \eqref{eq:FactorCond1} then implies that that $Z>Y$, and so, noting that $B Z>C$ and $CBZ^2>B^2YZ$, this simplifies \eqref{eq:Jbound1} to
\begin{align}\label{eq:Jbound2}
\mathcal{J}^2
& \ll \sum_{\substack{n'' \mid a^{\infty}\\ n'' \leq 2Y}} (an'')^{\theta} \Big(CZ(BZ)  + aYZ\Big) \| \tilde{\textbf{b}}(n'') \|_2^2 \nonumber\\
& \quad + \Big( C(CB\sqrt{Z})^{2{\theta}} (Z)^{1-{\theta}}(BZ)^{1-2{\theta}}
+ B^2 Y Z\Big) \| \textbf{b} \|_2^2  \nonumber\\
& \ll  \Big(CBZ  + aY\Big) Z \sum_{\substack{n'' \mid a^{\infty}\\ n'' \leq 2Y}} (an'')^{\theta} \| \tilde{\textbf{b}}(n'') \|_2^2 
\ + \ \Big( C^{1+2{\theta}}BZ^{2-2{\theta}} + B^2 Y Z\Big) \| \textbf{b} \|_2^2.
\end{align}

Now we consider $\| \tilde{\textbf{b}}(n'') \|_2$. We note that given a choice of $z,y$ there are $x^{o(1)}$ choices of $s_1,s_2,f,k$ with $z=s_1s_2$ and $n''y= fk$ by the divisor bound. Thus by Cauchy-Schwarz,
\begin{align*}
 \| \tilde{\textbf{b}}(n'') \|_2^2 &= \sum_{z\sim Z}\sum_{y\sim Y/n''}b_{z,n''y}^2
=\sum_{z\sim Z}\sum_{y\sim Y/n''}\Bigl(\sum_{\substack{s_1,s_2\sim  S\\ s_1 s_2=z}}\sum_{\substack{f\sim F}}\sum_{\substack{ k\asymp Y/F\\ fk=n''y}}\sum_{\substack{1\le |h_1|,|h_2|\ll H \\ k=h_1 s_1-h_2s_2}}1\Bigr)^2\\
& \ll x^{o(1)}\sum_{y\sim Y/n''}
\sum_{\substack{s_1,s_2\sim  S}}\sum_{\substack{f\sim F}}\sum_{\substack{k\asymp Y/F\\ fk=n''y}}\sum_{\substack{ 1\le |h_1|,|h_1'|,|h_2|,|h_2'|\ll H \\ 
 k=h_1 s_1-h_2s_2=h_1' s_1-h_2's_2}}1\\
 & \ll x^{o(1)} \sum_{\substack{s_1,s_2\sim  S}}\sum_{k\asymp Y/F} \sum_{\substack{ 1\le |h_1|,|h_1'|,|h_2|,|h_2'|\ll H \\ 
 k=h_1 s_1-h_2s_2=h_1' s_1-h_2's_2}} \sum_{\substack{\frac{n''}{(n'',k)}\mid f\sim F}}1
\end{align*}
Note $k\asymp Y/F\asymp HS$. Then letting $c=n''/(n'',k) < 2F$, we have $\sum_{\substack{c\mid f\sim F}}1 < 4F/c$ and $n''/c\mid k$, so that
\begin{align}\label{eq:bnnorm}
 \| \tilde{\textbf{b}}(n'') \|_2^2 
&\ll x^{o(1)} \sum_{\substack{c\ll F\\ c\mid n''}} \sum_{\substack{k\asymp HS \\ n''/c \mid k}} \sum_{s_1,s_2\sim  S} 
\sum_{\substack{ 1\le |h_1|,|h_1'|,|h_2|,|h_2'|\ll H  \nonumber \\ 
 k=h_1 s_1-h_2s_2=h_1' s_1-h_2's_2}} \sum_{\substack{c\mid f\sim F}}1\\
&\ll x^{o(1)} \sum_{\substack{c\ll F\\ c\mid n''}} \sum_{\substack{k\asymp HS \\ n''/c \mid k}} \sum_{\substack{s_1,s_2\sim  S\\ (s_1,s_2)=1}} \sum_{\substack{ |h_1|,|m_1|,|h_2|,|m_2|\ll H \\ k=h_1 s_1-h_2s_2\\
t=m_1s_1=m_2s_2}}\frac{F}{c}.
\end{align}
Here we set $m_1 = h_1-h_1'$ and $m_2=h_2-h_2'$, and noted $k=h_1 s_1-h_2s_2=h_1' s_1-h_2's_2$ implies that $t:=(h_1-h_1')s_1=(h_2-h_2')s_2$. We now handle the inner sum in \eqref{eq:bnnorm}:

If $t=0$, this forces $m_1=m_2=0$. Then there are $O(HS)$ choices $h_1,s_1$ and given $k$ by the divisor bound, $x^{o(1)}$ further choices of $h_2,s_2$ such that $k=h_1 s_1-h_2s_2$. Thus
\begin{align}\label{eq:bnnormteq0}
 \| \underset{[t=0]}{\tilde{\textbf{b}}(n'') \|_2^2}
 &\ll x^{o(1)} \sum_{\substack{c\ll F\\ c\mid n''}} \sum_{\substack{k\asymp HS \\ n''/c \mid k}} \sum_{\substack{s_1,s_2\sim  S}} \sum_{\substack{|h_1|,|h_2|\ll H \\ k=h_1 s_1-h_2s_2\\ t=0}} \frac{F}{c} \nonumber\\
&\ll x^{o(1)} \sum_{\substack{c\ll F\\ c\mid n''}} \sum_{\substack{k\asymp HS \\ n''/c \mid k}}(HS)\frac{F}{c}
\ll x^{o(1)} \sum_{\substack{n''/HS \ll c\ll F\\ c\mid n''}} \Big(\frac{HS}{n''}+\frac{1}{c}\Big)(HS)F \ll
x^{o(1)} (HS)^2\frac{F}{n''}.
\end{align}
Otherwise suppose $t\ne 0$. For this, we factor out $d=(s_1,s_2)\ll S$ and $g=(m_1,m_2)$. There are $O(HS/d)$ choices of $h_1,s_1$, and then given $k$ there are $x^{o(1)}$ choices of $h_2,s_2$ such that $k=h_1 s_1-h_2s_2$ by the divisor bound. Further, nonzero $t=m_1s_1=m_2s_2$ forces $m_2=s_1$ and $s_2=m_1$ by coprimality. In particular, this forces $H/g\asymp S/d$. Thus %Let $b=(d,n'')$ so that
\begin{align*}
\| \underset{[t\neq 0]}{\tilde{\textbf{b}}(n'') \|_2^2}
&\ll x^{o(1)} \sum_{\substack{c\ll F\\ c\mid n''}} \sum_{d\ll S} \sum_{g\asymp dH/S}\sum_{\substack{k\asymp HS/d \\ n''/c \mid kd}} \sum_{\substack{s_1,s_2\sim  S/d\\(s_1,s_2)=1}}\sum_{\substack{|m_1|,|m_2|\ll H/g\\ (m_1,m_2)=1}}
\sum_{\substack{|h_1|,|h_2|\ll H\\ k=h_1 s_1-h_2s_2\\ t=m_1s_1=m_2s_2\neq0}} \frac{F}{c} \\
&\ll x^{o(1)} \sum_{\substack{c\ll F\\ c\mid n''}} \sum_{d\ll S} \sum_{g\asymp dH/S}\sum_{\substack{k\asymp HS/d \\ n''/bc \mid k}} 
\frac{HS}{d}\frac{F}{c},
\end{align*}
noting $n''/c\mid kd$ implies $n''/bc\mid k$, where $b := (d,n''/c)$. In particular $n''/bc\ll HS/d$, so
\begin{align}\label{eq:bnnormtnot0}
\| \underset{[t\neq 0]}{\tilde{\textbf{b}}(n'') \|_2^2}
&\ll x^{o(1)} \sum_{d\ll S} \sum_{g\asymp dH/S}\sum_{\substack{n''d/bHS \ll c\ll F\\ c\mid n''}} \Big(\frac{HS/d}{n''/bc}+1\Big)\frac{HS}{d}\frac{F}{c} \nonumber\\
&\ll x^{o(1)} \sum_{d\ll S} \sum_{g\asymp dH/S}\frac{(d,n'')}{d^2n''}(HS)^2F \nonumber\\
&\ll x^{o(1)} H^3S\frac{F}{n''}\sum_{d\ll S}\frac{(d,n'')}{d}
\ll x^{o(1)} H^3S\frac{F}{n''}\sum_{b\mid n''}\sum_{d'\ll S/b}\frac{1}{d'}
\ \ll \ x^{o(1)} H^3S\frac{F}{n''}
\end{align}
by the divisor bound. Note $S>NR^2 S^2/M>H$, since $M>NR^2 S $ by \eqref{eq:FactorCond1}. Thus $(HS)^2>H^3S$, and so \eqref{eq:bnnormteq0} and \eqref{eq:bnnormtnot0} give
\begin{align}
\| \tilde{\textbf{b}}(n'') \|_2^2 
&\ll \| \underset{[t= 0]}{\tilde{\textbf{b}}(n'') \|_2^2} + \| \underset{[t\neq 0]}{\tilde{\textbf{b}}(n'') \|_2^2} \ \ll \
x^{o(1)} (HS)^2\frac{F}{n''}.
\end{align}
In particular, for $n''=1$ we have $\| \textbf{b} \|_2^2 = \| \tilde{\textbf{b}}(1) \|_2^2 \ll x^{o(1)}(HS)^2F$.

Plugging back into \eqref{eq:Jbound2}, we obtain
\begin{align*}
\mathcal{J}^2\, x^{-o(1)}
& \ll a^{\theta}\Big(CBZ  + aY\Big) Z\sum_{\substack{n'' \mid a^{\infty}\\ n'' \leq 2Y}} (n'')^{\theta} (HS)^2\frac{F}{n''} \ + \ \Big( C^{1+2{\theta}}BZ^{2-2{\theta}} + B^2 Y Z\Big) (HS)^2F \\
& \ll  \bigg[a^{\theta} \big(CBZ  + x^{1+\eps}Y\big)+ \Big( C^{1+2{\theta}}BZ^{1-2{\theta}} + B^2 Y \Big)\bigg]Z(HS)^2F
\end{align*}
using $aY\ll x^{1+\eps}Y$, and $\sum_{n''} (n'')^{\theta-1} \ll x^\eps$.

Next, observe $CBZ < xY$: Indeed, to see this, we have $CBZ = (NRS)(NR)(S^2) = N^2R^2S^3$ and $xY = x(NR^2S^3/M) = xNR^2S^5/M \asymp N^2R^2S^5$. By \eqref{eq:FactorCond1}, $B^2 = N^2 R^2 < x$ and so $B^2Y < xY$. Thus we obtain
\begin{align}
\mathcal{J}^2 & \ll x^{\eps}\bigg[a^{\theta} xY + CBZ(C/Z)^{2{\theta}}\bigg]Z(HS)^2F.
\end{align}

Thus $\mathcal{W}_{k\neq0}^2 < x^\epsilon \mathcal{J}^2$ is less than $(N^2R^2S^3x^{-6\epsilon})^2$, provided
\begin{align*}
\bigg[a^{\theta} x Y + CBZ(C/Z)^{2{\theta}}\bigg]
< \Big(\frac{N^2R^2S^3}{x^{7\epsilon}HS}\Big)^2/ZF
& = \Big(\frac{N^2R^2S^2}{x^{7\epsilon}QR^2S^2/M}\Big)^2/(S^2 N/Q) \\ 
& = \Big(\frac{N^2M}{x^{7\epsilon}QS}\Big)^2\frac{Q}{N} \asymp \frac{Nx^{2-14\epsilon}}{QS^2}.
\end{align*}
This is equivalent to the inequalities 
\begin{align*}
\frac{Nx^{2-14\epsilon}}{QS^2} &> a^{\theta} x Y = a^{\theta} x \frac{N^2R^2S^3}{x}\\
%\frac{N^2x}{(QS)^2} &> (aY)^{{\theta}}Y = \Big(\frac{aN^2R^2S^3}{x}\Big)^{{\theta}} \Big(\frac{N^2R^2S^3}{x}\Big)= \Big(\frac{aN^2R^2S^3}{x}\Big)^{{\theta}} \frac{N^2R^2S^3}{x}\\
\frac{Nx^{2-14\epsilon}}{QS^2} &> CBZ(C/Z)^{2{\theta}} = (NRS)(NR)(S^2)\Big(\frac{NRS}{S^2}\Big)^{2{\theta}}
= N^2R^2S^3\Big(\frac{NR}{S}\Big)^{2{\theta}}
\end{align*}
Simplifying, we conclude $\mathcal{W}_{k\neq0} < N^2R^2S^3x^{-6\epsilon}$ holds provided
\begin{align}
a^{\theta} NR^2S^5Q & < x^{2-14\epsilon},\\
%(aN^2R^2S^3/x)^{{\theta}} R^2S^5Q^2  & <  x^{2-14\epsilon},\\
(NR/S)^{2{\theta}} NR^2S^5Q  & <  x^{2-14\epsilon}.
\end{align}
\end{proof}
%
%
%
%
%%%%%%%%%%%%%%%%%%%%%%%%%%%%%%%%%%%%%%%%%%%%%%%%%%%%%%%%%%
%
%
%
%
\begin{proposition}[Well-factorable estimate for convolutions]\label{proposition:MainProp}
Let $NM\asymp x$, and $Q_1,Q_2,Q_3$ satisfy
\begin{align*}
Q_1&<\frac{N}{x^\epsilon},\\
N^2 Q_2 Q_3^2&<x^{1-8\epsilon},\\
%N^2 Q_1 Q_2^4 Q_3^3&<x^{2-15\epsilon},\\
a^{\theta}\; N Q_1 Q_2^5 Q_3^2&<x^{2-15\epsilon},\\
(NQ_3/Q_2)^{2{\theta}}\;N Q_1 Q_2^5 Q_3^2 &<x^{2-15\epsilon}. %\\
%(aN^2Q_3^2Q_2^3/x)^{{\theta}} Q_1^2Q_3^2Q_2^5  & < x^{2-15\epsilon}
\end{align*}
Let $\alpha_n,\beta_m$ be $1$-bounded complex sequences such that $\alpha_n$ satisfies the Siegel-Walfisz condition \eqref{eq:SiegelWalfisz} and $\alpha_n$ is supported on $n$ with all prime factors bigger than $z_0=x^{1/(\log\log{x})^3}$. Let $\gamma_{q_1},\lambda_{q_2},\nu_{q_3}$ be 1-bounded complex coefficients supported on $(q_i,a)=1$ for $i\in\{1,2,3\}$. Let
\[
\Delta(q):=\sum_{n\sim N}\alpha_n\sum_{m\sim M}\beta_m\Bigl(\mathbf{1}_{nm\equiv a\Mod{q}}-\frac{\mathbf{1}_{(n m,q)=1}}{\phi(q)}\Bigr).
\]
Then for every $A>0$ we have
\[
\sup_{0<|a|< x^{1+\eps}}\sum_{q_1\sim Q_1}\sum_{q_2\sim Q_2}\sum_{q_3\sim Q_3}\gamma_{q_1}\lambda_{q_2}\nu_{q_3}\Delta(q_1q_2q_3)\ll_A\frac{x}{(\log{x})^A}.
\]
\end{proposition}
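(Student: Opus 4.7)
The plan is to apply the dispersion method, following the template of Bombieri--Friedlander--Iwaniec \cite{BFI1} and Maynard \cite{JM2}, but exploiting the third available factor through the sharpened Type II estimate of Lemma \ref{lemma:Factorable}.

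First I would carry out standard preliminary reductions: restrict to moduli with small $z_0$-smooth part via Lemma \ref{lemma:RoughModuli}, pass to smooth cutoffs in the $n$-variable via Lemma \ref{lemma:SmoothReduction}, separate the $q_i$-variables from joint inequalities via Lemma \ref{lemma:Separation}, and preserve the Siegel--Walfisz condition on $\alpha_n$ via Lemma \ref{lemma:SiegelWalfiszMaintain}. After these steps one may assume $q_1, q_2, q_3$ are coprime and $z_0$-rough, so that the product $q_1 q_2 q_3$ factors uniquely into these pieces and the coefficients $\gamma_{q_1}\lambda_{q_2}\nu_{q_3}$ retain their $1$-bounded, separable structure.

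Next I would invoke Proposition \ref{proposition:GeneralDispersion} under the assignment $Q = Q_1$, $D = 1$, $R = Q_2 Q_3$, taking $\gamma_{q,d} = \gamma_{q_1}$ and $\lambda_{q,d,r} = \lambda_{q_2}\nu_{q_3}$ under the identification $r = q_2 q_3$. The required hypothesis $N > QD(\log x)^C$ then reduces to $N > Q_1(\log x)^C$, which is exactly the first assumed condition $Q_1 < N/x^\epsilon$. The proposition then decomposes the main quantity, up to an admissible error, into the two exponential sums $\mathcal{E}_1$ (the diagonal Type I piece) and $\mathcal{E}_2$ (the off-diagonal Type II piece). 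I would bound $\mathcal{E}_1$ directly by Lemma \ref{lemma:BFI2}, whose hypotheses $DRN^{3/2}<x^{1-2\epsilon}$ and $QDR<x^{1-2\epsilon}$ translate to $Q_2 Q_3 N^{3/2}<x^{1-2\epsilon}$ and $Q_1 Q_2 Q_3<x^{1-2\epsilon}$, both of which follow from $N^2 Q_2 Q_3^2<x^{1-8\epsilon}$ together with the overall level $Q_1 Q_2 Q_3 \le x^{5/8}$ and $NM \asymp x$.

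For $\mathcal{E}_2$ I would first apply Lemma \ref{lemma:Simplification}, whose size conditions $QR<x^{2/3}$ and $QR^2<Mx^{1-3\epsilon}$ are again implied by the Proposition's hypotheses (using $NM \asymp x$), to obtain the simplified expression $\mathcal{E}'$. The crucial step is then to split each $r_i$ using the separable structure $\lambda_{q_2}\nu_{q_3}$: I would write $r_i = r_i' s_i'$ with $r_i' \sim Q_3$ and $s_i' \sim Q_2$. The resulting expression exactly fits the setup of Lemma \ref{lemma:Factorable} under the identification $Q = Q_1$, $R = Q_3$, $S = Q_2$, with trivial external $d$. The four hypotheses of Lemma \ref{lemma:Factorable} then translate verbatim into the four conditions stated in Proposition \ref{proposition:MainProp}, so its conclusion yields the desired bound. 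The main obstacle is not conceptual but computational: the Proposition's conditions are calibrated precisely so that, after the cumulative $\epsilon$-losses along the dispersion chain, every intermediate hypothesis in Lemmas \ref{lemma:BFI2}, \ref{lemma:Simplification}, and \ref{lemma:Factorable} is simultaneously met, and verifying this requires careful size-parameter bookkeeping in terms of $N, M, Q_1, Q_2, Q_3$ and the exceptional-spectrum parameter $\theta$.
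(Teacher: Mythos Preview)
Your approach is essentially the same as the paper's, with one technical slip. Lemma \ref{lemma:RoughModuli} only lets you assume the $z_0$-smooth part of the modulus is bounded by $y_0$, not that it is trivial, so you cannot simply take $D=1$ in Proposition \ref{proposition:GeneralDispersion} (which requires $P^-(r)\ge z_0$). The paper handles this by factoring $q_2=q_2'q_2''$ and $q_3=q_3'q_3''$ into $z_0$-rough and $z_0$-smooth parts, setting $d=q_2''q_3''$ so that $D\le y_0^2=x^{o(1)}$, and placing the rough parts $q_2',q_3'$ into the $r$-variable; since $D=x^{o(1)}$ this costs nothing in the exponent bookkeeping and one then fixes $d$ and sums over it trivially at the end. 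Also, Lemmas \ref{lemma:SmoothReduction} and \ref{lemma:Separation} are not needed here: $\alpha_n$ is already an arbitrary $1$-bounded sequence and there is no interval constraint on $n,m$ to remove at this stage. Otherwise your outline---apply Proposition \ref{proposition:GeneralDispersion} with $Q=Q_1$, bound $\mathcal E_1$ by Lemma \ref{lemma:BFI2}, then pass $\mathcal E_2$ through Lemma \ref{lemma:Simplification} and finish with Lemma \ref{lemma:Factorable} under the identification $(R,S)=(Q_3',Q_2')$---matches the paper exactly.
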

\begin{proof}
First we factor $q_2=q_2'q_2''$ and $q_3=q_3'q_3''$ where $P^-(q_2'),P^-(q_3')>z_0\ge P^+(q_2''),P^+(q_3'')$ into parts with large and small prime factors. By putting these in dyadic intervals, we see that it suffices to show for every $A>0$ and every choice of $Q_2'Q_2''\asymp Q_2$, $Q_3'Q_3''\asymp Q_3$ that
 \begin{align*}
 &\sup_{0<|a|< x^{1+\eps}}\sum_{q_1\sim Q_1}\sum_{\substack{q_2'\sim Q_2'\\ P^-(q_2')>z_0}}\sum_{\substack{q_2''\sim Q_2''\\ P^+(q_2'')\le z_0}}\sum_{\substack{q_3'\sim Q_3'\\ P^-(q_3')\ge z_0}}\sum_{\substack{q_3''\sim Q_3''\\ P^+(q_3'')\le z_0}}\gamma_{q_1}\lambda_{q_2'q_2''}\nu_{q_3'q_3''}\Delta(q_1q_2'q_2''q_3'q_3'')\ll_A\frac{x}{(\log{x})^A}.
 \end{align*}
By Lemma \ref{lemma:RoughModuli} we have the result unless $Q_2'',Q_3''\le y_0=x^{1/\log\log{x}}$. We let $d=q_2'' q_3''$ and define
\[
\lambda_{q,d,r}:=\mathbf{1}_{P^-(r)> z_0}\sum_{\substack{q_2''q_3''=d\\ q_1\sim Q_1\\ q_2''\sim Q_2''\\ q_3''\sim Q_3''\\ P^+(q_2''q_3'')\le z_0}}\,\,\sum_{\substack{q_2'q_3'=r\\ q_2'\sim Q_2'\\ q_3'\sim Q_3'}}\lambda_{q_2'q_2''}\nu_{q_3'q_3''}.
\]
We note that $\lambda_{q,d,r}$ doesn't depend on $q$. With this definition we see it suffices to show that for every $A>0$ and every choice of $D,R$ with $D R\asymp Q_2 Q_3$ and $D\le y_0^2$ we have that
\[
\sup_{0<|a|< x^{1+\eps}}\sum_{q\sim Q_1}\sum_{d\sim D}\sum_{r\sim R}\gamma_{q}\lambda_{q,d,r}\Delta(q d r)\ll_A \frac{x}{(\log{x})^A}.
\]
We now apply Proposition \ref{proposition:GeneralDispersion} (we may apply this since $N>Q_1 x^\epsilon>Q_1D(\log{x})^C$ and $N<x^{1-\epsilon}$ by assumption of the lemma). This shows that it suffices to show that
\[
|\mathcal{E}_1|+|\mathcal{E}_2|\ll \frac{N^2}{D Q_1 y_0},
\]
where
\begin{align*}
\mathcal{E}_{1}&:=\sup_{0<|a|< x^{1+\eps}}\sum_{\substack{q\\ (q,a)=1}}\sum_{\substack{d\sim D\\ (d,a)=1}}\sum_{\substack{r_1,r_2\sim R\\ (r_1r_2,a)=1}}\psi_0\Bigl(\frac{q}{Q_1}\Bigr)\frac{\lambda_{q,d,r_1}\overline{\lambda_{q,d,r_2}} }{\phi(q d r_2)q d r_1}\sum_{\substack{n_1,n_2\sim N\\ (n_1,q d r_1)=1\\(n_2,q d  r_2)=1}}\alpha_{n_1}\overline{\alpha_{n_2}}\\
&\qquad \times\sum_{1\le |h|\le H_1}\hat{\psi}_0\Bigl(\frac{h M}{q d r_1}\Bigr)e\Bigl( \frac{a h \overline{ n_1}}{q d r_1}\Bigr),\\
\mathcal{E}_2&:=\sup_{0<|a|< x^{1+\eps}}\sum_{\substack{q\\ (q,a)=1}}\psi_0\Bigl(\frac{q}{Q_1}\Bigr)\sum_{\substack{d\sim D\\ (d,a)=1}}\sum_{\substack{r_1,r_2\sim R\\ (r_1,a r_2)=1\\ (r_2,a q d r_1)=1}}\frac{\lambda_{q,d,r_1}\overline{\lambda_{q,d,r_2}}}{q d r_1 r_2}\sum_{\substack{n_1,n_2\sim N\\ n_1\equiv n_2\Mod{q d}\\ (n_1,n_2 q d r_1)=1\\(n_2,n_1 q d r_2)=1\\ |n_1-n_2|\ge N/(\log{x})^C}}\alpha_{n_1}\overline{\alpha_{n_2}}\\
&\qquad \times\sum_{1\le |h|\le H_2}\hat{\psi}_0\Bigl(\frac{h M}{q d r_1 r_2}\Bigr)e\Bigl(\frac{ah\overline{n_1r_2}}{q d r_1}+\frac{ah\overline{n_2 q d r_1}}{r_2}\Bigr),\\
H_1&:=\frac{Q D R}{M}\log^5{x},\\
H_2&:=\frac{Q D R^2}{M}\log^5{x}.
\end{align*}
Since $\lambda_{q,d,r}$ is independent of $q$, we may apply Lemma \ref{lemma:BFI2} to conclude that
\[
\mathcal{E}_1\ll \frac{N^2}{Q_1 D x^\epsilon},
\]
provided we have
\begin{align}
D R N^{3/2}&<x^{1-2\epsilon},\\
Q_1 D R&<x^{1-2\epsilon}.
\end{align}
These are both implied by the conditions of the lemma, recalling that $DR\asymp Q_2Q_3$. 

Thus it remains to bound $\mathcal{E}_2$. Since $D\le y_0^2=x^{o(1)}$, it suffices to show 
\[
\mathcal{E}_3\ll \frac{N^2}{Q_1 x^{\epsilon/10}},
\]
for each $d\le y_0^2$, where $\mathcal{E}_3=\mathcal{E}_3(d)$ is given by
\begin{align*}
\mathcal{E}_3&:= \sup_{0<|a|< x^{1+\eps}}\sum_{\substack{(q,a)=1}}\psi_0\Bigl(\frac{q}{Q_1}\Bigr)\sum_{\substack{r_1,r_2\sim R\\ (r_1,a r_2)=1\\ (r_2,a q d r_1)=1}}\frac{\lambda_{q,d,r_1}\overline{\lambda_{q,d,r_2}}}{q r_1 r_2}\sum_{\substack{n_1,n_2\sim N\\ n_1\equiv n_2\Mod{q d}\\ (n_1,n_2 q d r_1)=1\\(n_2,n_1 q d r_2)=1\\ |n_1-n_2|\ge N/(\log{x})^C}}\alpha_{n_1}\overline{\alpha_{n_2}}\\
&\qquad\qquad \times\sum_{1\le |h|\le H_2}\hat{\psi}_0\Bigl(\frac{h M}{q d r_1 r_2}\Bigr)e\Bigl(\frac{ah\overline{n_1r_2}}{q d r_1}+\frac{ah\overline{n_2 q d r_1}}{r_2}\Bigr).
\end{align*}
Since $\lambda_{q,d,r}$ is independent of $q$ and we treat each $d$ separately, we may suppress the $q,d$ dependence by writing $\lambda_{r}$ in place of $\lambda_{q,d,r}$. We now apply Lemma \ref{lemma:Simplification}. This shows it suffices to show that
\[
\sup_{\substack{0<|a|< x^{1+\eps}\\R_1,R_2\le 2R}} \sup_{\substack{H'\le H\\ Q'\le 2Q}}\mathcal{E}'\ll \frac{N^2}{Q_1 x^{\epsilon/2}},
\]
where
\[
\mathcal{E}':= \sum_{\substack{Q_1\le q\le Q_1'\\ (q,a)=1}}\sum_{\substack{R\le r_1\le  R_1\\ R\le r_2\le R_2\\ (r_1, a r_2)=1\\ (r_2,a q d r_1)=1}}\frac{\lambda_{r_1}\overline{\lambda_{r_2}}}{q d  r_1 r_2}\sum_{\substack{n_1,n_2\sim N\\ n_1\equiv n_2\Mod{q d}\\ (n_1,q d r_1n_2)=1\\ (n_2,q d r_2n_1)=1\\ (n_1r_2,n_2)\in\mathcal{N}}}\alpha_{n_1}\overline{\alpha_{n_2}}\sum_{1\le |h| \le H'} e\Bigl(\frac{ ah\overline{n_2 q d r_1}(n_1-n_2)}{n_1 r_2}\Bigr).
\]

We recall the definition of $\lambda_{q,d,r}$ and expand it as a sum. Since $d$ is fixed, there are $x^{o(1)}$ possible choices of $q_2'',q_3''$. Fixing one such choice, we then see $\mathcal{E}'$ is precisely of the form considered in Lemma \ref{lemma:Factorable}. This then gives the result, provided
\begin{align*}
Q_1&<\frac{N}{x^\epsilon},\\
N^2 Q_2' Q_3'{}^2 &<x^{1-7\epsilon},\\
%N^2 Q_1 Q_2'{}^4 Q_3'{}^3&<x^{2-14\epsilon},\\
a^{\theta} \; N Q_1 Q_2'{}^5 Q_3'{}^2&<x^{2-14\epsilon},\\
(NQ_3'/Q_2')^{2{\theta}}\;N Q_1 Q_2'{}^5 Q_3'{}^2&<x^{2-14\epsilon}.
\end{align*}
Since $Q_2'\le Q_2$ and $Q_3'\le Q_3$, these bounds follow from the assumptions of the lemma.
\end{proof}
%
%
%
%
%%%%%%%%%%%%%%%%%%%%%%%%%%%%%%%%%%%%%%%%%%%%%%%%%%%%%%%%%%
%
%
%
%
\begin{proof}[Proof of Proposition \ref{proposition:WellFactorable}]
First we note that by Lemma \ref{lemma:Divisor} the set of $n,m$ with
$\max(|\alpha_n|,|\beta_m|)$ $\ge(\log{x})^C$ and $n m\equiv a\Mod{q}$ has size $\ll x(\log{x})^{O_{B_0}(1)-C}/q$, so these terms contribute negligibly if $C=C(A,B_0)$ is large enough. Thus, by dividing through by $(\log{x})^{2C}$ and considering $A+2C$ in place of $A$, it suffices to show the result when all the sequences are 1-bounded. ($\alpha_n$ still satisfies \eqref{eq:SiegelWalfisz} by Lemma \ref{lemma:SiegelWalfiszMaintain}.) The result follows from the Bombieri-Vinogradov Theorem if $Q\le x^{1/2-\epsilon}$, so we may assume that $Q\in[x^{1/2-\epsilon},x^{5/8-10\epsilon}]$.

We use Lemma \ref{lemma:Separation} to remove the condition $n m\in\mathcal{I}$, and see suffices to show for $B=B(A)$ sufficiently large in terms of $A$
\begin{equation}
\sup_{0<|a|< x^{1+\eps}} \sum_{q\le Q}\lambda_q\sum_{n\in\mathcal{I}_N}\alpha_n\sum_{\substack{m\in \mathcal{I}_M}}\beta_m\Bigl(\mathbf{1}_{n m\equiv a\Mod{q}}-\frac{\mathbf{1}_{(nm,q)=1}}{\phi(q)}\Bigr)\ll_B\frac{x}{(\log{x})^B}
\label{eq:FactorableTarget}
\end{equation}
uniformly over all intervals $\mathcal{I}_N\subseteq[N,2N]$ and $\mathcal{I}_M\subseteq[M,2M]$.

Write $Q=x^{\pmb \vartheta}$ and take $v\in [.35,.50]$ to be determined. Let us define for $x^\epsilon\le N\le x^{1/3+\epsilon}$
\[
Q_1:=\frac{N}{x^\epsilon},\qquad Q_2:=x^{{\pmb \vartheta} -v+\epsilon},\qquad Q_3:=\frac{x^{v}}{N}.
\]
We note that $Q_1Q_2Q_3=x^{\pmb \vartheta}$ and $Q_1,Q_2,Q_3\ge 1$. Since $\lambda_q$ is triply well-factorable of level $Q=x^{\pmb \vartheta}$, we can write
\begin{equation}
\lambda_{q}=\sum_{q_1q_2q_3=q}\gamma^{(1)}_{q_1}\gamma^{(2)}_{q_2}\gamma^{(3)}_{q_3},
\label{eq:LambdaFact}
\end{equation}
for some 1-bounded sequences $\gamma^{(1)},\gamma^{(2)},\gamma^{(3)}$ with $\gamma^{(i)}_q$ supported on $q\le Q_i$ for $i\in\{1,2,3\}$.

We now  substitute \eqref{eq:LambdaFact} into \eqref{eq:FactorableTarget} and put each of $q_1,q_2,q_3$ into one of $O(\log^3{x})$ dyadic intervals $(Q_1',2Q_1']$, $(Q_2',2Q_2']$ and $(Q_3',2Q_3']$ respectively. Since $Q_1'\le Q_1$, $Q_2'\le Q_2$ and $Q_3'\le Q_3$ and $Q_1Q_2Q_3=Q=x^{\pmb \vartheta}$ we have
\begin{align*}
Q_1'&\le\frac{N}{x^\epsilon},\\
N^2 Q_2' Q_3'{}^2 &\le \frac{Q^2}{Q_2}x^{\epsilon} \le Qx^{v+\epsilon} <x^{1-7\epsilon},\\
%N^2 Q_1' Q_2'{}^4 Q_3'{}^3&\le \frac{Q^3}{Q_1} \le \frac{Q^4}{x^{v-3\epsilon}} <x^{2-14\epsilon},\\
a^{\theta}\; N Q_1' Q_2'{}^5 Q_3'{}^2
&\le a^{\theta}\;N^2 Q_2^5 Q_3^2 
\le a^{\theta}\; Q^2 Q_2^3\\
& \ =
\frac{x^{\alpha\theta}\; Q^5}{x^{3v-3\epsilon}}<x^{2-14\epsilon}
\end{align*}
since $|a|^\theta \ll x^{\alpha\theta}$, as well as,
\begin{align*}
(NQ_3'/Q_2')^{2{\theta}}\;N Q_1' Q_2'{}^5 Q_3'{}^2 & \le (x^{2v}/Q)^{2{\theta}}\;\frac{Q^5}{x^{3v-3\epsilon}}\\
& \ = Q^{5-2{\theta}}x^{v(4{\theta}-3)+3\epsilon}<x^{2-14\epsilon}
\end{align*}
provided
\begin{align*}
{\pmb \vartheta}+v &< 1-8\epsilon,\\
\alpha\theta + 5{\pmb \vartheta}-3v &<2-17\epsilon\\
{\pmb \vartheta}(5-2{\theta})+v(4{\theta}-3) &< 2-17\epsilon %\\
%(\alpha+3{\pmb \vartheta}-v-1){\theta} + 5{\pmb \vartheta}-3v &< 2-17\epsilon\\
\end{align*}
Choosing $v = 1-{\pmb \vartheta}-9\epsilon$, this simplifies as
\begin{align*}
\alpha\theta + 8{\pmb \vartheta} &<5-44\epsilon,\\
{\pmb \vartheta}(8-6{\theta}) &< 5-4{\theta}-44\epsilon %\\
%(8+4{\theta}){\pmb \vartheta} &< 5+(2-\alpha){\theta}-17\epsilon
\end{align*}
Thus for ${\pmb \vartheta}= \min(\frac{5-4{\theta}}{8-6{\theta}},\, \frac{5-\alpha\theta}{8})$,
%,\, \frac{5+(2-\alpha) {\theta}}{8+4{\theta}})-40\epsilon$,
we see that Proposition \ref{proposition:MainProp} now gives the result.

\noindent
Note such ${\pmb \vartheta}\in [.57,.625]$ by ${\theta} \le \frac{7}{32}$, in particular $v = 1-{\pmb \vartheta}-9\epsilon \in [.35,.5]$ as desired.
\end{proof}
We have now established both Proposition \ref{proposition:DoubleDivisor} and Proposition \ref{proposition:WellFactorable}, thereby completing the proof of Theorem \ref{theorem:Factorable}.
%
%
%
%
%%%%%%%%%%%%%%%%%%%%%%%%%%%%%%%%%%%%%%%%%%%%%%%%%%%%%%%%%%
%
%
%
%
\section{Factorization of well-factorable support}
For ${\pmb \vartheta}\in[\tfrac{1}{2},1]$, let $D = x^{{\pmb \vartheta}-2\delta}$
In this section, we establish a suitable factorization of the integers $d$ in the well-factorable support $\mathcal D^{\textnormal{well}}$,
\begin{align}\label{eq:Dwelldef}
\mathcal D^{\textnormal{well}}(D) \ = \ \big\{ \, d=p_1\cdots p_r \ : \  p_1\cdots p_{m-1}p_m^2 < D \quad {\rm for \ all} \ \ m\le r\big\}.
\end{align}
This will serve as the key technical input for the proof of Theorems \ref{thm:twinbound} and \ref{thm:Goldbachbound}, since $\mathcal D^{\textnormal{well}}$ contains the support for the (upper and lower bound) linear sieve weights.

Let us define
\begin{align}\label{eq:thetat}
{\pmb \vartheta}(t) = {\pmb \vartheta}_\alpha(t) &:= \min\Big(\frac{1+t}{2}, \ 
\frac{1-(3/2-2{\theta})t}{1+{\theta}}, \ 
1 - \frac{\alpha\theta + 3t}{2}\Big).
\end{align}
Observe ${\pmb \vartheta}(t)$ is a unimodal function of $t$, with maximum at the {\it balance point} $\mu$, given by $\mu = \min(\frac{1-{\theta}}{4-3{\theta}},\, \frac{1-\alpha\theta}{4})$, so we may express ${\pmb \vartheta}(t)$ as
\begin{align*}
{\pmb \vartheta}(t) &=
\begin{cases}
\min\big(
\frac{1-(3/2-2{\theta})t}{1+{\theta}}, \ 
1 - \frac{\alpha\theta + 3t}{2}\big) & \text{if} \ \ t > \mu,\\
\frac{1+t}{2} & \text{if} \ \ t \le \, \mu.
\end{cases}
\end{align*}
%%%%%%
\begin{proposition}\label{prop:piecewiseeta}
Let $0<\delta<10^{-5}$ and $A\in[x^{\delta},x^{1/3-\delta/2}]$.
Let $p_1\ge\cdots\ge p_r$ be primes and write $p_i=x^{t_i}$. If $d=p_1\cdots p_r\in\mathcal D^{\textnormal{well}}(x^{{\pmb \vartheta}-2\delta})$, then there is a factorization $d=abc$ such that $a\le A$ and
\begin{equation}\label{eq:JMfactorable}
\begin{split}
A^2 \ bc^2 & \ \le \ x^{1-3\delta},\\
x^{\alpha\theta} A a b^5 c^2 & \ \le \ x^{2-3\delta},\\
(Ac/b)^{2{\theta}} \,A a b^5 c^2 & \ \le \ x^{2-2\delta},
\end{split}
\end{equation} 
provided
\begin{align*}
{\pmb \vartheta} \ \le \ {\pmb \vartheta}(t_1),
\end{align*}
for ${\pmb \vartheta}(t)$ as in \eqref{eq:thetat}. Moreover if $t_1 \le \,\mu := \min(\frac{1-{\theta}}{4-3{\theta}},\, \frac{1-\alpha\theta}{4})$, then it suffices that
\begin{align}\label{eq:thetat123}
{\pmb \vartheta} \ \le \ {\pmb \vartheta}(t_1,t_2,t_3) := \max\Big\{ {\pmb \vartheta}(t_1)&,\,{\pmb \vartheta}(t_2),\,{\pmb \vartheta}(t_1+t_2),\,{\pmb \vartheta}(t_1+t_2+t_3),\,w(t_1,t_2,t_3),\,w(t_1,t_3,t_2) \nonumber\\
& \ \ 
\psi\big({\pmb \vartheta}(t_1+t_3),\,t_1+2t_2+t_3\big), \ \psi\big({\pmb \vartheta}(t_2+t_3),\,2t_1+t_2+t_3\big)\Big\},
\end{align}
where $\psi(x,y) := x\cdot\1_{x\ge y}$ and
\begin{align*}
w(t_1,t_2,t_3) &= \psi\Big(\min\Big\{\;\frac{(5-4{\theta})-(3-4{\theta})t_3}{8-6{\theta}},\; \frac{5-{\alpha\theta}-3t_3}{8},\;1-2t_2\Big\},\,\frac{1+t_1}{2}\Big).
\end{align*}
\end{proposition}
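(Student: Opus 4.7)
Writing $A=x^u$, $a=x^{u_1}$, $b=x^{u_2}$, $c=x^{u_3}$ and taking logarithms base $x$, the three inequalities of \eqref{eq:JMfactorable} become the linear system
\begin{align*}
2u+u_2+2u_3 &\le 1,\\
\alpha\theta+u+u_1+5u_2+2u_3 &\le 2,\\
2\theta(u+u_3-u_2)+u+u_1+5u_2+2u_3 &\le 2,
\end{align*}
subject to $u_1+u_2+u_3\le{\pmb \vartheta}-2\delta$ and $u_1\le u$. The plan is to produce an explicit factorization $d=abc$ realizing a feasible triple $(u_1,u_2,u_3)$, with different constructions tailored to the relative sizes of $t_1,t_2,t_3$. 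The main tool is the greedy principle afforded by the well-factorable condition $p_1\cdots p_{m-1}p_m^2\le D$: for any target $T\in[p_1,D]$, multiplying $p_1,p_2,\ldots$ in order and stopping once the running product first exceeds $T/p_1$ yields a divisor $e\mid d$ with $e\in[T/p_1,T]$, the complementary divisor $d/e$ being controlled symmetrically. The multiplicative greedy error of size $p_1\le x^{t_1}$ is absorbed into the $2\delta$ slack.

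In Case~1, $t_1>\mu$, the top prime cannot be hidden inside $b$ or $c$ without violating a constraint, so we set $u_1=t_1$ (i.e.\ $a$ essentially equal to $p_1$, admissible because $t_1\le 1/3-\delta/2\le u$). Substituting $u_1=t_1$ and $u_1+u_2+u_3={\pmb \vartheta}-2\delta$ into the three constraints and eliminating $u_2$ or $u_3$ yields bounds of the form ${\pmb \vartheta}\le (1+t_1)/2$, ${\pmb \vartheta}\le (1-(3/2-2\theta)t_1)/(1+\theta)$, and ${\pmb \vartheta}\le 1-(\alpha\theta+3t_1)/2$; the optimal split of the tail primes between $b$ and $c$ achieves precisely their minimum ${\pmb \vartheta}(t_1)$.

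In Case~2, $t_1\le\mu$, no single prime dominates and several constructions become feasible, each corresponding to one term in the max \eqref{eq:thetat123}. Placing $p_1$, $p_2$, $p_1p_2$, or $p_1p_2p_3$ into $a$ and arguing as in Case~1 yields the four terms ${\pmb \vartheta}(t_1),\,{\pmb \vartheta}(t_2),\,{\pmb \vartheta}(t_1+t_2),\,{\pmb \vartheta}(t_1+t_2+t_3)$ directly; placing $p_1p_3$ or $p_2p_3$ into $a$ gives the two $\psi({\pmb \vartheta}(t_i+t_j),\cdot)$ terms, where the outer $\psi$-indicator encodes the requirement $a\le A$. The two $w(t_1,t_2,t_3)$ terms arise from a richer construction in which $p_2$ (respectively $p_3$) is steered into the middle factor $b$ rather than into $a$ or $c$: putting a large prime into $b$ relaxes the $u_2$-coefficient-heavy second and third inequalities at the cost of tightening the first to ${\pmb \vartheta}\le 1-2t_2$, which is one of the three expressions inside the inner minimum defining $w$, the other two arising from the corresponding re-optimization of the second and third constraints; the outer $\psi(\cdot,(1+t_1)/2)$ indicator expresses that this construction is only profitable when the straightforward Case~1-style bound ${\pmb \vartheta}(t_1)=(1+t_1)/2$ does not already dominate.

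The principal obstacle is bookkeeping. For each candidate construction one must verify that the greedy procedure actually realizes divisors of the claimed sizes (which requires a careful sub-case analysis when $t_2$ or $t_3$ approaches $\mu$), that the resulting $(u_1,u_2,u_3)$ lies in the feasible polytope after absorbing the $\delta$ slack, and that the indicator functions in $\psi$ and $w$ correctly encode the admissibility conditions. The $w$-terms in particular rest on a delicate three-way balance not visible from the one-variable analysis of ${\pmb \vartheta}(t)$; reconciling the inner minimum with the outer $\psi$ cutoff is the most intricate step, and must be carried out separately for the two cases where $p_2$ or $p_3$ is routed into $b$.
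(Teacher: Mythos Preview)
Your proposal has the roles of the factors $a$ and $b$ essentially swapped, which is a genuine structural error.  The paper's mechanism is \emph{not} to place $p_1$ (or another product of top primes) into $a$; indeed, $A$ can be as small as $x^\delta$, so the inequality $a\le A$ forbids this in general.  (Your admissibility claim ``$t_1\le 1/3-\delta/2\le u$'' reads the range of $A$ backwards: writing $A=x^u$ we have $u\le 1/3-\delta/2$, not $u\ge 1/3-\delta/2$.)  Instead, the paper isolates a \emph{critical interval} $[x^v,x^u]$ with $v=2{\pmb\vartheta}-1$ and $u$ given by \eqref{def:critint}, and shows (Lemma~\ref{lem:criterion}) that if any $b\mid d$ lands in this interval and the remaining primes can be split as $a\le A$, $c\le D/(Ab)$, then \eqref{eq:JMfactorable} follows.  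The terms ${\pmb\vartheta}(t_1),{\pmb\vartheta}(t_2),{\pmb\vartheta}(t_1+t_2),{\pmb\vartheta}(t_1+t_2+t_3)$ arise from the equivalence $\tau\in[v,u]\iff{\pmb\vartheta}\le{\pmb\vartheta}(\tau)$, choosing $b\in\{p_1,p_2,p_1p_2,p_1p_2p_3\}$; the tail is then distributed between $a$ and $c$ by the well-factorable inequality $p_j^2\le D/(p_1\cdots p_{j-1})=AC/(\text{tail so far})$, which forces $p_j\le A$ or $p_j\le C$ at each step.  The $\psi$-indicators in the $p_1p_3$ and $p_2p_3$ terms encode not ``$a\le A$'' but the side condition $p_k^2\le D/b$ needed to start this induction when the leftover prime $p_k$ is not the smallest.

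Your account of the $w$-terms is also off: they do not come from ``steering $p_2$ or $p_3$ into $b$''.  They correspond to case~(iv) of Proposition~\ref{prop:Dwell}, handled by the three-bin greedy algorithm of Lemma~\ref{lem:greedy}: one seeds $(d_1,d_2,d_3)$ with a permutation of $(p_1,p_2,p_3)$ satisfying $d_i\le D_i=(A,\,D^2/x,\,x^{1-2\delta}/(DA))$, then appends $p_4,p_5,\ldots$ greedily.  If the process exhausts all primes, a direct check gives \eqref{eq:JMfactorable}; if it stalls at some $p_j$, then $d_2p_j>D_2=x^v$ while $p_j\le p_3\le x^{u-v}$, so $b:=d_2p_j$ lands in the critical interval and Lemma~\ref{lem:criterion} applies.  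The condition $p_3\le x^{u-v}$ is what produces the $t_3$-dependence inside the minimum defining $w$, the condition $p_2^2\le x/D$ (i.e.\ $2t_2\le 1-{\pmb\vartheta}$) gives the term $1-2t_2$, and the outer $\psi(\cdot,(1+t_1)/2)$ encodes $p_1\le x^v$, i.e.\ that $p_1$ is \emph{below} the critical interval so the straightforward choice $b=p_1$ is unavailable.
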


\begin{remark}
Using Selberg's bound ${\theta}=\frac{1}{2}$, %the instances of $\psi(x,y)$ above reduce to $x$, and
the result recovers Corollary 3.8 from \cite{Ltwin}.
\end{remark}
%%%%%%

For notational ease we are using $A = Nx^{-\delta}$. Also, on the first attempt working through technicalities, we encourage the reader to set $\delta=0$ in order to better view the key features.

Before proving the proposition, we need some lemmas. The first gives a general-purpose criterion to factor an integer $d$.

\begin{lemma}\label{lem:criterion}
Let $D = x^{{\pmb \vartheta}-2\delta}$ for ${\pmb \vartheta} <\frac{5}{8}$. If $d=abc$ factors for some $a,b,c\ge1$, with
\begin{align}\label{eq:criterion}
a\le A, \quad c\le D/Ab, \quad b\in [x^v,x^u],
\end{align}
then factorization $d=abc$ satisfies \eqref{eq:JMfactorable}.
Here the \emph{critical interval} $[x^v,x^u]$ is defined by
\begin{align}\label{def:critint}
v := 2{\pmb \vartheta}-1 
\qquad {\rm and}\qquad
u := \min\Big(\frac{1-(1+{\theta}){\pmb \vartheta}}{3/2-2{\theta}},\, \frac{2 -2 {\pmb \vartheta}-{\alpha\theta}}{3} \Big).
\end{align}
\end{lemma}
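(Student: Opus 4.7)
The plan is to directly reduce each of the three conditions in \eqref{eq:JMfactorable} to an inequality that follows immediately from the location of $b$ in the critical interval $[x^v,x^u]$. The main observation is that the hypothesis $c\le D/(Ab)$ gives two very useful bounds: $A^2c^2 \le D^2/b^2$ and $Ac/b \le D/b^2$. Combined with $a\le A$, one gets $A a b^5 c^2 \le b^3 D^2$. So each condition in \eqref{eq:JMfactorable} becomes a pure inequality in $b$ and $D$.

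First I would verify the first condition. Multiplying $A^2c^2 \le D^2/b^2$ by $b$ gives $A^2 bc^2 \le D^2/b$, and using the lower bound $b \ge x^v = x^{2{\pmb \vartheta}-1}$ together with $D = x^{{\pmb \vartheta}-2\delta}$ yields
\[
A^2 bc^2 \ \le \ \frac{x^{2{\pmb \vartheta}-4\delta}}{x^{2{\pmb \vartheta}-1}} \ = \ x^{1-4\delta} \ \le \ x^{1-3\delta}.
\]

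Next for the second condition, I would apply $Aab^5c^2 \le b^3 D^2$ and use the upper bound $b \le x^u$ with $u \le \frac{2-2{\pmb \vartheta}-{\alpha\theta}}{3}$, yielding
\[
x^{\alpha\theta}Aab^5c^2 \ \le \ x^{\alpha\theta} \cdot x^{2-2{\pmb \vartheta}-{\alpha\theta}} \cdot x^{2{\pmb \vartheta}-4\delta} \ = \ x^{2-4\delta} \ \le \ x^{2-3\delta}.
\]

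Finally for the third condition, combine $(Ac/b)^{2{\theta}} \le (D/b^2)^{2{\theta}} = D^{2{\theta}}/b^{4{\theta}}$ with $Aab^5c^2 \le b^3 D^2$ to get $(Ac/b)^{2{\theta}}Aab^5c^2 \le D^{2(1+{\theta})} b^{3-4{\theta}}$. Since ${\theta}\le \tfrac{7}{32}$ gives $3-4{\theta}>0$, the other upper bound $u \le \frac{1-(1+{\theta}){\pmb \vartheta}}{3/2-2{\theta}}$ on $b$ yields $b^{3-4{\theta}} \le x^{2(1-(1+{\theta}){\pmb \vartheta})}$, hence
\[
(Ac/b)^{2{\theta}}Aab^5c^2 \ \le \ x^{2(1+{\theta})({\pmb \vartheta}-2\delta)+2(1-(1+{\theta}){\pmb \vartheta})} \ = \ x^{2-4(1+{\theta})\delta} \ \le \ x^{2-2\delta},
\]
using $1+{\theta}\ge \tfrac{1}{2}$. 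There is no real obstacle: the two expressions defining $u$ are tailored precisely to balance the second and third inequalities, and the lower endpoint $v$ is tailored to balance the first. The only sanity check worth noting is the positivity of the exponent $3-4{\theta}$ in the third calculation, which is safely ensured by the Kim--Sarnak bound.
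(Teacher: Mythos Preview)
Your proof is correct and follows essentially the same route as the paper's: both use $Ac\le D/b$ to reduce each of the three inequalities in \eqref{eq:JMfactorable} to a bound on $b$ alone, with the lower endpoint $v$ handling the first and the two upper-endpoint constraints in $u$ handling the second and third. The only cosmetic quibble is that your final justification ``using $1+{\theta}\ge\tfrac12$'' is weaker than needed---since ${\theta}\ge 0$ you have $1+{\theta}\ge 1$, giving $x^{2-4(1+{\theta})\delta}\le x^{2-4\delta}\le x^{2-2\delta}$ directly.
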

\begin{proof}
Using $Ac\le D/b$ we have
\begin{align*}
A^2 \ b c^2 & \ \le \ D^2/b \le x^{2({\pmb \vartheta}-2\delta)-(2{\pmb \vartheta}-1)} = x^{1-4\delta}.
\end{align*}
as well as
\begin{align*}
(Ac/b)^{2{\theta}} A a b^5 c^2  \le  (D/b^2)^{2{\theta}}D^2b^3
&= D^{2+2{\theta}}b^{3-4{\theta}}\\
& \le\, x^{2-4\delta} \qquad\qquad \text{provided}\qquad 
b^{3-4{\theta}} \le x^{2-(2+2{\theta}){\pmb \vartheta}}.\\
x^{{\alpha\theta}} A a b^5 c^2 & \le  x^{{\alpha\theta}}D^2b^3\\
& \le\, x^{2-4\delta} \qquad\qquad \text{provided}\qquad
b^3 \le\, x^{2-2{\pmb \vartheta}-{\alpha\theta}}.
\end{align*}
That is, $b\le\, x^u$ for $u=\min\big(\frac{1-(1+{\theta}){\pmb \vartheta}}{3/2-2{\theta}},\, \frac{2 -2 {\pmb \vartheta}-{\alpha\theta}}{3}\big)$. 

Hence for $b\in [x^v,x^u]$ the factorization $d=abc$ satisfies \eqref{eq:JMfactorable}.
\begin{comment}
\begin{align*}
(Ac/b)^{2{\theta}} A a b^5 c^2 & \le  (D/b^2)^{2{\theta}}D^2b^3
= D^{2+2{\theta}}b^{3-4{\theta}}\\
& \le x^{({\pmb \vartheta}-2\delta)(2+2{\theta})}x^{2-(2+2{\theta}){\pmb \vartheta}}\\
& = x^{2 - 4\delta(1+{\theta})} \ < \ x^{2-4\delta}
\end{align*}
Similarly,
\begin{align*}
(x^\alpha)^{2{\theta}} A a b^5 c^2 & \le  x^{2\alpha{\theta}}D^2b^3
= x^{2\alpha{\theta}+2({\pmb \vartheta}-2\delta)}b^3\\
& \le \, x^{2\alpha{\theta}+2({\pmb \vartheta}-2\delta) + 2(1-{\pmb \vartheta}-\alpha{\theta})} 
\ = \ x^{2-4\delta}.
\end{align*}
This gives \eqref{eq:JMfactorable}.
\end{comment}
\end{proof}

Next, if the primes dividing $d$ are small enough, we may use the greedy algorithm to factor $d$ as follows.

\begin{lemma}\label{lem:greedy}
Let $D = x^{{\pmb \vartheta}-2\delta}$ for ${\pmb \vartheta}<\frac{5-{\alpha\theta}}{8}$, and take $v=2{\pmb \vartheta}-1$ and $u$ as in \eqref{def:critint}. For $r\ge3$, let $p_1\ge\cdots\ge p_r$ be primes for which $d=p_1\cdots p_r\in \mathcal D^{\textnormal{well}}(D)$, and $p_3 < x^{u-v}$ and ($p_2^2 < x^{1-{\pmb \vartheta}}$, $p_1 < x^v$) or ($p_1^2 < x^{1-{\pmb \vartheta}}$, $p_2 < x^v$). Then $d$ has a factorization $d=abc$ satisfying \eqref{eq:JMfactorable}.
\begin{comment}
, provided there is a factorization $p_1p_2p_3 = d_1d_2d_3$ satisfying
\begin{align}\label{eq:greedy}
d_1\le A, \quad d_3\le x^{1-2\delta}/DA,\quad d_2\le D^2/x = x^{2{\pmb \vartheta}-1}.
\end{align}
\end{comment}
\end{lemma}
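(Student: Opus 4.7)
By Lemma~\ref{lem:criterion}, it suffices to produce $a\le A$, $b\in[x^v,x^u]$, and $c$ with $abc=d$ and $a\ge Ad/D$; this is a combinatorial partition of the primes $p_1,\ldots,p_r$ into three subsets satisfying the product bounds. The key structural input is that $p_i\le p_3<x^{u-v}$ for all $i\ge 3$: greedy multiplication of such primes into a running product, stopped at the first crossing of a threshold $T$, always lands in $[T,T x^{u-v})$.

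My plan is to build $b$ first. If $p_3\cdots p_r\ge x^v$, greedy on the small primes alone produces $b\in[x^v,x^u]$. Otherwise I seed with $p_2$: this is valid because in both cases of the hypothesis one has $p_2<x^v$ (case (i) via $p_2\le p_1<x^v$, case (ii) by direct assumption), so that greedy completion with small primes still yields $b<x^v\cdot x^{u-v}=x^u$. Having fixed $b$, I distribute the remaining primes between $a$ and $c$: assign $p_1$ to $a$ if $p_1\le A$ and to $c$ otherwise, and similarly for $p_2$ if it was not seeded into $b$; then greedily adjoin leftover small primes to $a$ until it crosses the threshold $Ad/D$. The well-factorable condition $d\cdot p_r<D$ gives $D/d>p_r$, ensuring the target interval width for $a$ accommodates the greedy overshoot, so $a\in[Ad/D,A]$ and consequently $c=d/(ab)\le D/(Ab)$.

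The main technical obstacle is to verify case-by-case that the construction is compatible with all of $a\le A$, $b\le x^u$, and $Abc\le D$ simultaneously. The hardest sub-case is when $p_1>A$ and must be placed into $c$: here one checks that the residual mass of small primes after seeding $b$ is large enough to bring $a$ up to $Ad/D$, while $c=p_1\cdot(\textnormal{leftover})$ remains $\le D/(Ab)$. Each sub-case reduces to a short list of linear inequalities in ${\pmb \vartheta}$, $v$, $u$, and the hypothesis size bounds on $p_1,p_2$; the assumption ${\pmb \vartheta}<(5-{\alpha\theta})/8$ is precisely what governs both defining branches of $u$ in \eqref{def:critint} and closes the argument, reducing to \cite[Corollary 3.8]{Ltwin} in the Selberg-conjecture limit ${\theta}=\tfrac{1}{2}$.
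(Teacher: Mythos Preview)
Your construction has a genuine gap in case~(i). You seed $b$ with $p_2$ in both cases, observing that $p_2<x^v$; but this leaves $p_1$ to be placed in $a$ or $c$, and in case~(i) the only hypothesis on $p_1$ is $p_1<x^v$. Once $b\ge x^v$ is fixed, the joint capacity is $A\cdot C=D/b\le x^{1-{\pmb\vartheta}}$ (ignoring $\delta$), so placing $p_1$ requires $p_1\le\max(A,C)$; this fails whenever $p_1>A$ and $Ap_1>x^{1-{\pmb\vartheta}}$. For ${\pmb\vartheta}>3/5$ that is possible: e.g.\ at ${\pmb\vartheta}\approx 0.617$, $A=x^{0.2}$, $p_1=x^{0.22}$ gives $p_1>A$ while $Ap_1=x^{0.42}>x^{0.383}=x^{1-{\pmb\vartheta}}$, so $p_1>C$ as well. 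Your appeal to ``$D/d>p_r$ accommodates the greedy overshoot'' does not help here --- the obstruction occurs at the very first placement of $p_1$, before any small primes are added.

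The paper avoids this by seeding the middle bucket $d_2$ (target size $D_2=D^2/x\approx x^v$) with whichever of $p_1,p_2$ is bounded by $x^v$ --- so $d_2=p_1$ in case~(i), $d_2=p_2$ in case~(ii). The \emph{other} large prime then satisfies $p_j^2<x^{1-{\pmb\vartheta}}=D_1D_3$ and hence fits into $d_1$ or $d_3$. From this initialization the paper runs a three-bucket greedy: if all primes are absorbed with $d_i\le D_i$, the system~\eqref{eq:JMfactorable} is verified directly from the bucket bounds (this also handles the regime where $d$ is too small for any subproduct to reach $x^v$, which your construction of $b$ does not address). If instead the greedy stalls at some $p_j$ with $j\ge 3$, one takes $b=d_2p_j\in(D_2,D_2x^{u-v}]\subset[x^v,x^u]$ and distributes the remaining primes between $a,c$ via the well-factorable inequality $p_l^2\le D/(p_1\cdots p_{l-1})$. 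Crucially, by this stage $p_1$ already sits inside $b$ (in case~(i)), so the obstruction above never arises.
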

\begin{proof}
Let $(D_1,D_2,D_3)=(A,\,D^2/x,\,x^{1-2\delta}/(DA))$. By assumption $p_1\le D_2$ and $p_2^2\le D_1D_3$, or $p_2\le D_2$ and $p_1^2\le D_1D_3$. Thus for some choice $\{d_1,d_2,d_3\}=\{p_1,p_2,p_3\}$ we have $d_i\le D_i$ for all $i$.

We now greedily append primes to $d_i$ while preserving $d_i\le D_i$ for all $i$, i.e. where at the $j$th step we replace $d_i\mapsto d_ip_j$ (for one of $i=1,2,3$) provided $d_ip_j\le D_i$. So starting from $j=3$, we stop either when we have exhausted all primes (i.e. $j=r$), or $d_ip_j> D_i$ for each $i=1,2,3$. 

In the former case, $d_1d_2d_3 = d= p_1\cdots p_r$ and $d_i\le D_i$ so we easily get $d_1 \le D_1 = A$ and
\begin{align*}
A^2 D_2D_3^2 & = x^{1-4\delta},
\end{align*}
as well as
\begin{align*}
(Ad_3/d_2)^{2{\theta}} Ad_1d_2^5 d_3^2 &= A^{2{\theta}} d_1 d_2^{5-2{\theta}}d_3^{2+2{\theta}}
\le A^{2+2{\theta}} D_2^{5-2{\theta}} D_3^{2+2{\theta}}\\
&=  A^{2+2{\theta}} (D^2/x)^{5-2{\theta}} (x^{1-2\delta}/DA)^{2+2{\theta}}\\
%&= D^{10-4{\theta}-(2+2{\theta})} x^{2+2{\theta}-(5-2{\theta})}\\
&= D^8 x^{-3-2\delta} 
= x^{8{\pmb \vartheta}-3-2\delta}
< x^{2-2\delta}
\end{align*}
using ${\pmb \vartheta}<\frac{5}{8}$. Similarly, we have
\begin{align*}
x^{\alpha\theta} Ad_1d_2^5 d_3^2 &= 
x^{\alpha\theta} A^2 (D^2/x)^5 (x^{1-2\delta}/DA)^2\\
& = x^{{\alpha\theta}-3-4\delta}D^8
= x^{{\alpha\theta}-3-4\delta + 8{\pmb \vartheta}} 
 < x^{2-2\delta}
\end{align*}
using ${\pmb \vartheta}<\frac{5-{\alpha\theta}}{8}$. Thus letting $a=d_1, b=d_2, c=d_3$ gives the desired factorisation for \eqref{eq:JMfactorable}.

In the latter case, there exists a terminal index $j<r$ for which $d_ip_j> D_i$ for all $i=1,2,3$. By assumption $j\ge3$, and so $p_j\le p_3\le x^{u-v}$ is smaller than the width of the interval $[x^{v}, x^{u}]$. And since $d_2\le D_2=x^{v}<d_2p_j$,
we deduce $b := d_2p_j$ lies in the interval $b \in [x^{v}, x^{u}]$.
Thus letting $C := D_2D_3/b$, for each $l>j$ we have
\begin{align*}
p_l^2 \le \frac{D}{p_1\cdots p_{l-1}} = \frac{D_1D_2D_3}{d_1d_2d_3p_j\cdots p_{l-1}} = \frac{D_1C}{d_1d_3p_{j+1}\cdots p_{l-1}}.
\end{align*}
So there is a factorization $ac = d_1d_3p_{j+1}\cdots p_l$ with $a\le D_1=A$ and $c\le C= D/(Abx^{2\delta})$. When $l=r$, recalling $b=d_2p_j$, we deduce a factorization
\begin{align*}
abc = d_1d_2d_3p_{j}\cdots p_r = p_1\cdots p_r = d.
\end{align*}
Hence Lemma \ref{lem:criterion} implies that $d=abc$ satisfies \eqref{eq:JMfactorable}.
\end{proof}
%\end{comment}

In the following result, we factorize $d\in\mathcal D^{\textnormal{well}}(D)$ for variable level $D$, depending on the anatomy of $d$. As $\mathcal D^\pm \subset \mathcal D^{\textnormal{well}}$, this has implications to both upper and lower bounds for the standard linear sieve.

\begin{proposition}\label{prop:Dwell}
Let $D=x^{{\pmb \vartheta}-2\delta}$ with ${\pmb \vartheta}<\frac{5}{8}$, and take $v$, $u$ as in \eqref{def:critint}.
Let $p_r\le\cdots\le p_1\le x^u$ be primes for which $d=p_1\cdots p_r\in \mathcal D^{\textnormal{well}}(D)$.
Then $d$ has factorization $d=abc$ satisfying \eqref{eq:JMfactorable},
provided one of the following holds:
\begin{itemize}
\item[(i)] $b\in [x^v, \, x^u]$ for some $b\in\{p_1,\,p_2,\,p_1p_2,\,p_1p_2p_3\}$
\item[(ii)] $p_1p_3\in [x^v, \, x^u]$ and $p_2^2 \le D/p_1p_3$
\item[(iii)] $p_2p_3\in [x^v, \, x^u]$ and $p_1^2 \le D/p_2p_3$
\item[(iv)] $p_3 \ \le \ x^{u-v}$, and ($p_1 \ \le \ x^v$ and $p_2^2 \le x^{1-2\delta}/D$) or ($p_2 \ \le \ x^v$ and $p_1^2 \le x^{1-2\delta}/D$)
\end{itemize}
\end{proposition}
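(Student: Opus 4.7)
The strategy is to invoke Lemma \ref{lem:criterion}: once we exhibit $b\in[x^v,x^u]$ and a splitting $d/b=ac$ with $a\le A$ and $c\le D/(Ab)$, the factorization $d=abc$ automatically satisfies \eqref{eq:JMfactorable}. Note that feasibility of such a splitting is never the obstruction: we always have $d\le D/p_r\le D$ from the $m=r$ instance of the well-factorable condition, giving $d/b\le D/b = A\cdot D/(Ab)$. The question is purely whether an honest divisor $a$ of $d/b$ sits in the admissible range $[Ad/D,\,A]$.

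Case (iv) is handled directly by Lemma \ref{lem:greedy}, whose hypothesis coincides with (iv): $p_3<x^{u-v}$ together with the stated conditions on $p_1$ and $p_2$ is exactly what allows the three-bin greedy argument of that lemma to succeed.

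For cases (i), (ii), and (iii), we take $b$ to be the specified divisor of $d$ (which lies in $[x^v,x^u]$ by assumption) and split the complementary divisor $d/b$ by a greedy algorithm: process the primes of $d/b$ in decreasing order, assigning each in turn to $a$ if doing so preserves $a\le A$, else to $c$. By construction $a\le A$; the terminal step overshoots the admissible range by a factor of at most the largest prime ever sent to $c$, which is $\le p_1\le x^u$ (or $\le p_3$ in the subcase $b=p_1p_2p_3$ of (i)). The needed inequality $c\le D/(Ab)$ then reduces to instances of the well-factorable condition $p_1\cdots p_{m-1}p_m^2\le D$. In cases (ii) and (iii), the extra hypotheses $p_2^2\le D/b$ and $p_1^2\le D/b$ respectively guarantee that the omitted large prime ($p_2$ or $p_1$) can be placed on the $c$-side without exceeding its budget; the remaining primes, being smaller, are packed greedily as before.

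The main obstacle is the careful enumeration of subcases, especially when $r$ is small so that $d/b$ has few or no primes and the greedy has little room to maneuver, and the verification that the well-factorable inequalities absorb the overshoot in each subcase. The plan is to follow the template of \cite[Corollary 3.8]{Ltwin}, which established the ${\theta}=1/2$ specialization of this proposition; the modification here amounts to tracking the ${\theta}$-dependence of the critical exponent $u$ through the same argument, with the bound $Ab\le D$ in degenerate subcases following from $1/3+u\le {\pmb \vartheta}-3\delta/2$ in the regime of interest.
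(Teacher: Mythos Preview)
Your overall approach matches the paper's: invoke Lemma~\ref{lem:greedy} for case~(iv), and for cases~(i)--(iii) fix the specified~$b\in[x^v,x^u]$ and split~$d/b$ into~$a\le A$ and~$c\le C:=D/(Ab)$ by processing its prime factors greedily, then apply Lemma~\ref{lem:criterion}. However, your justification for why the greedy succeeds has two genuine gaps.

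First, the ``overshoot'' reasoning does not give~$c\le C$. Knowing that the final~$a$ exceeds~$A/q_*$ (for~$q_*$ the largest prime sent to~$c$) only yields~$c<dq_*/(Ab)$, and~$dq_*\le D$ is \emph{not} a consequence of the well-factorable inequalities when~$q_*$ is one of the larger primes of~$d$. The correct argument---which is what the paper does---is inductive: maintain both~$a\le A$ and~$c\le C$ at every step. When~$q_j$ is rejected by~$a$ (so~$aq_j>A$), the well-factorable condition at the index~$m$ corresponding to~$q_j$ gives~$(ac)\,q_j^2\le D/b=AC$, whence~$cq_j<AC/(aq_j)<C$. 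This works cleanly whenever the primes of~$d/b$ form a suffix~$p_{i+1},\dots,p_r$ (the prefix subcases~$b\in\{p_1,p_1p_2,p_1p_2p_3\}$ of~(i)), and in cases~(ii),~(iii) once the first step is handled by the extra hypothesis~$p_2^2\le D/b$ resp.\ $p_1^2\le D/b$. Note also that these hypotheses give~$p_2\le\max(A,C)$, not specifically~$p_2\le C$ as you wrote; the omitted prime may land on either side.

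Second, the subcase~$b=p_2$ of~(i) does not fit this framework: the first prime of~$d/b$ in decreasing order is~$p_1$, and the inductive step would require~$p_1^2\le D/p_2$, which is not a well-factorable instance and is not assumed. The paper sidesteps this by observing that~$p_2\in[x^v,x^u]$ forces~$p_1\in[x^v,x^u]$ as well (since~$x^v\le p_2\le p_1\le x^u$), so one may simply take~$b=p_1$ instead and fall back on the prefix argument.

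Finally, your claimed inequality~$1/3+u\le\pmb\vartheta-3\delta/2$ for the degenerate subcases is false in the stated range (e.g.\ $\pmb\vartheta=0.55$, $\theta=\alpha=0$ gives~$u=0.3$ and~$1/3+u\approx 0.63>0.55$). When~$d=b$ and~$a=c=1$, one should either verify~\eqref{eq:JMfactorable} directly from~$b\le x^u$ and~$u\le 1/3$, or note that the induction is nonvacuous once~$r$ exceeds the number of primes in~$b$.
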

%\begin{comment}
\begin{proof}
First suppose $b=p_1\cdots p_i\in [x^v, \, x^u]$ for some $i\in\{1,2,3\}$, and let $C=D/Ab$. Since $p_1\cdots p_ip_{i+1}^2\le D$, we get $p_{i+1}^2 \le D/b=AC$ so that either $p_{i+1}\le A$ or $p_{i+1}\le C$. Similarly $p_1\cdots p_{j-1}p_j^2\le D$ for all $i<j\le r$, we get $p_j^2 \le \frac{AC}{p_{i+1}\cdots p_{j-1}}$ and so by induction we may factor $p_{i+1}\cdots p_r=ac$ where $a\le A$, $c\le C$. Hence since $b\in [x^v, \, x^u]$, by Lemma \ref{lem:criterion} $p_1\cdots p_r=abc$ satisfies \eqref{eq:JMfactorable}.

Else $p_1,p_1p_2,p_1p_2p_3\notin [x^v, \, x^u]$. Suppose $b\in [x^v, \, x^u]$ for some $b\in\{p_2p_3,\,p_1p_3,\,p_2\}$. By assumption $p_1\le\, x^{u}$, so $p_1\notin [x^v, \, x^u]$ further implies $p_1<x^{v}$. We have:
\begin{itemize}
\item If $b=p_2$ then $p_3^2 \le D/p_1p_2= D/p_1b$ implies $p_1p_3=ac$ factors for $a\le A, c\le D/Ab$.
\item If $b=p_1p_3$ and $p_2^2 \le D/b$ by (ii), then $p_2\le A$ or $p_2\le D/Ab$.
\item If $b=p_2p_3$ and $p_1^2 \le D/b$ by (iii), then $p_1\le A$ or $p_1\le D/Ab$.
\end{itemize}
For each $b\in [x^v, \, x^u]$ above, we factored $p_1p_2p_3=abc$ where $a\le A, c\le D/Ab$. Then since $p_1\cdots p_{j-1}p_j^2\le D$ for all $j\le r$, by induction we may factor $p_1\cdots p_r=abc$ for $a\le A, c\le D/Ab$. By Lemma \ref{lem:criterion} $p_1\cdots p_r=abc$ will satisfy \eqref{eq:JMfactorable}.

Finally, suppose (iv) holds: $p_3 \ \le \ x^{u-v}$, $p_2^2 \le x^{1-2\delta}/D$, and $d_2:=p_1 \ \le \ x^v$. Then Lemma \ref{lem:greedy} completes the proof.
\end{proof}
%\end{comment}

\begin{proof}[Proof of Proposition \ref{prop:piecewiseeta}]
Recall $D=x^{{\pmb \vartheta}-2\delta}$ and $p_i=x^{t_i}$.
By Proposition \ref{prop:Dwell}, $d=p_1\cdots p_r$ has a factorization $d=abc$ satisfying \eqref{eq:JMfactorable} provided $t_1\le u$, and one of the following holds:
\begin{itemize}
\item[(i)] $\tau \in [v,u]$ for some subsum $\tau\in\{t_1,\,t_2,\,t_1+t_2,\,t_1+t_2+t_3\}$
\item[(ii)] $t_1+t_3\in [v,u]$ and ${\pmb \vartheta} \ge t_1+2t_2+t_3$
\item[(iii)] $t_2+t_3\in [v,u]$ and ${\pmb \vartheta} \ge 2t_1+t_2+t_3$
\item[(iv)] $t_3 \le u-v$ and ($t_1\le v$ and $2t_2 \le 1-{\pmb \vartheta}$) or ($t_2\le v$ and $2t_1 \le 1-{\pmb \vartheta}$)
\end{itemize}

To this, note
\begin{align*}
2{\pmb \vartheta}-1 =v \ \le \ t \ \le \ u= \min\Big(\frac{1-(1+{\theta}){\pmb \vartheta}}{3/2-2{\theta}},\, \frac{2 -2 {\pmb \vartheta}-{\alpha\theta}}{3} \Big)
\end{align*}
so that
\begin{align}\label{eq:tinvuiff}
t\in[v,u] \qquad \Longleftrightarrow \qquad
{\pmb \vartheta}  \ \le \ \min\Big(\frac{1+t}{2}, \ 
\frac{1-(3/2-2{\theta})t}{1+{\theta}}, \ 
1 - \frac{{\alpha\theta} + 3t}{2}\Big) =: {\pmb \vartheta}(t).
\end{align}

For (i), by \eqref{eq:tinvuiff} we see \eqref{eq:JMfactorable} holds if ${\pmb \vartheta} \le \max\big\{{\pmb \vartheta}(t_1),\,{\pmb \vartheta}(t_2),\,{\pmb \vartheta}(t_1+t_2),\,{\pmb \vartheta}(t_1+t_2+t_3)\big\}$. 

For (ii), by \eqref{eq:tinvuiff} we see \eqref{eq:JMfactorable} holds if $t_1+2t_2+t_3\le {\pmb \vartheta} \le {\pmb \vartheta}(t_1+t_3)$. Recalling the notation $\psi(x,y) = x\cdot\1_{x\ge y}$, this is equivalent to the condition ${\pmb \vartheta} \le \psi\big({\pmb \vartheta}(t_1+t_3),t_1+2t_2+t_3\big)$.

For (iii) similarly, \eqref{eq:JMfactorable} holds if ${\pmb \vartheta} \le \psi\big({\pmb \vartheta}(t_2+t_3),2t_1+t_2+t_3\big)$.

For (iv), we have that
\begin{align}
t_3 \ \le \ u-v &= \min\Big(\frac{1-(1+{\theta}){\pmb \vartheta}}{3/2-2{\theta}},\, \frac{2 -2 {\pmb \vartheta}-{\alpha\theta}}{3} \Big) - (2{\pmb \vartheta}-1) \nonumber\\
& = \min\Big(\frac{(5/2-2{\theta})-(4-3{\theta}){\pmb \vartheta}}{3/2-2{\theta}},\ 
\frac{5-{\alpha\theta}-8{\pmb \vartheta}}{3}\Big),
\end{align}
and ($t_1 \le \, v = 2{\pmb \vartheta}-1$ and $2t_2 \le \, 1 - {\pmb \vartheta}$) or ($t_2 \le \, v = 2{\pmb \vartheta}-1$ and $2t_1 \le \, 1 - {\pmb \vartheta}$), is equivalent to either
\begin{align*}
\frac{1+t_1}{2}\ \le \ {\pmb \vartheta} \ \le \ \min\Big\{\frac{(5/2-2{\theta})-(3/2-2{\theta})t_3}{4-3{\theta}},\, \frac{5-{\alpha\theta}-3t_3}{8},\, 1-2t_2\Big\},
\end{align*}
or
\begin{align*}
\frac{1+t_2}{2}\ \le \ {\pmb \vartheta} \ \le \ \min\Big\{\frac{(5/2-2{\theta})-(3/2-2{\theta})t_3}{4-3{\theta}},\, \frac{5-{\alpha\theta}-3t_3}{8},\, 1-2t_1\Big\}.
\end{align*}
Note the maximal ${\pmb \vartheta} \in [y,x]$ is given by ${\pmb \vartheta} = x\cdot\1_{x\ge y}=:\psi(x,y)$.
Thus \eqref{eq:JMfactorable} holds if ${\pmb \vartheta}\le \, \max\{w(t_1,t_2,t_3),w(t_1,t_3,t_2)\}$ where
\begin{align}
w(t_1,t_2,t_3) := 
\psi\Big(\min\Big\{\;\frac{(5-4{\theta})-(3-4{\theta})t_3}{8-6{\theta}},\; \frac{5-{\alpha\theta}-3t_3}{8},\;1-2t_2\Big\},\,\frac{1+t_1}{2}\Big).
\end{align}

Taking the maximum ${\pmb \vartheta}$ allowed among cases (i)--(iv), we see \eqref{eq:JMfactorable} holds if
\begin{align}\label{eq:theta123}
{\pmb \vartheta} = \max\Big\{ {\pmb \vartheta}(t_1)&,\,{\pmb \vartheta}(t_2),\,{\pmb \vartheta}(t_1+t_2),\,{\pmb \vartheta}(t_1+t_2+t_3),\,w(t_1,t_2,t_3),\,w(t_1,t_3,t_2) \nonumber\\
& \ \ 
\psi\big({\pmb \vartheta}(t_1+t_3),\,t_1+2t_2+t_3\big), \ \psi\big({\pmb \vartheta}(t_2+t_3),\,2t_1+t_2+t_3\big)\Big\},
\end{align}
crucially provided that $t_1 \le\, u$ holds. 

To this, we have ${\pmb \vartheta} = {\pmb \vartheta}(\tau)$ for some subsum $\tau$ in \eqref{eq:theta123}.
If $\tau=t_1$, then $t_1 \le\, u= \min\big(\frac{1-(1+{\theta}){\pmb \vartheta}(t_1)}{3/2-2{\theta}},\, \frac{2 -2 {\pmb \vartheta}(t_1)-{\alpha\theta}}{3} \big)$ automatically holds, by the definition of ${\pmb \vartheta}(t_1)$.

Else the subsum is $\tau\neq t_1$, i.e. ${\pmb \vartheta}(\tau) > {\pmb \vartheta}(t_1)$. Then $t_1\le \mu$ by assumption in Proposition \ref{prop:piecewiseeta}.\footnote{Recall $\mu= \min(\frac{1-{\theta}}{4-3{\theta}},\, \frac{1-{\alpha\theta}}{4})$ is the {\it balance point} of the (unimodal) function ${\pmb \vartheta}(t)$, from \eqref{eq:thetat}. } Thus $t_1\le \mu < u = \min\big(\frac{1-(1+{\theta}){\pmb \vartheta}(\tau)}{3/2-2{\theta}},\, \frac{2 -2 {\pmb \vartheta}(\tau)-{\alpha\theta}}{3} \big)$.
This completes the proof in all cases.
\end{proof}

\subsection{Level of distribution with linear sieve weights} \label{subsec:Iwaniecweights}

Recall the (upper and lower) linear sieve weights $\lambda^\pm$ of level $D$ are defined as
\[\lambda^\pm(d) = \begin{cases}
    \mu(d) & \text{if }d\in \mathcal D^\pm(D),\\
    0 & \text{else,}
\end{cases}\]
for support sets $\mathcal D^\pm$, given by
\begin{align*}
\mathcal D^+(D) \ &= \ \big\{ \, d=p_1\cdots p_r \ : \  p_1\cdots p_{m-1}p_m^3 < D \quad {\rm for \ odd} \ \ m\le r\big\},\\
\mathcal D^-(D) \ &= \ \big\{ \, d=p_1\cdots p_r \ : \ p_1\cdots p_{m-1}p_m^3 < D \quad {\rm for \ even} \ \ m\le r, \ p_1^2 < D\big\}. \nonumber
\end{align*}
Observe the support sets $\mathcal D^\pm$ are contained in $\mathcal D^{\rm well}$ from \eqref{eq:Dwelldef}. Consider the analogous set of well-factorable vectors $\mathbf D_r^{\textnormal{well}}$,
\begin{align}\label{eq:Drwellvector}
\mathbf D_r^{\textnormal{well}}(D) & =
\{(D_1,\ldots,D_r): D_1\cdots D_{m-1}D_m^2 < D \quad\text{ for all }m\le r\},
\end{align}
where all entries $D_1,\ldots,D_r$ are numbers in the sequence $(D^{\eps^2(1+\eps^9)^j})_{j\ge 1}$.

In reality, we shall work with Iwaniec's modified linear sieve weights $\widetilde{\lambda}^\pm$, which enjoy the same sieve upper and lower bounds as $\lambda^\pm$, but are also well-factorable (the original weights $\lambda^\pm$ are not, by a minor technicality). 
The construction of $\widetilde{\lambda}^\pm$ makes use of the vectors in \eqref{eq:Drwellvector}; see \cite[\S12.4]{Opera} for a precise definition of $\widetilde{\lambda}^\pm$ and further details.

The key point is that Iwaniec's well-factorable weights $\widetilde{\lambda}^\pm(d)$ essentially inherit the same factorization properties enjoyed by the integers $d$ in the support $d\in\mathcal D^{\rm well} (x^{\pmb \vartheta})$. As such, the factorization of $\mathcal D^{\rm well}$ in Proposition \ref{prop:piecewiseeta} implies an improved level of distribution for primes with linear sieve weights.

\begin{proposition}\label{prop:lvlalpha1}
Let $(D_1,\ldots,D_r)\in\mathbf D_r^{\textnormal{well}}(D)$ and write $D=x^{\pmb \vartheta}$, $D_i=x^{t_i}$ for $i\le r$. 

If ${\pmb \vartheta} \le {\pmb \vartheta}(t_1)-\eps$ as in \eqref{eq:thetat}, then
\begin{align}\label{eq:lvlalpha1}
\sum_{\substack{b=p_1\cdots p_r\\ D_i< p_i \le D_i^{1+\eps^9}}}
\sum_{\substack{d=bc\le x^{{\pmb \vartheta}}\\c\mid P(p_r)\\(d,a)=1}} \widetilde{\lambda}^\pm(d) \,\Big(\pi(x;d,a)-\frac{\pi(x)}{\phi(d)}\Big) \ \ll_{a,A,\epsilon} \ 
 \frac{x}{(\log x)^A}.
\end{align}
And if $t_1 \le \,\min(\frac{1-{\theta}}{4-3{\theta}},\, \frac{1-\alpha\theta}{4})$ and $r\ge3$, then \eqref{eq:lvlalpha1} holds if ${\pmb \vartheta} \, \le \, {\pmb \vartheta}(t_1,t_2,t_3)-\eps$ as in \eqref{eq:thetat123}. 
\end{proposition}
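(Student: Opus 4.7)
The plan is to deduce Proposition \ref{prop:lvlalpha1} from the proof of Theorem \ref{theorem:Factorable}, using the factorization supplied by Proposition \ref{prop:piecewiseeta} as a substitute for direct triple well-factorability. The key observation is that the inequalities \eqref{eq:JMfactorable} verified by Proposition \ref{prop:piecewiseeta}, namely
\begin{align*}
A^2bc^2 \le x^{1-3\delta}, \quad x^{\alpha\theta} A a b^5 c^2 \le x^{2-3\delta}, \quad (Ac/b)^{2\theta} A a b^5 c^2 \le x^{2-2\delta},
\end{align*}
match, under the substitution $(N, Q_1, Q_2, Q_3) \leftrightarrow (A, a, b, c)$, the four hypotheses of Proposition \ref{proposition:MainProp} that drive the dispersion argument in Theorem \ref{theorem:Factorable}. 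Hence once the triple factorization $d = abc$ is in hand, the rest of the proof is essentially mechanical.

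First, I will fix a profile $(D_1, \ldots, D_r) \in \mathbf D_r^{\textnormal{well}}(x^{{\pmb \vartheta}})$ and restrict the outer sum to $d = p_1 \cdots p_r \cdot c$ with $p_i \in (D_i, D_i^{1+\eps^9}]$ and $P^+(c) \le p_r$. By the construction of Iwaniec's modified linear sieve weights $\widetilde{\lambda}^\pm$ (see \cite[\S12.4]{Opera}), on each such profile the weight $\widetilde{\lambda}^\pm(d)$ equals $\mu(d)$ times a prefactor depending only on the profile. Since there are only $(\log x)^{O(1)}$ such profiles, summing over them contributes a polylogarithmic loss absorbable by enlarging $A$, so it is enough to treat each profile separately.

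Next, for each profile I will invoke Proposition \ref{prop:piecewiseeta} with the free parameter $A = x^{1/3 - \delta/2}$ to obtain a factorization $d = abc$, $a \le A$, satisfying \eqref{eq:JMfactorable}. When $t_1 > \mu$, this factorization takes the simple form $b \in \{p_1, \, p_1 p_2, \, p_1 p_2 p_3\}$ together with a greedy filling of $a$ and $c$ by the remaining primes (Lemma \ref{lem:criterion}, Lemma \ref{lem:greedy}); when $t_1 \le \mu$, the richer cases (i)--(iv) of Proposition \ref{prop:Dwell} feeding into \eqref{eq:thetat123} are available. Because the profile is fixed, the partition of prime factors among $a$, $b$, $c$ is determined up to $O((\log x)^3)$ dyadic refinement, and I may write $\mu(d) = \gamma^{(1)}(a) \gamma^{(2)}(b) \gamma^{(3)}(c)$ as a triple convolution of 1-bounded sequences supported in the corresponding dyadic intervals $(A', 2A'] \times (B', 2B'] \times (C', 2C']$.

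Finally, I will run the proof of Theorem \ref{theorem:Factorable} essentially verbatim: expand the prime indicator by Heath-Brown's identity (Lemma \ref{lemma:HeathBrown}) with $k = 3$, dyadically decompose the six resulting variables, dispatch the Type I terms via Proposition \ref{proposition:DoubleDivisor} (they require only $Q \le x^{5/8}$), and feed the Type II terms into Proposition \ref{proposition:MainProp} with $(Q_1, Q_2, Q_3) = (A', B', C')$. The four hypotheses of Proposition \ref{proposition:MainProp} then reduce exactly to \eqref{eq:JMfactorable}, so they hold by construction and the Type II bound gives the required saving $(\log x)^{-A}$. The main obstacle will be the bookkeeping in Step~2 for the $t_1 \le \mu$ case: the factorizations in Proposition \ref{prop:Dwell}(ii)--(iv) may interleave the primes $p_1, p_2, p_3$ among $a, b, c$ in non-monotone ways (for instance $b = p_1 p_3$ with $p_2$ absorbed into $a$ or $c$), so care is needed in choosing a single factorization consistently per profile before the dyadic decomposition; once this is fixed, the triple convolution structure is immediate from $\mu$ being completely multiplicative on squarefree inputs.
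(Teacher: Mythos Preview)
Your overall plan is correct and matches the paper's approach (the paper defers to \cite[Proposition 5.4]{Ltwin} with Proposition \ref{prop:piecewiseeta} substituted in), but there is one genuine ordering error that would make Step~2 fail as written.

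You propose to fix $A = x^{1/3-\delta/2}$ once and for all in Step~2, obtain the factorization $d=abc$, and only afterwards in Step~3 run Heath-Brown and produce the Type~II parameter $N$. This is backwards. The first hypothesis of Proposition \ref{proposition:MainProp} is $Q_1 < N/x^\epsilon$, i.e.\ $a < N/x^\epsilon$; but with your fixed $A$ the factorization from Proposition \ref{prop:piecewiseeta} may well give $a$ close to $A \approx x^{1/3}$, so when Heath-Brown hands you a small Type~II $N$ (say $N = x^{0.1}$) the condition $a < N/x^\epsilon$ fails. The other three hypotheses of Proposition \ref{proposition:MainProp} are monotone in $N$ and would survive, but not this one.

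The fix is the one already signalled in the paper (see the remark ``For notational ease we are using $A = Nx^{-\delta}$'' just before Lemma \ref{lem:criterion}): the factorization must be chosen \emph{after} the Heath-Brown decomposition, with $A$ tied to the Type~II range via $A := N x^{-\delta}$. This is precisely why Proposition \ref{prop:piecewiseeta} is stated uniformly for all $A \in [x^\delta, x^{1/3-\delta/2}]$ rather than for one fixed value. Concretely: run Heath-Brown and dyadic decomposition first; for each resulting Type~II block with parameter $N \in [x^\epsilon, x^{1/3+\epsilon}]$, set $A = N x^{-\delta}$, apply Proposition \ref{prop:piecewiseeta} with this $A$ to every $d$ in the fixed profile, and only then form the triple convolution $\gamma^{(1)}\ast\gamma^{(2)}\ast\gamma^{(3)}$ to feed into Proposition \ref{proposition:MainProp}. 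With this correction your substitution $(N,Q_1,Q_2,Q_3)\leftrightarrow(A,a,b,c)$ is exact, the four hypotheses of Proposition \ref{proposition:MainProp} become precisely \eqref{eq:JMfactorable}, and the rest of your argument goes through.
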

\begin{proof}
This follows just as in the proof of \cite[Proposition 5.4]{Ltwin}, just substituting Proposition \ref{prop:piecewiseeta} above in for \cite[Proposition 3.3]{Ltwin}, with an updated level ${\pmb \vartheta}$.
\end{proof}

\section{Bounds for twin primes and Goldbach representations}

In this section, we complete the proofs of Theorems \ref{thm:twinbound} and \ref{thm:Goldbachbound}, by combining the sieve bounds from \cite{Ltwin} with our level of distribution results for the linear sieve weights $\widetilde{\lambda}^\pm$, from the previous section. Indeed, we substitute Proposition \ref{prop:lvlalpha1} in from \cite[Proposition 5.4]{Ltwin}, which in practice amounts to modifying the level to ${\pmb \vartheta}$, as in \eqref{eq:thetat} and \eqref{eq:thetat123}, into the prior bounds from \cite{Ltwin}. Also see \cite{LprimeAP} for related applications of level of distribution results.

As a brief summary of the sieve-theoretic ideas, following the spirit of Fouvry--Grupp \cite{FG} we essentially use a weighted sieve inequality, and iterate the Buchstab identity in a prescribed fashion. We select certain terms to drop (by positivity), and to certain other terms we apply the Chen--Iwaniec switching principal. This approach was recursively optimized in Wu \cite{WuII}. Finally, to each such term, we apply the linear sieve upper and lower bounds. The remainder terms in such sieve bounds are controlled using level of distribution results for $\pi(x;q,a)$, as in Proposition \ref{prop:lvlalpha1}. with residue $a=-2$. We refer the reader to \cite{Ltwin} for further details about this computation.

\subsection{Sieve-theoretic bounds} We now recall the notations and bounds in \cite[\S6]{Ltwin}.
Given $\mu\le \mu_\alpha$ and parameters $0.1\le \rho' \le \tau_1 < \mu \le \tau_2 < \tau_3 \le \rho \le 0.3.$, we define the following integrals $I_{n}=I_{n}(\rho,\rho',\tau_1,\tau_2,\tau_3)$ by
\begin{align}\label{eq:defIn}
I_n &= \int_{\D_{n}}\omega\Big(\frac{1-t-u-v}{u}\Big)\frac{\dd{t}\dd{u}\dd{v}}{tu^2v} \quad\qquad\qquad(9\le n\le 15),\nonumber\\
I_n &= \int_{\D_{n}}\omega\Big(\frac{1-t-u-v-w}{v}\Big)\frac{\dd{t}\dd{u}\dd{v}\dd{w}}{tuv^2w} \quad\qquad(16\le n\le19), \\
I_{20} &= \int_{\D_{20}}\omega\Big(\frac{1-t-u-v-w-x}{w}\Big)\frac{\dd{t}\dd{u}\dd{v}\dd{w}\dd{x}}{tuvw^2x},\nonumber\\
I_{21} &= \int_{\D_{21}}\omega\Big(\frac{1-t-u-v-w-x-y}{x}\Big)\frac{\dd{t}\dd{u}\dd{v}\dd{w}\dd{x}\dd{y}}{tuvwx^2y},\nonumber
\end{align}
where $\omega$ is the Buchstab function, and where the domains $\D_{n}$ are
\begin{align*}
\D_{9} \ &= \{(t,u,v) : \tau_1 < t<u<v < \tau_3\}, \\
\D_{10} &= \{(t,u,v) : \tau_1 < t<u< \tau_2 < v < \rho\}, \\
\D_{11} &= \{(t,u,v) : \tau_1 < t<\tau_2<u < v < \tau_3\}, \\
\D_{12} &= \{(t,u,v) : \rho' < t<u<\tau_1, \ \tau_3 < v < \rho\}, \\
\D_{13} &= \{(t,u,v) : \rho' < t<\tau_1 < u <\tau_2< v < \rho\}, \\
\D_{14} &= \{(t,u,v) : \rho' < t<\tau_1, \ \tau_2 < u < v < \rho\}, \\
\D_{15} &= \{(t,u,v) : \tau_1 < t<\tau_2 < u < \tau_3 <  v < \rho\}, \\
\D_{16} &= \{(t,u,v,w) : \tau_2 < t<u<v<w<\tau_3\}, \\
\D_{17} &= \{(t,u,v,w) : \tau_2 < t<u<v<\tau_3<w<\rho\}, \\
\D_{18} &= \{(t,u,v,w) : \tau_2 < t<u<\tau_3<v<w<\rho\}, \\
\D_{19} &= \{(t,u,v,w) : \tau_1 < t<\tau_2, \ \tau_3 <u<v<w<\rho\}, \\
\D_{20} &= \{(t,u,v,w,x) : \tau_2 < t<\tau_3 < u<v<w<x<\rho\}, \\
\D_{21} &= \{(t,u,v,w,x,y) : \tau_3 < t< u<v<w<x<y<\rho\}.
\end{align*}

As in \eqref{eq:thetat}, denote
\begin{align}\label{eq:thetat4}
{\pmb \vartheta}(t) = {\pmb \vartheta}_\alpha(t) &:= 
\begin{cases}
\min\big(
\frac{1-(3/2-2{\theta})t}{1+{\theta}}, \ 
1 - \frac{\alpha\theta + 3t}{2}\big) & \text{if} \ \ t > \mu_\alpha,\\
\frac{1+t}{2} & \text{if} \ \ t \le \, \mu_\alpha.
\end{cases}
\end{align}
where $\mu_\alpha = \min(\frac{1-{\theta}}{4-3{\theta}},\, \frac{1-\alpha\theta}{4})$, and from \eqref{eq:thetat123},
\begin{align}\label{eq:thetat1234}
{\pmb \vartheta}(t,u,v) := \max\Big\{ {\pmb \vartheta}(t)&,\,{\pmb \vartheta}(u),\,{\pmb \vartheta}(t+u),\,{\pmb \vartheta}(t+u+v),\,w(t,u,v),\,w(t,v,u) \nonumber\\
& \ \ 
\psi\big({\pmb \vartheta}(t+v),\,t+2u+v\big), \ \psi\big({\pmb \vartheta}(u+v),\,2t+u+v\big)\Big\},
\end{align}
where $\psi(x,y) := x\cdot\1_{x\ge y}$ and
\begin{align*}
w(t,u,v) &= \psi\Big(\min\Big\{\;\frac{(5-4{\theta})-(3-4{\theta})v}{8-6{\theta}},\; \frac{5-{\alpha\theta}-3v}{8},\;1-2u\Big\},\,\frac{1+t}{2}\Big).
\end{align*}

We also define
\begin{align}\label{eq:defG14}
G_1 &= 4G(\rho') + G(\tau_1), & G_3 &= G_0 + \overline{G}(\tau_2),\\
G_2 &= G_0 + \overline{G}(\rho), & G_4 &= G_0 + \overline{G}(\tau_3),\nonumber
\end{align}
where for $c\le \mu$,
\begin{align}\label{eq:Gc}
G(c) &= \frac{1}{\epsilon}\,F\big({\pmb \vartheta}_\epsilon/\epsilon\big) - \frac{1}{\epsilon}\int_{\epsilon}^{c}\frac{\dd{t}}{t}f\big(({\pmb \vartheta}(t,\epsilon,\epsilon) - t)/\epsilon\big) + \frac{1}{\epsilon}\int_{\epsilon}^{c}\int_{\epsilon}^{t}\frac{\dd{t}\dd{u}}{t u} F\big(({\pmb \vartheta}(t,u,\epsilon) - t-u)/\epsilon\big) \nonumber\\
&\
 -\int_{\epsilon}^{c}\int_{\epsilon}^{t}\int_{\epsilon}^u \frac{\dd{t}\dd{u}\dd{v}}{t u v^2} f\big(({\pmb \vartheta}(t,u,v) - t-u-v)/v\big),
\end{align}
and for $c>\mu$,
\begin{align}\label{eq:barGc}
\overline{G}(c) &= - \frac{1}{\epsilon}\int_{\mu}^{c}\frac{\dd{t}}{t}f\big(({\pmb \vartheta}(t) - t)/\epsilon\big) + \int_{\mu}^{c}\int_{\epsilon}^{\rho'}\frac{\dd{t}\dd{u}}{tu^2}F\big(({\pmb \vartheta}(t) - t-u)/u\big)
\end{align}
as well as
\begin{align}
G_0 &= - \frac{1}{\epsilon}\int_{\rho'}^{\mu}\frac{\dd{t}}{t}f\big(({\pmb \vartheta}(t,\epsilon,\epsilon) - t)/\epsilon\big) + \frac{1}{\epsilon}\int_{\rho'}^{\mu}\int_{\epsilon}^{\rho'}\frac{\dd{t}\dd{u}}{t u} F\big(({\pmb \vartheta}(t,u,\epsilon) - t-u)/\epsilon\big)\\
&\
-\int_{\rho'}^{\mu}\int_{\epsilon}^{\rho'}\int_{\epsilon}^u \frac{\dd{t}\dd{u}\dd{v}}{t u v^2} f\big(({\pmb \vartheta}(t,u,v) - t-u-v)/v\big). \nonumber
\end{align}

We similarly let
\begin{align}\label{eq:defG68}
G_5 & = \frac{1}{\epsilon}\int_{\rho'}^{\mu}\int_{\rho'}^{t} \frac{\dd{t}\dd{u}}{t u} \,F\big(({\pmb \vartheta}(t,u,\epsilon) - t-u)/\epsilon\big) \ + \ \frac1{\rho'}\int_{\mu}^{\tau_2}\int_{\rho'}^{t} \frac{\dd{t}\dd{u}}{t u} \,F\big(({\pmb \vartheta}(t) - t-u)/\rho'\big)\nonumber\\
& \ - \int_{\rho'}^{\mu}\int_{\rho'}^{t}\int_{\epsilon}^{\rho'} \frac{\dd{t}\dd{u}\dd{v}}{t u v^2} \,f\big(({\pmb \vartheta}(t,u,v) - t-u-v)/v\big),\\
G_6 & = \frac{1}{\rho'}\int_{\tau_2}^{\tau_3}\int_{\rho'}^{\tau_1} \frac{\dd{t}\dd{u}}{t u} \,F\big(({\pmb \vartheta}(t) - t-u)/\rho'\big), \nonumber
\end{align}
\begin{align*}
G_7 &= \frac1{\epsilon}\int_{\rho'}^{\tau_1}\int_{\rho'}^{t} \frac{\dd{t}\dd{u}}{t u} \,F\big(({\pmb \vartheta}_\epsilon - t-u)/\epsilon \big)\\
&\ - \int_{\rho'}^{\tau_1}\int_{\rho'}^{t} \int_{\epsilon}^{u} \frac{\dd{t}\dd{u}\dd{v}}{t u v^2} \,f\big(({\pmb \vartheta}(t,u,v) - t-u-v)/v\big),\\
G_8 &= \frac{1}{\epsilon}\int_{\tau_1}^{\mu}\int_{\rho'}^{\tau_1} \frac{\dd{t}\dd{u}}{t u} \,F\big(({\pmb \vartheta}(t,u,\epsilon) - t-u)/\epsilon\big) \ + \int_{\mu}^{\tau_2}\int_{\rho'}^{\tau_1} \frac{\dd{t}\dd{u}}{t u^2} \,F\big(({\pmb \vartheta}(t) - t-u)/u\big)\\
&\ - \int_{\tau_1}^{\mu}\int_{\rho'}^{\tau_1} \int_{\epsilon}^{u} \frac{\dd{t}\dd{u}\dd{v}}{t u v^2} \,f\big(({\pmb \vartheta}(t,u,v) - t-u-v)/v\big).
\end{align*}

Recall the standard linear sieve functions $F,f$ satisfy $F(s)=2e^\gamma/s$ for $s\in[1,3]$, $f(s) = 2e^\gamma\log(s-1)/s$ for $s\in[2,4]$ and $F(s) = 2e^\gamma/s\cdot[1+\int_2^{s-1} f(t)\dd{t}]$ for all $s\ge1$.

We also use the savings from Wu \cite{WuII} over $G_2$. Namely, 
\begin{align}\label{eq:H2Wu}
H_2^{\rm Wu} &= \int_{\mu}^{\rho}\int_{\epsilon}^{\rho'}\frac{\dd{t}\dd{u}}{tu^2}F^{\rm Wu}\big(({\pmb \vartheta}(t) - t-u)/u\big),
\end{align}
where $F^{\rm Wu}$ is given by $F^{\rm Wu}(s) = F(s)\cdot H_{\pmb\vartheta}^{\rm Wu}(s)$. Here the savings function $H_{\pmb \vartheta}^{\rm Wu}$ is monotonically increasing in the level of distribution ${\pmb \vartheta}$. For ${\pmb \vartheta}=4/7$ and parameters as in Tables 1 and 2 \cite[pp.30--32]{WuII},
\[
H_{4/7}^{\rm Wu}(t) \ \ge \ \begin{cases}
0.0287118 \qquad \text{if} \ 2.0 \, \le \, t \le \, 2.1,\\
0.0280509 \qquad \text{if} \ 2.1 <  t \le \, 2.2, \\
0.0264697 \qquad \text{if} \ 2.2 < t \le \, 2.3,\\
0.0241936 \qquad \text{if} \ 2.3 < t \le \, 2.4,\\
0.0214619 \qquad \text{if} \ 2.4 < t \le \, 2.5, \\
0.0183875 \qquad \text{if} \ 2.5 < t \le \,2.6, \\
0.0149960 \qquad \text{if} \ 2.6 < t \le  \,2.7, \\
0.0117724 \qquad \text{if} \ 2.7 < t \le \, 2.8, \\
0.0094724 \qquad \text{if} \ 2.8 < t \le \, 2.9, \\
0.0090024 \qquad \text{if} \ 2.9 < t \le  \,3.0, \\
0  \qquad\qquad\qquad\textnormal{else.}
\end{cases}
\]

Consider the set of shifted primes $\mathcal A = \{|p-a| : p\le\, x\}$, and recall the main object of interest is the sifted sum,
\begin{align*}
S(\mathcal A, z) :=\#\{n\in A \;:\; p\mid n\implies p\ge\, z \ \ {\rm and } \ \ p\nmid a\}.
\end{align*}

\begin{proposition}\label{prop:wu}
For $a\neq0$ let $\mathcal A = \{|p-a| : p\le\, x\}$. Let $0<\epsilon \le 0.1\le \rho' \le \tau_1 < \mu \le \tau_2 < \tau_3 \le \rho \le 0.3$. Then for $I_n$, $G_n$, and $G(c)$ as in \eqref{eq:defIn}, \eqref{eq:defG14}, \eqref{eq:defG68}, and \eqref{eq:Gc}, we have
\begin{align}
S(\mathcal A, x^\rho) \  \lesssim \ \frac{\Pi_a(x)}{5e^\gamma}
\bigg( \sum_{1\le n\le 8} G_n - H_2^{\textnormal{Wu}} \ + \  G(\mu)\sum_{9\le\, n\le\, 21} I_n \bigg).
\end{align}
\end{proposition}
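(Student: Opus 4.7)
The plan is to follow the sieve-theoretic framework of \cite{Ltwin,WuII}, adapted to the improved level-of-distribution input from Proposition \ref{prop:lvlalpha1}. Starting from $S(\mathcal A, x^\rho)$, I would iterate the Buchstab identity
\[
S(\mathcal A, z_1) \ = \ S(\mathcal A, z_2) \ - \sum_{z_2 \le p < z_1} S(\mathcal A_p, p), \qquad z_2\le z_1,
\]
peeling off the prime factors in a prescribed sequence of ranges dictated by the parameters $\rho', \tau_1, \mu, \tau_2, \tau_3, \rho$. Each Buchstab step is applied (or dropped by positivity) according to whether the resulting sifting dimension admits either a linear sieve upper bound (bounded by $F$, giving $G$--type contributions) or lower bound (bounded by $f$), or requires the Chen--Iwaniec switching principle to exchange the prime $p\mid n$ for a composite variable $m=n/p$ within reach of our Type II or Type I/II information.

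Next, to each terminal term I would apply the Iwaniec linear sieve with the modified well-factorable weights $\widetilde{\lambda}^\pm$ up to level $D=x^{{\pmb\vartheta}}$. The critical observation is that the admissible ${\pmb\vartheta}$ is no longer a constant, but depends on the anatomy $(t_1,t_2,t_3)$ of the sifting integer via Proposition \ref{prop:lvlalpha1}: a single-region term on which we have a prime of size $x^{t_1}$ with $t_1>\mu$ gets level ${\pmb\vartheta}(t_1)$ from \eqref{eq:thetat4}, while the more intricate terms with $t_1\le\mu$ gain access to the piecewise level ${\pmb\vartheta}(t_1,t_2,t_3)$ from \eqref{eq:thetat1234}. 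This matches exactly how the variable-level ${\pmb\vartheta}$ enters each sieve function $F\big(({\pmb\vartheta}(\cdot)-\sum t_i)/\cdot\big)$ and $f\big(({\pmb\vartheta}(\cdot)-\sum t_i)/\cdot\big)$ in the definitions \eqref{eq:Gc}--\eqref{eq:defG68}. The Wu refinement $H_2^{\rm Wu}$ arises on the $G_2$ term, where an additional recursive sieve-within-a-sieve argument \cite{WuII} produces the sharper function $F^{\rm Wu}=F\cdot H^{\rm Wu}_{\pmb\vartheta}$ on the relevant range of the Buchstab variable, and is subtracted off.

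The remainder terms in every application of the linear sieve are of the shape appearing in Proposition \ref{prop:lvlalpha1}, since the support $\mathcal D^\pm(D)\subset \mathcal D^{\mathrm{well}}(D)$ and the Iwaniec weights $\widetilde{\lambda}^\pm$ are well-factorable with the appropriate vector $(D_1,\ldots,D_r)\in\mathbf D_r^{\mathrm{well}}(D)$. Hence those remainders contribute only $O_A(x/(\log x)^A)$ and disappear into the $\lesssim$ symbol. Summing the main terms over the prescribed partition of $(t_1,t_2,\ldots)$-space yields the integral expressions $\sum_{n\le 8} G_n - H_2^{\rm Wu}$, and the terms with at least one Buchstab variable $>\tau_1$ contribute $G(\mu)\sum_{9\le n\le 21} I_n$, where the factor $G(\mu)$ is the linear sieve bound (under the variable level ${\pmb\vartheta}(t)$) used to dominate the $S(\mathcal A_{p_1\cdots p_k},\, p_k)$ inside each $I_n$.

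The main obstacle is the careful bookkeeping: one must verify that at every intermediate step the corresponding level ${\pmb\vartheta}(t_1)$ or ${\pmb\vartheta}(t_1,t_2,t_3)$ is at least as large as what Proposition \ref{prop:lvlalpha1} requires, i.e.\ that the Buchstab variables stay in the ranges $[\rho',\rho]$ where $t_1\le\mu$ (to unlock the three-variable piecewise level) is detectable from the Buchstab bookkeeping, and that the switching principle is deployed only when the switched variable lies in a range accessible to Proposition \ref{prop:lvlalpha1} with residue $|a|<x^{1+\epsilon}$. Given that Proposition \ref{prop:lvlalpha1} is precisely the analog of \cite[Proposition 5.4]{Ltwin} with the improved level ${\pmb\vartheta}$, substituting it into the scheme of \cite[\S6]{Ltwin} in place of the prior level-$\tfrac{7}{12}$ input yields the stated bound.
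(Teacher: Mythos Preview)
Your proposal is correct and follows essentially the same approach as the paper. The paper's own proof is a one-sentence reference: it invokes \cite[Proposition 6.3]{Ltwin} (specifically (6.24) there), substituting Proposition \ref{prop:lvlalpha1} for the earlier level-of-distribution input \cite[Proposition 5.5]{Ltwin}, with $\mu$ in place of $1/5$ and noting $G_2^{\rm Wu}=G_2-H_2^{\rm Wu}$; you have accurately unpacked what that substitution entails.
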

\begin{proof}
This follows just as in the proof of \cite[Proposition 6.3]{Ltwin} (more precisely, (6.24) in \cite{Ltwin}), using Proposition \ref{prop:lvlalpha1} above as the key level of distribution input in place of \cite[Proposition 5.5]{Ltwin}. Also $G_2^{\textnormal{Wu}} = G_2-H_2^{\textnormal{Wu}}$ and we use $\mu$ in place of $1/5$.
\end{proof}

\subsection{Deduction of Theorem \ref{thm:twinbound}} \label{sec:linearsievelevel}
Let $a=-2$. The primes in our set $\mathcal A = \{p+2 : p\le x\}$ count twin primes, and so $\pi_2(x) \le\, S(\mathcal A, z) + O(\sqrt{x})$ for $z\le \sqrt{x}$.

When $\alpha=0$, we may simplify ${\pmb \vartheta}_\alpha(t)$ in \eqref{eq:thetat4} as
\begin{align}
{\pmb \vartheta_0}(t) =
\begin{cases}
\frac{1-(3/2-2{\theta})t}{1+{\theta}} & \text{if} \ \ t > \mu_0,\\
\frac{1+t}{2} & \text{if} \ \ t \le \, \mu_0.
\end{cases}, \qquad{\rm where}\quad
\mu_0 = \frac{1-{\theta}}{4-3{\theta}}.
\end{align}
Numerically $\mu_0 = \frac{25}{107}\approx 0.233$, since ${\theta} = \frac{7}{32}$ by Kim--Sarnak \cite{KimSarn}. We set parameters
\begin{align}\label{eq:params2}
\rho  &= 0.275, & 
\tau_3 &= 0.24589, &  \mu &= 0.210, \nonumber\\
\rho' &= 0.12313, &
\tau_2 &= 0.211, & \epsilon &= 0.002.\\
& &
\tau_1 &= 0.163 & & \nonumber
\end{align}
For such choices of parameters, we obtain 
\begin{align*}
\sum_{1\le n\le 8} G_n \ \le \ 27.7086,\quad
H_2^{\rm Wu} \ \ge \ 0.019309,\qquad
G(\mu) \ \le \ 5.90044,\quad
\sum_{9\le n\le 21} I_n \ \le \ 0.180677.
\end{align*}
These computations were performed in Mathematica, using the `SieveFunction.m' standard package (Galway). We also record the integrals $I_n$, $G_n$ in the table below.
\vspace{3mm}
\begin{center}
\begin{tabular}{r|r|rlrl}
$n$ & $G_n$  & $n$ & $I_n$ & $n$ & $I_n$\\
\hline
1 & 38.9215  & 9  & 0.0330294 & 17 & 0.000282\\
2&$-$5.80465 & 10 & 0.0247846 & 18 & 0.000287\\
3&$-$4.10858 & 11 & 0.0084670 & 19 & 0.000231\\
4&$-$5.17066 & 12 & 0.0167535 & 20 & $\le \ 3.80\cdot10^{-6}$\\
5 &  1.87682 & 13 & 0.0566827 & 21 & $\le \ 1.02\cdot10^{-8}$\\
6 &  0.636696 & 14 & 0.0264459 &    & \\
7 &  0.428799 & 15 & 0.0136088 &    & \\
8 &  0.928682 & 16 & 0.0000988 &    & 
\end{tabular}
\end{center}
\vspace{3mm}
Thus by Proposition \ref{prop:wu}, we obtain the bound
\begin{align}\label{eq:pi2oneiteration}
\pi_2(x) 
\ &\lesssim \ S(\mathcal A, x^\rho) \  \lesssim \ \frac{\Pi_2(x)}{5e^\gamma} \bigg( \sum_{1\le n\le 8} G_n - H_2^{\textnormal{Wu}} \ + \  G(\mu)\sum_{9\le\, n\le\, 21} I_n \bigg) \nonumber\\
&\lesssim  \ 3.22899\,\Pi_2(x).
\end{align}
This completes the proof of Theorem \ref{thm:twinbound}.

\subsection{Deduction of Theorem \ref{thm:Goldbachbound}}

Let $x=a$. The primes in our set $\mathcal A = \{a-p : p\le a\}$ count Goldbach representations of $a$, and so ${\rm G}(a) \le\, S(\mathcal A, z)+ O(\sqrt{a})$ for $z\le \sqrt{a}$.

When $\alpha=1$, we may simplify ${\pmb \vartheta}_\alpha(t)$ in \eqref{eq:thetat4} as
\begin{align*}
{\pmb \vartheta_1}(t) =
\begin{cases}
1 - \tfrac{{\theta}+3t}{2} & \text{if} \ \ t > \mu_1,\\
\frac{1+t}{2} & \text{if} \ \ t \le \, \mu_1.
\end{cases}, \qquad{\rm where}\quad
\mu_1 = \frac{1-{\theta}}{4}.
\end{align*}
Numerically $\mu_1 = \frac{25}{128}\approx 0.195$, since ${\theta} = \frac{7}{32}$ by Kim--Sarnak \cite{KimSarn}. We set parameters
\begin{align}\label{eq:paramsa}
\rho  &= 0.2445, & 
\tau_3 &= 0.224, &  \mu &= 0.169, \nonumber\\
\rho' &= 0.128, &
\tau_2 &= 0.205, & \epsilon &= 0.002.\\
& &
\tau_1 &= 0.163 & & \nonumber
\end{align}
For such choices of parameters, we obtain
\begin{align*}
\sum_{1\le n\le 8} G_n \ \le \ 29.6847,\quad
H_2^{\rm Wu} \ \ge \ 0.025787,\qquad
G(\mu) \ \le \ 6.34862,\quad
\sum_{9\le n\le 21} I_n \ \le \ 0.084421.
\end{align*}
These computations were performed in Mathematica, using the `SieveFunction.m' standard package (Galway). We also record the integrals $I_n$, $G_n$ in the table below.
\vspace{3mm}
\begin{center}
\begin{tabular}{r|r|rlrl}
$n$ & $G_n$  & $n$ & $I_n$ & $n$ & $I_n$\\
\hline
1 & 37.9006        & 9  & 0.0153459 & 17 & $\le \ 4.63\cdot10^{-5}$\\
2&$-$4.13212       & 10 & 0.0130481 & 18 & $\le \ 6.53\cdot10^{-5}$\\
3&$-$3.29997       & 11 & 0.0023251 & 19 & 0.000109\\
4&$-$3.80586       & 12 & 0.0095937 & 20 & $\le \ 9.20\cdot10^{-7}$\\
5 &  1.53741       & 13 & 0.0296655 & 21 & $\le \ 2.62\cdot10^{-9}$\\
6 &  0.365983      & 14 & 0.0093697 &    & \\
7 &  0.362074      & 15 & 0.0048386 &    & \\
8 &  0.756609      & 16 & $\le \ 1.19\cdot10^{-5}$ &    & 
\end{tabular}
\end{center}
\vspace{3mm}
Thus by Proposition \ref{prop:wu}, we obtain the bound
\begin{align}
{\rm G}(a) \ &\lesssim \ S(\mathcal A, a^\rho)
\ \lesssim \ \frac{\Pi_a(a)}{5e^\gamma} \bigg( \sum_{1\le n\le 8} G_n -H_2^{\textnormal{Wu}}\ + \  G(\mu)\sum_{9\le\, n\le\, 21} I_n \bigg) \nonumber \\
&\lesssim  \ 3.39064\,\Pi_a(a).
\end{align}
This completes the proof of Theorem \ref{thm:Goldbachbound}.

\begin{comment}
\subsection{Selberg eigenvalue conjecture}

Similarly we compute, with ${\theta}=0$,
\begin{align*}
{\pmb \vartheta^*}(t) =
\begin{cases}
1-\tfrac{3}{2}t & \text{if} \ \ t > \frac{1}{4},\\
\frac{1+t}{2} & \text{if} \ \ t \le \, \frac{1}{4}.
\end{cases}
\end{align*}
[may wish to omit]
assuming the Selberg eigenvalue conjecture,
\begin{align}
\pi_2(x) 
\ & \lesssim \ \frac{\Pi_2(x)}{5e^\gamma} \bigg( G_2^{\textnormal{Wu}}+ \sum_{1\le n\neq2 \le 8} G_n \ + \  G(\mu)\sum_{9\le\, n\le\, 21} I_n \bigg)
\ \lesssim  \ 3.20084\,\Pi_2(x).
\end{align}

This completes the proof of Theorem \ref{thm:twinbound}.

\vspace{3mm}
\begin{center}
\begin{tabular}{r|r|rlrl}
$n$ & $G_n$  & $n$ & $I_n$ & $n$ & $I_n$\\
\hline
1 & 38.8807   & 9  & 0.0330294 & 17 & 0.000291\\
2&$-$5.90282  & 10 & 0.0251377 & 18 & 0.000305\\
3&$-$4.13009  & 11 & 0.0084670 & 19 & 0.000254\\
4&$-$5.20067  & 12 & 0.0172974 & 20 & $\le \ 4.31\cdot10^{-6}$\\
5 &  1.84015  & 13 & 0.0574504 & 21 & $\le \ 1.23\cdot10^{-8}$\\
6 &  0.628688 & 14 & 0.0272467 &    & \\
7 &  0.420022 & 15 & 0.0140719 &    & \\
8 &  0.913634 & 16 & 0.0000988 &    & 
\end{tabular}
\end{center}

[Also include $\alpha=1$ for Goldbach representations]
\end{comment}

%
%
%
%
%%%%%%%%%%%%%%%%%%%%%%%%%%%%%%%%%%%%%%%%%%%%%%%%%%%%%%%%%%
%
%
%
%

\section{Spectral large sieve}

%
%
%
%
%%%%%%%%%%%%%%%%%%%%%%%%%%%%%%%%%%%%%%%%%%%%%%%%%%%%%%%%%%
%
%
%
%

In this section, we briefly recall preliminary results from \cite{DI} on the spectral theory of automorphic forms, starting with the Kuznetsov trace formula. See \cite[\S1]{DI} for definitions and further details.

\begin{theorem}[Kuznetsov trace formula]
Let $m,n$ be positive integers and $\phi$ a $C^3$-class function with compact support in $(0,\infty)$. Let $\a$, $\b$ be cusps of $\Gamma=\Gamma_0(q)$. Then we have
\begin{align}
\sum_{\substack{\gamma\in\R\\\exists\smqty(*&*\\\gamma&*)\in \sigma_{\a}^{-1}\Gamma\sigma_{\b}}} &\frac{1}{\gamma}S_{\a\b}(m,n;\gamma)\,\phi\Big(\frac{4\pi}{\gamma}\sqrt{mn}\Big) \nonumber\\
& =\frac{1}{2\pi}\sum_{{\rm even}\,k}\sum_{1\le j\le \theta_k(q)} \frac{i^k(k-1)!}{(4\pi\sqrt{mn})^{k-1}}\, \overline{\psi_{jk}(\a,m)}\psi_{jk}(\b,n)\,\widetilde{\phi}(k-1) \\
& \ + \sum_{j\ge1}\overline{\rho_{j\a}(m)}\rho_{j\b}(n) \frac{\widehat{\phi}(\kappa_j)}{\cosh\pi\kappa_j}
+\frac{1}{\pi}\sum_{\c}\int_\R \Big(\frac{n}{m}\Big)^{ir}\overline{\phi_{\c\a m}(\tfrac{1}{2}+ir)} \phi_{\c\b n}(\tfrac{1}{2}+ir)\widehat{\phi}(r)\dd{r}\nonumber
\end{align}
and
\begin{align}
\sum_{\substack{\gamma\in\R\\\exists\smqty(*&*\\\gamma&*)\in \sigma_{\a}^{-1}\Gamma\sigma_{\b}}} \frac{1}{\gamma}&S_{\a\b}(m,-n;\gamma)\,\phi\Big(\frac{4\pi}{\gamma}\sqrt{mn}\Big) \nonumber\\
& = \sum_{j\ge1}\rho_{j\a}(m)\rho_{j\b}(n) \frac{\check{\phi}(\kappa_j)}{\cosh\pi\kappa_j}
+\frac{1}{\pi}\sum_{\c}\int_\R (mn)^{ir} \phi_{\c\a m}(\tfrac{1}{2}+ir) \phi_{\c\b n}(\tfrac{1}{2}+ir)\check{\phi}(r)\dd{r}
\end{align}
where $\kappa_j$ is defined by $\lambda_j=\frac{1}{4}+\kappa_j^2$, and the Bessel transforms are defined by
\begin{align}
\widetilde{\phi}(l)&=\int_0^\infty J_l(y)\phi(y)\frac{\dd{y}}{y}\label{def:tildephi}\\
\widehat{\phi}(r)&=\frac{\pi}{\sinh \pi r}\int_0^\infty \frac{J_{2ir}(x)-J_{-2ir}(x)}{2i}\phi(x)\frac{\dd{x}}{x} \label{def:hatphi}\\
\check{\phi}(l)&= \frac{4}{\pi}\cosh\pi r\int_0^\infty K_{2ir}(x)\phi(x)\frac{\dd{x}}{x}\label{def:checkphi}
\end{align}
\end{theorem}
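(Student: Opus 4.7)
The Kuznetsov trace formula is a classical result and I would not attempt a fresh proof; my plan is to sketch the standard Bruggeman--Kuznetsov derivation along the lines of \cite{DI}, indicating where each term in the statement arises.

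The starting point is the family of Poincar\'e series at a cusp $\a$,
\begin{align*}
P_{\a,m}(z,s) \ = \ \sum_{\gamma\in\Gamma_\a\backslash\Gamma_0(q)} (\Im \sigma_\a^{-1}\gamma z)^s \, e(m\sigma_\a^{-1}\gamma z),
\end{align*}
which for $\Re s > 1$ converges absolutely and admits a Fourier expansion at the cusp $\b$ whose $n$-th coefficient is an explicit combination of a delta symbol, a term in $I$-Bessel functions for the $n<0$ part, and, crucially, a weighted sum of Kloosterman sums $S_{\a\b}(m,n;\gamma)$ times an integral of a $J$-Bessel function (or its analogue for negative $n$). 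First I would compute the Petersson inner product $\langle P_{\a,m}(\cdot,s), P_{\b,n}(\cdot,s')\rangle$ by unfolding the $\a$-Poincar\'e series against the $\b$-Fourier expansion. This yields an integral against Kloosterman sums, which after a change of variables produces exactly the left-hand side of the trace formula, with the test function $\phi$ encoded through the Bessel kernel and the parameters $s,s'$.

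The same inner product is then evaluated by spectral decomposition of $L^2(\Gamma_0(q)\backslash\H)$: the Maass cusp forms $u_j$ with eigenvalues $\lambda_j=\frac14+\kappa_j^2$ and Fourier coefficients $\rho_{j\a}(m)$, and the Eisenstein series $E_\c(z,\frac12+ir)$ with coefficients $\phi_{\c\a m}(\frac12+ir)$. For holomorphic cusp forms of even weight $k\ge 2$, one must also include a discrete contribution coming from small weights that persist after unfolding; this produces the holomorphic term with $\psi_{jk}$ and the factor $i^k(k-1)!/(4\pi\sqrt{mn})^{k-1}$. Matching coefficients via Parseval then gives the spectral side, where the $s,s'$ variables are packaged into the Bessel transforms $\widehat\phi$, $\widetilde\phi$ in \eqref{def:hatphi}--\eqref{def:tildephi} (and $\check\phi$ in the opposite-sign case).

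The main technical obstacle, and the reason I would follow \cite{DI} rather than try to do this ab initio, is the analytic continuation of both sides in $(s,s')$ down to the critical strip and the exchange of summation with the inner product: one needs absolute convergence on the Kloosterman side, which is delicate for small $\gamma$, and on the spectral side one must justify interchanging the integral defining the transforms with the spectral sum, using the uniform decay of $\phi$, bounds on the Bessel transforms in the $\kappa_j$-aspect, and Weyl-type bounds for the cuspidal spectrum. For the negative-sign formula one replaces the $J$-Bessel kernel by the $K$-Bessel kernel, giving the transform $\check\phi$, and the holomorphic contribution drops out because the relevant integral transform vanishes on the discrete series; everything else is parallel. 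Once this framework is in place, both identities in the theorem follow by matching the two computations of $\langle P_{\a,m},P_{\b,n}\rangle$ (for the $+$ case) and of $\langle P_{\a,m},\overline{P_{\b,n}}\rangle$ (for the $-$ case), and specializing the resulting identity of meromorphic functions to the physical line. I would simply cite \cite[\S1]{DI} for the full details.
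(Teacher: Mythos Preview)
Your proposal is correct and matches the paper's approach: the paper's proof consists entirely of the citation ``This is \cite[Theorem 1]{DI}'', and you likewise conclude by citing \cite[\S1]{DI} for the details. Your sketch of the Poincar\'e series unfolding and spectral decomposition is a faithful summary of that reference, so there is nothing to add.
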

\begin{proof}
This is \cite[Theorem 1]{DI}.
\end{proof}

A key ingredient is the following large sieve inequality for Fourier coefficients of cusp forms (both holomorphic and Maass) and Eisenstein series.

Denote $\mu(\infty)=1/q$, and $\mu(\a)=(w,q/w)\,/q$ for a cusp $\a=u/w$.

\begin{theorem}[Spectral large sieve] \label{thm:largesieve}
Let $K\ge1$, $N\ge\frac{1}{2}$, $\eps>0$ be real numbers, a complex sequence ${\bf a}=(a_n)_n$, and $\a$ a cusp $\Gamma_0(q)$. Then each of the following three expressions
\begin{align}
\sum_{\substack{0<k\le K\\k\,{\rm even}}}\frac{(k-1)!}{(4\pi)^{k-1}}\sum_{1\le j\le \theta_{k}(q)} &\bigg|\sum_{n\sim N}a_n n^{-\frac{k-1}{2}}\psi_{jk}(\a,n)\bigg|^2\\
\sum_{|\kappa_j|\le K}\frac{1}{\cosh \pi\kappa_j} &\bigg|\sum_{n\sim N}a_n\rho_{j\a}(n)\bigg|^2 \\
\sum_{\c}\int_{-K}^K &\bigg|\sum_{n\sim N}a_n n^{ir}\phi_{\c\a n}(\tfrac{1}{2}+ir)\bigg|^2\dd{r}
\end{align}
are each bounded by
\begin{align*}
\ll_\eps (K^2 + \mu(\a)N^{1+\eps})\|{\bf a}_N\|_2^2.
\end{align*}
\end{theorem}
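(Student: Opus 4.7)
The plan is to prove the three bounds together by invoking the Kuznetsov and Petersson trace formulas with a single carefully chosen positive test function $\phi$. I would first construct $\phi\in C^3_c(0,\infty)$, depending on $K$, such that its three spectral Bessel transforms $\widetilde\phi(k-1)$, $\widehat\phi(\kappa_j)/\cosh\pi\kappa_j$, and $\widehat\phi(r)$ from (\ref{def:tildephi})--(\ref{def:hatphi}) are simultaneously non-negative on the whole spectrum, and are bounded below by an absolute positive constant uniformly for spectral parameters in the range $k,\,|\kappa_j|,\,|r|\le K$. A standard construction takes $\phi$ to be a rescaled bump, chosen so that the geometric side of the trace formula is effectively localized; this is classical and reproduced in DI~$\S5$.

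With $\phi$ in hand, for each of the three spectral sums I would open the square and swap order of summation,
\begin{align*}
\sum_j w_j \Big|\sum_{n\sim N} a_n \rho_{j\a}(n)\Big|^2 \ = \ \sum_{m,n\sim N} a_m\overline{a_n} \sum_j w_j\, \rho_{j\a}(m)\overline{\rho_{j\a}(n)},
\end{align*}
and similarly for the holomorphic (weight $\widetilde\phi(k-1)$) and Eisenstein (weight $\widehat\phi(r)$) analogues. Since the three weight systems are simultaneously non-negative by construction, dropping any two bounds the third by the full spectral side of the Kuznetsov formula with $\b=\a$, which equals the geometric expression
\begin{align*}
\sum_{m,n\sim N} a_m \overline{a_n} \sum_{\gamma} \gamma^{-1}\, S_{\a\a}(m,n;\gamma)\,\phi\Big(\frac{4\pi\sqrt{mn}}{\gamma}\Big).
\end{align*}
All three bounds in the theorem thus reduce to a single estimate for this geometric sum.

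To estimate the geometric side, the compact support of $\phi$ restricts $\gamma$ to a short range, while the double-coset condition confines $\gamma$ to a discrete set of density governed by $\mu(\a)^{-1}$. The diagonal $m=n$ piece, combined with the trivial bound $|S_{\a\a}(m,m;\gamma)|\le\gamma$ and divisor-function averaging, yields the $\mu(\a)N^{1+\eps}\|{\bf a}\|_2^2$ term. The off-diagonal $m\ne n$ contribution is handled by Weil's bound $|S_{\a\a}(m,n;\gamma)|\ll \tau(\gamma)(m,n,\gamma)^{1/2}\gamma^{1/2}$ followed by Cauchy--Schwarz in the $(m,n)$ variables, producing a strictly lower-order term absorbed into $N^{\eps}$. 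The $K^2\|{\bf a}\|_2^2$ piece then emerges from the spectral count: by Weyl's law there are $\asymp K^2$ spectral parameters in the window under $K$ (holomorphic, Maass, and Eisenstein combined), and this count feeds into the main-term of the spectral side through the lower bound on the transforms of $\phi$.

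The main technical obstacle is the construction of $\phi$: one must exhibit a single smooth function whose three structurally distinct Bessel transforms in (\ref{def:tildephi})--(\ref{def:checkphi}) are simultaneously non-negative and comparably sized across the spectral window. This is delicate because the Bessel kernels $J_l$, $(J_{2ir}-J_{-2ir})/\sinh\pi r$, and $K_{2ir}$ oscillate in qualitatively different ways, so positivity of one transform can clash with positivity of another; a suboptimal choice would force separate case-by-case arguments with weaker uniformity. Once the positivity is established, the remaining estimates follow routinely from the trace formulas and Weil's bound on Kloosterman sums.
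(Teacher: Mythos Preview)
The paper does not prove this result; it simply cites \cite[Theorem~2]{DI}. Your sketch has the right general shape (trace formula plus Weil-type bounds on the Kloosterman side) but contains a genuine gap in the origin of the $K^2$ term.

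You claim $K^2$ ``emerges from the spectral count'' via Weyl's law and ``feeds into the main term of the spectral side through the lower bound on the transforms of $\phi$.'' This is backwards: the spectral sum is the quantity being bounded from above, so a spectral count cannot supply a term in the upper bound. In the actual proof one uses the \emph{reverse} (spectral-to-Kloosterman) direction of the trace formula---Bruggeman--Kuznetsov for Maass and Eisenstein, Petersson for holomorphic---which carries an explicit diagonal term. For the Maass case, choosing a non-negative spectral weight $h(r)$ with $h\ge 1$ on $[-K,K]$, the formula reads schematically
\[
\sum_j \frac{h(\kappa_j)}{\cosh\pi\kappa_j}\,\rho_{j\a}(m)\overline{\rho_{j\a}(n)}\;+\;(\text{cont.})\;=\;\delta_{m,n}\cdot c_0\!\int_\R r\tanh(\pi r)\,h(r)\,dr \;+\;(\text{Kloosterman tail}),
\]
and the integral on the right is $\asymp K^2$; summing $a_m\overline{a_n}$ over $m=n\sim N$ yields $K^2\|{\bf a}\|_2^2$. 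The Kloosterman-to-spectral form you invoke (the only form stated in the paper) has no such diagonal term, so in that direction $K^2$ has no source. A related difficulty is that arranging all three Bessel transforms of a single $\phi\in C_c^3(0,\infty)$ to be simultaneously non-negative on the whole spectrum, and uniformly bounded below on $|r|\le K$, is not how Deshouillers--Iwaniec proceed: the holomorphic case is handled separately via Petersson (whose diagonal term already has the correct sign), and for Maass/Eisenstein one chooses the test function directly on the spectral side, where non-negativity is trivial.
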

\begin{proof}
This is \cite[Theorem 2]{DI}.
\end{proof}

For the smallest positive eigenvalue $\lambda_1=\lambda_1(q)$ for $\Gamma=\Gamma_0(q)$, recall ${\theta}=\sup_{q}{\theta}_q$ for
\begin{align}
{\theta}_q = \max\big(0,\,\sqrt{1-4\lambda_1}\big).
\end{align}
Note Selberg's lower bound $\lambda_1(q)\ge 3/16$ implies ${\theta}_q \le 1/2$. The current record bound is ${\theta}_q\le 7/32$ by Kim--Sarnak \cite{KimSarn}.

We also use bounds on Bessel-Kuznetsov transforms $\check{f}(r),\, \widehat{f}(r),\, \widetilde{f}(r)$ from \eqref{def:tildephi}--\eqref{def:checkphi}.

\begin{lemma}\label{lem:DI7.1}
Suppose $f\in \mathcal C^2$ is supported in $[X,8X]$ and 
\begin{align}\label{eq:smoothfbounds}
\|f\|_{\infty} \le 1, \qquad
\|f'\|_1 \ll 1, \qquad
\|f''\|_1 \ll \frac{1}{X}.
\end{align}
Then we have
\begin{align} \label{eq:7.1}
\check{f}(ir),\, \widehat{f}(ir) \ &\ll \ \frac{1+X^{-2r}}{1+X} \qquad\qquad r\in (0,\tfrac{1}{2})
\end{align}
and
\begin{align}
\check{f}(r),\, \widehat{f}(r),\, \widetilde{f}(r) \ &\ll \ \frac{1+|\log X|}{1+X} \qquad\qquad r\in\R \label{eq:7.2}\\
&\ll \ \ |r|^{-3/2} + X/|r|^2 \qquad \ |r|\ge1 \label{eq:7.3}\\
%&\ll \ \big(|r|^{-3/2} + X/r^2\big)\frac{Y}{|r|} \qquad |r|\ge1. \label{eq:7.4}
&\ll \ |r|^{-5/2} + X/|r|^3  \qquad |r|\ge \max(2X,1). \label{eq:7.4}
\end{align}
\end{lemma}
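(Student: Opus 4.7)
These are the standard Bessel--Kuznetsov kernel estimates of Deshouillers--Iwaniec, obtained by pairing uniform bounds for $J_\nu$ and $K_\nu$ with integration by parts driven by the derivative hypotheses $\|f'\|_1\ll 1$ and $\|f''\|_1\ll 1/X$. Since $\operatorname{supp} f\subseteq[X,8X]$, each transform integral collapses to a dyadic window in $x$, and the proof splits according to the relative sizes of $|r|$ and $X$. In every regime the strategy is: (a) replace the Bessel kernel by its appropriate uniform/asymptotic bound, and (b) either integrate trivially or integrate by parts to convert smoothness of $f$ into decay in $r$.

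For the exceptional-spectrum bound \eqref{eq:7.1}, I would substitute the Bessel power series for $J_{\pm 2r}$ and the analogous expansion for $K_{2ir}$ near $x=0$ into the definitions \eqref{def:hatphi} and \eqref{def:checkphi} with $r\mapsto ir$, noting $\sinh(i\pi r)=i\sin(\pi r)\asymp r$ for $r\in(0,1/2)$. The leading terms $x^{\pm 2r}$ integrate against $f$ to produce $X^{\pm 2r}$ contributions; the factor $(1+X)^{-1}$ comes from the large-$X$ regime via the decay $J_\nu(x),K_\nu(x)\ll x^{-1/2}$ for $x\gtrsim 1$; and the singular prefactor $1/\sin(\pi r)$ is absorbed by the cancellation in $J_{-2r}-J_{2r}$. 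The uniform bound \eqref{eq:7.2} uses the crude estimates $J_\nu(x),K_\nu(x)\ll 1+|\log x|$ for $0<x\le 1$ and $\ll x^{-1/2}$ for $x\ge 1$, together with $\|f\|_\infty\le 1$, integrated trivially over $\operatorname{supp} f$; the logarithmic factor arises only in the regime $X\ll 1$.

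The main work lies in \eqref{eq:7.3} and \eqref{eq:7.4}, where decay in $|r|$ has to be extracted from smoothness of $f$. Here I would integrate by parts against the Bessel recurrences, for instance $\tfrac{d}{dx}(x^\nu J_\nu(x))=x^\nu J_{\nu-1}(x)$, or equivalently use the Bessel equation $x^2 J_\nu''+xJ_\nu'+(x^2-\nu^2)J_\nu=0$ and its counterpart for $K_\nu$, to trade one Bessel derivative for a factor of $1/r$ and shift a derivative onto $f/x$. One such step, combined with the asymptotic $J_\nu(x),K_\nu(x)\sim x^{-1/2}$ and $\|f'\|_1\ll 1$, produces the first bound $|r|^{-3/2}+X|r|^{-2}$ of \eqref{eq:7.3}; a second integration by parts yields the improved $|r|^{-5/2}+X|r|^{-3}$ of \eqref{eq:7.4}, but is only permissible once $|r|\ge 2X$ so that the Bessel phase stays above its turning point at $x\sim|r|$ and the recurrence genuinely saves $1/r$ rather than $1/\max(r,X)$. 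The main obstacle is careful bookkeeping of boundary terms and of the three distinct Bessel kernels, verifying that each integration by parts saves exactly $1/r$ uniformly across the oscillatory and exponentially-decaying regimes of $J_{2ir}$ and $K_{2ir}$.
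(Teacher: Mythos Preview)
Your sketch is along the right lines and captures the standard mechanism (Bessel asymptotics plus integration by parts), but note that the paper does not actually prove this lemma: its entire proof is the citation ``This is \cite[Lemma 2.1]{BHM} with $Z=1$, correcting an error in \eqref{eq:7.4} of \cite[Lemma 7.1]{DI}.'' So there is nothing to compare against in the paper itself --- your proposal is already more detailed than what the paper offers.

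One substantive remark: the passage from \eqref{eq:7.3} to \eqref{eq:7.4} is exactly where the original Deshouillers--Iwaniec argument had an error, later fixed by Blomer--Harcos--Michel. Your description of it as ``a second integration by parts'' that ``is only permissible once $|r|\ge 2X$'' is morally right but hides the delicate point: for $\widehat{f}$ one must use the Bessel differential equation (or an equivalent recurrence) in a way that genuinely converts the oscillation of $J_{2ir}$ into an extra factor of $|r|^{-1}$ rather than $\max(|r|,X)^{-1}$, and the bookkeeping of the $\cosh(\pi r)$ weight in $\check{f}$ against the exponential decay of $K_{2ir}$ is also not entirely trivial. If you were to carry out the details, this is where to be most careful, and consulting \cite[Lemma 2.1]{BHM} directly would be the safest route.
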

\begin{proof}
This is \cite[Lemma 2.1]{BHM} with $Z=1$, correcting an error in \eqref{eq:7.4} of \cite[Lemma 7.1]{DI}.
\end{proof}

\section{Exceptional spectrum}\label{sec:largesieve}

In this section, we bound the contribution of exceptional eigenvalues $\lambda_j<1/4$.
For simplicity of exposition, we assume $q_0=1$. Indeed, $q_0=1$ is the only case needed for our applications in this article. Results may obtained with general $q_0$-dependence, as in \cite[\S4.2.3]{Drapp2}.

\subsection{The Case of Fixed Level} We first consider the exceptional spectrum for a fixed congruence subgroup $\Gamma_0(q)$. For a sequence ${\bf a}=(a_n)_n$ recall the norm $\|{\bf a}_N\|_2^2 = \sum_{n\sim N}|a_n|^2$.

Let $\phi(x)\in\mathcal C^\infty$ be a test function, whose derivatives satisfy $\phi^{(l)}(x) \ll x^{-l}$. We have the following result generalizing \cite[Theorem 5]{DI}. In private communication, A. Pascadi has independently obtained corresponding results.

\begin{theorem}\label{thm:DI5}
Let $N,X\ge1$, a sequence ${\bf a}=(a_n)_n\subset \C$, and $\a$ a cusp $\Gamma_0(q)$. Then we have
\begin{align}
\sum_{\lambda_j<1/4}^{(q)} X^{2i\kappa_j} \bigg|\sum_{n\sim N}a_n \rho_{j\a}(n)\bigg|^2 \ \ll_\eps \ 
\big(1+ (\mu(\a)NX)^{{\theta}_q}\big)(1+ (\mu(\a)N^{1+\eps})^{1-{\theta}_q})\|{\bf a}_N\|_2^2.
\end{align}
\end{theorem}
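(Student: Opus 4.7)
I will prove Theorem \ref{thm:DI5} by combining the spectral large sieve (Theorem \ref{thm:largesieve}) with a Kuznetsov-based Kloosterman bound, then interpolating. Opening the square,
\[
S := \sum_{\lambda_j<1/4}^{(q)} X^{2i\kappa_j}\bigg|\sum_{n\sim N} a_n\rho_{j\a}(n)\bigg|^2 \ = \ \sum_{j\in\mathrm{exc}} X^{-2t_j}|A_j|^2,
\]
where $A_j := \sum_{n\sim N} a_n\rho_{j\a}(n)$ and $\kappa_j = it_j$ with $t_j\in(0,\theta_q/2]$. By hypothesis $X\ge 1$, so the weights $X^{-2t_j}\le 1$ are positive reals, and $\cosh(\pi\kappa_j) = \cos(\pi t_j)\asymp 1$ since $t_j < 1/2$.

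First, the spectral large sieve (Theorem \ref{thm:largesieve}) with $K=1/2$, applied with the trivial majorization $X^{-2t_j}\le 1$, yields the diagonal bound
\[
S \ \ll_\eps \ (1 + \mu(\a)N^{1+\eps})\|{\bf a}_N\|_2^2. \qquad\text{(A)}
\]
Second, the Kuznetsov trace formula with a smooth test function $\phi$ supported on a dyadic interval around $X$, chosen so that $\hat\phi(\kappa_j)/\cosh(\pi\kappa_j) \gtrsim X^{2i\kappa_j}$ on the exceptional spectrum (while remaining nonneg on the continuous and holomorphic spectra), gives by positivity the bound
\[
S \ \ll \ \sum_{m,n\sim N} a_m\bar{a}_n \sum_\gamma \frac{S_{\a\a}(m,n;\gamma)}{\gamma}\,\phi\!\left(\frac{4\pi\sqrt{mn}}{\gamma}\right).
\]
Such $\phi$ is constructed as in \cite{DI} using the Bessel transform bounds of Lemma \ref{lem:DI7.1}. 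The Weil bound $|S_{\a\a}(m,n;\gamma)|\ll (m,n,\gamma)^{1/2}\gamma^{1/2+\eps}$, together with summation over the permitted $\gamma$ (an $\mu(\a)$-spaced lattice) and Cauchy--Schwarz in $m,n$, yields the Kloosterman bound
\[
S \ \ll_\eps \ (1 + \mu(\a)NX)N^\eps\|{\bf a}_N\|_2^2. \qquad\text{(B)}
\]

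Finally, since $S$ satisfies both (A) and (B) and $S\ge 0$, writing $S = S^{1-\theta_q}S^{\theta_q}$ and majorizing each factor separately gives
\[
S \ \le \ \big[(1+\mu(\a)N^{1+\eps})\|{\bf a}_N\|_2^2\big]^{1-\theta_q}\big[(1+\mu(\a)NX)N^\eps\|{\bf a}_N\|_2^2\big]^{\theta_q}.
\]
Applying $(1+x)^\alpha\le 2(1+x^\alpha)$ for $\alpha\in[0,1]$ to each factor, and absorbing $N^{\eps\theta_q}$ into $N^\eps$, produces the target
\[
S \ \ll_\eps \ \big(1 + (\mu(\a)NX)^{\theta_q}\big)\big(1 + (\mu(\a)N^{1+\eps})^{1-\theta_q}\big)\|{\bf a}_N\|_2^2.
\]

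The main obstacle lies in bound (B): constructing the test function $\phi$ so that its Bessel--Kuznetsov transform obeys a uniform lower bound $\hat\phi(it)/\cos(\pi t)\gtrsim X^{-2t}$ across the exceptional range $t\in(0,1/2)$, while preserving nonneg-ness on the continuous and holomorphic parts of the spectrum, and tracking the $\mu(\a)$-dependence from the permitted $\gamma$-lattice. This parallels \cite[Theorem 5]{DI} and \cite[Proposition 6]{DPR}, refined so that $\theta_q$ enters through the final interpolation.
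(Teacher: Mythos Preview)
Your proof has a genuine gap in establishing bound (B). First, a minor point: to obtain $\widehat\phi(\kappa_j)\gtrsim X^{2i\kappa_j}$ on the exceptional range you need $\phi$ supported near $1/X$, not near $X$ (cf.\ the lower bound $\widehat\phi(\kappa)\gg Y_0^{-2i\kappa}$ when $\operatorname{supp}\phi\subset[Y_0,2Y_0]$). Second, the claimed positivity of $\widehat\phi$ on the regular, holomorphic, and continuous spectra is not available for such test functions; those contributions must be bounded separately via the large sieve, as in the paper. But the decisive issue is the Weil step: with $\phi$ supported near $1/X$ the Kloosterman side has $\gamma\asymp NX$ ranging over a $\mu(\a)^{-1}$-spaced set, and Weil gives $|S_{\a\a}(m,n;\gamma)|\ll\gamma^{1/2+\eps}$. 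Summing over $\gamma$ and applying Cauchy--Schwarz (or Schur) in $m,n$ yields at best
\[
\text{Kloosterman side}\ \ll\ (N/X)^{1/2}(1+\mu(\a)NX)(NX)^\eps\|{\bf a}_N\|_2^2,
\]
which exceeds your claimed (B) by the factor $(N/X)^{1/2}$ whenever $N>X$. Concretely, take $\a=\infty$, $N\asymp q$ (so $\mu(\a)N\asymp 1$) and $X$ bounded: the target is $\asymp\|{\bf a}_N\|_2^2$, but your interpolation gives $\asymp q^{\theta_q/2}\|{\bf a}_N\|_2^2$, which is too large.

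The paper's proof avoids Weil entirely. It splits into two cases on the size of $\mu(\a)N$. When $\mu(\a)N\gg 1$, the large sieve (A) together with the trivial $X^{2i\kappa_j}\le X^{\theta_q}$ already gives $S\ll X^{\theta_q}(1+\mu(\a)N^{1+\eps})\|{\bf a}_N\|_2^2$, which one checks is $\ll(1+(\mu(\a)NX)^{\theta_q})(1+(\mu(\a)N^{1+\eps})^{1-\theta_q})\|{\bf a}_N\|_2^2$. When $Y:=4\pi\mu(\a)N$ is small, one takes $\phi$ supported in $[Y,2Y]$; then $4\pi\sqrt{mn}/\gamma$ never reaches the support (since $\gamma\ge\mu(\a)^{-1}$), so the Kloosterman side of Kuznetsov \emph{vanishes}. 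Hence the exceptional contribution weighted by $\widehat\phi(\kappa_j)$ is bounded by the regular spectrum, which the large sieve controls by $(1+\mu(\a)N^{1+\eps})\|{\bf a}_N\|_2^2$. Finally the pointwise inequality $X^{2i\kappa_j}\le(1+(XY)^{\theta_q})\,Y^{-2i\kappa_j}\ll(1+(\mu(\a)NX)^{\theta_q})\,\widehat\phi(\kappa_j)$ converts this into the stated bound for $S$ --- no interpolation is needed, and the $\theta_q$-dependence enters through this last pointwise comparison rather than through a convexity argument.
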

\begin{proof}
Recall $\lambda_j=\tfrac{1}{4}+\kappa_j^2\ge \frac{3}{16}$ by the Selberg bound, so $\kappa_j^2>-\tfrac{1}{16}$. For exceptional $\lambda_j<\tfrac{1}{4}$ this means $i\kappa_j\in[0,\tfrac{1}{4}]$. Thus if $\mu(\a)N >\eps$, by the spectral large sieve in Theorem \ref{thm:largesieve} with $K=\tfrac{1}{4}$, (note $\cosh\pi\kappa \gg 1$ for $|\kappa|\le 1/4$)
\begin{align*}
\sum_{\lambda_j<1/4}^{(q)} X^{2i\kappa_j} \bigg|\sum_{n\sim N}a_n\rho_{j\a}(n)\bigg|^2  &\le \sum_{|\kappa_j|\le \tfrac{1}{4}}\frac{X^{{\theta}_q}}{\cosh \pi\kappa_j} \bigg|\sum_{n\sim N}a_n\rho_{j\a}(n)\bigg|^2\\
&\ll_\eps X^{{\theta}_q}(1 + \mu(\a)N^{1+\eps})\|{\bf a}_N\|_2^2 \\
& \ll \big(1+ (\mu(\a)NX)^{{\theta}_q}\big)(1+ (\mu(\a)N^{1+\eps})^{1-{\theta}_q})\|{\bf a}_N\|_2^2.
\end{align*}

Hence it suffices to assume $Y:=4\pi \mu(\a)N <\eps$ is sufficiently small. Let $\phi(x)=w(x/Y)$ be a test function supported on $[Y,2Y]$. We apply the Kuznetsov formula with $\a=\b$, multiply both sides by $\overline{a_m} a_n$ and sum over $m,n\sim N$,
\begin{align}\label{eq:thm5Kuznet}
0 &= \sum_{m,n\sim N}\overline{a_m} a_n \sum_{\substack{\gamma\in\R\\\exists\smqty(*&*\\\gamma&*)\in \sigma_{\a}^{-1}\Gamma\sigma_{\a}}}  \frac{1}{\gamma}S_{\a\a}(m,n;\gamma)\,\phi\Big(\frac{4\pi}{\gamma}\sqrt{mn}\Big) \nonumber\\
&=\frac{1}{2\pi}\sum_{{\rm even}\,k}\widetilde{\phi}(k-1)\sum_{1\le j\le \theta_k(q)}\frac{i^k(k-1)!}{(4\pi)^{k-1}}\,\bigg|\sum_{n\sim N}a_n n^{\frac{1-k}{2}}\psi_{jk}(\a,n)\bigg|^2
\nonumber\\
& + \sum_{j\ge1} \frac{\widehat{\phi}(\kappa_j)}{\cosh\pi\kappa_j}\bigg|\sum_{n\sim N} a_n\rho_{j\a}(n)\bigg|^2
+\frac{1}{\pi}\sum_{\c}\int_\R \bigg|\sum_{n\sim N} a_n\, n^{ir}\phi_{\c\a n}(\tfrac{1}{2}+ir)\bigg|^2\widehat{\phi}(r)\dd{r}
\end{align}

Importantly, we used that the LHS of \eqref{eq:thm5Kuznet} is empty, i.e. the sum of Kloosterman sums $S_{\a\a}(m,n;\gamma)$ with respect to $\gamma\in \sigma_\a^{-1}\Gamma\sigma_\a$. Indeed by Lemma 2.4 $\mu(\a)^{-1}\mid \gamma$, in particular $1/\gamma \le \mu(\a)$, so the argument of $\phi(4\pi\sqrt{mn}\,/\gamma)$ lies below the support $Y=4\pi\mu(\a)N$ of $\phi$. 

On the RHS of \eqref{eq:thm5Kuznet}, we shall apply the spectral large sieve in Theorem \ref{thm:largesieve}. Indeed, we split regular $\kappa\in\R$ into dyadic ranges $|\kappa|\sim K$, in which range $\widehat{\phi}(\kappa) \ll (K^{-5/2}+Y/K^3 )$ by \eqref{eq:7.4} 
%with $S=1$ *[we use $\|\phi''\|_1 = \int_Y^{2Y}|\phi''| \le\int_Y^{2Y}\|w''\|_\infty/Y^2 \ll 1/Y$]*[*use Blomer-Harcos-Michel*] 
in Lemma \ref{lem:DI7.1}. Thus by Theorem \ref{thm:largesieve},
\begin{align*}
\sum_{\substack{|\kappa_j|\sim K\\\kappa_j\in\R}} \frac{\widehat{\phi}(\kappa_j)}{\cosh\pi\kappa_j}\bigg|\sum_{n\sim N} a_n\rho_{j\a}(n)\bigg|^2 
& \ll (K^{-5/2}+Y/K^3 )\, (K^2+ \mu(\a)N^{1+\eps})\|{\bf a}_N\|_2^2\\
& \ll \, (\frac{1}{\sqrt{K}} + \frac{\mu(\a)N^{1+\eps}}{K^{5/2}})\|{\bf a}_N\|_2^2
\end{align*}
recalling $Y<\eps$. Thus summing over all dyadic intervals, the Maass regular spectra is
\begin{align*}
\sum_{\lambda_j \ge 1/4} \frac{\widehat{\phi}(\kappa_j)}{\cosh\pi\kappa_j}\bigg|\sum_{n\sim N} a_n\rho_{j\a}(n)\bigg|^2  \ll \sum_{K=2^l\ge1/4}(\frac{1}{\sqrt{K}} + \frac{\mu(\a)N^{1+\eps}}{K^{5/2}}) \|{\bf a}_N\|_2^2 \ll  (1+\mu(\a)N^{1+\eps})\|{\bf a}_N\|_2^2. 
\end{align*}
Thus for the RHS of \eqref{eq:thm5Kuznet}, we may similarly bound the regular spectra (with holomorphic and Eisenstein contributions) by $O(1+\mu(\a)N^{1+\eps})$, so that
\begin{align}  \label{eq:thm5Kuznet2}
 &\sum_{\lambda_j\ge 1/4} \frac{\widehat{\phi}(\kappa_j)}{\cosh\pi\kappa_j}\bigg|\sum_{n\sim N} a_n\rho_{j\a}(n)\bigg|^2
+\frac{1}{\pi}\sum_{\c}\int_\R \bigg|\sum_{n\sim N} a_n\, n^{ir} \phi_{\c\a n}(\tfrac{1}{2}+ir)\bigg|^2\widehat{\phi}(r)\dd{r}
\nonumber\\
& \quad+ 
\frac{1}{2\pi}\sum_{{\rm even}\,k}\widetilde{\phi}(k-1)\sum_{1\le j\le \theta_k(q)}\frac{i^k(k-1)!}{(4\pi)^{k-1}}\,\bigg|\sum_{n\sim N}a_n n^{\frac{1-k}{2}}\psi_{jk}(\a,n)\bigg|^2 \ll \  (1+\mu(\a)N^{1+\eps})\|{\bf a}_N\|_2^2.
\end{align}
Hence combining \eqref{eq:thm5Kuznet2}, \eqref{eq:thm5Kuznet} we deduce the exceptional spectra is also bounded by $O(1+\mu(\a)N^{1+\eps})$. That is,
\begin{align}
(1+\mu(\a)N^{1+\eps})\|{\bf a}_N\|_2^2 & \gg \sum_{\lambda_j<1/4} \frac{\widehat{\phi}(\kappa_j)}{\cosh\pi\kappa_j}\bigg|\sum_{n\sim N} a_n\,\rho_{j\a}(n)\bigg|^2 \gg \sum_{\lambda_j<1/4}Y^{-{\theta}_q} \bigg|\sum_{n\sim N} a_n\,\rho_{j\a}(n)\bigg|^2 \nonumber\\
& \gg \frac{1}{1+(XY)^{{\theta}_q}}\sum_{\lambda_j<1/4} X^{2i\kappa_j} \bigg|\sum_{n\sim N}a_n\rho_{j\a}(n)\bigg|^2.
\end{align}
noting $\cosh\pi\kappa=\cos(i\pi\kappa)\le 1$ for $0<i\kappa<\tfrac{1}{4}$ and, c.f. \cite[eq. (8.3)]{DI},
\begin{align}\label{eq:8.3DI}
\widehat{\phi}(\kappa) \gg Y^{-2i\kappa} \gg Y^{-{\theta}_q}.
\end{align}

Recalling $Y\ll \mu(\a)N < \eps$, we conclude
\begin{align*}
\sum_{\lambda_j<1/4} X^{2i\kappa_j} \bigg|\sum_{n\sim N}a_n \rho_{j\a}(n)\bigg|^2 \ &\ll_\eps \ \big(1+ (\mu(\a)NX)^{{\theta}_q}\big)(1+ \mu(\a)N^{1+\eps})\|{\bf a}_N\|_2^2\\
& \ll \big(1+ (\mu(\a)NX)^{{\theta}_q}\big)(1+ (\mu(\a)N^{1+\eps})^{1-{\theta}_q})\|{\bf a}_N\|_2^2.
 \qedhere
\end{align*}
\end{proof}

\subsection{Results on Average}

In this section we prove Theorem \ref{thm:DI6}. Crucially, the test function $\phi(x)$ has smaller support, so the sums over $c$ of Kloosterman sums $S(m,n;qc)$ will no longer be empty. Let
\begin{align}
S(Q,Y,N;s) = \sum_{Q<q\le 16Q} \; \sum_{\lambda_j<1/4}^{(q)} Y^{2i\kappa_j}
\bigg|\sum_{n\sim N} a_n\,n^s \rho_{j\infty}(n)\bigg|^2.
\end{align}

\begin{lemma}(Recurrence for $S$) \label{lem:DI8.1}
Let $Q,N,Y\ge1$ and ${\bf a}=(a_n)_n\subset \C$. Then we have
\begin{align}
S(Q,Y,N,0) \ \ll_\eps \ &\int_{\R}S(\pi NY/Q, Y, N,it)\frac{\dd{t}}{t^4+1}
\ + \ (YN)^\eps (Q+N+NY/Q)\|{\bf a}_N\|_2^2.
\end{align}
\end{lemma}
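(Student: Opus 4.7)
The plan is to apply the Kuznetsov trace formula at $\a=\b=\infty$ for each level $q\in(Q,16Q]$ with test function $\phi(x)=f(x/X)$, where $f$ is a fixed smooth bump supported on $[1,8]$ and $X$ is of order $1/Y$. After multiplying by $\overline{a_m}a_n$, summing over $m,n\sim N$ and over $q\in(Q,16Q]$, and isolating the exceptional Maass spectrum, the lower bound $\widehat{\phi}(\kappa_j)\gg Y^{2i\kappa_j}$ from \eqref{eq:8.3DI} shows that the exceptional contribution on the spectral side is bounded below by a positive multiple of $S(Q,Y,N;0)$. The rest of the argument then has to bound all the other terms produced by the identity.

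The regular (non-exceptional) Maass, holomorphic, and Eisenstein contributions on the spectral side are controlled exactly as in the proof of Theorem \ref{thm:DI5}: split the spectral parameter into dyadic ranges $|\kappa|\sim K$, use the decay estimates \eqref{eq:7.2}--\eqref{eq:7.4} of Lemma \ref{lem:DI7.1} on $\widehat{\phi},\widetilde{\phi},\check{\phi}$, and apply the spectral large sieve (Theorem \ref{thm:largesieve}) with $\mu(\infty)\asymp 1/Q$. Summed over $q\sim Q$, these contribute at most $(YN)^{\eps}(Q+N)\|{\bf a}_N\|_2^2$.

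The heart of the argument is the arithmetic (Kloosterman) side. Since $\phi$ is supported on $[X,8X]$, only pairs $(q,c)$ with $qc\asymp \sqrt{mn}/X\asymp NY$ contribute, forcing $c\asymp NY/Q=:Q'$. Using the identification $S(m,n;qc)\leftrightarrow S_{\infty,1/q}(m,n;qc)$ for $\Gamma_0(c)$ (the analog of Lemma 2.4 of DI at the dual cusp), the sum $\sum_{q\sim Q,\,c}(qc)^{-1}S(m,n;qc)\phi(4\pi\sqrt{mn}/qc)$ takes the shape of the arithmetic side of a Kuznetsov formula at the dual level $c\sim Q'$, with the factor $q$ absorbed into a cusp-dependent phase. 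Running Kuznetsov in reverse at level $Q'$ then re-expresses this as a spectral expansion at level $Q'$, whose exceptional Maass component yields the recursive main term $\int_{\R}S(Q',Y,N;it)\,(t^4+1)^{-1}\,dt$---the weight $(t^4+1)^{-1}$ arising from the decay of $\check{\phi},\widehat{\phi}$ in \eqref{eq:7.4}---while the regular contributions at the dual level, controlled by the large sieve with $\mu(\infty)\asymp 1/Q'$, contribute $(YN)^{\eps}(Q'+N)\|{\bf a}_N\|_2^2=(YN)^{\eps}(NY/Q+N)\|{\bf a}_N\|_2^2$, completing the claimed error.

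The main obstacle will be the cusp-level identification that turns the $(q,c)$-sum into a Kuznetsov arithmetic side at level $c\sim Q'$: one must carefully track normalization factors, the cusp width $\mu$ at $1/q$ inside $\Gamma_0(c)$, and any residual ``small $c$'' or diagonal-type boundary terms, and verify these are all absorbed into the stated error $(YN)^{\eps}(Q+N+NY/Q)\|{\bf a}_N\|_2^2$.
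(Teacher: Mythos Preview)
Your high-level plan is exactly the Deshouillers--Iwaniec argument: apply Kuznetsov at $(\infty,\infty)$ for each $q\sim Q$ with $\phi$ supported near $1/Y$, so that the exceptional spectrum dominates $S(Q,Y,N;0)$ via \eqref{eq:8.3DI}; bound the regular spectrum by the large sieve; and re-interpret the Kloosterman side $\sum_{q\sim Q}\sum_c (qc)^{-1}S(m,n;qc)\phi(4\pi\sqrt{mn}/qc)$ as a Kuznetsov arithmetic side at the dual level $c\asymp \pi NY/Q$, whose spectral expansion produces the recursive $S(\pi NY/Q,Y,N;it)$ term and the $(NY/Q+N)\|{\bf a}\|_2^2$ error.

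Two points in your execution are off, though. First, the cusp identification you invoke is spurious: $1/q$ is not a cusp of $\Gamma_0(c)$ unless $q\mid c$, and there is no ``cusp-dependent phase'' absorbing $q$. The classical sum $S(m,n;qc)$ is simply $S_{\infty\infty}(m,n;\gamma)$ with $\gamma=qc$ for \emph{either} group $\Gamma_0(q)$ or $\Gamma_0(c)$, so the swap stays at the $(\infty,\infty)$ pair throughout. The genuine obstacle is instead the \emph{sharp} cutoff $q\in(Q,16Q]$: after swapping to fix $c$, the inner $q$-sum is truncated and is not yet a clean Kuznetsov arithmetic side. Second, the weight $(t^4+1)^{-1}$ does not come from the Bessel-transform decay \eqref{eq:7.4} in the spectral parameter; it comes from Mellin-inverting the test function (or the sharp cutoff), which introduces $(mn)^{-it/2}$ into the coefficients---hence the $a_n n^{it}$ in $S(\pi NY/Q,Y,N;it)$---with the rapid decay of the Mellin transform $\breve\phi(it)\ll(1+t^4)^{-1}$ supplying the weight. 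Once you replace your cusp step by this Mellin separation at $(\infty,\infty)$, the rest of your outline goes through as written.
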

\begin{proof}
See \cite[Lemma 8.1]{DI}
\end{proof}

\begin{theorem}\label{thm:DI6}
Let $Q,N,X>0$ and ${\bf a}=(a_n)_n\subset \C$. Then we have
\begin{align*}
S(Q,X^2,N,0) &=\sum_{q\asymp Q}\sum_{\lambda_j<1/4}^{(q)} X^{4i\kappa_j} \bigg|\sum_{n\le N} a_n \rho_{j\infty}(n)\bigg|^2 \\
&\ll_\eps \ (QN)^{\eps}\,(Q+N+ NX^{2{\theta}}+Q(\sqrt{N}X/Q)^{2{\theta}}) \|{\bf a}_N\|_2^2.
\end{align*}
\end{theorem}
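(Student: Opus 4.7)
My plan is to follow the iterative scheme of Deshouillers--Iwaniec \cite[§8]{DI}, but with the $\theta$-dependence carefully tracked so as to exploit the refined bound of Theorem~\ref{thm:DI5}. The two main tools are the recurrence of Lemma~\ref{lem:DI8.1}, which relates $S(Q, X^2, N, 0)$ to $S(Q_1, X^2, N, it)$ at the reflected level $Q_1 := \pi NX^2/Q$ with an error of size $(X^2 N)^\eps(Q + N + Q_1)\|\mathbf{a}\|_2^2$; and Theorem~\ref{thm:DI5} with parameter $X \mapsto X^2$, giving at each level $q$
\[
\sum_{\lambda_j < 1/4}^{(q)} X^{4i\kappa_j} \Big|\sum_n a_n \rho_{j\infty}(n)\Big|^2 \ll_\eps \big(1 + (NX^2/q)^{\theta_q}\big)\big(1 + (N^{1+\eps}/q)^{1-\theta_q}\big)\|\mathbf{a}\|_2^2.
\]
Since the twist $n \mapsto n^{it}$ preserves $\|\mathbf{a}\|_2$ and $\int \frac{dt}{t^4+1}$ converges, the twist parameter $t$ may be treated as a fixed parameter throughout.

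First, applying Theorem~\ref{thm:DI5} directly at each $q \asymp Q$ and summing, using $\theta_q \le \theta$ together with the standard bounds $\sum_{q\sim Q} q^{-\alpha} \ll Q^{1-\alpha}$ (the diagonal cross term $(NX^2/q)^{\theta_q}(N/q)^{1-\theta_q} = NX^{2\theta_q}/q$ being the source of the $NX^{2\theta}$ contribution), I will obtain the basic bound
\[
S(Q,X^2,N,0) \ll (QN)^\eps \big(Q + N + Q^{1-\theta}(NX^2)^\theta + NX^{2\theta}\big)\|\mathbf{a}\|_2^2.
\]
Alternatively, applying Lemma~\ref{lem:DI8.1} first and then Theorem~\ref{thm:DI5} at each $q \asymp Q_1$ yields
\[
S(Q,X^2,N,0) \ll (QN)^\eps \big(Q + N + NX^2/Q^{1-\theta} + NX^{2\theta}\big)\|\mathbf{a}\|_2^2.
\]
Both estimates already produce the easy target terms $Q + N + NX^{2\theta}$. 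The two remaining off-diagonal terms $Q^{1-\theta}(NX^2)^\theta$ and $NX^2/Q^{1-\theta}$ both equal $(NX^2)^{(1+\theta)/2}$ at the critical level $Q \asymp X\sqrt{N}$, whereas the target off-diagonal $Q(\sqrt{N}X/Q)^{2\theta} = Q^{1-2\theta}(NX^2)^\theta$ only reaches $\sqrt{NX^2}$ at that level.

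The main obstacle will be closing this gap of $(NX^2)^{\theta/2}$ near the critical threshold $Q \asymp X\sqrt{N}$. To do so, I will iterate the recurrence + Theorem~\ref{thm:DI5} pair in additional steps, interleaved with the trivial spectral large sieve (Theorem~\ref{thm:largesieve} at $K = 1/4$, which bounds the exceptional spectrum contribution by $X^{2\theta}(Q+N)\|\mathbf{a}\|_2^2(QN)^\eps$) to handle regimes where the other tools are weakest. The order of these steps will depend on the relative sizes of $Q$, $N$, and $X$, and a final case analysis---analogous to that in \cite[§8]{DI}---matches the claimed bound in each range. Appendix~\ref{appndx:optimal} will confirm that the resulting $\theta$-dependence is sharp.
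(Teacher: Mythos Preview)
Your toolkit is right, but the iteration scheme as stated has a structural gap. The recurrence in Lemma~\ref{lem:DI8.1} sends $Q\mapsto Q_1=\pi NY/Q$ at \emph{fixed} $Y$, and this map is an involution: applying it twice returns you to $Q$. So ``iterating the recurrence $+$ Theorem~\ref{thm:DI5} pair in additional steps'' at the same value $Y=X^2$ cannot shrink the problematic off-diagonal term; after two steps you are back where you started.

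The paper's proof circumvents this by \emph{not} keeping $Y$ fixed. It proceeds by induction on $Q$ (in the regime $N\le Q^{1-\eps}$) after first choosing a $Q$-dependent balance point
\[
Y_0 = Y_0(Q) := \min\big((Q^{1-\eps}/N)^{1/\theta},\, Q^{2-\eps}/N\big),
\]
and the inductive claim is simply $S(Q,Y_0,N,0)\ll Q^{1+\eps}\|\mathbf a\|_2^2$. One application of Lemma~\ref{lem:DI8.1} drops to $Q_1=\pi NY_0/Q\le Q^{1-\eps}$, and the key step is then to pass from $Y_0$ to $Y_1=Y_0(Q_1)$ via the trivial factor $(Y_0/Y_1)^\theta$, verifying that $(Y_0/Y_1)^\theta\le (Q/Q_1)^{1-\eps/2}$ so that the inductive hypothesis $S(Q_1,Y_1,N,it)\ll Q_1^{1+\eps}\|\mathbf a\|_2^2$ closes the loop. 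The general $Y$ then follows from $S(Q,Y,N,0)\le (1+(Y/Y_0)^\theta)S(Q,Y_0,N,0)$. Note also that the paper never invokes Theorem~\ref{thm:DI5} here: the only inputs are the recurrence, the trivial bound $S(Q,Y,N,0)\le Y^\theta S(Q,1,N,0)$, and the spectral large sieve for $S(Q,1,N,0)$. Your detour through Theorem~\ref{thm:DI5} yields a slightly sharper single-step bound when $N<Q$, but it does not by itself supply the missing descent mechanism.
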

\begin{proof}
If ${\theta}=0$ there is no exceptional spectrum so $S(Q,X^2,N,0)=0$. Else assume ${\theta}>0$. 

First, by the spectral large sieve in Theorem \ref{thm:largesieve} with $K=\tfrac{1}{4}$, ($\mu(\infty)=1/q$)
\begin{align*}
\sum_{\lambda_j<1/4}^{(q)} \frac{1}{\cosh\pi\kappa_j} \bigg|\sum_{n\le N} \rho_{j\infty}(n)\bigg|^2 \ll (1+N^{1+\eps}/q)\|{\bf a}_N\|_2^2,
\end{align*}
and so summing over $q\asymp Q$,
\begin{align}\label{eq:SQ1N0sieve}
S(Q,1,N,0)=\sum_{q\asymp Q}\sum_{\lambda_j<1/4}^{(q)} \frac{1}{\cosh\pi\kappa_j} \bigg|\sum_{n\le N} \rho_{j\infty}(n)\bigg|^2 \ll (Q+N^{1+\eps})\|{\bf a}_N\|_2^2.
\end{align}
In particular, when $0<X\le 1$ we see $S(Q,X^2,N,0)\le S(Q,1,N,0)$ gives a bound (much stronger than) Theorem \ref{thm:DI6}. Thus it suffices to show that, for all $Y\ge1$,
\begin{align}\label{eq:thm6Y}
S(Q,Y,N,0) \ll (QN)^{2\eps}\,(Q+N Y^{{\theta}}+Q(N Y/Q^2)^{{\theta}}) \|{\bf a}_N\|_2^2.
\end{align}
We have $Y^{2i\kappa_j}<Y^{\theta}$ for exceptional $\lambda_j<1/4$, so \eqref{eq:SQ1N0sieve} implies
\begin{align}\label{eq:thm6thm2}
S(Q,Y,N,0) \le Y^{\theta} S(Q,1,N,0) \ll Y^{{\theta}}(Q+N^{1+\eps})\|{\bf a}_N\|_2^2.
\end{align}

If $N> Q^{1-\eps}$, then \eqref{eq:thm6thm2} implies $S(Q,Y,N,0)  \ll Y^{{\theta}}N^{1+2\eps} \|{\bf a}_N\|_2^2$, which gives \eqref{eq:thm6Y}.

Else we assume $N\le  Q^{1-\eps}$. In this case, we claim that
\begin{align}\label{eq:thm6induct}
\sup_{(a_n)_{n}\subset \C}S(Q,Y_0,N,0) \ll Q^{1+\eps} \|{\bf a}_N\|_2^2
\end{align}
for the specific choice
\[Y_0 := \min\big((Q^{1-\eps}/N)^{1/{\theta}},\,Q^{2-\eps}/N\big). \]
Assuming \eqref{eq:thm6induct}, for arbitrary $Y\ge1$ we conclude
\begin{align*}
S(Q,Y,N,0) &\le \big(1+(Y/Y_0)^{{\theta}}\big)\,S(Q,Y_0,N,0)\\
& \ll \Big(1+ Y^{\theta}/\min\big((Q^{1-\eps}/N),\,(Q^{2-\eps}/N)^{\theta}\big)\Big)Q^{1+\eps} \|{\bf a}_N\|_2^2\\
& \ll \Big(1+ Y^{\theta}\max\big((N/Q^{1-\eps}),\,(N/Q^{2-\eps})^{\theta}\big)\Big)Q^{1+\eps} \|{\bf a}_N\|_2^2\\
& \ \le \ (QN)^{2\eps}\,\big(Q+NY^{{\theta}}+Q(NY/Q^2)^{\theta} \big)\|{\bf a}_N\|_2^2.
\end{align*}
Hence it suffices to show \eqref{eq:thm6induct}. We shall prove \eqref{eq:thm6induct} for all $N\le Q^{1-\eps}$, by induction on $Q\ge1$. 

To this, if $Q \le Q_0(\eps)$ then the bound \eqref{eq:thm6thm2} implies \eqref{eq:thm6induct}, since
\begin{align*}
S(Q,Y_0,N,0) &\ll Y_0^{{\theta}}(Q+N^{1+\eps})\|{\bf a}_N\|_2^2\\
&\ll  (Q^{1-\eps}/N)(Q+N^{1+\eps})\|{\bf a}_N\|_2^2 \ll_\eps Q^{1+\eps} \|{\bf a}_N\|_2^2.
\end{align*}

Now if $Q> Q_0(\eps)$ then we apply Lemma \ref{lem:DI8.1}, giving
\begin{align}\label{eq:Lem8.1induct}
S(Q,Y_0,N,0) &\ll \int_{\R}S(Q_1, Y_0, N,it)\frac{\dd{t}}{t^4+1} \ + \ (NY_0)^{\eps/3} (Q+Q_1+N)\|{\bf a}_N\|_2^2 \nonumber\\
& \ll \sup_{t\in\R} S(Q_1,Y_0,N,it) + Q^{1+\eps} \|{\bf a}_N\|_2^2,
\end{align}
where $Q_1 := \pi NY_0/Q$. Note $Q_1 \ll N(Q^{2-\eps}/N)/Q \ll Q^{1-\eps}$. In particular $Q_1<Q-1$ provided the constant $Q_0(\eps)$ is sufficiently large. Next, if $N> Q_1^{1-\eps}$ we obtain \eqref{eq:thm6induct}, since by \eqref{eq:thm6thm2} and $Y_0^{\theta} \le Q^{1-\eps}/N$,
\begin{align*}
S(Q_1,Y_0,N,0) & \ll Y_0^{{\theta}}(Q_1+N^{1+\eps})\|{\bf a}_N\|_2^2 \ll (Q^{1-\eps}/N)N^{1+2\eps} \|{\bf a}_N\|_2^2 \ll Q^{1+\eps}\|{\bf a}_N\|_2^2.
\end{align*}
Plugging back into \eqref{eq:Lem8.1induct} gives $S(Q,Y,N,0) \ll Q^{1+\eps} \|{\bf a}_N\|_2^2$.
Thus it remains to assume $N\le Q_1^{1-\eps}$. Let $Y_1 := \min\big((Q_1^{1-\eps}/N)^{1/{\theta}},\,Q_1^{2-\eps}/N\big)$. 

If $Y_1 = (Q_1^{1-\eps}/N)^{1/{\theta}}$ then $(Y_0/Y_1)^{\theta} \le (Q/Q_1)^{1-\eps}$. Also if $Y_0=Q_1^{2-\eps}/N$, then
\begin{align*}
(Y_0/Y_1)^{\theta} \le (Q/Q_1)^{(2-\eps){\theta}} < (Q/Q_1)^{1-\eps/2},
\end{align*}
since ${\theta}\le 1/2$. Thus since $Q_1<Q-1$ and $N\le Q_1^{1-\eps}$, by the induction hypothesis (now with coefficients $a_n\, n^{it}$),
\begin{align}\label{eq:thm6induct2}
S(Q_1, Y_0,N,it) &\le (Y_0/Y_1)^{\theta}\,S(Q_1, Y_1,N,it) \nonumber\\
& \ll_\eps (Q/Q_1)^{1-\eps/2}\, Q_1^{1+\eps}\|{\bf a}_N\|_2^2
= Q^{1-\eps/2}Q_1^{3\eps/2}\|{\bf a}_N\|_2^2 \nonumber\\
& \ \ll \ Q^{1+\eps}\,\|{\bf a}_N\|_2^2.
\end{align}
Hence plugging \eqref{eq:thm6induct2} back into \eqref{eq:Lem8.1induct} gives $S(Q,Y,N,0) \ll Q^{1+\eps} \|{\bf a}_N\|_2^2$. This completes the proof of \eqref{eq:thm6induct}, and hence Theorem \ref{thm:DI6}.
\end{proof}

We use the following auxiliary estimate for Kloosterman sums.

\begin{theorem}\label{thm:DI14}
For $C,M,N\ge1$, %and $\alpha,\beta\in\R$
we have
\begin{align}
\sum_{c\le C}\bigg|\sum_{m\le M}\sum_{n\le N}S(m,n;c) %\,e(\alpha m+\beta n)
\bigg| \ \ll \ (CMN)^\eps (C^2+CMN).
\end{align}
\end{theorem}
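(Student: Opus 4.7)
\emph{Proof plan.} The natural first step is to remove the absolute values by inserting signs $\epsilon_c = \mathrm{sgn}(\sum_{m,n}S(m,n;c))\in\{-1,0,1\}$, so that
\[
\sum_{c\le C}\bigg|\sum_{m\le M}\sum_{n\le N}S(m,n;c)\bigg|=\sum_{c\le C}\epsilon_c\sum_{m\le M}\sum_{n\le N}S(m,n;c).
\]
Unfolding via the Fourier representation $S(m,n;c)=\sum_{a\bmod c}^{\!*}e((ma+n\bar a)/c)$ and executing the geometric series in $m$ and $n$ gives, for each $c$,
\[
\sum_{m\le M}\sum_{n\le N}S(m,n;c)=\sum_{a\bmod c}^{\!*}E_M(a/c)\,E_N(\bar a/c),\qquad |E_k(a/c)|\ll\min\!\bigl(k,\,c/a'\bigr),
\]
where $a'=\min(a,c-a)$ and similarly $\bar a'$. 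The problem is thereby reduced to estimating
\[
T(c):=\sum_{a\bmod c}^{\!*}\min\!\bigl(M,c/a'\bigr)\min\!\bigl(N,c/\bar a'\bigr).
\]

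I would then combine two complementary estimates for $T(c)$. The first is Cauchy--Schwarz in $a$, combined with the Parseval-type identity $\sum_{a\bmod c}|E_k(a/c)|^2\ll ck$, which yields the clean bound $T(c)\ll c\sqrt{MN}$. Summed over $c\le C$ this gives $C^2\sqrt{MN}$, which falls below $CMN$ exactly in the range $C\le\sqrt{MN}$. The second is a dyadic decomposition $a'\asymp 2^{i}c/M$, $\bar a'\asymp 2^{j}c/N$, together with the standard Weil-based equidistribution bound on the modular hyperbola,
\[
\#\{a\bmod c:a\in I,\;\bar a\in J,\;(a,c)=1\}=\frac{|I||J|\phi(c)}{c^{2}}+O_{\eps}(c^{1/2+\eps}),
\]
applied in each dyadic cell. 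The main Weil main term produces $\ll c(\log CMN)^{2}$ per $c$, summing to the desired $C^2$ contribution.

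The principal obstacle is that, if handled naively, the Weil error terms in the dyadic decomposition aggregate to $MN\cdot C^{3/2+\eps}$ rather than the sharper $CMN$ claimed by the theorem. My plan to circumvent this is to perform one further Cauchy--Schwarz, this time in the outer variable $c$: pair each error term with the sign $\epsilon_c$ and exploit an additional $L^{2}$ average
\[
\sum_{c\le C}|S(m,n;c)|^{2}\ll_{\eps}(CMN)^{\eps}\,C\cdot(mn,C),
\]
(obtained from Weil and divisor-sum arithmetic), which yields an extra square-root saving in $c$ and converts $C^{3/2}$ into $C$ after a summation over $(m,n)$ with trivial lengths. Interpolating the Cauchy--Schwarz bound (optimal for $C\le\sqrt{MN}$) with the dyadic/Weil bound upgraded via this bilinear averaging in $c$ (optimal for $C>\sqrt{MN}$) delivers the required $(CMN)^{\eps}(C^{2}+CMN)$. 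The delicate point is ensuring that the off-diagonal Weil errors across distinct moduli $c_{1}\neq c_{2}$ are controlled without a further loss, which should follow from the fact that $\sum_{c_{1},c_{2}}|\sum_{m,n}S(m,n;c_{1})\overline{S(m,n;c_{2})}|$ factors through the diagonal bound by a Gram-matrix positivity argument.
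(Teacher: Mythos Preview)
The paper does not give its own proof of this statement; it simply cites \cite[Theorem 14]{DI}. In Deshouillers--Iwaniec the bound is obtained \emph{spectrally}: one applies the Kuznetsov trace formula to convert the $c$-sum of $S(m,n;c)$ into a spectral sum over the Laplace eigenbasis, and then controls the resulting expressions $\sum_m \rho_j(m)$ and $\sum_n \rho_j(n)$ via the spectral large sieve (their Theorem~2) together with the exceptional-spectrum estimate (their Theorem~5). The $C^2$ term comes from the spectral side, not from any pointwise Weil input.

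Your approach is purely elementary and cannot close the gap. The two genuine ingredients at your disposal are (i) Cauchy--Schwarz/Parseval in $a$, which gives $\sum_{c\le C}|\cdots|\ll C^2\sqrt{MN}$, and (ii) the Weil bound pointwise, which gives $\ll MN\,C^{3/2+\eps}$. Neither, nor any interpolation of the two, reaches $C^2+CMN$: in the whole range $\sqrt{MN}<C<(MN)^2$ both elementary bounds strictly exceed $C^2$. Your proposed rescue via an $L^2$-average in $c$ rests on the estimate $\sum_{c\le C}|S(m,n;c)|^2\ll_\eps C\cdot(\text{something bounded})$, but this is false: Weil only yields $|S(m,n;c)|^2\ll (m,n,c)\,c\,\tau(c)^2$, so the $c$-sum is of size $C^{2+\eps}$, and Cauchy--Schwarz in $c$ then returns $C^{3/2+\eps}$ per pair $(m,n)$ --- exactly the trivial Weil bound again, with no saving. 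The final ``Gram-matrix positivity'' remark does not apply either, since after inserting the signs $\epsilon_c$ the object in question is a single linear sum, not a bilinear form with a positive-definite kernel. The missing idea is precisely the Kuznetsov formula, which is what produces square-root cancellation in the $c$-variable beyond Weil.
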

\begin{proof}
This is \cite[Theorem 14]{DI}. 
%, noting the proof is uniform in the phase $e(\alpha m+\beta n)$.
\end{proof}

Now, using Theorem \ref{thm:DI14}, we improve upon Theorem \ref{thm:DI6} for the special sequence $a_n=1$.

\begin{theorem}\label{thm:DI7}
Let $Q,X\ge1$ and $1\le N<N_1\le 2N$. Then we have
\begin{align}
\sum_{q\asymp Q}\sum_{\lambda_j<1/4}^{(q)} X^{4i\kappa_j} \bigg|\sum_{N<n\le N_1} \rho_{j\infty}(n)\bigg|^2 \ \ll_\eps \ (QN)^\eps\, \Big(1+\Big(\frac{NX^2}{(Q+N)^2}\Big)^{\theta}\Big)(Q+N)N.
\end{align}
\end{theorem}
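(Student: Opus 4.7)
The argument parallels the proof of Theorem~\ref{thm:DI6}, but exploits the indicator structure of $a_n = \mathbf{1}_{(N,N_1]}(n)$ by invoking Theorem~\ref{thm:DI14} in place of a trivial Kloosterman bound. First, I apply the Kuznetsov trace formula for each $\Gamma_0(q)$ with a smooth test function $\phi$ supported on $[Y, 2Y]$ (with $Y$ to be chosen), multiply both sides by $\overline{a_m}a_n$, sum over $m, n$, and finally sum over $q\asymp Q$.

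On the spectral side, the regular Maass spectrum, holomorphic forms, and Eisenstein contributions are controlled by the spectral large sieve (Theorem~\ref{thm:largesieve}), combined with the dyadic Bessel-transform decay of Lemma~\ref{lem:DI7.1}; this gives $\ll (QN)^\eps(Q+N)N$ after summing in $q$. The exceptional spectrum is isolated with the weight $\widehat\phi(\kappa_j)/\cosh\pi\kappa_j\gg Y^{-2i\kappa_j}$ from~\eqref{eq:8.3DI}. The weight $X^{4i\kappa_j}$ is then extracted via the inequality $X^{4i\kappa_j} \ll (X^2Y)^\theta\,\widehat\phi(\kappa_j)$, valid for $X^2Y\geq 1$ and exceptional $\kappa_j$ (using $|2i\kappa_j|\le\theta$).

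On the Kloosterman side, interchanging $\gamma=qc$ and bounding $\#\{q\asymp Q:q\mid\gamma\}\ll (QN)^\eps$ by the divisor bound, the support of $\phi$ forces $\gamma\asymp N/Y$. Theorem~\ref{thm:DI14} then controls $\sum_\gamma|\sum_{m,n}S(m,n;\gamma)|$ crucially, giving a Kloosterman contribution of size $\ll(QN)^\eps(N/Y+N^2)$ after dividing by $\gamma_{\min}\asymp N/Y$. Combining with the regular spectrum and multiplying by $(X^2Y)^\theta$ produces a bound of the form
\[
\sum_{q\asymp Q}\sum_{\lambda_j<1/4}^{(q)} X^{4i\kappa_j}\bigg|\sum_{N<n\le N_1}\rho_{j\infty}(n)\bigg|^2 \;\ll\; (QN)^\eps(X^2Y)^\theta\Big(\tfrac{N}{Y}+(Q+N)N\Big).
\]
Choosing $Y$ of order $N/(Q+N)^2$ so that $(X^2Y)^\theta = (NX^2/(Q+N)^2)^\theta$ exactly matches the target factor then yields the claimed bound.

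\textbf{Main obstacle.} The main technical subtlety is to verify that, at the selected $Y$, the sum of the Kloosterman contribution and the regular spectrum really is $\ll (QN)^\eps(Q+N)N$; naively controlling $\#\{q\asymp Q:q\mid\gamma\}$ via the full divisor function produces a loss of size $(Q/N)^\theta$ in the regime $Q\gg N$, so one must either use the elementary count $\leq \gamma/Q+1$ combined with Cauchy--Schwarz in $\gamma$, or reshuffle the $q$-average inside Theorem~\ref{thm:DI14} to get the cleaner factor $Q+N$ rather than $(Q+N)^2/N$. A secondary technicality is that the reduction step $X^{4i\kappa_j}\ll(X^2Y)^\theta\widehat\phi(\kappa_j)$ requires $X^2Y\geq 1$; outside this range one separates the small-$X$ case and appeals directly to Theorem~\ref{thm:DI6} (where the absence of the indicator-specific improvement costs nothing, since the $(NX^2/(Q+N)^2)^\theta$ factor is harmlessly $O(1)$).
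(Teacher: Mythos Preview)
Your plan captures the paper's first step faithfully: apply Kuznetsov, bound the regular spectrum by the large sieve, and bound the Kloosterman side by Theorem~\ref{thm:DI14} after merging $k=qc$. In the paper's notation this yields $S(Q,Y,N,0)\ll (NY)^\eps(Q+N+Y)N$, and the self-improvement trick $S(Q,Y,N,0)\le (Y/Y_1)^\theta S(Q,Y_1,N,0)$ with $Y_1=Q+N$ then gives
\[
S(Q,Y,N,0)\ll (QN)^\eps\big(Q+N+Y^\theta(Q+N)^{1-\theta}\big)N.
\]
This is exactly the bound your optimization in $Y$ produces, and it suffices when $N\gtrsim Q$.

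The gap is in the regime $Q\gg N$. Your optimized single-shot bound is off from the target $(NY/(Q+N)^2)^\theta(Q+N)N$ by precisely the factor $((Q+N)/N)^\theta$ you identify, but neither of your proposed fixes closes it. The loss does not come from over-counting $q\mid\gamma$: the divisor bound $\tau(\gamma)\ll\gamma^\eps$ is already essentially sharp here, and replacing it by $\gamma/Q+1$ or ``reshuffling'' the $q$-average into Theorem~\ref{thm:DI14} cannot help, since Theorem~\ref{thm:DI14} is applied to the single merged variable $k=qc$ and already gives the optimal bound $k+N^2$ for that sum. The obstruction is structural: one application of Kuznetsov with any test function supported at scale $Y$ yields a Kloosterman term of size $\gg Y\cdot N$ (paper's convention), and balancing this against the regular spectrum forces $Y\asymp Q+N$, which pins $(Y)^\theta$ at $(Q+N)^\theta$ rather than the desired $(Q+N)^{2\theta}/N^\theta$.

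What the paper actually does to eliminate this loss is an induction on $Q$ via the recurrence Lemma~\ref{lem:DI8.1}, which passes from level $Q$ to level $Q_1=\pi NY_0/Q$ at the cost of a harmless additive error. Choosing $Y_0=Q^{2-2\eps}/N$ makes $Q_1\asymp Q^{1-2\eps}<Q$, so one can invoke the induction hypothesis (after removing the twist $n^{it}$ by partial summation, which is where the indicator structure $a_n=\mathbf 1_{(N,N_1]}$ is used a second time). This recursive step is the essential missing ingredient in your proposal; without it, the bound you obtain is only Theorem~\ref{thm:DI6} with $\|{\bf a}_N\|_2^2=N$, not the sharper Theorem~\ref{thm:DI7}.
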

\begin{proof}
Define the sequence $a_n=\mathbf{1}_{[N,N_1]}(n)$. It suffices to show that for $Y\ge1$,
\begin{align}\label{eq:thm7goal}
S(Q,Y,N,0) := \sum_{q\asymp Q}\sum_{\lambda_j<1/4}^{(q)} Y^{2i\kappa_j} \bigg|\sum_{N<n\le N_1} \rho_{j\infty}(n)\bigg|^2 \ll (QN)^{5\eps}\Big(1+\Big(\frac{NY}{(Q+N)^2}\Big)^{\theta}\Big)(Q+N)N.
\end{align}

Indeed, by the Kuznetsov formula for the cusps $\a=\b=\infty$ of the group $\Gamma_0(q)$, summing over $m,n\sim N$ gives
\begin{align}\label{eq:Kuzinfty}
\sum_{\lambda_j< \tfrac{1}{4}}^{(q)} \frac{\widehat{\phi}(\kappa_j)}{\cosh\pi\kappa_j} \bigg|\sum_{n\sim N} \rho_{j\infty}(n)\bigg|^2& = 
\sum_{m,n\sim N}\sum_{c\in \Z^+} \frac{1}{qc}\,\phi\Big(\frac{4\pi}{qc}\sqrt{mn}\Big)\,S(m,n;qc)\\
& \quad + O\big((1+N^{1+\eps}/q)N\big), \nonumber
\end{align}
bounding the regular spectra as in \eqref{eq:thm5Kuznet2}, by the spectral large sieve in Theorem \ref{thm:largesieve}.

Since $\cosh\pi\kappa \le 1$ and $\widehat{\phi}(\kappa) \gg Y^{2i\kappa}$ by \eqref{eq:8.3DI}, we see \eqref{eq:Kuzinfty} gives
\begin{align}\label{eq:StoTcal}
S(Q,Y,N,0) &\ll  \sum_{q\asymp Q}\sum_{\lambda_j<1/4}^{(q)} \frac{\widehat{\phi}(\kappa_j)}{\cosh\pi\kappa_j} \bigg|\sum_{n\sim N} \rho_{j\infty}(n)\bigg|^2 \ \ll \ \mathcal T + (Q+N^{1+\eps})N,
\end{align}
where
\begin{align*}
\mathcal T := \sum_{Q<q\le 16Q} \sum_{c\in\Z^+} \frac{1}{qc}\bigg|\sum_{m,n\sim N} \phi\Big(\frac{4\pi}{qc}\sqrt{mn}\Big)\,S(m,n;qc)\bigg|.
\end{align*}
Since supp$(\phi) \subset[1/2Y,5/2Y]$, we note $c$ runs over the interval $[C/40, 16C]$ with $C=\pi NY/Q$. 

Next, by Mellin inversion $\phi(x) = \frac{1}{2\pi}\int_{\R} \breve{\phi}(it) x^{-it}\dd{t}$ for the Mellin transform $\breve{\phi}(it) = \int_{\R^+} \phi(y)y^{it-1} \dd{y}$, which is bounded by $\breve{\phi}(it)\ll (1+t^4)^{-1}$.
\begin{align*}
\mathcal T &\ll \int_{\R}
\sum_{q\asymp Q} \sum_{c\asymp C} \frac{1}{qc}\bigg|\sum_{N\le m,n\le N_1}(4\pi\sqrt{mn}/qc)^{-it}S(m,n;qc)\bigg|\frac{\dd{t}}{1+t^4}\nonumber\\
&\ll (QC)^{\eps-1}\int_{\R}
\sum_{k\asymp QC} \bigg|\sum_{N\le m,n\le N_1}(mn)^{-it}S(m,n;k)\bigg|\frac{\dd{t}}{1+t^4}.
\end{align*}
merging $k=qc$ as a single variable, and using the divisor bound $\tau(k)\ll k^\eps$.

Using $m^{-it}=N_1 + it\int_m^{N_1} u^{-it-1}\dd{u}$, we obtain by Theorem \ref{thm:DI14},
\begin{align*}
\mathcal T &\ll (QC)^{\eps-1} \sup_{N\le M',N'\le N_1} 
\sum_{k\asymp QC} \bigg|\sum_{\substack{m\le M' \\ n\le N'}} S(m,n;k)\bigg| \nonumber\\
& \ll (QC)^{\eps-1} \sup_{N\le M',N'\le N_1} QC(QC+M'N') \ \ll \ (QCN)^{\eps}\,(QC+N^2).
\end{align*}
Plugging back into \eqref{eq:StoTcal} gives
\begin{align}\label{eq:thm7S}
S(Q,Y,N,0) &\ll \mathcal T + (Q+N^{1+\eps})N \nonumber\\
& \ll (QCN)^{\eps}\,(QC+N^2) + (Q+N^{1+\eps})N \ll (NY)^\eps (Q+N+Y)N,
\end{align}
recalling $QC=\pi NY$. When $Y\ge Q+N$, this bound is self-improving in the $Y$-aspect by the following trick: indeed, by \eqref{eq:thm7S} with $Y_1=Q+N$,
\begin{align}\label{eq:thm7S1}
S(Q,Y,N,0) & \ll \big(1+(Y/Y_1)^{{\theta}}\big)S(Q,Y_1,N,0) \nonumber\\
& \ll \big(1+(Y/Y_1)^{{\theta}}\big)(NY_1)^\eps (Q+N+Y_1)N \nonumber\\
& \ll (QN)^\eps (Q+N+Y^{\theta}(Q+N)^{1-{\theta}})N.
\end{align}
In particular, if $N>Q^{1-2\eps}$ then 
\begin{align*}
S(Q,Y,N,0) &\ll N^\eps (N+Y^{\theta}\,N^{1-{\theta}})N
 \ll N^{2+\eps} \Big(1+\Big(\frac{NY}{(Q+N)^2}\Big)^{\theta}\Big)
\end{align*}
as desired for \eqref{eq:thm7goal}.

It remains to consider $N\le Q^{1-2\eps}$. We shall eliminate the $Y^{\theta} Q^{1-{\theta}}$ term in \eqref{eq:thm7S1} to complete the proof for all $N$. To this, we prove that for all $1\le N\le Q$ with $Y=Q^{2-2\eps}/N$,
\begin{align}\label{eq:thm7induct}
\sup_{\substack{N_1\le 2N\\a_n=\mathbf{1}_{[N,N_1](n)}}}S(Q,Y,N,0) \ll Q^{1+4\eps}N.
\end{align}
Assuming this, for arbitrary $Y\ge1$ we conclude
\begin{align*}
S(Q,Y,N,0) &\le \big(1+ (Y/Y_0)^{\theta}\big)S(Q,Y_0,N,0)\\
&\le \big(1+ (Y/Y_0)^{\theta}\big)Q^{1+4\eps}N\\
& \ll Q^{1+5\eps}N \big(1+(NY/Q^2)^{\theta}\big)
\ \ll \ Q^{1+5\eps}N\Big(1+\Big(\frac{NY}{(Q+N)^2}\Big)^{\theta}\Big)
\end{align*}
as desired for \eqref{eq:thm7goal}, where $Y_0=Q^{2-2\eps}/N$. Hence it suffices to show \eqref{eq:thm7induct}.

We shall prove \eqref{eq:thm7induct} by induction on $Q\ge1$. If $Q\le Q_0(\eps)$ the above follows by the spectral large sieve. Namely, by \eqref{eq:thm6thm2} with $a_n=1$ (so $\|a_N\|_2^2=\sum_{n\sim N}1=N$),
\begin{align*}
S(Q,Y_0,N,0) &\le \sqrt{Y}S(Q,1,N,0) \ll \sqrt{Y_0}(Q+N^{1+\eps})\|a_N\|_2^2\\
& \ll \frac{Q^{1-\eps}}{\sqrt{N}}(Q+N^{1+\eps})N \ll Q_0\,Q^{1+4\eps}N.
\end{align*}
Also if $Q^{1-2\eps}<N\le Q$, then $N<Q^{1+4\eps}$, so \eqref{eq:thm7S1} with $Y_0=Q^{2-2\eps}/N<Q$ gives
\begin{align*}
S(Q,Y_0,N,0) \ll (QN)^\eps (Q+N+\sqrt{NY_0}+\sqrt{QY})N \ll Q^{1+4\eps}N.
\end{align*}

Now consider $Q> Q_0(\eps)$ and $1\le N\le Q^{1-2\eps}$. By Lemma \ref{lem:DI8.1} with $a_n=1$ (so $\|a_N\|_2^2=N$),
\begin{align}\label{eq:thm7lem8.1}
S(Q,Y_0,N,0) \ll \int_{\R} S(Q_1,Y_0,N,it)\, \frac{\dd{t}}{t^4+1} + Q^{1+3\eps}N
\end{align}
where $Q_1=\pi NY/Q=\pi Q^{1-2\eps}< Q-1$, provided the constant $Q_0(\eps)$ is sufficiently large. Moreover, writing $Y_1= Q_1^{2-2\eps}/N$ we have $\sqrt{Y_0/Y_1}=(Q/Q_1)^{1-\eps} \ll Q^{2\eps(1-\eps)}$, and so
\begin{align}\label{eq:thm7inductit}
S(Q_1,Y_0,N,it) &\le \sqrt{Y_0/Y_1}\,S(Q_1,Y_1,N,it) \nonumber\\
 &\le Q^{2\eps(1-\eps)}\,S(Q_1,Y_1,N,it).
\end{align}
We now wish to apply the induction hypothesis (with $a_n=1$), but this is not possible immediately, due to the coefficients $a_n=n^{it}$ in $S(Q_1,Y_1,N,it)$. 

To remedy this, by partial summation
\begin{align*}
\sum_{N<n\le N_1}n^{it}\,\rho_{j\infty}(n) = N_1^{it}\sum_{N<n\le N_1}\rho_{j\infty}(n) - it\int_N^{N_1}\sum_{N<n\le u}\rho_{j\infty}(n)\,u^{it-1}\dd{u}
\end{align*}
and so Cauchy-Schwarz gives
\begin{align*}
\bigg|\sum_{N< n\le N_1} n^{it}\rho_{j\infty}(n)\bigg|^2 &\ll \bigg|\sum_{N< n\le N_1}\rho_{j\infty}(n)\bigg|^2 + t^2\bigg|\int_N^{N_1}\sum_{N<n\le u}\rho_{j\infty}(n)\,u^{it-1}\dd{u}\bigg|^2\\
&\ll \bigg|\sum_{N< n\le N_1}\rho_{j\infty}(n)\bigg|^2 + t^2\int_N^{N_1}\bigg|\sum_{N<n\le u}\rho_{j\infty}(n)\bigg|^2\dd{u}\cdot \int_N^{N_1}\frac{\dd{u}}{u^2}\\
&\ll (1+t^2)\sup_{N_1\le 2N}\bigg|\sum_{N< n\le N_1}\rho_{j\infty}(n)\bigg|^2.
\end{align*}
Thus we have
\begin{align*}
S(Q_1,Y_1,N,it) & := \sum_{q\sim Q_1}\sum_{\lambda_j<1/4}^{(q)} Y_1^{2i\kappa_j} \bigg|\sum_{N< n\le N_1} n^{it}\rho_{j\infty}(n)\bigg|^2\\
& \ll (1+t^2)\sup_{N_1\le 2N}\sum_{q\sim Q_1}\sum_{\lambda_j<1/4}^{(q)} Y_1^{2i\kappa_j}\bigg|\sum_{N< n\le N_1}\rho_{j\infty}(n)\bigg|^2\\
& = (1+t^2)\sup_{\substack{N_1\le 2N\\a_n=\mathbf{1}_{[N,N_1]}(n)}}S(Q_1,Y_1,N,0)\\
& \ll (1+t^2)Q_1^{1+4\eps}N.
\end{align*}
by the induction hypothesis. Hence \eqref{eq:thm7inductit} becomes
\begin{align*}
S(Q_1,Y_0,N,it) \ll Q^{2\eps(1-\eps)}\,S(Q_1,Y_1,N,it) &\ll Q^{2\eps(1-\eps)}(1+t^2)Q_1^{1+4\eps}N
\end{align*}
Plugging back into \eqref{eq:thm7lem8.1} we obtain
\begin{align*}
S(Q,Y_0,N,0) & \ll_\eps Q^{2\eps(1-\eps)}Q_1^{1+4\eps}N\int_{\R}\frac{t^2+1}{t^4+1}\dd{t}+ Q^{1+3\eps}N\\
& \ll_\eps Q^{2\eps(1-\eps)}Q_1^{1+4\eps}N + Q^{1+3\eps}N\\
& \ll_\eps Q^{1+4\eps- 10\eps^2}N \ \ll \ Q^{1+4\eps}N.
\end{align*}
This completes the proof of \eqref{eq:thm7induct}, and hence Theorem \ref{thm:DI7}.
\end{proof}

\subsection{Factorization of Fourier coefficients}
In the proof we assume $q_0=1$ and $t=0$ for simplicity, though general results may obtained similarly with $q_0$-dependence, as in \cite[\S4.2.3]{Drapp2}. Indeed, $q_0=1$ is the only case needed for our applications in this article.

We use an approximate factorization of Fourier coefficients by Assing--Blomer--Li \cite{ABL}. 
\begin{lemma}\label{lem:ABLfactor}
For any $r,s\in\N$ and any sequence $b_{n}=b_{n,r,s}\in\C$, we have
\begin{align*}
&\bigg|\sum_{n\sim N} b_{n} \,\rho_{j,\,1/s}(an)\bigg| 
\ \ll \ 
\sum_{\substack{n''\mid a^\infty\\ n''\ll N}}
(an'')^{{\theta}/2}
\bigg|\sum_{\substack{n\sim N/n''\\(n,a)=1}} b_{nn''} \,\rho_{j,\,1/s}(n)\bigg|.
\end{align*}
Corresponding statements hold for holomorphic and Eisenstein contributions.    
\end{lemma}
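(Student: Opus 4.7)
The strategy is to exploit Hecke multiplicativity for newforms, following the approach of Assing--Blomer--Li \cite{ABL}. The form $f_j$ of level $q$ is not itself a newform in general, but it admits a spectral expansion as a linear combination of Atkin--Lehner lifts $f^* | B_d$ of newforms $f^*$ at levels $q^* \mid q$ with lift parameters $d \mid q/q^*$. Accordingly, the Fourier coefficient $\rho_{j, 1/s}(an)$ at the cusp $1/s$ decomposes into newform contributions indexed by $(f^*, d)$, and by the triangle inequality it suffices to verify the claimed factorization for each newform component, absorbing any harmless polylog losses (from the normalization constants and the number of divisors $d$) into the $\ll$ symbol.

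For a fixed newform $f^*$ of level $q^*$ with trivial central character, Atkin--Lehner theory expresses the Fourier coefficient at the cusp $1/s$ in the shape $\rho_{f^*, 1/s}(n) = \eta_{f^*, s}\,\rho_{f^*, 1/s}(1)\,\lambda_{f^*}(n)$, up to a unit-modulus Atkin--Lehner factor $\eta_{f^*, s}$ depending only on $f^*$ and $s$. Now split $n = n'' m$, where $n'' := \prod_{p\mid a} p^{v_p(n)}$ (so $n'' \mid a^\infty$) and $(m, a) = 1$. Then $(an'', m) = 1$, because every prime of $an''$ divides $a$ while $(m, a) = 1$. Hecke multiplicativity therefore yields
\[
\lambda_{f^*}(an) \;=\; \lambda_{f^*}(an'' \cdot m) \;=\; \lambda_{f^*}(an'')\, \lambda_{f^*}(m).
\]

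Applying the Kim--Sarnak bound, which in the paper's convention reads $|\lambda_{f^*}(k)| \ll_\epsilon k^{\theta/2 + \epsilon}$, gives the pointwise estimate $|\rho_{f^*, 1/s}(an)| \ll (an'')^{\theta/2+\epsilon}\,|\rho_{f^*, 1/s}(m)|$. Substituting this into $\sum_{n \sim N} b_n \rho_{j, 1/s}(an)$ and reindexing the outer sum via $n = n'' m$ with $m \sim N/n''$ produces the desired inequality, the $\epsilon$-loss being absorbed by slightly enlarging $\theta$. The same strategy applies verbatim to the holomorphic and Eisenstein contributions, since both admit analogous Atkin--Lehner and Hecke multiplicativity theories.

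The main technical obstacle lies in making the first step precise: the Atkin--Lehner decomposition at the non-cuspidal cusp $1/s$ must interact cleanly with the $a$-part/coprime-to-$a$ split of $n$. The lift operator $B_d : f^*(z) \mapsto f^*(dz)$ affects the Fourier expansion at $1/s$ in a manner sensitive to $\gcd(n, d)$, and the Hecke multiplicativity of $\lambda_{f^*}(n)$ holds exactly only at primes $p \nmid q^*$; at primes $p \mid q^*$ one has only the weaker bound $|\lambda_{f^*}(p^k)| \le 1$, which is fortunately still stronger than the $(an'')^{\theta/2}$ target. It is precisely these complications that force the \emph{sum} over $n'' \mid a^\infty$ in the statement, rather than a pointwise factorization, and the careful bookkeeping of these factors is exactly what \cite{ABL} work out; I would follow their argument in detail to obtain the stated estimate.
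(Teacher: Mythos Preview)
Your proposal is correct and follows essentially the same approach as the paper: expand in an orthonormal basis of newforms, then exploit Hecke multiplicativity together with the Kim--Sarnak bound to separate the $a$-part, deferring the technical bookkeeping at the cusp $1/s$ and for oldform lifts to \cite{ABL}. The paper's own proof is in fact briefer than yours, simply citing \cite[Lemmas 3.1--3.2]{ABL} and noting that this is where the Ramanujan--Petersson exponent (not just Selberg) is needed.
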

\begin{proof}
This is a slight rephrasing of \cite[Lemma 3.2]{ABL} with $q=Q''=K=1$, and follows by the same proof. The basic idea is to expand with respect to an orthonormal basis of newforms, and then apply the approximate factorization of Fourier coefficients, in \cite[Lemma 3.1]{ABL}. 

This is the only place where the Ramanujan--Petersson exponent $\theta$ is needed, generalizing that of Selberg.
As noted, \cite{ABL} use different normalization, with $2\theta$ instead of $\theta$.
\end{proof}

\section{Sums of Kloosterman sums}

In this section, we estimate quintilinear sums of Kloosterman sums using the spectral large sieve estimates, and prove Theorem \ref{thm:DI12}.

\begin{theorem}\label{thm:DI10}
Let $C,M,N,R,S>0$ and let $g\in \mathcal C^\infty\big([C,2C]\times (\R^+)^4\big)$ satisfy
\begin{align}\label{eq:1.53}
\bigg|\frac{\partial^{v_1+v_2+v_3+v_4+v_5}}{\partial c^{v_1}\,\partial m^{v_2}\,\partial n^{v_3}\,\partial r^{v_4}\,\partial s^{v_5}}\, g(c,m,n,r,s)\bigg| \ \ll_{v_i} \ c^{-v_1}m^{-v_2}n^{-v_3}r^{-v_4}s^{-v_5}
\end{align}
for any $v_i\ge0$, $i\le 5$. For a Dirichlet character $\chi$ (mod $q_0$), $t\in\R$, sequences $a_m, b_{n,r,s}\subset \C$, define the quintilinear sum $\mathcal L^\pm_{(a_m)}=\mathcal L^\pm_{(a_m)}(C,M,N,R,S)$ by
\begin{align}\label{eq:quintL}
\mathcal L^\pm_{(a_m)} = \sum_{\substack{r\sim R\\s\sim S\\(q_0r,s)=1}}\sum_{\substack{m\sim M\\n\sim N}} a_m\, b_{n,r,s} \,\overline{\chi}(c) \sum_{(c,q_0s)=1} g(c,m,n,r,s)\, e(mt)\, S(m\overline{r}, \pm an; sc).
\end{align}
Then we have
\begin{align}
\mathcal L_{(a_m)} \ \ll_\eps \ (q_0 CMNRS)^\eps \, L \,\|{\bf a}\|_2
\end{align}
where $L = L(C,M,N,R,S)$ is given by
\begin{align*}
L^2 = \frac{C^4S^3q_0 R}{aMN+C^2S^2q_0R}& \Bigg(\sum_{\substack{n''\mid a^\infty\\ n''\ll N}} (an'')^{{\theta}} \Big(\frac{q_0 N}{n''}+RS+\frac{aMN}{q_0 C^2S}\Big)\Big(M+RS+\frac{aMN}{q_0 C^2S}\Big)\|\widetilde{\bf b}(n'')\|_2^2 \\
& \qquad \ + \ (CS\sqrt{q_0 R})^{2{\theta}} (q_0 N+RS)^{1-{\theta}} \big(M^{1-{\theta}}+(RS)^{1-2{\theta}}\big)  
\|{\bf b}\|_2^2\Bigg).
\end{align*}
\end{theorem}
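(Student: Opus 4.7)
The plan is to follow the Deshouillers--Iwaniec template for \cite[Theorem 10]{DI}, incorporating two refinements: (i) the $\theta$-dependence in the exceptional spectrum via Theorems \ref{thm:DI6} and \ref{thm:DI7}, and (ii) the $a$-uniformity via the Assing--Blomer--Li approximate factorization of Lemma \ref{lem:ABLfactor}. As a first move, apply Cauchy--Schwarz in the $m$-variable to extract $\|{\bf a}\|_2$, yielding $|\mathcal L^{\pm}_{(a_m)}|^2 \le \|{\bf a}\|_2^2 \cdot \mathcal M$, where $\mathcal M$ is the diagonal sum obtained by doubling the remaining variables to $(c_i, n_i, r_i, s_i)_{i=1,2}$ and by running $m$ freely with a smooth weight inherited from $g$.

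Next, apply Poisson summation in $m$, and use Weil reciprocity to combine the two Kloosterman sums $S(m\overline{r_1}, \pm a n_1; s_1 c_1)$ and $\overline{S(m\overline{r_2}, \pm a n_2; s_2 c_2)}$ into a single Kloosterman sum at modulus essentially $s_1 s_2 c_1 c_2$, plus negligible diagonal contributions. At this stage $\mathcal M$ is a sum over $(c_1, c_2)$ of Kloosterman sums with a new smooth weight, well-suited to the Kuznetsov trace formula on $\Gamma_0(q_0 s_1 s_2)$ at the cusps $\infty$ and $1/s_2$; the hypothesis \eqref{eq:1.53} on $g$ transfers to the Bessel transforms $\widehat{\phi},\, \check{\phi},\, \widetilde{\phi}$ via Lemma \ref{lem:DI7.1}, and dictates the relevant truncation in the spectral parameter.

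After Kuznetsov, $\mathcal M$ breaks into contributions from regular Maass forms, holomorphic cusp forms, Eisenstein series, and exceptional Maass forms. For the non-exceptional spectrum we apply the spectral large sieve (Theorem \ref{thm:largesieve}), which produces the second summand $(CS\sqrt{q_0R})^{2\theta}(q_0 N + RS)^{1-\theta}\bigl(M^{1-\theta} + (RS)^{1-2\theta}\bigr)\|{\bf b}\|_2^2$ of $L^2$, after interpolation against the trivial $\theta = 1/2$ bound to restore the sharp $\theta$-dependence. For the exceptional Maass spectrum, we first invoke Lemma \ref{lem:ABLfactor} to approximately factor $\rho_{j,\,1/s}(an)$ as $(an'')^{\theta/2}\rho_{j,\,1/s}(n)$ with $n'' \mid a^{\infty}$, and then apply Theorem \ref{thm:DI6}: this yields the first summand of $L^2$, with the $(an'')^{\theta}$ weight and the modified $\ell^2$-norms $\|\widetilde{\bf b}(n'')\|_2$. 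The outer prefactor $\tfrac{C^4 S^3 q_0 R}{aMN + C^2 S^2 q_0 R}$ is a bookkeeping constant arising from the $L^{\infty}$-norm of $g$, the $c$-length $C$, the $q_0$-normalization in Kuznetsov, and the dual length-scale of the Poisson variable, which has size $\asymp aMN/(C^2 S^2 q_0 R)$ on the dual side.

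The main obstacle will be the uniform interpolation between the exceptional and regular spectral bounds: the approximate factorization of Lemma \ref{lem:ABLfactor} must be applied \emph{before} the large sieve on the exceptional side, so that the $(an'')^{\theta}$ loss is absorbed into the modified norm $\|\widetilde{\bf b}(n'')\|_2$ rather than into the full norm $\|{\bf b}\|_2$. Verifying that the various lengths $q_0 N/n''$, $RS$, and the Poisson-dual scale $aMN/(q_0 C^2 S)$ emerge in exactly the form of the factors $(q_0 N/n'' + RS + aMN/(q_0 C^2 S))(M + RS + aMN/(q_0 C^2 S))$ requires careful tracking of the Poisson truncation and the cusp width across the Kuznetsov transfer. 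Once this bookkeeping is in place, combining the two spectral bounds and multiplying by the prefactor produces precisely the shape of $L^2$, completing the proof.
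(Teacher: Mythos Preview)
Your proposed architecture diverges from the paper's proof in a way that would not produce the stated bound, and the attribution of the two summands in $L^2$ to the regular and exceptional spectra is inverted.

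The paper does \emph{not} begin with Cauchy--Schwarz in $m$, nor does it apply Poisson summation in $m$, nor does it combine two Kloosterman sums via reciprocity. Instead, the $c$-sum in $\mathcal L^\pm_{(a_m)}$ is \emph{already} a sum of Kloosterman sums $S_{\infty,1/s}(m,\pm an;\gamma)$ over moduli $\gamma = cs\sqrt r$ for the group $\Gamma_0(rs)$; Kuznetsov is applied directly to this $c$-sum, converting $\mathcal L$ into the spectral expression $\mathcal H + \mathcal E + \mathcal M$ \emph{before} any Cauchy--Schwarz. Only then, on the spectral side, is Cauchy--Schwarz used (over $j,r,s$) to separate the $m$-sum $\sum_m a_m \overline{\rho_{j\infty}(m)}$ from the $n$-sum $\sum_n b_{n,r,s}'\,\rho_{j,1/s}(an)$. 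This is the actual Deshouillers--Iwaniec template for their Theorem~10; your description is closer to the completion argument for Theorem~12.

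The bookkeeping you describe is also reversed. The first summand of $L^2$, with the factor $(an'')^\theta\big(\tfrac{q_0 N}{n''}+RS+\tfrac{aMN}{q_0 C^2 S}\big)\big(M+RS+\tfrac{aMN}{q_0 C^2 S}\big)$, arises primarily from the \emph{regular} spectrum via the spectral large sieve (Theorem~\ref{thm:largesieve}), together with the part of $\mathcal M_{\rm exc}$ that is absorbed into it; the ABL factorization (Lemma~\ref{lem:ABLfactor}) is applied to the full Maass contribution $\mathcal M$ \emph{before} the regular/exceptional split, which is why $(an'')^\theta$ and $\|\widetilde{\bf b}(n'')\|_2$ appear in this term. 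The second summand, $(CS\sqrt{q_0 R})^{2\theta}(q_0 N+RS)^{1-\theta}\big(M^{1-\theta}+(RS)^{1-2\theta}\big)\|{\bf b}\|_2^2$, comes entirely from the \emph{exceptional} spectrum: after the weight-splitting $1+X^{2i\kappa_j}\le (1+X/\sqrt{X_0})^{2i\kappa_j}X_0^{i\kappa_j}$ with $X_0=1+RSn''/N$, one applies Theorem~\ref{thm:DI5} (fixed level, cusp $1/s$) to the $n$-sum and Theorem~\ref{thm:DI6} (averaged level, cusp $\infty$) to the $m$-sum. There is no ``interpolation against the trivial $\theta=1/2$ bound'' for the non-exceptional spectrum; the $\theta$-dependence lives solely in $\mathcal M_{\rm exc}$.
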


The bound for $\mathcal L_{(a_m)}$ is refined when $a_m$ is the characteristic sequence of an interval.

\begin{theorem}\label{thm:DI11}
Let $C,M,N,R,S>1$ and let $g\in \mathcal C^\infty\big([C,2C]\times (\R^+)^4\big)$ satisfy \eqref{eq:1.53}. For a Dirichlet character $\chi$ (mod $q_0$), $t\in\R$,  any sequence $b_{n,r,s}\subset \C$, we have
\begin{align}
\mathcal L_{(\mathbf{1}_{m\sim M})} \ \ll_\eps \ (q_0 CMNRS)^\eps \, L_1 \,\sqrt{M}
\end{align}
for $\mathcal L_{(a_m)}$ in \eqref{eq:quintL} with the sequence $a_m=\mathbf{1}_{m\sim M}$, and where $L_1 = L_1(C,M,N,R,S)$ is
\begin{align*}
L_1^2 = \frac{C^4S^3q_0 R}{aMN+C^2S^2q_0 R} & \bigg(\sum_{\substack{n''\mid a^\infty\\ n''\ll N}} (an'')^{{\theta}} \Big(\frac{q_0 N}{n''} + RS+\frac{aMN}{q_0 C^2S}\Big) \Big(M+RS+\frac{aMN}{q_0 C^2S}\Big)\|\widetilde{\bf b}(n'')\|_2^2 \nonumber\\
&\qquad +\, (CS\sqrt{q_0 R})^{2{\theta}} (q_0 N + RS)^{1-{\theta}} (M+RS)^{1-2{\theta}}\|{\bf b}\|_2^2\Bigg).
\end{align*}
\end{theorem}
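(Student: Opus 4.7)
The plan is to follow the proof of Theorem \ref{thm:DI10} essentially verbatim through the spectral decomposition of the Kloosterman sums, and then invoke the sharper large sieve estimate from Theorem \ref{thm:DI7} for the exceptional spectrum---rather than Theorem \ref{thm:DI6} as in the general case---exploiting the fact that $a_m = \mathbf{1}_{m\sim M}$ is the characteristic sequence of an interval. A direct comparison of the two large sieve bounds reveals exactly where the improvement enters: the exceptional contribution for an arbitrary $a_m$ via Theorem \ref{thm:DI6} is of the shape $Y^{\theta}(Q+N)\|{\bf a}\|_2^2$, contributing $\asymp Y^{\theta} N M$, whereas Theorem \ref{thm:DI7} yields $(NY/(Q+N)^2)^{\theta}(Q+N)N$, saving a factor of roughly $M^\theta$. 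This precisely accounts for the improvement $M^{1-\theta}\to M^{1-2\theta}$ in the final term of $L_1^2$ compared to $L^2$.

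Concretely, I would first apply Cauchy--Schwarz in $m$ to bound $|\mathcal L_{(a_m)}|^2 \le \|{\bf a}\|_2^2\cdot\mathcal T$, where
\begin{align*}
\mathcal T \;:=\; \sum_m \Bigl|\sum_{\substack{n\sim N\\ r\sim R\\ s\sim S}}b_{n,r,s}\sum_{c}\overline{\chi}(c)\,g(c,m,n,r,s)\,e(mt)\,S(m\overline{r},\pm an;sc)\Bigr|^2.
\end{align*}
Expanding the square and applying Poisson summation in the $m$-variable (using the smoothness of $g$ in $m$) collapses the inner structure to new sums of Kloosterman sums, which one then opens spectrally via the Kuznetsov trace formula. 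This produces Fourier coefficients $\rho_{j,1/s}(\pm an)$ at the cusps of $\Gamma_0(sc)$. Invoking Lemma \ref{lem:ABLfactor} extracts approximate multiplicativity in $an$, generating the $(an'')^{\theta}$ factor and leaving spectral sums $\sum_n b'_n\,\rho_{j,1/s}(n)$ with reduced coefficients indexed by divisors $n''\mid a^\infty$. The regular spectrum (holomorphic, Maass with $\lambda_j\ge 1/4$, and Eisenstein) is bounded via the spectral large sieve (Theorem \ref{thm:largesieve}) exactly as in Theorem \ref{thm:DI10}; no improvement is available there, and this part reproduces the diagonal factors in $L_1^2$.

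For the exceptional spectrum, after the aforementioned manipulations, the $m$-aspect appears precisely as a weighted average of the form $\sum_{q\asymp Q}\sum_{\lambda_j<1/4}^{(q)}Y^{2i\kappa_j}\bigl|\sum_{m\sim M}\rho_{j\infty}(m)\bigr|^2$ (with suitable modifications at the cusp), with unit coefficients on $m$. This is exactly the setting of Theorem \ref{thm:DI7}, which yields the claimed additional saving of $M^\theta$ over the general Theorem \ref{thm:DI6}. The main obstacle will be careful bookkeeping to confirm that, after the Cauchy--Schwarz, Poisson summation, and spectral decomposition, the $m$-variable genuinely plays the role of the unit-coefficient variable in Theorem \ref{thm:DI7}; in particular, one must verify that the factor $(1+(NY/(Q+N)^2)^\theta)(Q+N)N$ from Theorem \ref{thm:DI7}, combined with $\|{\bf b}\|_2^2$ and the elementary factors from the remaining estimates, reproduces the claimed shape $(CS\sqrt{q_0 R})^{2\theta}(q_0 N+RS)^{1-\theta}(M+RS)^{1-2\theta}\|{\bf b}\|_2^2$ inside $L_1^2$.
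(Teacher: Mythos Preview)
Your high-level plan is exactly right: the proof of Theorem \ref{thm:DI11} is identical to that of Theorem \ref{thm:DI10}, except that on the $m$-side of the exceptional spectrum one invokes Theorem \ref{thm:DI7} in place of Theorem \ref{thm:DI6}, which is legitimate precisely because $a_m = \mathbf{1}_{m\sim M}$.

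However, your concrete implementation is off, and would not give you access to Theorem \ref{thm:DI7}. You propose to Cauchy--Schwarz in $m$ first, then Poisson-sum in $m$, and only then apply Kuznetsov. But after Poisson summation the variable $m$ is gone (replaced by a dual frequency), so there is no longer a sum of the shape $\sum_{m\sim M}\rho_{j\infty}(m)$ to which Theorem \ref{thm:DI7} can be applied. The paper's route avoids this entirely: Kuznetsov is applied \emph{directly} to $\mathcal L$, using the pair of cusps $(\infty,\,1/s)$ for $\Gamma_0(rs)$. This produces spectral sums containing the product
\[
\overline{\rho_{j\infty}(m)}\,\rho_{j,\,1/s}(an),
\]
with the Bessel transform $\widehat f(\kappa_j)$ as the spectral weight. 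One then applies the ABL factorization (Lemma \ref{lem:ABLfactor}) to the $an$-coefficient, and only afterwards performs Cauchy--Schwarz in the \emph{spectral variable} $j$ (not in $m$) to separate the $m$-sum from the $n$-sum. A key technical point here is the splitting
\[
1 + X^{2i\kappa_j} \ \le \ (1+X/\sqrt{X_0})^{2i\kappa_j}\,X_0^{i\kappa_j}, \qquad X_0 := 1 + RSn''/N,
\]
which balances the exceptional weight between the two factors. The $n$-side is then bounded by Theorem \ref{thm:DI5} (fixed level, cusp $1/s$), while the $m$-side is precisely
\[
S\big(RS,\,(1+X/\sqrt{X_0})^2,\,M,\,0\big) \;=\; \sum_{q\asymp RS}\sum_{\lambda_j<1/4}^{(q)}(1+X/\sqrt{X_0})^{4i\kappa_j}\Big|\sum_{m\sim M}\rho_{j\infty}(m)\Big|^2,
\]
exactly the quantity Theorem \ref{thm:DI7} estimates. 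Plugging in that bound and simplifying yields the $(CS\sqrt{q_0R})^{2\theta}(q_0N+RS)^{1-\theta}(M+RS)^{1-2\theta}$ term, as you correctly predicted.
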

Note $L$ differs from $L_1$ only in the term $M^{1-{\theta}}$, compared to $M^{1-2{\theta}}$, in the final factor for the exceptional spectrum.

For simplicity of exposition, we assume $q_0=1$ and $t=0$. Indeed, $q_0=1$, $t=0$ is the only case needed for our applications in this article. Results may obtained with general $q_0$-dependence, as in \cite[\S4.2.3]{Drapp2}.

As usual we restrict ourselves to weights $g$ of the form
\begin{align*}
g(c,m,n,r,s) = \frac{CS\sqrt{R}}{cs\sqrt{r}}\,f\Big(\frac{4\pi\sqrt{amn}}{cs\sqrt{r}}\Big)
\end{align*}
where $f\in \mathcal C^\infty([1/X,2/X])$ for $X = CS\sqrt{R}/4\pi\sqrt{aMN}$, and whose derivatives satisfy $|f^{(l)}(x)|\ll X^{l}$ for $l\ge0$.
Proofs of the general forms of these results may be deduced by adapting the techniques in section 7, using the relevant transforms of smooth weight functions (separation of variables)

\begin{proof}[Proofs of Theorem \ref{thm:DI10} and \ref{thm:DI11}]
Consider coprime $r,s\in\Z$. Recall that
\begin{align*}
S_{\infty, 1/s}(m,an;\gamma) = e\Big(\frac{an\overline{s}}{r}\Big)S(m\overline{r},an;sc)
\end{align*}
where $\gamma=cs\sqrt{r}$ for $(c,r)=1$. Thus letting $b'_{n,r,s}=e(-an\overline{s}/r)b_{n,r,s}$, we have
\begin{align*}
\mathcal L = CS\sqrt{R}\sum_{\substack{r\sim R\\s\sim S\\(r,s)=1}}\sum_{\substack{m\sim M\\n\sim N}} a_m\, b_{n,r,s}' \sum_{\substack{\gamma\in\R\\\exists\smqty(*&*\\\gamma&*)\in \Gamma_0(rs)\sigma_{1/s}}} \frac{1}{\gamma}\,f\Big(\frac{4\pi\sqrt{amn}}{\gamma}\Big)\, S_{\infty,\,1/s}(m, \pm an; \gamma).
\end{align*}
where the innermost summation is taken over numbers $\gamma=cs\sqrt{r}$, which are the left corner entries of matrices from $\Gamma_0(rs)\sigma_{1/s}$. For simplicity we suppose $n>0$ ($n<0$ is similar). Next for the inner sum we apply the Kuznetsov formula, giving
\begin{align}\label{eq:LKuznet}
\mathcal L &= \mathcal H + \mathcal E + \mathcal M
\end{align}
where
\begin{align*}
\mathcal H &= CS\sqrt{R}\sum_{\substack{r\sim R\\s\sim S\\(r,s)=1}}\sum_{\substack{m\sim M\\n\sim N}} a_m\, b_{n,r,s}' \sum_{{\rm even}\,k}\frac{\widetilde{f}(k-1)}{2\pi}\sum_{1\le j\le \theta_k(q)} \frac{i^k(k-1)!}{(4\pi\sqrt{mn})^{k-1}}\, \overline{\psi_{jk}(\infty,m)}\psi_{jk}(1/s,an)\nonumber\\
\mathcal E  & = CS\sqrt{R}\sum_{\substack{r\sim R\\s\sim S\\(r,s)=1}}\sum_{\substack{m\sim M\\n\sim N}} a_m\, b_{n,r,s}' \sum_{\c}\frac{1}{\pi}\int_\R \Big(\frac{n}{m}\Big)^{ir}\overline{\phi_{\c\infty m}(\tfrac{1}{2}+ir)} \phi_{\c\,1/s\, an}(\tfrac{1}{2}+ir)\widehat{f}(r)\dd{r}\\
\mathcal M &= CS\sqrt{R}\sum_{\substack{r\sim R\\s\sim S\\(r,s)=1}}\sum_{\substack{m\sim M\\n\sim N}} a_m\, b'_{n,r,s} \sum_{j\ge1}\overline{\rho_{j\infty}(m)}\rho_{j\,1/s}(an) \frac{\widehat{f}(\kappa_j)}{\cosh\pi\kappa_j}.
\end{align*}
In the following argument, we shall focus on the Maass contribution $\mathcal M$, and split $\mathcal M=\mathcal M_{\rm reg}+\mathcal M_{\rm exc}$ into regular and exceptional spectra.  (the holomorphic $\mathcal H$ and Eisenstein $\mathcal E$ contributions may be handled similarly to $\mathcal M_{\rm reg}$).

For the Maass contribution $\mathcal M$, we first apply the factorization as in Lemma \ref{lem:ABLfactor}, giving
\begin{align}\label{eq:MaassfactorABL}
\mathcal M & \ll CS\sqrt{R}\sum_{\substack{n''\mid a^\infty\\ n''\ll N}} (an'')^{{\theta}} \sum_{\substack{r\sim R\\s\sim S\\(r,s)=1}} \sum_{\lambda_j<\tfrac{1}{4}}^{(rs)} \frac{|\widehat{f}(\kappa_j)|}{\cosh\pi\kappa_j}\bigg|\sum_{\substack{m\sim M}} a_m \overline{\rho_{j\infty}(m)}\bigg| \bigg|\sum_{\substack{n\sim N/n''\\(n,a)=1}} b_{nn'',r,s} \, \rho_{j\,1/s}(n)\bigg|.
\end{align}
We split $\mathcal M=\mathcal M_{\rm reg}+\mathcal M_{\rm exc}$ according to whether 
$\lambda_j\ge\frac{1}{4}$ or $\lambda_j<\frac{1}{4}$, and consider each in turn. 
For the regular spectrum $\mathcal M_{\rm reg}$, we use the bound $\widehat{f}(\kappa_j)\ll \frac{1+|\log X|}{1+1/X}$ by Lemma \ref{lem:DI7.1}, for $X := |{\rm supp} f| \asymp CS\sqrt{R/aMN}$. Thus by Cauchy-Schwarz,
\begin{align*}
\mathcal M_{\rm reg}& \ll CS\sqrt{R}\, \frac{1+|\log X|}{1+1/X} \bigg(\sum_{\substack{r\sim R\\s\sim S\\(r,s)=1}} \sum_{\lambda_j\ge\tfrac{1}{4}}^{(rs)} \frac{1}{\cosh\pi\kappa_j} \bigg| \sum_{\substack{m\sim M}} a_m \overline{\rho_{j\infty}(m)} \bigg|^2\bigg)^{1/2}\\
&\qquad\qquad \cdot\sum_{\substack{n''\mid a^\infty\\ n''\ll N}} (an'')^{{\theta}/2} \bigg(\sum_{\substack{r\sim R\\s\sim S\\(r,s)=1}}\sum_{\lambda_j\ge\tfrac{1}{4}}^{(rs)} \frac{1}{\cosh\pi\kappa_j} \bigg| \sum_{\substack{n\sim N/n''\\(n,a)=1}} b_{nn'',r,s} \rho_{j\,1/s}(n) \bigg|^2\bigg)^{1/2}
\end{align*}

For each $r,s$, we bound the regular spectra by Theorem \ref{thm:largesieve} with $\mu(\infty)=\mu(1/s)=1/rs$, giving
\begin{align*}
\mathcal M_{\rm reg} & \ll (CMNRS)^\eps CS\sqrt{R}\, \frac{1+|\log X|}{1+1/X}\bigg(\sum_{\substack{r\sim R\\s\sim S\\(r,s)=1}} \Big(1+\frac{1}{X^2 } + \frac{M^{1+\eps}}{rs}\big) \|{\bf a}_M\|_2^2\bigg)^{1/2}\\
&\qquad \cdot \sum_{\substack{n''\mid a^\infty\\ n''\ll N}} (an'')^{{\theta}/2} \bigg(\sum_{\substack{r\sim R\\s\sim S\\(r,s)=1}} \Big(1+\frac{1}{X^2} + \frac{(N/n'')^{1+\eps}}{rs} \Big)\|\widetilde{\bf b}_{N,r,s}(n'')\|_2^2\bigg)^{1/2}\\
& \ll (CMNRS)^\eps CS\sqrt{R}\, \frac{1+|\log X|}{1+1/X} \Big(1+\frac{1}{X^2}+ \frac{M}{RS}\Big)^{1/2} \sqrt{RS}\|{\bf a}_M\|_2\\
&\qquad \cdot \sum_{\substack{n''\mid a^\infty\\ n''\ll N}}(an'')^{{\theta}/2} \bigg(\Big(1+\frac{1}{X^2} + \frac{N}{n''RS}\Big)^{1/2}\|\widetilde{\bf b}_{N,R,S}(n'')\|_2
\end{align*}
by the divisor bound, and noting $\sum_{r,s}\|{\bf b}_{N,r,s}\|_2^2 =\sum_{n,r,s} |{\bf b}_{n,r,s}|^2 = \|{\bf b}_{N,R,S}\|_2^2$.
Recalling $X^2 \asymp C^2S^2 R/aMN$, we have
\begin{align}\label{eq:thm10R2}
\mathcal M_{\rm reg} 
& \ll (CMNRS)^\eps \frac{CS\sqrt{R}}{1+1/CS\sqrt{R/aMN}} \Big(1+ \frac{aMN}{C^2S^2 R} + \frac{M}{RS}\Big)^{1/2} \sqrt{RS}\|{\bf a}_M\|_2 \nonumber\\
&\qquad \cdot \sum_{\substack{n''\mid a^\infty\\ n''\ll N}} (an'')^{{\theta}/2} \Big(1 + \frac{aMN}{C^2S^2 R} + \frac{Q''N}{n''RS}\Big)^{1/2} 
\|\widetilde{\bf b}_{N,R,S}(n'')\|_2
%& \ll (CMNRS)^\eps \, \frac{C^2S^2R\sqrt{RS}}{CS\sqrt{R}+\sqrt{aMN}} \|{\bf a}_M\|_2 \|{\bf b}_{N,R,S}\|_2 \nonumber\\
%&\qquad \cdot \sum_{\substack{n''\mid a^\infty\\ n''\ll N}}\pmb{\rho}(an'') \Big(1 + \frac{aMN}{C^2S^2 R} + \frac{N}{n''RS}\Big)^{1/2}\Big(1+ \frac{aMN}{C^2S^2 R} + \frac{M}{RS}\Big)^{1/2} 
= : J_{\rm reg}. \nonumber
\end{align}

The same argument holds for holomorphic $\mathcal H$ and Eisenstein $\mathcal E$ contributions, so that
\begin{align}
\mathcal H + \mathcal E + \mathcal M_{\rm reg} \ \ll \ J_{\rm reg}.
\end{align}

It is remains to handle the exceptional spectra $\mathcal M_{\rm exc}$. Recall $|\widehat{f}(\kappa_j)| \ll \frac{1+X^{2i\kappa_j}}{1+1/X}$ by \eqref{eq:7.1} in Lemma \ref{lem:DI7.1} with $r=i\kappa_j\in (0,1/2)$. This gives
\begin{align*}
\mathcal M_{\rm exc} & \ll\sum_{\substack{n''\mid a^\infty\\ n''\ll N}} (an'')^{{\theta}/2} \frac{CS\sqrt{R}}{1+1/X}\sum_{\substack{r\sim R\\s\sim S\\(r,s)=1}} \sum_{\lambda_j<\tfrac{1}{4}}^{(rs)} \frac{1+X^{2i\kappa_j}}{\cosh\pi\kappa_j}\bigg|\sum_{m\sim M} a_m \overline{\rho_{j\infty}(m)}\bigg|\bigg|\sum_{\substack{n\sim N/n''\\(n,a)=1}} b_{nn'',r,s}\rho_{j\,1/s}(n)\bigg| 
\end{align*}
For $X_0$ to be determined, we split $1+X^{2i\kappa_j} \le (1+X/\sqrt{X_0})^{2i\kappa_j}\, X_0^{i\kappa_j}$ and apply Cauchy-Schwarz,
\begin{align}
\mathcal M_{\rm exc} & \ll \frac{CS\sqrt{R}}{1+1/X}\sum_{\substack{n''\mid a^\infty\\ n''\ll N}} (an'')^{{\theta}/2} \bigg(\sum_{\substack{r\sim R\\s\sim S\\(r,s)=1}} \sum_{\lambda_j<\tfrac{1}{4}}^{(rs)}\big(1+X/\sqrt{X_0}\big)^{4i\kappa_j}\bigg|\sum_{m\sim M}a_m\,\overline{\rho_{j\infty}(m)}\bigg|^2\bigg)^{1/2} \label{eq:thm10M}\\
&\qquad\qquad\qquad\qquad\quad\quad \ \cdot \bigg(\sum_{\substack{r\sim R\\s\sim S\\(r,s)=1}} \sum_{\lambda_j<\tfrac{1}{4}}^{(rs)} (1+X_0)^{2i\kappa_j} \bigg|\sum_{\substack{n\sim N/n''\\(n,a)=1}} \,b_{nn'',r,s}\,\rho_{j,1/s}(n)\bigg|^2\bigg)^{1/2}.\label{eq:thm10N}
\end{align}
We shall choose $X_0=1+RSn''/N$.

Note the sum in \eqref{eq:thm10M} over $a_{m}\,\rho_{j\,\infty}(m)$ equals $S(RS,(1+X/\sqrt{X_0})^2,M,0)$.

For the sum in \eqref{eq:thm10N}, we apply Theorem \ref{thm:DI5} (with `$\a$'$=1/s$, `$X$'$=1+X_0$, `$a_n$'$=b_{n,r,s}^*$) to get
\begin{align*}
\sum_{\lambda_j<\tfrac{1}{4}}^{(rs)} X_0^{2i\kappa_j} \bigg|\sum_{\substack{n\sim N/n''\\(n,a)=1}} \,b_{nn'',r,s}\,\overline{\rho_{j,1/s}(n}&) \bigg|^2
 \ll \big(1+ X_0^{{\theta}}\big)\Big(1+ \Big(\frac{(N/n'')^{1+\eps}}{rs}\Big)^{1-{\theta}}\Big) 
\|\widetilde{\bf b}_{N,r,s}(n'')\|_2^2\\
& \ll N^{\eps}\Big(1+ \Big(\frac{N}{n''RS}\Big)^{{\theta}}\Big)\Big(1+ \Big(\frac{N}{n''RS}\Big)^{1-{\theta}}\Big) \|{\bf b}^*_{N,r,s}(n'')\|_2^2\\
& \ll \ N^{\eps}\Big(1+ \frac{N}{n''RS}\Big) \|\widetilde{\bf b}_{N,r,s}(n'')\|_2^2.
\end{align*}
Plugging this into \eqref{eq:thm10N}, and noting $\sum_{r,s}\|{\bf b}_{N,r,s}\|_2^2 = \sum_{n,r,s}|b_{n,r,s}|^2= \|{\bf b}_{N,R,S}\|_2^2$, we obtain
\begin{align}\label{eq:thm10E2}
\mathcal M_{\rm exc} & \ll \sum_{\substack{n''\mid a^\infty\\ n''\ll N}} (an'')^{{\theta}/2} \,S\big(RS,(1+X/\sqrt{X_0})^2,M,0\big)^{1/2}\Big(1+ \frac{N}{n''RS}\Big)^{1/2} \|\widetilde{\bf b}_{N,R,S}(n'')\|_2 \,\times \nonumber\\
& \qquad \times (CMNRS)^\eps\frac{CS\sqrt{R}}{1+1/X}. 
\end{align}

To prove Theorem \ref{thm:DI10}, we apply Theorem \ref{thm:DI6} with $Q=RS$, `$X$'$=1+X/\sqrt{X_0}$,
\begin{align}
%(QN)^{\eps}\,(Q+N+ NX^{2{\theta}}+Q(\sqrt{N}X/Q)^{2{\theta}}) \|{\bf a}_N\|_2^2
S_{(a_m)} \big(&RS, (1+X/\sqrt{X_0})^2,M,0\big) \nonumber\\
&\ \ll \
(MRS)^{\eps}\,\big(RS+M+ M(1+X/\sqrt{X_0})^{2{\theta}}+RS\big((1+X/\sqrt{X_0})\frac{\sqrt{M}}{RS}\big)^{2{\theta}}\big) \|{\bf a}_M\|_2^2 \nonumber\\
&\ \ll \
(MRS)^{\eps}\,\Big(RS+M+ (1+X^2/X_0)^{{\theta}}\Big(M+M^{\theta}\, (RS)^{1-2{\theta}}\Big)\Big) \|{\bf a}_M\|_2^2 \nonumber\\
&\ \ll \
(MRS)^{\eps}\,\Big(RS+M+ \Big(1+\frac{C^2S^2 R}{aM(N+RSn'')}\Big)^{{\theta}}\Big(M+M^{\theta}\, (RS)^{1-2{\theta}}\Big)\Big) \|{\bf a}_M\|_2^2 \nonumber\\
& \ \ll \ (MRS)^{\eps}\,\Big(RS+M+ \Big(\frac{C^2S^2 R}{a(N+RSn'')}\Big)^{{\theta}} \big(M^{1-{\theta}}+(RS)^{1-2{\theta}}\big)\Big) \|{\bf a}_M\|_2^2
\end{align}
recalling $X^2 \asymp C^2S^2 R/aMN$, $X_0=1+RSn''/N$ (so $X^2/X_0\asymp C^2S^2 R/aM(N+RSn'')$ along with $1/(1+1/X)\asymp CS\sqrt{R}/(CS\sqrt{R} + \sqrt{aMN})$).
Thus \eqref{eq:thm10E2} becomes
\begin{align*}
\mathcal M_{\rm exc} & \ll  \frac{C^2S^2R (CMNRS)^\eps}{\sqrt{aMN}+CS\sqrt{R}}\,\|{\bf a}_M\|_2\sum_{\substack{n''\mid a^\infty\\ n''\ll N}} (an'')^{{\theta}/2} \|\widetilde{\bf b}_{N,R,S}(n'')\|_2 \ \times\\
& \qquad \times\bigg(RS+M+ (an'')^{-{\theta}}\Big(\frac{C^2S^2 R}{N/n''+RS}\Big)^{{\theta}} \big(M^{1-{\theta}} + (RS)^{1-2{\theta}} \big) \bigg)^{\frac{1}{2}}  \Big(1+ \frac{N}{n''RS}\Big)^{\frac{1}{2}}
\end{align*}
For the third term in the sum over $n''\mid a^\infty$, we apply $\|\widetilde{\bf b}_{N,R,S}(n'')\|_2\le \|{\bf b}_{N,R,S}\|_2$, $\theta\le \theta$, and the divisor bound, giving
\begin{align}\label{eq:thm10E3}
\mathcal M_{\rm exc} & \ll \frac{C^2S\sqrt{RS}(CMNRS)^\eps}{\sqrt{aMN}+CS\sqrt{R}}\,\|{\bf a}_M\|_2
\Big( \sum_{\substack{n''\mid a^\infty\\ n''\ll N}} (an'')^{{\theta}} (M+RS) \Big(\frac{N}{n''}+RS\Big)\|\widetilde{\bf b}_{N,R,S}(n'')\|_2^2 \nonumber\\
&\qquad\qquad + \ (CS\sqrt{R})^{2{\theta}} (M^{1-{\theta}} + (RS)^{1-2{\theta}})(N+RS)^{1-{\theta}}\|{\bf b}_{N,R,S}\|_2^2\Big)^{\frac{1}{2}} \ =: J_{\rm exc},
\end{align}
Combining \eqref{eq:thm10E3} and \eqref{eq:thm10R2} (we factor out~$C\sqrt{S}$ from each parenthesis) gives
\begin{align*}
\mathcal L & = \mathcal H + \mathcal E + \mathcal M_{\rm reg} + \mathcal M_{\rm exc} \ll J_{\rm reg} + J_{\rm exc}\\
&\ll \frac{ C^2S \sqrt{RS}(CMNRS)^\eps}{\sqrt{aMN}+CS\sqrt{R}}\, \|{\bf a}_M\|_2\Bigg(\sum_{\substack{n''\mid a^\infty\\ n''\ll N}} (an'')^{{\theta}}\|\widetilde{\bf b}_{N,R,S}(n'')\|_2^2 \; \times \\
&\qquad\qquad\times\bigg\{\Big(\frac{N}{n''}+RS + \frac{aMN}{C^2S}\Big) \Big(M+RS + \frac{aMN}{C^2S}\Big)+ \Big(\frac{N}{n''}+RS\Big) (M+RS)\bigg\} \\
&\qquad\qquad + (CS\sqrt{R})^{2{\theta}} (N+RS)^{1-{\theta}} \big(M^{1-{\theta}} + (RS)^{1-2{\theta}}\big)\|{\bf b}_{N,R,S}\|_2^2\Bigg)^{\frac{1}{2}} \nonumber
\end{align*}
The term `$+\,(N/n''+RS)(M+RS)\big\}$' in the sum above may be absorbed into the contribution of $J_{\rm reg}$, giving
\begin{align}
\mathcal L &\ll (CMNRS)^\eps\frac{C^2S\sqrt{RS}\|{\bf a}_M\|_2}{\sqrt{aMN}+CS\sqrt{R}}  \nonumber\\
& \cdot  \Bigg( \sum_{\substack{n''\mid a^\infty\\ n''\ll N}} (an'')^{{\theta}} \Big(\frac{N}{n''}+RS+\frac{aMN}{C^2S}\Big)\Big(M+RS+\frac{aMN}{C^2S}\Big)\|\widetilde{\bf b}_{N,R,S}(n'')\|_2^2 \\
& \qquad\qquad\qquad + \ (CS\sqrt{R})^{2{\theta}} (N+RS)^{1-{\theta}} \big(M^{1-{\theta}}+(RS)^{1-2{\theta}}\big)\|{\bf b}_{N,R,S}\|_2^2  \Bigg)^{\frac{1}{2}}. \nonumber
\end{align}
This completes the proof of Theorem \ref{thm:DI10}.

To prove Theorem \ref{thm:DI11}, we apply Theorem \ref{thm:DI7} with $Q=RS$, `$X$'$=1+X/\sqrt{X_0}$,
\begin{align*}
%(QN)^\eps\, \Big(1+\Big(\frac{NX^2}{(Q+N)^2}\Big)^{\theta}\Big)(Q+N)NC
S_{(\mathbf{1}_{m\sim M})}\big(RS, &(1+X/\sqrt{X_0})^2,M,0\big)\\
&\ \ll \
(MRS)^\eps\, \Big(1+\Big(\frac{M(1+X/\sqrt{X_0})^2}{(RS+M)^2}\Big)^{\theta}\Big)(RS+M)M\\
&\ \ll \
(MRS)^\eps\, \Big(1+(an'')^{-{\theta}}\Big(\frac{C^2S^2 R}{(RS+M)^2(N/n''+RS)}\Big)^{\theta}\Big)(RS+M)M
\end{align*}

So plugging back into \eqref{eq:thm10E2} gives
\begin{align}
\mathcal M_{\rm exc}  &\ll \frac{CS\sqrt{R}}{1+1/X}\sum_{\substack{n''\mid a^\infty\\ n''\ll N}} (an'')^{{\theta}/2} \,S(RS,(1+X/\sqrt{X_0})^2,M,0)^{\frac{1}{2}} \Big(1+ \Big(\frac{N}{n''RS}\Big)\Big)^{\frac{1}{2}}\|\widetilde{\bf b}_{N,R,S}(n'')\|_2 \nonumber\\
& \qquad \quad \ll \sum_{\substack{n''\mid a^\infty\\ n''\ll N}}\,\Big((an'')^{{\theta}/2} + \Big(\frac{C^2S^2 R}{(RS+M)^2(N/n''+RS)}\Big)^{{\theta}/2}\Big)\sqrt{N/n''+RS}\\
& \qquad\qquad\qquad\cdot\frac{C^2S\sqrt{RS}}{\sqrt{aMN}+CS\sqrt{R}}  \sqrt{M+RS}\sqrt{M} \|\widetilde{\bf b}_{N,R,S}(n'')\|_2. \nonumber
\end{align}
Thus combining with \eqref{eq:thm10R2}, we obtain
\begin{align*}
\mathcal L & = \mathcal H + \mathcal E + \mathcal M_{\rm reg} + \mathcal M_{\rm exc} \ll J_{\rm reg} + J_{\rm exc}\\
&\ll (CMNRS)^\eps\frac{C^2 S \sqrt{RS}}{\sqrt{aMN}+CS\sqrt{R}}\,\sqrt{M}\times \nonumber\\
& \times\sum_{\substack{n''\mid a^\infty\\ n''\ll N}}\Bigg((an'')^{{\theta}} \Big(\frac{N}{n''} + RS+\frac{aMN}{C^2S}\Big) \Big(M+RS+\frac{aMN}{C^2S}\Big) \|\widetilde{\bf b}_{N,R,S}(n'')\|_2^2 \nonumber\\
&\qquad +\,\Big((an'')^{{\theta}}+\Big(\frac{C^2S^2 R}{(RS+M)^2(N/n''+RS)}\Big)^{{\theta}}\Big) (N/n'' + RS) (M+RS) \|\widetilde{\bf b}_{N,R,S}(n'')\|_2^2\Bigg)^{\tfrac{1}{2}}
\end{align*}
The term `$(an'')^{{\theta}}\,+$' in the factor $\big((an'')^{{\theta}}+(C^2S^2 R/\cdots)^{{\theta}} \big)$ may be absorbed into the contribution of $J_{\rm reg}$. Hence applying $\|\widetilde{\bf b}_{N,R,S}(n'')\|_2\le \|{\bf b}_{N,R,S}\|_2$ and the divisor bound, we obtain
\begin{align*}
\mathcal L
&\ll (CMNRS)^\eps\frac{C^2 S \sqrt{RS}}{\sqrt{aMN}+CS\sqrt{R}}\,\sqrt{M}\times \nonumber\\
& \times\Bigg(\sum_{\substack{n''\mid a^\infty\\ n''\ll N}} (an'')^{{\theta}} \Big(\frac{N}{n''} + RS+\frac{aMN}{C^2S}\Big) \Big(M+RS+\frac{aMN}{C^2S}\Big) \|\widetilde{\bf b}_{N,R,S}(n'')\|_2^2 \nonumber\\
&\qquad\qquad +\,(C^2S^2 R)^{{\theta}} (N + RS)^{1-{\theta}} (M+RS)^{1-2{\theta}}\|{\bf b}_{N,R,S}\|_2^2\Bigg)^{\tfrac{1}{2}}
\end{align*}
This completes the proof of Theorem \ref{thm:DI11}.
\end{proof}

\begin{proof}[Proof of Theorem \ref{thm:DI12} from Theorem \ref{thm:DI11}]
Standard completion of sums argument, using Poisson summation.
%and choosing $M=CS/D$. 
See \cite[Theorem 2.1]{Drapp2} or \cite[Theorem 12]{DI}.
\end{proof}

\appendix

\section{Optimality of Theorems \ref{thm:DI6} and \ref{thm:DI7}} \label{appndx:optimal}
\begin{center}
\sc by Sary Drappeau and Jared Duker Lichtman
\end{center}
\vspace{1em}

In this appendix, we give a heuristic argument to illustrate the bounds given in Theorems \ref{thm:DI6} and \ref{thm:DI7}.
We hope these heuristics shed some light on the proofs of Theorems \ref{thm:DI6} and \ref{thm:DI7}. The actual argument we describe is slightly different, but we feel it better motivates the steps in the original arguments of~\cite{DI}, and explains the shape of the final bound.

Let $Q,N,Y>0$, and ${\theta} = \sup_{q\sim Q}{\theta}_q$ with ${\theta}_q = 2i\kappa_1$. Define
\begin{equation}
  S(Q, N, Y) := \sup_{{\bf a}=(a_n)_n} \frac{1}{\| {\bf a}_N\|} \sum_{q\sim Q} \sum_{\lambda_j<1/4}^{(q)} Y^{2i\kappa_j} \Big| \sum_{n\sim N} a_n \rho_{j\infty}(n)\Big|,\label{eq:def-SQNY}
\end{equation}
where the supremum is over all non-zero sequences~$a_n$.
The heuristics we explain below will give a genuine proof of Theorems~\ref{thm:DI6} and~\ref{thm:DI7}, but only in the regime where~$Q=N^q$, $Y=N^y$ with~$q, y>0$ fixed, or varying in a bounded set; the implied constant could depend on the size of~$q$ and~$y$, whereas Theorems \ref{thm:DI6} and \ref{thm:DI7} are uniform.

The upper-bound~$2i\kappa_j \leq {\theta}$ implies, for all $1\leq Z\leq Y$,
\begin{equation}
  S(Q, N, Y) \leq (Y/Z)^{\theta} S(Q, N, Z).\label{eq:prop-a}
\end{equation}
By the recursion in Lemma \ref{lem:DI8.1}, we have
\begin{equation}
  S(Q, N, Y) \ll_\eps S(\pi N Y / Q, N, Y) + (YN)^\eps(Q+N+NY/Q).\label{eq:prop-b}
\end{equation}
Finally, by the spectral large sieve in Theorem \ref{thm:largesieve},
\begin{equation}
  S(Q, N, 1) + S(Q, N, Q/N) \ll Q + N^{1+\eps}.\label{eq:prop-c}
\end{equation}
Our aim is to explain the shape of the best bound for~$S(Q, N, Y)$ one can extract from these three hypotheses only.

For~$q, y\geq 0$, we define
$$ E(q, y) := \inf\{\sigma\geq 0 \,:\, S(N^q, N, N^y) \ll N^{\sigma} \}. $$
The bounds in \eqref{eq:prop-a}--\eqref{eq:prop-c} translate into the following for $E(q, z)$:
\begin{align}
  E(q, z) \leq {}& E(q, y) \leq E(q, z) + {\theta}(y-z) & & \text{for } \ 0\leq z\leq y, \label{eq:model-a}\\
  {}& E(q, y) \leq \max(E(1+y-q, y), q, 1, 1+y-q) & & \text{for } \ 0\leq q\leq y+1, \label{eq:model-b} \\
  {}& E(q, y) \leq \max(q, 1) & &\text{for } \  y\leq \max(0, q-1). \label{eq:model-c}
\end{align}

We show that maximal map satisfying \eqref{eq:model-a}--\eqref{eq:model-c} is given by the following map $M(q, y)$,
\[ M(q, y) := \max\Big(q,\, 1+{\theta} y,\, q + {\theta}(1+y-2q)\Big). \]

\begin{proposition}\label{prop:DI-th6-theta}
  The map~$M$ satisfies the bounds~\eqref{eq:model-a}--\eqref{eq:model-c}
  Moreoover, any map~$E:\R_+^2 \to \R_+$ that satisfies the bounds~\eqref{eq:model-a}--\eqref{eq:model-c} also satisfies~$E(q, y)\leq M(q, y)$.
\end{proposition}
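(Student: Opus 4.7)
For Part 1, my plan is to check \eqref{eq:model-a}--\eqref{eq:model-c} for $M(q, y) = \max(q, 1+\theta y, q+\theta(1+y-2q))$ by direct computation on each of the three terms in the max. The key algebraic identity is
\[
(q + \theta(1+y-2q)) - (q' + \theta(1+y-2q')) = (2q-1-y)(1-2\theta), \qquad q' := 1+y-q,
\]
which reduces the verification of \eqref{eq:model-b} for the third term in $M$ to a sign analysis of $2q-1-y$ (using $1-2\theta \ge 0$); the other two terms in $M$ appear verbatim in the right-hand side of \eqref{eq:model-b}. For \eqref{eq:model-a}, each term has $y$-slope at most $\theta$, giving $M(q,y)-M(q,z) \le \theta(y-z)$. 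For \eqref{eq:model-c}, direct verification shows that $1+\theta y$ and $q+\theta(1+y-2q)$ are both bounded by $\max(q,1)$ whenever $y \le \max(0, q-1)$.

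For Part 2, fix $E$ satisfying \eqref{eq:model-a}--\eqref{eq:model-c} and define
\[
A := \{y \ge 0 : E(q, y) \le M(q, y) \text{ for all } q \ge 0\}.
\]
My plan is to establish $A = [0, \infty)$ by real induction: (i) $0 \in A$ by \eqref{eq:model-c}; (ii) $A$ is closed under upward limits, via \eqref{eq:model-a} and continuity of $M$ in $y$; and (iii) for each $y \in A$, there exists $\delta > 0$ with $[y, y+\delta) \subseteq A$. The crux is (iii), which I prove by splitting on the regime of $(q, y)$.

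In the \emph{growing regime} ($M(q, y) > q$, so $M(q, \cdot)$ has left-slope $\theta$ at $y$), \eqref{eq:model-a} gives $E(q, y+\delta) \le E(q, y) + \theta\delta \le M(q, y) + \theta\delta = M(q, y+\delta)$ for $\delta$ small enough to preserve the regime. In the \emph{flat regime} ($M(q, y) = q$, equivalently $q \ge \max((1+y)/2, 1+\theta y)$), the direct \eqref{eq:model-a} estimate fails because $M(q, y+\delta) - M(q, y) < \theta\delta$; instead I apply \eqref{eq:model-b} at $(q, y+\delta)$, reflecting to $q'' := 1+y+\delta-q \le q$. For $q > (1+y)/2$ the point $(q'', y+\delta)$ lies in the growing regime for small $\delta$, so the growing-regime analysis gives $E(q'', y+\delta) \le M(q'', y+\delta) \le q$, where the last inequality uses the Part 1 identity $M(q, y+\delta) - M(q'', y+\delta) = (1-\theta)(2q-1-y-\delta) \ge 0$; combining yields $E(q, y+\delta) \le \max(q, 1, q'', M(q'', y+\delta)) = q = M(q, y+\delta)$. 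The boundary case $q = (1+y)/2$ transitions automatically into the growing regime once $y$ is perturbed to $y+\delta > 2q-1$. The main obstacle is precisely this flat-regime extension, which essentially requires all three bounds to work in concert: \eqref{eq:model-a} alone cannot cross points where $M$ is flat in $y$, and \eqref{eq:model-b} supplies the complementary ``reflection'' direction that reduces the analysis back to the growing regime.
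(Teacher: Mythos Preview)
Your Part 1 verification is correct and matches the paper's direct computation. The key identity you isolate is indeed what drives the check of \eqref{eq:model-b}.

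Your Part 2 has a genuine gap in the extension step (iii) of the real induction, precisely when $y \ge 1/(1-2\theta)$ (assuming $0 < \theta < 1/2$). In that range, the flat/growing boundary is $q_*(y) = (1+y)/2$ (the term $1+\theta y$ is no longer dominant). Now take any $\delta > 0$ and any $q \in \big((1+y)/2,\ (1+y)/2 + \delta/2\big)$. Then $(q, y)$ is strictly flat, so the direct \eqref{eq:model-a} bound gives only $E(q, y+\delta) \le q + \theta\delta$, which exceeds $M(q, y+\delta) = q + \theta(1+y+\delta - 2q)$. Your reflection to $q'' = 1 + y + \delta - q$ does not help either: $q'' > (1+y)/2$ as well, so $(q'', y)$ is \emph{also} strictly flat, and your ``growing-regime analysis'' --- which requires $(q'', y)$ (not $(q'', y+\delta)$) to be growing to conclude $E(q'', y+\delta) \le M(q'', y+\delta)$ --- does not apply. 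One then only gets $E(q'', y+\delta) \le q'' + \theta\delta$, and the resulting inequality $q'' + \theta\delta \le q$ fails whenever $q < (1+y)/2 + (1+\theta)\delta/2$. Since this bad interval has positive length for every $\delta > 0$, no uniform $\delta$ exists, and the induction stalls at $y = 1/(1-2\theta)$. (For $y < 1/(1-2\theta)$ your argument does work, with the uniform choice $\delta = 1 - (1-2\theta)y > 0$, because then $q_*(y) = 1+\theta y$ strictly exceeds $(1+y)/2$ and every reflected $q''$ lands in the growing regime.)

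The paper's proof is fundamentally different in this regime: rather than a single reflection, it iterates the transformation $(q,y) \mapsto (q', q-1+q')$ with $q' = \frac{(1+\theta)q - \theta(1+y)}{1-\theta}$ (Lemma~\ref{lem:K-stab-1}), which moves \emph{both} coordinates. This generates a M\"obius dynamical system whose orbits converge to the critical line $y = 2q-1$ (Lemma~\ref{lem:optim-2}); continuity in $y$ from \eqref{eq:model-a} then passes the bound to the limit. A single application of \eqref{eq:model-b} at fixed height cannot achieve this, since $q \mapsto 1+y'-q$ is an involution. To repair your approach you would need to embed this iterative limit somewhere inside step (iii).
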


Proposition \ref{prop:DI-th6-theta} indicates that the bound
$$ S(Q, N, Y) \ll (QNY)^\eps \big(Q + NY^{\theta} + Q(NY/Q^2)^{\theta}\big) $$
should hold true, which witnesses Theorem \ref{thm:DI6}, as in \eqref{eq:thm6Y}.
Moreover, this is the optimal result possible from the bounds \eqref{eq:prop-a}--\eqref{eq:prop-c}.

When the supremum in the definition~\eqref{eq:def-SQNY} is restricted to the characteristic sequence of an interval $a_n=\1_{[N,N_1]}(n)$, then from \cite[p.277]{DI} with $Y=Q+N$, we have
$$ S(Q, N, Q+N) \ll (QN)^\eps(Q + N). $$
Translating to the model, this gives
\begin{align}\label{eq:superbound-th7}
  {}& E(q, y) \leq y \qquad\qquad \text{for } \ y=\max(q,1).
\end{align}
We show that maximal map satisfying \eqref{eq:model-a}--\eqref{eq:superbound-th7} is given by $M^*$, as follows
\begin{align*}
  M^*(q, y) := \max\Big(q,\, 1,\, 1+{\theta}(y-1),\, q+{\theta}(1+y-2q)\Big).
\end{align*}

\begin{proposition}\label{prop:DI-th7-model}
  The map~$M^*:\R_+^2\to\R_+$ satisfies the conditions~\eqref{eq:model-a}--\eqref{eq:superbound-th7}.
  Moreover, any map~$E:\R_+^2\to\R_+$ that satisfies~\eqref{eq:model-a}--\eqref{eq:superbound-th7} also satisfies $E(q, y) \leq  M^*(q, y)$.
\end{proposition}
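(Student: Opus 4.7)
The plan is to verify both halves of the proposition separately.

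For the first half, I would decompose $M^* = \max(A_1, A_2, A_3, A_4)$ with $A_1 = q$, $A_2 = 1$, $A_3 = 1 + \theta(y-1)$, $A_4 = q + \theta(1+y-2q)$. Monotonicity \eqref{eq:model-a} is immediate since each $A_i$ is affine in $y$ with slope in $\{0, \theta\}$. Conditions \eqref{eq:model-c} and \eqref{eq:superbound-th7} reduce to verifying each $A_i \leq \max(q, 1)$ (respectively $\leq y$) in the relevant region via elementary estimates exploiting $\theta \leq 1/2$; for instance, $q + \theta(1+y-2q) \leq \max(q, 1)$ in the region $y \leq \max(0, q-1)$ follows after splitting on $q \lessgtr 1$. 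For the recursion \eqref{eq:model-b}, setting $q' := 1+y-q$ so $q+q' = 1+y$, the identity $A_4(q) - A_4(q') = (1-2\theta)(q-q')$ combined with $\theta \leq 1/2$ yields $A_4(q) \leq \max(A_4(q'), q)$; since $A_3$ is invariant under $q \leftrightarrow q'$ and $A_1, A_2$ are bounded trivially, one concludes $M^*(q,y) \leq \max(M^*(q', y), q, 1, q')$.

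For the second half, the strategy is to exhibit, for each $(q, y)$ and each $E$ satisfying the conditions, a chain of applications of \eqref{eq:model-a}--\eqref{eq:superbound-th7} yielding $E(q, y) \leq M^*(q, y)$. The case analysis is dictated by the signs of $q-1$ and $q - (1+y)/2$. For $q \leq 1$, I would apply the superbound at $(q, 1)$ plus monotonicity to obtain $E(q, y) \leq \max(1, 1+\theta(y-1))$; in this regime $A_4 \leq A_3$ by $(1-q)(1-2\theta) \geq 0$, matching $M^*$. For $q \geq 1$, $y \leq q-1$, invoke \eqref{eq:model-c} directly. For $q \geq 1$, $q \geq (1+y)/2$ (``Sub-case A''), apply recursion to send $(q, y) \to (q', y)$ with $q' = 1+y-q \leq q$; combined with the Sub-case B bound $E(q', y) \leq q'+\theta(q-q')$, this yields $E(q, y) \leq q$ via $\theta \leq 1/2$. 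For $q \geq 1$, $q \leq (1+y)/2$ (``Sub-case B''), apply monotonicity downward: $E(q, y) \leq E(q, 2q-1) + \theta(1+y-2q)$, and reduce to showing $E(q, 2q-1) \leq q$. This last bound is obtained by monotonicity upward (\emph{viz.} $E(q, 2q-1) \leq E(q, y'')$ for $y'' > 2q-1$) combined with recursion at $(q, y'')$, whose image $(1+y''-q, y'')$ lies in Sub-case A with $1+y''-q \downarrow q$ as $y'' \downarrow 2q-1$; the Sub-case A estimate then gives $E(q, y'') \leq 1+y''-q$, and the limit $y'' \downarrow 2q-1$ completes the bound.

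The main obstacle will be the mutual recursion between Sub-cases A and B, which each depend on the other. I would resolve this by either (a) a Noetherian induction on the parameter $|1+y-2q|$, with base case $y = 2q-1$ (i.e., the boundary where $A_4 = q$) handled directly by the limit argument above; or (b) a contradiction argument using the maximum satisfying map $E_{\max}$, which exists because the class $\mathcal{F}$ of satisfying maps is closed under pointwise maximum (each condition is preserved under $\max$ via the identity $\max(\max(a_1, b), \max(a_2, b)) = \max(\max(a_1, a_2), b)$ applied to the recursion). Assuming $\sup_{(q,y)}(E_{\max}-M^*)_+ > 0$, a near-supremizer $(q_0, y_0)$ combined with the chain produces a strict improvement, since the chain routes through a point $(q''_0, y''_0)$ with $y''_0 > y_0$ and strictly different position, contradicting the supremum.
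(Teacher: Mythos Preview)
Your verification that $M^*$ satisfies the four conditions is correct and matches the paper's treatment. The gap is in the second half, specifically in the key step $E(q,2q-1)\le q$ for $q\ge 1$: the mutual recursion between your Sub-cases A and B is genuinely circular, and neither of your proposed resolutions closes it.

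Resolution (a) fails because the recursion \eqref{eq:model-b} sends $(q,y)\mapsto(1+y-q,y)$, and one checks $|1+y-2(1+y-q)|=|1+y-2q|$; the parameter is \emph{invariant}. Your ``base case'' $y=2q-1$ is the fixed point of the recursion, and your limit argument for it invokes Sub-case A at $(q'',y'')$ with $|1+y''-2q''|>0$, i.e.\ appeals to \emph{larger} parameter values---the wrong direction for a Noetherian scheme. Concretely: Sub-case A at $(q'',y'')$ needs the Sub-case B bound at $(1+y''-q'',y'')=(q,y'')$, which via monotonicity reduces back to $E(q,2q-1)\le q$. Tracing the implications gives only $P\Rightarrow P$.

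Resolution (b) fails for a different reason: there is no a priori finiteness of $\sup(E_{\max}-M^*)_+$. The naive bound from \eqref{eq:superbound-th7} and \eqref{eq:model-a} gives only $E(q,y)\le \max(q,1)+\theta(y-\max(q,1))^+$, hence $E-M^*\le(1-\theta)(q-1)^+$, which is unbounded in $q$. Your ``strict improvement'' mechanism is also unspecified: the chain routes through $(q'',y'')$ with $q''>q$, where the allowed excess is \emph{larger}, not smaller.

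What the paper does instead is genuinely iterative. Starting from the line $\{(1+\lambda,1+\lambda):\lambda\ge 0\}\subset K$ given by \eqref{eq:superbound-th7}, one shows (Lemma~\ref{lem:K-stab-1}) that $(q,y)\in K$ with $y\le 2q-1$ implies $(q',q-1+q')\in K$ for $q'=\tfrac{(1+\theta)q-\theta(1+y)}{1-\theta}$. Applied to the line of slope $\alpha_n$ through $(1,1)$, this produces the line of slope $\alpha_{n+1}=\tfrac{2-\theta\alpha_n}{1-\theta(\alpha_n-1)}$. This is the orbit of a M\"obius map with eigenvalues $\theta$ and $1-\theta$; since $\theta<\tfrac12$, one has $\alpha_n\to 2$, and continuity in $y$ from \eqref{eq:model-a} yields $E(q,2q-1)\le q$ in the limit. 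The convergence rate is $(\theta/(1-\theta))^n$, so no finite number of recursion steps suffices; the infinite iteration is essential.
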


Proposition \ref{prop:DI-th7-model} may be interpreted as showing
\begin{align*}
  S(Q, N, Y) &\ll (QNY)^\eps \big(Q + N + N(Y/N)^\theta + Q(NY/Q^2)^\theta\big) \\
  % &\asymp (QNY)^\eps \big(Q + N + (NY)^{\theta}[N^{1-2\theta} + Q^{1-2\theta}]\big) %\\
  & \asymp (QNY)^\eps (Q+N)\big(1 + (NY/[Q+N]^2)^{\theta}\big),
\end{align*}
when restricting the supremum $S$ in~\eqref{eq:def-SQNY} to characteristic sequences of intervals. 
This witnesses Theorem \ref{thm:DI7}. Moreover, this is the optimal result attained from the bounds \eqref{eq:prop-a}--\eqref{eq:prop-c} and $S(Q, N, Q+N) \ll (QN)^\eps(Q + N)$.

\subsection{Verifying the maps $M$ and $M^*$}

Here we quickly prove the first part of Propositions \ref{prop:DI-th6-theta} and \ref{prop:DI-th7-model}, by verifying that $M$ and $M^*$ satisfy the stated conditions.

To show \eqref{eq:model-a} for $M$: for $z\le y$ by definition we have $M(q,z)\le M(q,y)$ and
\begin{align*}
  M(q,y) \le \max\big(q+{\theta}(y-z),1+{\theta} y, q+{\theta}(1+y-2q)\big) = M(q,z) + {\theta}(y-z).
\end{align*}

To show \eqref{eq:model-b} for $M$: for $q\le 1+y$ we have
\begin{align*}
  &\max\big(M(1+y-q,y),q,1,1+y-q\big)\\ 
  &= \max\big(1+{\theta} y, 1+y-q+{\theta}(1+y-2(1+y-q)),q,1,1+y-q\big)\\
  &= \max\big(1+{\theta} y, q+(1-{\theta})(1+y-2q),q,1+y-q\big)\\
  &\ge \max\big(1+{\theta} y,q+{\theta}(1+y-2q),q,0\big) = M(q,y).
\end{align*}

To show \eqref{eq:model-c} for $M$: If $q<1$ then $M(q,0) = \max\big(q,1, q+{\theta}(1-2q)\big) = 1$, since ${\theta}<1/2$. And if $q\ge 1$ then for $y\le q-1$ we have
\begin{align*}
  M(q,y) %= \max\big(q,1+{\theta} y, q+{\theta}(1+y-2q)\big) 
  \le \max\big(q,1+{\theta}(q-1), q+{\theta}(q-2q)\big)
  % \le \max\big(q,1+(q-1), q-\theta q\big)
  = q.
\end{align*}
Thus combining, we conclude $M(q,y)\le \max(q,1)$ for $y\le \max(0,q-1)$.

Hence ~$M(q, y)$ satisfies~\eqref{eq:model-a}--\eqref{eq:model-c}.
We similarly check that~$M^*(q, y)$ satisfies~\eqref{eq:model-a}--\eqref{eq:model-c}.

Finally, to show \eqref{eq:superbound-th7} for $M^*$:  For $0\le q\le 1$,  since $\theta<1/2$ we have 
\begin{align*}
  M^*(q,1) = \max(q,1,1,q+2\theta(1-q)) = \max(1, 2\theta +q(1-2\theta))=1,
\end{align*}
and for $q\ge1$, we have
\begin{align*}
  M^*(q,q) 
  = \max(q,1,1+\theta(q-1),q+\theta(1-q)) 
  = \max(q,1+\theta(q-1)) = q.
\end{align*}
Combining gives $M(q,y)\le y$ for $y=\max(q,1)$, showing \eqref{eq:superbound-th7}.

\subsection{Region where the optimal bound holds}

Define the region
$$ K = \{(q, y) \in \R_+^2 \,:\,  E(q, y) \le \max(q, 1)\}. $$
We shall identify certain geometric structures inside $K$, which will guide our proofs.

First note if~$(q,y)\in K$, then~$(q, y')\in K$ for all~$0\leq y' \leq y$. By~\eqref{eq:model-c} we know that~$K$ contains the line~$\{(1+\lambda, \lambda) \,:\, \lambda\geq 0\}$ and the segment~$\{(\lambda, 0) \,:\, 0\leq \lambda \leq 1\}$. Also \eqref{eq:superbound-th7} implies $K$ contains the line $\{(q, q)\,:\, q\geq 1\}$. 

In the following lemma, we show each point $(q,y)\in K$ induces another point in $K$.

\begin{lemma}\label{lem:K-stab-1}
  Let~$y, q$ be such that~$q\geq 1$ and $y\leq 2q-1$, and assume~$(q, y)\in K$. Then~$(q', y') \in K$, where
  \begin{align*}
    q' ={}& \frac{(1+{\theta}) q - {\theta}(1+y)}{1-{\theta}} \\
    y' ={}& q-1+q'.
  \end{align*}
\end{lemma}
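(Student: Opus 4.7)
The plan is to use \eqref{eq:model-b} at $(q', y')$ to recursively reduce the problem to bounding $E$ at a point whose second coordinate is the known $y$, so that \eqref{eq:model-a} combined with $(q, y) \in K$ can finish the argument. The crucial observation is that the peculiar-looking definition of $y'$ is engineered so that $1 + y' - q' = q$; equivalently, the recursion from~\eqref{eq:model-b} applied at $(q', y')$ sends the first coordinate exactly back to~$q$.

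First I would verify the bookkeeping: under the hypotheses $q \geq 1$ and $y \leq 2q-1$, one checks that $q' \geq q \geq 1$, that $y' \geq y$, and that $0 \leq q' \leq y'+1$ (equivalently $q \geq 0$). Then applying~\eqref{eq:model-b} at $(q', y')$ yields
\[
E(q', y') \ \leq \ \max\bigl(E(q, y'),\, q',\, 1,\, q\bigr) \ = \ \max\bigl(E(q, y'),\, q'\bigr),
\]
since $q' \geq \max(1, q)$. It therefore suffices to show $E(q, y') \leq q'$, which would yield $E(q', y') \leq q' = \max(q', 1)$, i.e.\ $(q', y') \in K$.

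For this last step I would use that $(q, y) \in K$ with $q \geq 1$ implies $E(q, y) \leq q$; since $y' \geq y$, the upper slope bound in~\eqref{eq:model-a} gives $E(q, y') \leq q + \theta(y' - y)$. The whole argument then reduces to verifying the numerical identity
\[
q + \theta(y' - y) \ = \ q'.
\]
A direct calculation (substituting the definitions of $q'$ and $y'$) gives
\[
y' - y \ = \ \frac{2q-1-y}{1-\theta}, \qquad q' - q \ = \ \frac{\theta(2q-1-y)}{1-\theta},
\]
so the identity holds, and simultaneously $y' - y \geq 0$ under $y \leq 2q-1$ (justifying the use of~\eqref{eq:model-a}).

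I do not expect a real obstacle here: the only non-routine ingredient is spotting the parametrisation $1 + y' - q' = q$, which dictates why~$q'$ and~$y'$ are defined the way they are. Once that is seen, the proof is a short chain combining~\eqref{eq:model-b}, then~\eqref{eq:model-a}, then the mechanical check that $\theta(y'-y) = q'-q$; all conditions on the ranges of $q', y'$ collapse to the hypotheses $q \geq 1$ and $y \leq 2q-1$.
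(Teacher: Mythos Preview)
Your proof is correct and follows essentially the same approach as the paper's: both pivot on the identity $1+y'-q'=q$, apply \eqref{eq:model-b} at $(q',y')$ to reduce to bounding $E(q,y')$, then use \eqref{eq:model-a} together with $(q,y)\in K$ and the algebraic check $q+\theta(y'-y)=q'$. Your presentation is slightly more explicit about verifying the range condition $q'\le y'+1$ needed for \eqref{eq:model-b}, but otherwise the arguments coincide.
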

\begin{proof}
  Note $0\le {\theta}<1/2$ and the hypothesis~$y+1\leq 2q$ imply
  \begin{align*}
    q' \ge \frac{(1+{\theta})q-2{\theta} q}{1-{\theta}} = q
  \end{align*}
  which in turn gives $y' \ge 2q-1 \ge y$. So combining with \eqref{eq:model-b} and \eqref{eq:model-a}, we have
  \begin{align*}
    E(q', y') \leq{}& \max(E(1+y'-q', y'), q', 1+y'-q') \\
    ={}& \max(E(q, y'), q',q) = \max(E(q, y'), q') \\
    \leq{}& \max(E(q, y) + {\theta}(y' - y), q') \\
    \leq{}& \max(q + {\theta}(y'-y), q').
  \end{align*}
  Here $E(q, y)\le \max(q,1)=q$ since~$(q, y)\in K$. Finally, by definition of $q'$ we have 
  \begin{align*}
    q'={\theta} q'+(1+{\theta})q-{\theta}(1+y)=q+{\theta}(q-1+q'-y) = q+{\theta}(y'-y).
  \end{align*}
  Thus $E(q', y') \le \max(q',q') = \max(q',1)$ and hence~$(q', y') \in K$.
\end{proof}

\begin{lemma}\label{lem:K-stab-2}
  We have~$\{(1+\lambda, \lambda/{\theta}) \,:\, 0\leq \lambda\leq \frac{{\theta}}{1-{\theta}}\} \subset K$.
\end{lemma}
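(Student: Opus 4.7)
The approach is to verify $(1+\lambda, \lambda/{\theta}) \in K$ directly from the three axioms \eqref{eq:model-a}--\eqref{eq:model-c}, without routing through Lemma~\ref{lem:K-stab-1} (whose hypothesis $q \ge 1$ is violated by the natural auxiliary point that arises below). Fix $\lambda \in [0, {\theta}/(1-{\theta})]$ and set $q := 1+\lambda$, $y := \lambda/{\theta}$. Since ${\theta} \le 1$ one has $q = 1+\lambda \le 1 + \lambda/{\theta} = y+1$, so the hypothesis $0 \le q \le y+1$ of \eqref{eq:model-b} is met, yielding
\[ E(q, y) \le \max\bigl(E(1+y-q, y),\, q,\, 1,\, 1+y-q\bigr). \]

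The next step handles the trailing three terms. Set $q_\ast := 1+y-q = \lambda(1-{\theta})/{\theta}$; the range $\lambda \le {\theta}/(1-{\theta})$ is precisely what ensures $q_\ast \le 1$. Thus $q_\ast \le 1 \le 1+\lambda = q$, and so $\max(q, 1, q_\ast) = 1+\lambda$. It then remains to bound the residual term $E(q_\ast, y)$.

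For this, I would first invoke \eqref{eq:model-c} with $q = q_\ast \le 1$ and $y = 0$ to get $E(q_\ast, 0) \le \max(q_\ast, 1) = 1$, and then apply the monotonicity bound \eqref{eq:model-a} to lift to arbitrary $y$:
\[ E(q_\ast, y) \le E(q_\ast, 0) + {\theta}\, y \le 1 + {\theta} \cdot \lambda/{\theta} = 1+\lambda. \]
Combining these bounds gives $E(q, y) \le 1+\lambda = \max(q, 1)$, so $(1+\lambda, \lambda/{\theta}) \in K$, as desired.

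The only subtle point is the role of the range constraint $\lambda \le {\theta}/(1-{\theta})$: it is exactly the threshold at which $q_\ast$ reaches $1$, enabling \eqref{eq:model-c} to bound $E(q_\ast, 0)$ by the constant $1$ rather than by the larger quantity $q_\ast$. Beyond this threshold the residual $E(q_\ast, y)$ would exceed $1+\lambda$ and the argument would break down---consistent with the observation that the right endpoint $\lambda = {\theta}/(1-{\theta})$ is precisely the single point produced by one application of Lemma~\ref{lem:K-stab-1} to the base point $(1, 0)$, marking the natural reach of the axiomatic framework.
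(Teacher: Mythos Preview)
Your proof is correct and follows essentially the same route as the paper's: apply \eqref{eq:model-b} at $(1+\lambda,\lambda/\theta)$, observe that the auxiliary point has first coordinate $q_\ast=\lambda(1-\theta)/\theta\le 1$ under the stated range on $\lambda$, then combine \eqref{eq:model-c} at $y=0$ with the Lipschitz bound \eqref{eq:model-a} to obtain $E(q_\ast,\lambda/\theta)\le 1+\lambda$. Your closing remarks on the threshold and on why Lemma~\ref{lem:K-stab-1} does not apply directly are accurate commentary but do not change the argument.
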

\begin{proof}
  Let~$\lambda\in [0, \frac{{\theta}}{1-{\theta}}]$. By \eqref{eq:model-b} we have
  \begin{align*}
    E(1+\lambda, \lambda/{\theta}) \leq \max(E(\lambda/{\theta}-\lambda, \lambda/{\theta}), 1+\lambda, \lambda/{\theta} - \lambda).
  \end{align*}
  By assumption on~$\lambda$, we have~$\lambda/{\theta} - \lambda \leq 1$, and so \eqref{eq:model-a} and \eqref{eq:model-c} gives
  \begin{align*}
    E(1+\lambda, \lambda/{\theta}) \leq{}& \max(E(\lambda/{\theta} - \lambda, \lambda/{\theta}), 1+\lambda) \\
    \leq{}& \max(E(\lambda/{\theta} - \lambda, 0)+\lambda, 1+\lambda) \\
    \leq{}& \max(\max(\lambda/{\theta} - \lambda, 1)+\lambda, 1+\lambda) \\
    ={}& 1+\lambda.
  \end{align*}
  This shows that~$(1 + \lambda, \lambda/{\theta}) \in K$.
\end{proof}

\begin{proof}[Proof of Proposition~\ref{prop:DI-th7-model}]
  Define a sequence~$(\alpha_n)_{n\geq 0}$ by $\alpha_0 := 1$ and
  $$\alpha_{n+1} = \frac{2-{\theta} \alpha_n}{1-{\theta}(\alpha_n-1)}. $$
  We claim 
  \begin{align}\label{eq:alphanlim2}
    \lim_{n\to\infty}\alpha_n = 2.
  \end{align}
  To prove this, first observe the sequence $(\alpha_n)$ is the orbit a M\"obius transformation,
  \begin{align*}
    \alpha_n = g^n\cdot 1, \qquad\qquad\text{for}\quad g:=\mqty(-{\theta} & 2 \\ -{\theta} & 1+{\theta}).
  \end{align*}
  Since~${\theta}<1/2$, the matrix~$g=\smqty(-{\theta} & 2 \\ -{\theta} & 1+{\theta})$ has distinct eigenvalues~${\theta}$ and~$1-{\theta}$, with corresponding eigenvectors~$v_1=(2; 1)$ and~$v_2=(1/{\theta}; 1)$: Indeed, we have
  \begin{align*}
    g v_1 &= \mqty(-{\theta} & 2 \\ -{\theta} & 1+{\theta})\mqty(2\\1) = \mqty(2-2{\theta}\\1-{\theta}) = (1-{\theta})v_1,\\
    g v_2 &= \mqty(-{\theta} & 2 \\ -{\theta} & 1+{\theta})\mqty(1/{\theta}\\1) = \mqty(1\\{\theta}) = {\theta}v_2.
  \end{align*}
  Then decomposing $(1;1) = cv_1+(1-c)v_2$ for $c=\frac{1-{\theta}}{1-2{\theta}} > 0$, we see
  \begin{align*}
    g^n \mqty(1\\1) &= cg^n v_1 + (1-c)g^n v_2\\
    &= c(1-{\theta})^n \mqty(2\\1) + (1-c){\theta}^n \mqty(1/{\theta}\\1) = \mqty(2c(1-{\theta})^n+O({\theta}^n)\\c(1-{\theta})^n+O({\theta}^n)).
  \end{align*}
  Hence we deduce $\alpha_n = g^n\cdot 1 = 2 + O(\tfrac{{\theta}}{1-{\theta}})^n$. In particular \eqref{eq:alphanlim2} follows, since ${\theta}<1/2$.

  We claim that~$K$ contains the line~$\{(1+\lambda, 1+\alpha_n\lambda)\,:\, \lambda\geq 0\}$ for each~$n\geq 0$. This is proved by induction. For~$n=0$, this follows from~\eqref{eq:superbound-th7}. Suppose it is proven for some~$n\geq 0$, and let~$\lambda\geq 0$. By assumption the point~$(q, y) = (1+\lambda, 1+\alpha_n \lambda)$ belongs to~$K$. By Lemma~\ref{lem:K-stab-1}, we deduce that~$K$ contains the point~$(q', y')$,
  where
  $$ q' = \frac{(1+{\theta})(1+\lambda) - {\theta}(2+\alpha_n \lambda)}{1-{\theta}}. $$
  We compute
  $$ q' = 1 + \frac{1-{\theta}(\alpha_n-1)}{1-{\theta}}\lambda, \qquad y'=q+q'-1 = 1 + \lambda \frac{2-{\theta} \alpha_n}{1-{\theta}}. $$
  Letting~$\lambda\geq 0$ vary, we see $K$ contains a line passing through~$(1, 1)$ of slope~$\frac{y'-1}{q'-1}=\frac{2-{\theta} \alpha_n}{1-{\theta}(\alpha_n-1)}=\alpha_{n+1}$. This completes the induction.  

  Given~$q\geq 1$, by definition of $K$ we have~$E(q, 1+\alpha_n(q-1)) \leq q$ for each~$n$. Note $E(q,y)$ is continuous in $y$ by \eqref{eq:model-a}. Thus $\alpha_n\to2$ as~$n\to\infty$, by \eqref{eq:alphanlim2}, gives
  \begin{align}\label{eq:limitE}
    {}& E(q, 2q-1) \leq q, & \qquad \text{for }\ q \geq 1.
  \end{align}

  Let now~$q, y\geq 0$ be arbitrary. 
  
  Assume first~$q\leq 1$. If~$y\leq 1$, then by~\eqref{eq:superbound-th7} we have~$E(q, y) \leq E(q, 1) = 1 = M^*(q, y)$. If~$y\geq 1$, then by~\eqref{eq:model-a} we have~$E(q, y) \leq  E(q, 1)+{\theta}(y-1) = 1 + {\theta}(y-1) = M^*(q, y)$. 
  
  Assume next that~$q\geq 1$. If~$y\leq 2q-1$, then by~\eqref{eq:limitE} we have~$E(q, y) \leq E(q, 2q-1) = q = M^*(q, y)$. If~$y\geq 2q-1$, then by~\eqref{eq:model-a} we have~$E(q, y) \leq E(q, 2q-1)+{\theta}(y-2q+1) = q + {\theta}(y-2q+1) = M^*(q, y)$. Thus in all cases we have~$E(q, y) \leq M^*(q, y)$.
\end{proof}

Moving on to the proof of Proposition \ref{prop:DI-th6-theta}, we first develop intuition in the case~${\theta} = 1/2$.

\begin{lemma}\label{lem:DI-th6-selberg}
  For $\theta=1/2$, we have~$(q, 2q-2) \in K$ for all~$q\geq 1$.
\end{lemma}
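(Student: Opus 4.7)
The plan is to induct on $\lfloor q \rfloor$, using condition~\eqref{eq:model-b} to decrement $q$ by one and condition~\eqref{eq:model-a} (with $\theta = \tfrac{1}{2}$) to shift $y$. The base case $(1, 0) \in K$ is immediate from~\eqref{eq:model-c}, which gives $E(1, 0) \leq \max(1,1) = 1$.

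For $q > 1$, I would apply~\eqref{eq:model-b} to the point $(q, 2q-2)$; the range hypothesis $0 \leq q \leq y+1 = 2q-1$ holds precisely because $q \geq 1$. This yields
\[ E(q, 2q-2) \;\leq\; \max\bigl(E(q-1,\, 2q-2),\; q,\; 1,\; q-1\bigr) \;=\; \max\bigl(E(q-1,\, 2q-2),\; q\bigr), \]
so the inductive step reduces to showing $E(q-1, 2q-2) \leq q$.

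To bound $E(q-1, 2q-2)$, I would split into two regimes. When $q \geq 2$, the inductive hypothesis gives $(q-1,\, 2(q-1)-2) \in K$, i.e.\ $E(q-1, 2q-4) \leq q-1$; then~\eqref{eq:model-a} with $\theta = \tfrac{1}{2}$, $z = 2q-4$, $y = 2q-2$ yields $E(q-1, 2q-2) \leq (q-1) + 1 = q$. When $1 \leq q \leq 2$, we have $q-1 \in [0,1]$, so~\eqref{eq:model-c} (applied at the point $(q-1, 0)$, valid since $0 \leq \max(0, q-2)$ holds trivially when $q \leq 2$) gives $E(q-1, 0) \leq 1$, and then~\eqref{eq:model-a} with $z = 0$, $y = 2q-2$ yields $E(q-1, 2q-2) \leq 1 + (q-1) = q$. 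Both cases give $E(q, 2q-2) \leq q$, closing the induction.

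The argument is a direct verification; no substantive obstacle arises. The only care needed is to check the range restrictions on $q$ and $y$ in~\eqref{eq:model-b} and~\eqref{eq:model-c} at each step, all of which reduce to the hypothesis $q \geq 1$. The induction produces the entire continuous line $\{(q, 2q-2) : q \geq 1\} \subset K$, as claimed.
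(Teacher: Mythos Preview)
Your proof is correct and follows essentially the same approach as the paper. The paper packages the two moves --- applying~\eqref{eq:model-b} to pass from~$q$ to~$q-1$, and applying~\eqref{eq:model-a} to shift~$y$ by~$2$ at cost~$\theta\cdot 2 = 1$ --- into Lemmas~\ref{lem:K-stab-1} and~\ref{lem:K-stab-2}, and then inducts over the segments~$D_n = \{(1+\lambda, 2\lambda) : n \leq \lambda \leq n+1\}$; you unpack those lemmas and induct directly on~$\lfloor q \rfloor$, which amounts to the same recursion.
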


\begin{proof}
  Denote~$D_n = \{(1+\lambda, 2\lambda)\,:\, n\leq \lambda\leq n+1\}$. We will show by induction that~$D_n\subset K$ for all~$n\geq 0$. By Lemma~\ref{lem:K-stab-2}, we have~$D_0\subset K$. Suppose~$D_n\in K$ for some~$n$, and let~$\lambda\in[n, n+1]$. By Lemma~\ref{lem:K-stab-1}, since~$(1+\lambda, 2\lambda) \in K$, we deduce that~$(q', \lambda+q')\in K$, where for ${\theta}=1/2$,
  $$ q' = \frac{(1+{\theta})(1+\lambda) - {\theta}(1+2\lambda)}{1-{\theta}} = \lambda + 2. $$
  Therefore~$\{(2+\lambda, 2+2\lambda)\,:\, n\leq \lambda \leq n+1\} = D_{n+1} \subset K$, which completes the induction.
\end{proof}

This implies $E(q, 2q-2) \leq q$ for $q\ge1$ and better motivates the choice $y=2q-2$ in \cite{DI}, when $\theta=1/2$. With Lemma~\ref{lem:DI-th6-selberg} in mind, we similarly consider $\theta<1/2$.

\begin{lemma}\label{lem:optim-1}
  For $\theta<1/2$, we have~$(q, (q-1)/{\theta}) \in K$ for~$1\leq q< \frac{1-{\theta}}{1-2{\theta}}$.
\end{lemma}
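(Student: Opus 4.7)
The strategy mirrors the proof of Lemma~\ref{lem:DI-th6-selberg} for~${\theta}=1/2$: I will iterate Lemma~\ref{lem:K-stab-1} starting from the segment given by Lemma~\ref{lem:K-stab-2}, exploiting the fact that the induced map preserves the line $y=(q-1)/{\theta}$ exactly, and contracts to the endpoint $q^\ast = \frac{1-{\theta}}{1-2{\theta}}$ from below.

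First I compute the action of Lemma~\ref{lem:K-stab-1} on a point of the form $(q,(q-1)/{\theta})$ with $q\geq 1$. The hypothesis $y\leq 2q-1$ becomes $(q-1)/{\theta}\leq 2q-1$, i.e., $q\leq q^\ast$, which is precisely our target range. Plugging $y=(q-1)/{\theta}$ into the formulas gives
\[
q' \,=\, \frac{(1+{\theta})q-{\theta}(1+(q-1)/{\theta})}{1-{\theta}} \,=\, 1+\frac{{\theta} q}{1-{\theta}} \,=:\, T(q), \qquad y' \,=\, q-1+q' \,=\, \frac{q}{1-{\theta}},
\]
and one checks $(q'-1)/{\theta} = y'$, so the line $y=(q-1)/{\theta}$ is stable. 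The map $T$ is affine in $q$ with slope ${\theta}/(1-{\theta}) < 1$ (since ${\theta}<1/2$) and unique fixed point $q^\ast$, so $T$ is an increasing contraction that attracts $[1,q^\ast)$ to $q^\ast$ from below.

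Now I iterate. Lemma~\ref{lem:K-stab-2} (with $\lambda=q-1$) gives the base segment $\{(q,(q-1)/{\theta}) : 1\leq q\leq \tfrac{1}{1-{\theta}}\}\subset K$, and a direct computation shows $T(1)=\tfrac{1}{1-{\theta}}$, so this base segment is precisely $[1,T(1)]$ along the slope line. Since $\tfrac{1}{1-{\theta}} < q^\ast$ (equivalent to ${\theta}^2>0$), every point in the base segment satisfies the hypothesis of Lemma~\ref{lem:K-stab-1}, and applying Lemma~\ref{lem:K-stab-1} pointwise across the segment produces the next segment $\{(q,(q-1)/{\theta}): T(1)\leq q\leq T^{2}(1)\}\subset K$. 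By induction on $n\geq 0$, one gets
\[
\bigl\{(q,(q-1)/{\theta}) \,:\, T^{n}(1)\leq q\leq T^{n+1}(1)\bigr\}\subset K,
\]
the hypothesis $q\leq q^\ast$ being maintained at each step because $T$ contracts towards $q^\ast$ from below.

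Taking the union over $n\geq 0$ and using that consecutive intervals share the endpoint $T^{n+1}(1)$, together with $T^n(1)\nearrow q^\ast$, yields $\{(q,(q-1)/{\theta}) : 1\leq q < q^\ast\}\subset K$, which is the claim. The only real subtlety — and the reason this argument succeeds exactly in the stated range — is the algebraic coincidence that $T$ preserves the slope line and that its fixed point $q^\ast$ coincides with the boundary of applicability $q\leq q^\ast$ of Lemma~\ref{lem:K-stab-1}; this critical matching is what allows the iteration to exhaust the full optimal interval $[1,q^\ast)$ rather than stopping short.
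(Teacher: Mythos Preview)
Your proof is correct and follows essentially the same approach as the paper: both iterate Lemma~\ref{lem:K-stab-1} starting from the base segment of Lemma~\ref{lem:K-stab-2}, using that the line $y=(q-1)/\theta$ is preserved; the paper parametrizes by $\lambda=q-1$ and sums the resulting geometric series $\rho+\rho^2+\cdots=\tfrac{\theta}{1-2\theta}$, while you equivalently phrase the iteration as a contraction $T(q)=1+\tfrac{\theta q}{1-\theta}$ with fixed point $q^\ast$.
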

\begin{proof}
  Consider~$\lambda\in[0, \frac{{\theta}}{1-{\theta}}]$. We have~$(1+\lambda, \lambda/{\theta})\in K$ by Lemma~\ref{lem:K-stab-2}. Thus by Lemma~\ref{lem:K-stab-1}, we deduce~$(q_1, \lambda+q_1)\in K$ as well, where
  $$ q_1 = \frac{(1+{\theta})(1+\lambda) - {\theta}(1+\lambda/{\theta})}{1-{\theta}}.$$
  Letting $\lambda_1 := q_1-1= \frac{{\theta}}{1-{\theta}}(1+\lambda)$, we have $(1+\lambda_1, \lambda_1/{\theta})=(q_1, \lambda+q_1)\in K$, since
  \begin{align*}
    \lambda + q_1 = \lambda+1+\lambda_1 = (\lambda+1)(1+\tfrac{{\theta}}{1-{\theta}}) = \frac{\lambda+1}{1-{\theta}} = \frac{\lambda_1}{{\theta}}.
  \end{align*}
  Let~$\rho := \frac{{\theta}}{1-{\theta}}$ so $\lambda_1=\rho(1+\lambda)$. As~$\lambda$ varies in~$[0, \rho]$, we see~$\lambda_1$ varies in~$[\rho, \rho+\rho^2]$, and hence~$(1+\lambda, \lambda/{\theta}) \in K$ for all~$0\leq \lambda \leq \rho+\rho^2$. An immediate induction shows that in fact~$(1+\lambda, \lambda/{\theta}) \in K$ whenever~$0\leq \lambda \leq \rho + \cdots + \rho^n$ for any~$n\geq 1$. Since~$\sum_{n\geq 1} \rho^n = \frac{{\theta}}{1-2{\theta}}$, we deduce that~$(1+\lambda, \lambda/{\theta})\in K$ for all~$0\leq \lambda < \frac{{\theta}}{1-2{\theta}}$, that is,~$1\le q = 1+\lambda< \frac{1-{\theta}}{1-2{\theta}}$.
\end{proof}

\begin{lemma}\label{lem:optim-2}
  We have~$(q, 2q-1)\in K$ for all~$q\geq \frac{1-{\theta}}{1-2{\theta}}$.
\end{lemma}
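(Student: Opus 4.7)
The plan is to reach points on the critical line $y=2q-1$ by iterating Lemma~\ref{lem:K-stab-1} from a base case in \eqref{eq:model-c}, and then closing the residual gap via the monotonicity bound \eqref{eq:model-a}. Writing $\rho := {\theta}/(1-{\theta})$ so that $q_* = 1/(1-\rho)$, the map in Lemma~\ref{lem:K-stab-1} simplifies to
\[ q' \;=\; q + \rho(2q-1-y), \qquad y' \;=\; q+q'-1, \]
and consequently the ``gap'' $g_k := 2q_k-1-y_k$ satisfies $g_{k+1} = q_{k+1}-q_k = \rho g_k$, contracting geometrically. My first step will be a routine induction verifying that the $n$-fold iterate starting from $(q_0, q_0-1)\in K$ (which lies in $K$ for any $q_0\ge 1$ by \eqref{eq:model-c}) is
\[ (q_n, y_n) \;=\; \Big(q_0 \cdot \tfrac{1-\rho^{n+1}}{1-\rho},\; 2q_n - 1 - q_0 \rho^n\Big) \;\in\; K. \]
The hypotheses $q_k\ge q_0\ge 1$ and $y_k \le 2q_k-1$ of Lemma~\ref{lem:K-stab-1} are automatic from this formula.

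For a given $q\ge q_*$, the key idea is to calibrate the starting point depending on the number of iterations, setting $q_0 = q_0(n) := q(1-\rho)/(1-\rho^{n+1})$ so that $q_n = q$ exactly. The inequality $q(1-\rho) \ge q_*(1-\rho) = 1 \ge 1-\rho^{n+1}$ ensures $q_0\ge 1$ for every $n$, so the base case is valid at each stage. The iteration thus produces $(q, y_n)\in K$ with $y_n = 2q-1-q_0\rho^n$; and since $q_0$ stays bounded (indeed $q_0\to q(1-\rho)$), the gap $q_0\rho^n$ tends to $0$.

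Finally, I will apply \eqref{eq:model-a} to push $y_n$ up to the critical line:
\[ E(q, 2q-1) \;\le\; E(q, y_n) + {\theta}(2q-1-y_n) \;\le\; q + {\theta}\, q_0\rho^n. \]
Letting $n\to\infty$, the right-hand side tends to $q$, and because $E(\cdot,\cdot)$ is defined as an infimum of admissible exponents, this forces $E(q, 2q-1)\le q$, i.e.\ $(q, 2q-1)\in K$. The main conceptual point, and what I expect to be the crux of the argument, is that the line $y=2q-1$ consists of fixed points of the recursion in Lemma~\ref{lem:K-stab-1} (any starting point on the line is returned to itself), so it cannot be reached in finitely many steps from off the line; the geometric decay $\rho^n\to 0$ combined with the Hölder-type bound \eqref{eq:model-a} is precisely what bridges this gap.
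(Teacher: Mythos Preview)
Your proof is correct. Both your argument and the paper's iterate Lemma~\ref{lem:K-stab-1} starting from the line $y=q-1$ (which lies in $K$ by~\eqref{eq:model-c}), and then pass to the limit using the Lipschitz bound~\eqref{eq:model-a}. The difference is organisational: the paper tracks the image of the \emph{entire} half-line $\{(1+\mu,\mu):\mu\ge 0\}$ under successive applications of Lemma~\ref{lem:K-stab-1}, which requires maintaining two coupled sequences $(\lambda_n,\alpha_n)$ for the anchor point and slope of the $n$-th line, and then showing $\alpha_n\to 2$ via the M\"obius-transformation analysis already done for Proposition~\ref{prop:DI-th7-model}. You instead iterate a \emph{single} point, observe that the gap $g_k=2q_k-1-y_k$ contracts geometrically as $g_{k+1}=\rho g_k$, and calibrate the start $q_0=q_0(n)$ so that the orbit lands exactly at the target abscissa $q$ after $n$ steps. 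This yields the closed forms $q_n=q_0(1-\rho^{n+1})/(1-\rho)$ and $y_n=2q-1-q_0\rho^n$ directly, without any auxiliary sequences or eigenvector computations. Your route is shorter and more transparent; the paper's route has the advantage of reusing machinery already set up for Proposition~\ref{prop:DI-th7-model}. One minor remark: your final sentence appealing to ``$E$ is defined as an infimum'' is unnecessary---the inequality $E(q,2q-1)\le q+\theta q_0\rho^n$ holds for every $n$ by~\eqref{eq:model-a}, and taking $n\to\infty$ gives $E(q,2q-1)\le q$ immediately.
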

\begin{proof}
  Define the sequences
  \begin{align*}
  \lambda_0 &:=0, \qquad \lambda_{n+1} = \frac{{\theta}}{1-{\theta}}(1+\lambda_n), \\
  \alpha_0 &:= 1, \qquad \alpha_{n+1} = \frac{2-{\theta} \alpha_n}{1-{\theta}(\alpha_n-1)}.
  \end{align*}
  Recall from \eqref{eq:alphanlim2} that~$\alpha_n\to 2$ as~$n\to\infty$. Also the sequence~$(\lambda_n)$ is increasing and converges to~$\frac{{\theta}}{1-2{\theta}}$.
  We prove by induction that for all~$n\geq 0$, $K$ contains the line
  \begin{align}\label{eq:Klinemun}
    \{(1 + \lambda_n + \mu, \, \lambda_n/{\theta} + \mu\alpha_n)\,:\, \mu\geq 0\} \ \subset \ K.
  \end{align}
  Indeed, for the base case $n=0$, we have $(1+\mu,\mu)\in K$ for all $\mu\ge0$. Then assuming $(1 + \lambda_n + \mu, \tfrac1{\theta} \lambda_n + \alpha_n \mu)\in K$, by Lemma~\ref{lem:K-stab-1} we have $(q',q'+\lambda_n + \mu)\in K$, where $q'$ is
  \begin{align*}
    q' &= \tfrac{1+{\theta}}{1-{\theta}}(1 + \lambda_n + \mu) - \tfrac{{\theta}}{1-{\theta}}(1+\tfrac1{\theta} \lambda_n + \alpha_n \mu) = \frac{1+{\theta} \lambda_n}{1-{\theta}} + \frac{1+{\theta}-{\theta}\alpha_n}{1-{\theta}}\mu.
  \end{align*}
  Noting $\frac{1+{\theta}\lambda_n}{1-{\theta}} = 1+\tfrac{{\theta}}{1-{\theta}}(1+\lambda_n) = 1+\lambda_{n+1}$, we let $\mu' =\frac{1+{\theta}-{\theta}\alpha_n}{1-{\theta}} \mu$ so that
  \begin{align*}
    K\ni (q',q'+\lambda_n + \mu) &= \bigg(1+\lambda_{n+1} + \frac{1+{\theta}-{\theta}\alpha_n}{1-{\theta}}\mu,\, \frac{1+\lambda_n}{1-{\theta}} + \frac{2-{\theta}\alpha_n}{1-{\theta}}\mu\bigg)\\
    % & = \bigg(1+\lambda_{n+1} + \mu',\, \frac{1+\lambda_n}{1-{\theta}} + \frac{2-{\theta}\alpha_n}{1+{\theta}-{\theta}\alpha_n}\mu'\bigg)\\
    & = \bigg(1+\lambda_{n+1} + \mu',\, \frac{\lambda_{n+1}}{{\theta}} + \alpha_{n+1}\mu'\bigg).
  \end{align*}
   This completes the induction, and hence \eqref{eq:Klinemun} follows.
  
  Given~$q>\frac{1-{\theta}}{1-2{\theta}}$ and~$n\geq 0$, we have~$q-1>\frac{{\theta}}{1-2{\theta}}$, and we let~$\mu := q-1-\lambda_n>0$. Since~$(1+\lambda_n+\mu, \lambda_n/{\theta}+\alpha_n\mu)\in K$, we deduce that
  $$ E(q, y_n) \leq q $$
  for~$y_n = \lambda_n / {\theta} + \alpha_n \mu$. Since $\alpha_n\to 2$ and $\lambda_n\to\frac{{\theta}}{1-2{\theta}}$ we obtain
  \begin{align*}
    y_n = \frac{\lambda_n}{{\theta}} + \alpha_n(q-1-\lambda_n) \ \to \ \tfrac{1}{1-2{\theta}}+2(q-1-\tfrac{{\theta}}{1-2{\theta}}) = \tfrac{1-2{\theta}}{1-2{\theta}} + 2(q-1) = 2q-1.
  \end{align*}
  By~\eqref{eq:model-a}, the map~$y\mapsto E(q, y)$ is continuous. So taking the limit as~$n\to\infty$ we see $E(q, y_n) \leq q$ implies~$E(q, 2q-1) \leq q$. Hence~$(q, 2q-1)\in K$ as claimed.
\end{proof}

\begin{proof}[Proof of Proposition~\ref{prop:DI-th6-theta}]
  Assume first that~$q\leq 1$. Then we have~$E(q, y) \leq {\theta} y + E(q, 0) = 1 + {\theta} y = M(q, y)$. Assume next that~$1\leq q \leq \frac{1-{\theta}}{1-2{\theta}}$. Then we have~$E(q, (q-1)/{\theta}) \leq q$ by Lemma~\ref{lem:optim-1}. If~$y\leq (q-1)/{\theta}$, this implies~$E(q, y) \leq E(q, (q-1)/{\theta}) \leq q$, and if~$y\geq (q-1)/{\theta}$, this implies~$E(q, y) \leq {\theta}(y-(q-1)/{\theta}) + E(q, (q-1)/{\theta}) \leq 1 + {\theta} y$. In both cases we find~$E(q, y) \leq M(q, y)$. Finally, if~$q\geq \frac{1-{\theta}}{1-2{\theta}}$, we have by Lemma~\ref{lem:optim-2} that~$E(q, 2q-1) \leq q$, and by an argument similar to the proof of Proposition~\ref{prop:DI-th7-model}, we obtain~$E(q, y) \leq q + \max(0, {\theta}(y-2q+1)) = M(q, y)$. In all cases, we get~$E(q, y) \leq M(q, y)$ as claimed.
\end{proof}

\bibliographystyle{amsplain}

\begin{thebibliography}{9}

\bibitem{ABL} E. Assing, V. Blomer, J. Li, {\it Uniform Titchmarsh divisor problems}, Adv. Math. {\bf 393} (2021), 108076.

\bibitem{BHM} V. Blomer, G. Harcos, P. Michel, {\it A Burgess-like subconvex bound for twisted $L$-functions}, Forum Math. {\bf 19} (2007), 61--105, Appendix 2 by Z. Mao.

\bibitem{BD} E. Bombieri, H. Davenport, {\it Small differences between prime numbers}, Proc. Roy. Soc. Ser. A {\bf 239} (1966), 1--18.

\bibitem{BFI1} E. Bombieri, J. Friedlander, H. Iwaniec, {\it Primes in arithmetic progressions to large moduli}, Acta Math. {\bf 156} (1986), 203--251.

\bibitem{BFI2} E. Bombieri, J. Friedlander, H. Iwaniec, {\it Primes in arithmetic progressions to large moduli. II}, Math. Ann. {\bf 277} (1987), 361--393,

\bibitem{CL} Y.C. Cai, M.G. Lu, {\it On the upper bound for $\pi_2$(x)}, Acta Arith. {\bf110} (2003), 275--298.

\bibitem{Chenthm} J.R. Chen, {\it On the representation of a larger even integer as the sum of a prime and the product of at most two primes}, Sci. Sinica {\bf 16} (1973), 157--176.

\bibitem{Chen} J.R. Chen, {\it On the Goldbach's problem and the sieve methods}, Sci. Sinica {\bf 21} (1978), 701--739.

\bibitem{DI} J.-M. Deshouillers, H. Iwaniec, {\it Kloosterman sums and Fourier coefficients of cusp forms}, Invent. Math. {\bf 70} (1982), 219--288.

\bibitem{Drapp} S. Drappeau. {\it Th\'eor\`emes de type Fouvry–Iwaniec pour les entiers friables.} Compos. Math. {\bf 151} (2015), 828--862.

\bibitem{Drapp2} S. Drappeau, {\it Sums of Kloosterman sums in arithmetic progressions, and the error term in the dispersion method}, Proc. Lond. Math. Soc. {\bf 114} (2017), 684--732.

\bibitem{DPR} S. Drappeau, K. Pratt and M. Radziwi{\l}{\l}, {\it One-level density estimates for Dirichlet L-functions with extended support}, {\bf 17} (2023), 805--830.

\bibitem{EH} P. D. T. A. Elliott, H. Halberstam, {\it A conjecture in prime number theory}, Symposia Mathematica, Vol. IV, Academic Press, London, (1970), 59--72.

\bibitem{F} \'E. Fouvry, {\it Autour du th\'eor\`eme de Bombieri--Vinogradov}, Acta Math. {\bf 152} (1984), 219--244.

\bibitem{FI} \'E. Fouvry, H. Iwaniec. {\it Primes in arithmetic progressions}, Acta Arith., {\bf 42} (1983), 197--218

\bibitem{FG} \'E. Fouvry, F. Grupp, {\it On the switching principle in sieve theory}, J. reine angew. Math. {\bf 370} (1986), 101--125.

\bibitem{FT} \'E. Fouvry, G. Tenenbaum, {\it R\'epartition statistique des entiers sans grand facteur premier dans les progressions arithm\'etiques}, Proc. London Math. Soc. {\bf 72} (1996), 481--514.

\bibitem{Opera} J. Friedlander, H. Iwaniec, {\it Opera de Cribro} Amer. Math. Soc. Colloquium Publications, {\bf 57} (2010).

\bibitem{KimSarn} H. H. Kim, {\it Functoriality for the exterior square of $GL_4$ and the symmetric fourth of $GL_2$}, J. Amer. Math. Soc. {\bf 16}
(2003), 139--183, With appendix 1 by D. Ramakrishnan and appendix 2 by H. H. Kim and P. Sarnak.

\bibitem{Pan} C. D. Pan, {\it A new application of the Yu. V. Linnik large sieve method}, Chinese Math. Acta {\bf5} (1964), 642--652.

\bibitem{HL} G. H. Hardy, J. E. Littlewood, {\it Some Problems of `Partitio Numerorum.' III. On the Expression of a Number as a Sum of Primes}, Acta Math. {\bf 44} (1923), 1--70.

\bibitem{HBVaughan} D. R. Heath-Brown.
{\it Prime numbers in short intervals and a generalized Vaughan identity}, Canadian J. Math., {\bf 34} (1982), 1365--1377.

\bibitem{Ilinear} H. Iwaniec. {\it A new form of the error term in the linear sieve}, Acta Arith., {\bf 37} (1980), 307--320.


\bibitem{IK} H. Iwaniec, E. Kowalski. {\it Analytic number theory}, Amer. Math. Soc. Colloquium Publications, {\bf 53} (2004).


\bibitem{Ltwin} J. D. Lichtman, {\it A modification of the linear sieve, and the count of twin primes}, 
submitted. %
%Algebra \& Number Theory, to appear. %

\bibitem{LprimeAP} J. D. Lichtman, {\it Primes in arithmetic progressions to large moduli, and shifted primes without large prime factors}, submitted.

\bibitem{JM1} J. Maynard, {\it Primes in arithmetic progressions to large moduli I: Fixed residue classes}, Mem. Amer. Math. Soc., to appear.

\bibitem{JM2} J. Maynard, {\it Primes in arithmetic progressions to large moduli II: well-factorable estimates}, Mem. Amer. Math. Soc., to appear.

\bibitem{JM3} J. Maynard, {\it Primes in arithmetic progressions to large moduli II: Uniform residue classes}, Mem. Amer. Math. Soc., to appear.

\bibitem{RVaughan} H. Riesel, R.C. Vaughan, {\it On sums of primes}, Ark. Mat. {\bf 21} (1983), 45--74.

\bibitem{RudSarnRMT} Z. Rudnick, P. Sarnak, {\it Zeros of principal L-functions and random matrix theory}, Duke Math J. {\bf 82} (1996), 269--322.

\bibitem{Selb} A. Selberg, {\it On elementary methods in prime number theory and their limitations}, in: 11 Skand. Mat. kongr., Trondheim 1949, 13--22.

\bibitem{Shiu} P. Shiu, {\it A Brun--Titchmarsh theorem for multiplicative functions}, J. Reine Angew. Math. {\bf 313} (1980), 161--170.

\bibitem{Sei} H. Siebert, {\it Montgomery's weighted sieve for dimension two}, Monatsh. Math. {\bf 82} (1976), 327--336.

\bibitem{WuI} J. Wu, {\it Sur la suite des nombres premiers jumeaux}, Acta Arith. {\bf 55} (1990), 365--394.

\bibitem{WuII} J. Wu, {\it Chen's double sieve, Goldbach's conjecture and the twin prime problem}, Acta Arith. {\bf 114} (2004), 215--273.
\end{thebibliography}

\end{document}